\let\originallhook\lhook
\newcommand{\lhook}{\mathrel{\raise.018ex\hbox{$\originallhook$}}}
\let\mathscr\mathcal
\setlist[enumerate,1]{label={(\arabic*)},itemsep=\parskip} 
\setlist[itemize,1]{itemsep=\parskip} 
\newlist{thmlist}{enumerate}{2}
\setlist[thmlist,1]{label={\em(\roman*)},ref={(\roman*)},%
  itemsep=\parskip,leftmargin=*,align=left}
\setlist[thmlist,2]{label={\em(\alph*)},ref={(\alph*)},%
  itemsep=\parskip,leftmargin=*,align=left,topsep=0.1cm}
\newlist{defnlist}{enumerate}{2}
\setlist[defnlist,1]{label={(\roman*)},ref={(\roman*)},itemsep=\parskip,%
  leftmargin=*,align=left}
\setlist[defnlist,2]{label={(\alph*)},ref={(\alph*)},itemsep=\parskip,%
  leftmargin=*,align=left,topsep=0.1cm}
\newtheoremstyle{plain}
  {.5\baselineskip\@plus.2\baselineskip\@minus.2\baselineskip}
  {0\baselineskip\@plus.2\baselineskip\@minus.2\baselineskip\@plus.5em}
  {\slshape}
  {}
  {\bfseries}
  {.}
  { }
  {}
\newtheoremstyle{definition}
  {.5\baselineskip\@plus.2\baselineskip\@minus.2\baselineskip}
  {0\baselineskip\@plus.2\baselineskip\@minus.2\baselineskip\@plus.5em}
  {}
  {}
  {\bfseries}
  {.}
  { }
  {}
\theoremstyle{plain}
\newtheorem{thmX}{Theorem}
\newtheorem{thm}[subsubsection]{Theorem}
\newtheorem{cor}[subsubsection]{Corollary}
\newtheorem{lem}[subsubsection]{Lemma}
\newtheorem{prop}[subsubsection]{Proposition}
\theoremstyle{definition}
\newtheorem{defn}[subsubsection]{Definition}
\newtheorem{rem}[subsubsection]{Remark}
\newtheorem{exam}[subsubsection]{Example}
\newtheorem{constr}[subsubsection]{Construction}
\newtheorem{notation}[subsubsection]{Notation}
\newtheorem{warn}[subsubsection]{Warning}
\newcommand{\constrref}[1]{Construction~\ref{#1}}
\newcommand{\thmref}[1]{Theorem~\ref{#1}}
\newcommand{\secref}[1]{Sect.~\ref{#1}}
\newcommand{\ssecref}[1]{Subsect. ~\ref{#1}}
\newcommand{\lemref}[1]{Lemma~\ref{#1}}
\newcommand{\propref}[1]{Proposition~\ref{#1}}
\newcommand{\corref}[1]{Corollary~\ref{#1}}
\newcommand{\remref}[1]{Remark~\ref{#1}}
\newcommand{\defref}[1]{Definition~\ref{#1}}
\renewcommand{\eqref}[1]{(\ref{#1})}
\newcommand{\examref}[1]{Example~\ref{#1}}
\numberwithin{equation}{subsection}
\tikzset{
  commutative diagrams/.cd,
  arrow style=tikz,
  diagrams={>=latex}}
\tikzset{
  column sep/.code=\def\pgfmatrixcolumnsep{\pgf@matrix@xscale*(#1)},
  row sep/.code   =\def\pgfmatrixrowsep{\pgf@matrix@yscale*(#1)},
  matrix xscale/.code=%
    \pgfmathsetmacro\pgf@matrix@xscale{\pgf@matrix@xscale*(#1)},
  matrix yscale/.code=%
    \pgfmathsetmacro\pgf@matrix@yscale{\pgf@matrix@yscale*(#1)},
  matrix scale/.style={/tikz/matrix xscale={#1},/tikz/matrix yscale={#1}}}
\def\pgf@matrix@xscale{1}
\def\pgf@matrix@yscale{1}
\newcommand{\changelocaltocdepth}[1]{%
  \addtocontents{toc}{\protect\setcounter{tocdepth}{#1}}%
  \setcounter{tocdepth}{#1}}
\newcommand{\nc}{\newcommand}
\nc{\renc}{\renewcommand}
\nc{\ssec}{\subsection}
\nc{\sssec}{\subsubsection}
\nc{\on}{\operatorname}
\nc{\term}[1]{#1\xspace}
\nc{\sA}{\ensuremath{\mathcal{A}}\xspace}
\nc{\sB}{\ensuremath{\mathcal{B}}\xspace}
\nc{\sC}{\ensuremath{\mathcal{C}}\xspace}
\nc{\sD}{\ensuremath{\mathcal{D}}\xspace}
\nc{\sE}{\ensuremath{\mathcal{E}}\xspace}
\nc{\sF}{\ensuremath{\mathcal{F}}\xspace}
\nc{\sG}{\ensuremath{\mathcal{G}}\xspace}
\nc{\sH}{\ensuremath{\mathcal{H}}\xspace}
\nc{\sI}{\ensuremath{\mathcal{I}}\xspace}
\nc{\sJ}{\ensuremath{\mathcal{J}}\xspace}
\nc{\sK}{\ensuremath{\mathcal{K}}\xspace}
\nc{\sL}{\ensuremath{\mathcal{L}}\xspace}
\nc{\sM}{\ensuremath{\mathcal{M}}\xspace}
\nc{\sN}{\ensuremath{\mathcal{N}}\xspace}
\nc{\sO}{\ensuremath{\mathcal{O}}\xspace}
\nc{\sP}{\ensuremath{\mathcal{P}}\xspace}
\nc{\sQ}{\ensuremath{\mathcal{Q}}\xspace}
\nc{\sR}{\ensuremath{\mathcal{R}}\xspace}
\nc{\sS}{\ensuremath{\mathcal{S}}\xspace}
\nc{\sT}{\ensuremath{\mathcal{T}}\xspace}
\nc{\sU}{\ensuremath{\mathcal{U}}\xspace}
\nc{\sV}{\ensuremath{\mathcal{V}}\xspace}
\nc{\sW}{\ensuremath{\mathcal{W}}\xspace}
\nc{\sX}{\ensuremath{\mathcal{X}}\xspace}
\nc{\sY}{\ensuremath{\mathcal{Y}}\xspace}
\nc{\sZ}{\ensuremath{\mathcal{Z}}\xspace}
\nc{\bA}{\ensuremath{\mathbf{A}}\xspace}
\nc{\bB}{\ensuremath{\mathbf{B}}\xspace}
\nc{\bC}{\ensuremath{\mathbf{C}}\xspace}
\nc{\bD}{\ensuremath{\mathbf{D}}\xspace}
\nc{\bE}{\ensuremath{\mathbf{E}}\xspace}
\nc{\bF}{\ensuremath{\mathbf{F}}\xspace}
\nc{\bG}{\ensuremath{\mathbf{G}}\xspace}
\nc{\bH}{\ensuremath{\mathbf{H}}\xspace}
\nc{\bI}{\ensuremath{\mathbf{I}}\xspace}
\nc{\bJ}{\ensuremath{\mathbf{J}}\xspace}
\nc{\bK}{\ensuremath{\mathbf{K}}\xspace}
\nc{\bL}{\ensuremath{\mathbf{L}}\xspace}
\nc{\bM}{\ensuremath{\mathbf{M}}\xspace}
\nc{\bN}{\ensuremath{\mathbf{N}}\xspace}
\nc{\bO}{\ensuremath{\mathbf{O}}\xspace}
\nc{\bP}{\ensuremath{\mathbf{P}}\xspace}
\nc{\bQ}{\ensuremath{\mathbf{Q}}\xspace}
\nc{\bR}{\ensuremath{\mathbf{R}}\xspace}
\nc{\bS}{\ensuremath{\mathbf{S}}\xspace}
\nc{\bT}{\ensuremath{\mathbf{T}}\xspace}
\nc{\bU}{\ensuremath{\mathbf{U}}\xspace}
\nc{\bV}{\ensuremath{\mathbf{V}}\xspace}
\nc{\bW}{\ensuremath{\mathbf{W}}\xspace}
\nc{\bX}{\ensuremath{\mathbf{X}}\xspace}
\nc{\bY}{\ensuremath{\mathbf{Y}}\xspace}
\nc{\bZ}{\ensuremath{\mathbf{Z}}\xspace}
\nc{\dA}{\ensuremath{\mathds{A}}\xspace}
\nc{\dB}{\ensuremath{\mathds{B}}\xspace}
\nc{\dC}{\ensuremath{\mathds{C}}\xspace}
\nc{\dD}{\ensuremath{\mathds{D}}\xspace}
\nc{\dE}{\ensuremath{\mathds{E}}\xspace}
\nc{\dF}{\ensuremath{\mathds{F}}\xspace}
\nc{\dG}{\ensuremath{\mathds{G}}\xspace}
\nc{\dH}{\ensuremath{\mathds{H}}\xspace}
\nc{\dI}{\ensuremath{\mathds{I}}\xspace}
\nc{\dJ}{\ensuremath{\mathds{J}}\xspace}
\nc{\dK}{\ensuremath{\mathds{K}}\xspace}
\nc{\dL}{\ensuremath{\mathds{L}}\xspace}
\nc{\dM}{\ensuremath{\mathds{M}}\xspace}
\nc{\dN}{\ensuremath{\mathds{N}}\xspace}
\nc{\dO}{\ensuremath{\mathds{O}}\xspace}
\nc{\dP}{\ensuremath{\mathds{P}}\xspace}
\nc{\dQ}{\ensuremath{\mathds{Q}}\xspace}
\nc{\dR}{\ensuremath{\mathds{R}}\xspace}
\nc{\dS}{\ensuremath{\mathds{S}}\xspace}
\nc{\dT}{\ensuremath{\mathds{T}}\xspace}
\nc{\dU}{\ensuremath{\mathds{U}}\xspace}
\nc{\dV}{\ensuremath{\mathds{V}}\xspace}
\nc{\dW}{\ensuremath{\mathds{W}}\xspace}
\nc{\dX}{\ensuremath{\mathds{X}}\xspace}
\nc{\dY}{\ensuremath{\mathds{Y}}\xspace}
\nc{\dZ}{\ensuremath{\mathds{Z}}\xspace}
\nc{\bbA}{\ensuremath{\mathbb{A}}\xspace}
\nc{\bbB}{\ensuremath{\mathbb{B}}\xspace}
\nc{\bbC}{\ensuremath{\mathbb{C}}\xspace}
\nc{\bbD}{\ensuremath{\mathbb{D}}\xspace}
\nc{\bbE}{\ensuremath{\mathbb{E}}\xspace}
\nc{\bbF}{\ensuremath{\mathbb{F}}\xspace}
\nc{\bbG}{\ensuremath{\mathbb{G}}\xspace}
\nc{\bbH}{\ensuremath{\mathbb{H}}\xspace}
\nc{\bbI}{\ensuremath{\mathbb{I}}\xspace}
\nc{\bbJ}{\ensuremath{\mathbb{J}}\xspace}
\nc{\bbK}{\ensuremath{\mathbb{K}}\xspace}
\nc{\bbL}{\ensuremath{\mathbb{L}}\xspace}
\nc{\bbM}{\ensuremath{\mathbb{M}}\xspace}
\nc{\bbN}{\ensuremath{\mathbb{N}}\xspace}
\nc{\bbO}{\ensuremath{\mathbb{O}}\xspace}
\nc{\bbP}{\ensuremath{\mathbb{P}}\xspace}
\nc{\bbQ}{\ensuremath{\mathbb{Q}}\xspace}
\nc{\bbR}{\ensuremath{\mathbb{R}}\xspace}
\nc{\bbS}{\ensuremath{\mathbb{S}}\xspace}
\nc{\bbT}{\ensuremath{\mathbb{T}}\xspace}
\nc{\bbU}{\ensuremath{\mathbb{U}}\xspace}
\nc{\bbV}{\ensuremath{\mathbb{V}}\xspace}
\nc{\bbW}{\ensuremath{\mathbb{W}}\xspace}
\nc{\bbX}{\ensuremath{\mathbb{X}}\xspace}
\nc{\bbY}{\ensuremath{\mathbb{Y}}\xspace}
\nc{\bbZ}{\ensuremath{\mathbb{Z}}\xspace}
\nc{\mrm}[1]{\ensuremath{\mathrm{#1}}\xspace}
\nc{\mit}[1]{\ensuremath{\mathit{#1}}\xspace}
\nc{\mbf}[1]{\ensuremath{\mathbf{#1}}\xspace}
\nc{\mcal}[1]{\ensuremath{\mathcal{#1}}\xspace}
\nc{\msc}[1]{\ensuremath{\mathscr{#1}}\xspace}
\nc{\mfr}[1]{\ensuremath{\mathfrak{#1}}\xspace}
\renc{\bar}[1]{\overline{#1}}
\DeclarePairedDelimiter\abs{\lvert}{\rvert}%
\nc{\sub}{\subseteq}
\nc{\too}{\longrightarrow}
\nc{\hook}{\hookrightarrow}
\nc{\hooklongrightarrow}{\lhook\joinrel\longrightarrow}
\nc{\hooklong}{\hooklongrightarrow}
\nc{\twoheadlongrightarrow}{\relbar\joinrel\twoheadrightarrow}
\nc{\shiso}{\approx}
\nc{\isoto}{\stackrel{\sim}{\smash{\longrightarrow}\rule{0pt}{0.4ex}}}
\nc{\isofrom}{\xleftarrow{\sim}}
\renc{\ge}{\geqslant}
\renc{\le}{\leqslant}
\nc{\id}{\mathrm{id}}
\DeclareMathOperator{\Ker}{\on{Ker}}
\DeclareMathOperator{\Coker}{\on{Coker}}
\DeclareMathOperator{\Hom}{\on{Hom}}
\nc{\uHom}{\underline{\smash{\Hom}}}
\DeclareMathOperator{\Maps}{\on{Maps}}
\DeclareMathOperator{\End}{\on{End}}
\DeclareMathOperator{\Sym}{\on{Sym}}
\nc{\uEnd}{\underline{\smash{\End}}}
\nc{\colim}{\varinjlim}
\renc{\lim}{\varprojlim}
\nc{\Cofib}{\on{Cofib}}
\nc{\Fib}{\on{Fib}}
\nc{\initial}{\varnothing}
\nc{\op}{\mathrm{op}}
\DeclareMathOperator*{\fibprod}{\times}
\DeclareMathOperator*{\fibcoprod}{\operatorname{\sqcup}}
\nc{\Spc}{\mrm{Spc}}
\nc{\Spt}{\mrm{Spt}}
\nc{\Spec}{\on{Spec}}
\nc{\Stk}{\mrm{Stk}}
\nc{\Sch}{\mrm{Sch}}
\nc{\aff}{\mrm{aff}}
\nc{\A}{\mbf{A}}
\renc{\P}{\mbf{P}}
\nc{\cl}{{\mrm{cl}}}
\nc{\bDelta}{\mathbf{\Delta}}
\nc{\Tor}{\on{Tor}}
\nc{\Cech}{\textnormal{\v{C}}}
\nc{\Mod}{\mrm{Mod}}
\nc{\LMod}{\mrm{LMod}}
\nc{\D}{\on{\mbf{D}}}
\nc{\Qcoh}{\D}
\nc{\Perf}{\on{Perf}}
\nc{\Coh}{\on{Coh}^{\mrm{b}}}
\nc{\free}{\mrm{free}}
\nc{\perf}{\mrm{perf}}
\nc{\aperf}{\mrm{aperf}}
\nc{\coh}{\mrm{coh}}
\nc{\Einfty}{{\sE_\infty}}
\nc{\modmod}{/\!\!/}
\nc{\heart}{\heartsuit}
\nc{\proj}{\mrm{proj}}
\nc{\K}{\on{K}}
\nc{\G}{\bG}
\nc{\GL}{\on{GL}}
\nc{\BGL}{\on{BGL}}
\nc{\M}{\on{M}}
\nc{\KH}{\on{KH}}
\nc{\Alg}{\on{Alg}}
\nc{\CAlg}{\on{CAlg}}
\nc{\cn}{\mrm{cn}}
\nc{\hw}{\mrm{Hw}}
\nc{\htt}{\mrm{Ht}}
\nc{\Fun}{\on{Fun}}
\nc{\Funadd}{\on{Fun}_{\mrm{add}}}
\nc{\Funex}{\on{Fun}_{\mrm{ex}}}
\nc{\Ind}{\on{Ind}}
\nc{\Pro}{\on{Pro}}
\nc{\Kar}{\on{Kar}}
\nc{\Obj}{\on{Obj}}
\nc{\Ex}{\mrm{Ex}}
\nc{\E}{\on{E}}
\nc{\pr}{\mrm{pr}}
\nc{\red}{\mrm{red}}
\nc{\tsX}{\widetilde{\sX}}
\nc{\Pres}{\mrm{Pres}}
\nc{\Presc}{\Pres_\mrm{c}}
\nc{\sMaps}{\mathscr{M}\mathit{aps}} 
\nc{\tr}{\mrm{tr}}
\nc{\Nis}{\mrm{Nis}}
\DeclareMathOperator{\bldim}{\on{bl}\on{dim}}
\DeclareMathOperator{\covdimfppf}{\on{cov}\on{dim}_{\mrm{fppf}}}
\DeclareMathOperator{\covdimsm}{\on{cov}\on{dim}_{\mrm{sm}}}
\nc{\form}{\widehat}
\nc{\Qcohform}{\form{\mbf{D}}}
\nc{\scr}{\term{derived commutative ring}}
\nc{\scrs}{\term{derived commutative rings}}
\nc{\evcoconn}{\term{bounded}}
\nc{\Einfring}{\term{$\Einfty$-ring}}
\nc{\Einfrings}{\term{$\Einfty$-rings}}
\nc{\Ering}{\term{$\sE_1$-ring}}
\nc{\Erings}{\term{$\sE_1$-rings}}
\nc{\inftyCat}{\term{$\infty$-category}}
\nc{\inftyCats}{\term{$\infty$-categories}}
\nc{\pstab}{\term{presentable stable $\infty$-category}}
\nc{\pstabs}{\term{presentable stable $\infty$-categories}}
\nc{\inftyGrpd}{\term{$\infty$-groupoid}}
\nc{\inftyGrpds}{\term{$\infty$-groupoids}}
\title{Categorical Milnor~squares and K-theory of algebraic~stacks}
\author[T. Bachmann]{Tom Bachmann}
\address{Department Mathematik, LMU München, Theresienstr. 39, 80333 München, Germany}
\email{\href{mailto:tom.bachmann@zoho.com}{tom.bachmann@zoho.com}}
\author[A.\,A. Khan]{Adeel A. Khan}
\address{Institute of Mathematics, Academia Sinica, No. 1, Sec. 4, Roosevelt Road, Taipei 106319, Taiwan}
\email{\href{mailto:adeelkhan@gate.sinica.edu.tw}{adeelkhan@gate.sinica.edu.tw}}
\author[C. Ravi]{Charanya Ravi}
\address{Max-Planck-Institut für Mathematik, Vivatsgasse 7, 53111 Bonn, Germany}
\email{\href{mailto:ravi@mpim-bonn.mpg.de}{ravi@mpim-bonn.mpg.de}}
\author[V. Sosnilo]{Vladimir Sosnilo}
\address{Laboratory of Modern Algebra and Applications, St. Petersburg State University, 14th line, 29B, 199178 Saint Petersburg, Russia}
\address{St. Petersburg Department of Steklov Mathematical Institute of Russian Academy of Sciences, Fontanka, 27, 191023 Saint Petersburg, Russia}
\email{\href{mailto:vsosnilo@gmail.com}{vsosnilo@gmail.com}}
\begin{document}

\begin{abstract}
  We introduce a notion of Milnor square of stable \inftyCats and prove a criterion under which algebraic K-theory sends such a square to a cartesian square of spectra.
  We apply this to prove Milnor excision and proper excision theorems in the K-theory of algebraic stacks with affine diagonal and nice stabilizers.
  This yields a generalization of Weibel's conjecture on the vanishing of negative K-groups for this class of stacks.
\end{abstract}

\maketitle

\setcounter{tocdepth}{1}
\parskip 0pt
\tableofcontents

\setlength{\parindent}{0em}
\parskip 0.6em


\section*{Introduction}
\label{sec:intro}

A \emph{Milnor square} of rings, following \cite[Sect.~2]{Milnor}, is a cartesian square
\begin{equation}\label{eq:intro/affine Milnor square}
  \begin{tikzcd}[matrix scale=0.7]
    A \ar[twoheadrightarrow]{r}\ar{d}
    & A/I \ar{d}
    \\
    B \ar[twoheadrightarrow]{r}
    & B/J
  \end{tikzcd}
\end{equation}
where $I \sub A$ and $J\sub B$ are two-sided ideals.
The starting point of this paper is the following result of Land and Tamme, building on work of Morrow, Geisser and Hesselholt, and Suslin (see Corollaries~2.10 and 2.33 in \cite{LandTamme}, as well as \cite{Tamme_2018}, \cite{Morrow}, \cite{GeisserHesselholt} and \cite{Suslin}):

\begin{thm}\label{thm:intro/affine Milnor}\leavevmode
  \begin{thmlist}
    \item
    If the square \eqref{eq:intro/affine Milnor square} is Tor-independent, i.e., the group $\Tor_i^A(A/I, B)$ vanishes for all $i>0$, then the square
    \[ \begin{tikzcd}[matrix scale=0.7]
      \K(A) \ar{r}\ar{d}
      & \K(A/I) \ar{d}
      \\
      \K(B) \ar{r}
      & \K(B/J)
    \end{tikzcd} \]
    is cartesian.

    \item\label{item:uoubu}
    If the pro-system $\{\Tor_i^A(A/I^n, B)\}_{n>0}$ vanishes for all $i>0$, then the square of pro-spectra
    \[ \begin{tikzcd}[matrix scale=0.7]
      \{\K(A)\} \ar{r}\ar{d}
      & \{\K(A/I^n)\}_{n>0} \ar{d}
      \\
      \{\K(B)\} \ar{r}
      & \{\K(B/J^n)\}_{n>0}
    \end{tikzcd} \]
    is cartesian.
  \end{thmlist}
\end{thm}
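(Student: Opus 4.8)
The plan is to reduce both statements to the theorem of Land and Tamme governing the behaviour of localizing invariants under pullbacks of \Erings, together with an identification of the resulting correction term with the naive quotient. Since algebraic K-theory is a localizing invariant, it suffices to argue for an arbitrary such invariant $E$ and to specialize to $E = \K$ at the end. The square \eqref{eq:intro/affine Milnor square} is in particular a pullback square of \Erings, so the Land--Tamme construction produces a canonical \Ering, which I denote $B \odot_A (A/I)$, fitting into a square
\[
\begin{tikzcd}[matrix scale=0.7]
  E(A) \ar{r}\ar{d}
  & E(A/I) \ar{d}
  \\
  E(B) \ar{r}
  & E(B \odot_A (A/I))
\end{tikzcd}
\]
that is cartesian for every localizing invariant $E$. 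This ring carries a canonical comparison map $B \odot_A (A/I) \to B/J$ which is an isomorphism on $\pi_0$, and the failure of its higher homotopy to vanish is measured by the relative groups $\Tor_i^A(A/I, B)$ for $i > 0$.

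For (i), the only remaining point is that the comparison map $B \odot_A (A/I) \to B/J$ is an equivalence under the Tor-independence hypothesis. Indeed, $\pi_0(B \odot_A (A/I)) \cong \pi_0(B \otimes_A (A/I)) = B/J$, where the identification $\pi_0(B \otimes_A (A/I)) = B/J$ uses that the square is Milnor, so that $J$ coincides with the ideal $IB$; and the vanishing of $\Tor_i^A(A/I, B)$ for $i>0$ forces $B \odot_A (A/I)$ to be discrete, whence the comparison map is an equivalence. Substituting this into the cartesian square above with $E = \K$ gives exactly the claimed square. This is the content of Corollary~2.10 of \cite{LandTamme}.

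For (ii), the plan is to run the same argument functorially in $n$ and then pass to pro-objects. Applying the Land--Tamme construction to the pullback squares $B \leftarrow A \to A/I^n$ yields, naturally in $n$, cartesian squares with lower-right corner $E(B \odot_A (A/I^n))$; this naturality is what assembles the levelwise squares into a square of pro-spectra. It then remains to replace the pro-system $\{B \odot_A (A/I^n)\}_{n>0}$ by $\{B/J^n\}_{n>0}$. The comparison maps $B \odot_A (A/I^n) \to B/J^n$ are isomorphisms on $\pi_0$, and their deviation from being equivalences is controlled, degree by degree, by the pro-systems $\{\Tor_i^A(A/I^n, B)\}_{n>0}$ for $i>0$. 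The hypothesis that these pro-systems vanish is exactly what yields a pro-equivalence $\{B \odot_A (A/I^n)\} \to \{B/J^n\}$ of \Erings.

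The main obstacle is transporting this last pro-equivalence through $E$: one must verify that $\{E(B \odot_A (A/I^n))\} \to \{E(B/J^n)\}$ is an equivalence of pro-spectra. This is genuinely more delicate than the absolute case, since $E$ need not commute with the inverse limits involved, so the argument cannot be carried out termwise but must take place in the pro-category throughout. The technical heart is therefore to show that the pro-vanishing of the higher Tor groups, propagated through the Land--Tamme construction, produces a pro-equivalence of correction rings that is preserved by every localizing invariant; specializing to $E = \K$ then gives the cartesian square of pro-spectra. This is supplied by Corollary~2.33 of \cite{LandTamme}.
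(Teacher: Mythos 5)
Your proposal is correct and takes essentially the same route as the paper: the paper gives no independent proof of this statement, but presents it as Land--Tamme's theorem with precisely the citations you invoke (Corollaries~2.10 and 2.33 of \cite{LandTamme}), and your outline---the ring-level $\odot$-construction, the identification of the underlying spectrum of $B \odot_A (A/I)$ with $B \otimes_A^{\mathbb{L}} A/I$, and the comparison with $B/J$ under (pro-)Tor-independence---is a faithful account of that source. (The paper's own machinery yields an alternative proof, namely applying \thmref{thm:intro/cat} to the square of derived categories $\D(A)$, $\D(A/I)$, $\D(B)$, $\D(B/J)$, where base change, resp.\ pro-base change, is equivalent to Tor-independence, resp.\ pro-Tor-independence.)

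One assertion in your part (ii) is wrong as stated and worth flagging: pro-vanishing of $\{\Tor_i^A(A/I^n,B)\}_n$ for $i>0$ does \emph{not} yield an isomorphism of pro-\Erings $\{B \odot_A (A/I^n)\}_n \to \{B/J^n\}_n$. It yields only a degree-wise statement (an isomorphism of pro-abelian groups on each homotopy group), and since the rings $B \odot_A (A/I^n)$ are not uniformly bounded above, one cannot in general upgrade such a weak pro-equivalence to an isomorphism in the pro-category; this is exactly the failure mode of weak versus genuine pro-equivalences. Note that if your claim were literally true, then part (ii) would follow for \emph{every} localizing invariant by pure functoriality of $\Pro(-)$---but that generalization is precisely what this paper advertises as a nontrivial new consequence of \thmref{thm:pro excision} (requiring projective generation and boundedness hypotheses, which happen to hold here because the rings $B/J^n$ are discrete), and what Land--Tamme obtain for $\K$ only by exploiting its connectivity properties (highly connective ring maps induce highly connective maps of K-theory spectra). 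Since you ultimately defer "the technical heart" to Corollary~2.33, your proposal is no less complete than the paper's treatment, but the mechanism you describe for the pro-statement is not the one that actually works.
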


\begin{rem}
  A sufficient condition for the pro-Tor-independence in \ref{item:uoubu} is pro-Tor-unitality of $I$ (see e.g. \cite[Lem.~2.14]{LandTamme}).
  As observed by Morrow, the latter condition is equivalent to the vanishing of $\{\Tor_i^A(A/I^n, A/I^n)\}_{n>0}$ for all $i>0$ (see \cite[Thm.~0.2]{Morrow}).
  Moreover, this is automatic in the case of noetherian commutative rings (see \cite[Thm.~0.3]{Morrow}).
\end{rem}

Our first goal in this paper is to prove a categorical version of \thmref{thm:intro/affine Milnor}.

\begin{defn}\leavevmode
  \begin{defnlist}
    \item
    Let $\Delta$ be a commutative square of \pstabs and colimit-preserving functors of the form
    \begin{equation}\label{eq:intro/cat Milnor}
      \begin{tikzcd}[matrix scale=0.7]
        \sA \ar{r}{f^*}\ar{d}{p^*}
        & \sB \ar{d}{q^*}
        \\
        \sA' \ar{r}{g^*}
        & \sB'.
      \end{tikzcd}
    \end{equation}
    \begin{defnlist}
      \item
      We say $\Delta$ is a \emph{precartesian} if the canonical functor $(p^*, f^*) : \sA \to \sA' \times_{\sB'} \sB$ is fully faithful.
      \item
      We say $\Delta$ is a \emph{Milnor square} if it is precartesian, and each of the functors $f^*$, $g^*$, $p^*$ and $q^*$ is compact and generates its codomain under colimits.
      \item
      We say $\Delta$ \emph{satisfies base change} if it is vertically right-adjointable; that is, the base change transformation
      $ f^*p_* \to q_*g^* $
      is invertible (where $p_*$ and $q_*$ are the right adjoints of $p^*$ and $q^*$, respectively).
    \end{defnlist}

    \item
    Let $\Delta$ be a commutative square of pro-systems in the \inftyCat of \pstabs and compact colimit-preserving functors.
    We say $\Delta$ is a \emph{pro-Milnor square}, resp. \emph{satisfies pro-base change}, if it can be represented by a cofiltered system $\{\Delta_n\}_n$ where each $\Delta_n$ is a Milnor square, resp. satisfies base change.
  \end{defnlist}
\end{defn}

Given a \pstab $\sA$, we write simply $\K(\sA)$ for the nonconnective algebraic K-theory spectrum of the full subcategory $\sA^\omega$ of compact objects.
Our first main result is as follows (see Theorems~\ref{thm:excision} and \ref{thm:pro excision}):

\begin{thmX}[Categorical Milnor excision]\label{thm:intro/cat}\leavevmode
  \begin{thmlist}
    \item
    Suppose $\Delta$ is a Milnor square of compactly generated stable \inftyCats.
    If $\Delta$ satisfies base change, then the induced square
    \[
      \begin{tikzcd}[matrix scale=0.7]
        \K(\sA) \ar{r}{f^*}\ar{d}{p^*}
        & \K(\sB) \ar{d}{q^*}
        \\
        \K(\sA') \ar{r}{g^*}
        & \K(\sB').
      \end{tikzcd}
    \]
    is cartesian.

    \item
    Suppose $\Delta$ is a pro-Milnor square satisfying the projective generation and boundedness hypotheses of \ref{thm:pro excision}.
    If $\Delta$ satisfies pro-base change, then the induced square of pro-spectra $\K(\Delta)$ is cartesian.
  \end{thmlist}
\end{thmX}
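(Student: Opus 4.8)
The plan is to deduce Theorem~A from the affine Milnor excision statement \thmref{thm:intro/affine Milnor} by a Morita-theoretic reduction, running parallel to the argument of Land and Tamme in the ring case. Throughout, each of the four functors is colimit-preserving and hence admits a right adjoint; the compactness hypothesis means these right adjoints preserve filtered colimits, and the generation hypothesis is what will let us transport a set of generators around the square.

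First I would fix a set $\{P_i\}$ of compact generators of $\sA$. Since $f^{*}$, $p^{*}$ and $g^{*}p^{*}\simeq q^{*}f^{*}$ preserve compact objects and generate their codomains, the images $\{f^{*}P_i\}$, $\{p^{*}P_i\}$ and $\{g^{*}p^{*}P_i\}$ are compact generators of $\sB$, $\sA'$ and $\sB'$ respectively. Passing to endomorphism spectra produces a commuting square of spectral categories (ringoids) $\sR\to\sS$, $\sR\to\sR'$, $\sR'\to\sS'$, $\sS\to\sS'$, and by Schwede--Shipley the functors of $\Delta$ are identified with the associated base-change functors on module categories. As $\K$ of a compactly generated category is $\K$ of its subcategory of compact objects, we have $\K(\sA)=\K(\sR)$ and likewise for the other three corners, so it suffices to prove that $\K$ sends the ringoid square to a cartesian square.

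Next I would translate the two standing hypotheses into ring-theoretic conditions on this square. Evaluating the fully faithful comparison $(p^{*},f^{*})\colon\sA\to\sA'\times_{\sB'}\sB$ on the generators $P_i,P_j$ identifies the mapping spectrum $\sR(i,j)$ with the pullback $\sR'(i,j)\times_{\sS'(i,j)}\sS(i,j)$; thus precartesianness says exactly that the ringoid square is a pullback. Evaluating the base-change equivalence $f^{*}p_{*}\simeq q_{*}g^{*}$ on the free $\sR'$-module $\sR'$ yields $\sS\otimes_{\sR}\sR'\simeq\sS'$, i.e. the $\Tor$-independence of the square. At this point the (many-object, $\sE_1$-ring-spectrum form of the) affine theorem \thmref{thm:intro/affine Milnor} applies to the ringoid square and gives that $\K(\Delta)$ is cartesian, proving part~(i).

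For the pro-statement (ii) I would carry out the same reduction on a representing cofiltered system $\{\Delta_n\}$ of Milnor squares: the projective-generation hypothesis of \ref{thm:pro excision} ensures the generators can be chosen so that the endomorphism ringoids are connective and the construction is functorial in $n$, while the boundedness hypothesis controls the resulting pro-systems levelwise, so that pro-base change becomes pro-$\Tor$-independence and one concludes from part~\ref{item:uoubu} of \thmref{thm:intro/affine Milnor}. I expect the main obstacle to be precisely the translation step of the previous paragraph: verifying that the categorical base-change condition corresponds on the nose to the $\Tor$-independence (resp. pro-$\Tor$-independence) demanded by the affine theorem, and checking that the latter applies to spectral categories rather than ordinary rings, so that one must either invoke a genuinely many-object generalization or first reduce to the case of a single compact generator. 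In the pro-setting the extra difficulty is making every one of these identifications hold uniformly in $n$, which is exactly what the boundedness hypothesis is there to guarantee.
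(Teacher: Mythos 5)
Your translations are correct as far as they go: pushing a set of compact generators around the square, identifying the four corners with module categories over spectral categories (ringoids) via Schwede--Shipley, checking that precartesianness is equivalent to the ringoid square being a pullback on mapping spectra, and checking (as in \examref{ex:rings}) that the base change condition is equivalent to the derived Tor-independence statement $\sS\otimes_{\sR}\sR'\isoto\sS'$. The gap is the final step, and it is not a technicality you have deferred but the entire content of the theorem. \thmref{thm:intro/affine Milnor} as stated applies to Milnor squares of \emph{ordinary rings} with two-sided ideals; the Land--Tamme result behind it (\cite[Thm.~A]{LandTamme}) applies to pullback squares of \emph{single} $\sE_1$-ring spectra. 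The ``many-object, $\sE_1$-ring-spectrum form'' you invoke does not exist in the literature and does not follow formally from the single-object case: a compactly generated stable \inftyCat need not admit a single compact generator, and an infinite direct sum of compact generators is no longer compact, so the standard matrix-ring trick only amalgamates \emph{finitely} many generators. (Nor can you pass to a filtered colimit over finite subsets of generators, since the precartesian and base-change hypotheses are not inherited by the subcategories they generate, and the theorem is supposed to hold for localizing invariants that do not commute with filtered colimits.) Proving the ringoid version means re-running Land--Tamme's $\odot$-construction in a many-object setting, which is exactly what the paper does, invariantly: it forms the lax pullback $\sC$ of $\sA'\to\sB'\gets\sB$, defines $\sA'\odot_{\sA}^{\sB'}\sB$ as the Verdier quotient of $\sC$ by $\sA$, shows that any localizing invariant sends the resulting square $\Delta_0$ to a cartesian square using the two semi-orthogonal decompositions of $\sC$ (\propref{prop:E(Delta_0)}), and shows that base change forces $b^*:\sA'\odot_{\sA}^{\sB'}\sB\to\sB'$ to be an equivalence (\thmref{thm:odot}). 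So your argument, as written, assumes the theorem it sets out to prove; to repair it you would have to supply the proof of the many-object excision statement, i.e.\ reproduce the paper's Section~3 in the language of spectral categories.

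For part (ii) the same circularity recurs, and in addition your account of the hypotheses misplaces where the real difficulty lies. Pro-base change is an isomorphism condition in $\Pro(\Presc)$, involving re-indexing and morphisms that are not levelwise, so it cannot simply be read off as levelwise (pro-)Tor-independence of the ringoid systems; the paper's work here is to show that a pro-Milnor square satisfying pro-base change is \emph{levelwise} isomorphic to the system of $\odot$-constructions (\lemref{lem:levelwise Milnor}), which rests on a criterion for a cofiltered system of functors to be a pro-equivalence (\corref{cor:proequiv criterion stable}). That criterion is where projective generation and boundedness genuinely enter --- they allow a reduction, via a colocalization argument handling infinitely many generators, to systems of connective bounded-above ring spectra where \cite[Lem.~2.28]{LandTamme} applies --- rather than merely ensuring ``connectivity and functoriality in $n$'' as your sketch suggests.
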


\begin{exam}
  Let $\D(A)$ denote the derived \inftyCat of left $A$-modules over a ring $A$.
  For any Milnor square as in \eqref{eq:intro/affine Milnor square}, the induced square
  \[ \begin{tikzcd}[matrix scale=0.7]
    \D(A) \ar{r}\ar{d}
    & \D(A/I) \ar{d}
    \\
    \D(B) \ar{r}
    & \D(B/J)
  \end{tikzcd} \]
  is a Milnor square of stable \inftyCats.
  It satisfies base change precisely when $\Tor^A_i(A/I, B) = 0$ for all $i>0$, and 
  the induced square of pro-\inftyCats (formed by taking the ideal $I^n$ for $n>0$) satisfies pro-base change precisely when $\{\Tor^A_i(A/I^n, B)\}_{n>0} = 0$ for all $i>0$.
  Thus \thmref{thm:intro/cat} may be regarded as a generalization of \thmref{thm:intro/affine Milnor}.
\end{exam}

\begin{rem}
  Land and Tamme prove \thmref{thm:intro/affine Milnor} as a consequence of a more general result \cite[Thm.~A]{LandTamme} which applies to cartesian squares of $\sE_1$-ring spectra.
  \thmref{thm:intro/cat} also recovers this result when applied to stable \inftyCats of module spectra.
\end{rem}

\begin{rem}
  In \thmref{thm:intro/cat}, nonconnective algebraic K-theory can be replaced by any localizing invariant of stable \inftyCats (which is not required to preserve filtered colimits).
\end{rem}

\thmref{thm:intro/cat} allows us to prove excision statements in the K-theory of algebraic stacks.
For a (quasi-compact quasi-separated) algebraic stack $\sX$, we let $\D(\sX)$ denote the derived \inftyCat of quasi-coherent sheaves on $\sX$, $\Perf(\sX) \sub \D(\sX)$ the full subcategory of perfect complexes, and $\K(\sX)$ the nonconnective algebraic K-theory spectrum of $\Perf(\sX)$.
If $\sZ$ is a closed substack, we regard the formal completion $\sX^\wedge_\sZ$ as an ind-algebraic stack so that there is naturally associated to it a ``continuous K-theory'' pro-spectrum $\form{\K}(\sX^\wedge_\sZ)$.
For example, for $I$ an ideal in a noetherian commutative ring $A$, the continuous K-theory of the formal completion of $\Spec(A)$ along the vanishing locus of $I$ is the pro-spectrum $\{\K(A/I^n)\}_n$.
We then have:

\begin{thmX}[Milnor excision]\label{thm:intro/Milnor stack}
  Let $\Delta$ be a commutative square
  \[ \begin{tikzcd}[matrix scale=0.7]
    \sZ' \ar{r}\ar{d}
    & \sX' \ar{d}{f}
    \\
    \sZ \ar{r}{i}
    & \sX
  \end{tikzcd} \]
  of noetherian \emph{ANS} stacks (algebraic stacks with affine diagonal and nice stabilizers).
  Assume that $\Delta$ is a \emph{Milnor square}: that is, it is cartesian and cocartesian, $f$ is an affine morphism, and $i$ is a closed immersion.
  \begin{thmlist}
    \item\label{item:intro/Milnor stack}
    If $\Delta$ is Tor-independent, then the square
    \[ \begin{tikzcd}[matrix scale=0.7]
      \K(\sX) \ar{r}\ar{d}
      & \K(\sZ) \ar{d}
      \\
      \K(\sX') \ar{r}
      & \K(\sZ')
    \end{tikzcd} \]
    is cartesian.

    \item\label{item:intro/Milnor stack pro}
    The induced square of pro-spectra
    \[ \begin{tikzcd}[matrix scale=0.7]
      \{\K(\sX)\} \ar{r}\ar{d}
      & \form{\K}(\sX^\wedge_\sZ) \ar{d}
      \\
      \{\K(\sX')\} \ar{r}
      & \form{\K}(\sX'^\wedge_{\sZ'})
    \end{tikzcd} \]
    is cartesian.
  \end{thmlist}
\end{thmX}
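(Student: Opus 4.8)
The plan is to deduce both parts from \thmref{thm:intro/cat} by applying the functor $\D(-)$ of quasi-coherent sheaves to $\Delta$ and passing to pullbacks. Since pullback along a morphism of stacks is colimit-preserving (being left adjoint to pushforward), reversing the arrows of $\Delta$ produces a commutative square of \pstabs
\[
  \begin{tikzcd}[matrix scale=0.7]
    \D(\sX) \ar{r}{i^*}\ar{d}{f^*}
    & \D(\sZ) \ar{d}{f'^*}
    \\
    \D(\sX') \ar{r}{g^*}
    & \D(\sZ'),
  \end{tikzcd}
\]
where $g\colon\sZ'\to\sX'$ and $f'\colon\sZ'\to\sZ$ are the base changes of $i$ and $f$. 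First I would check that this is a Milnor square. Each pullback is compact because it preserves perfect complexes, which on ANS stacks are exactly the compact objects of $\D(-)$; and each pullback generates its target under colimits because the corresponding pushforward is conservative — this holds for $f^*,f'^*$ since $f,f'$ are affine, and for $i^*,g^*$ since $i,g$ are closed immersions. For the precartesian condition I would use that $\Delta$ is \emph{cocartesian}: together with the fact that $i$ (hence $g$) is a closed immersion, this exhibits $\sO_\sX$ as the fiber product $f_*\sO_{\sX'}\times_{h_*\sO_{\sZ'}} i_*\sO_\sZ$ in $\D(\sX)$, where $h\colon\sZ'\to\sX$ is the diagonal composite. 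Tensoring an arbitrary $\sF\in\D(\sX)$ against this fiber sequence and applying the projection formula shows $\sF \simeq f_*f^*\sF \times_{h_*h^*\sF} i_*i^*\sF$, which by adjunction is precisely the full faithfulness of $(f^*,i^*)\colon\D(\sX)\to\D(\sX')\times_{\D(\sZ')}\D(\sZ)$. This step uses only the pushout structure, not Tor-independence.

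For part \ref{item:intro/Milnor stack} it then remains to verify base change. The base change transformation of the categorical square is the canonical map $i^*f_*\to f'_*g^*$ of functors $\D(\sX')\to\D(\sZ)$, and this is an equivalence exactly when $\Delta$ is Tor-independent — the standard base change isomorphism for quasi-coherent sheaves along a Tor-independent cartesian square. With the Milnor square and base change hypotheses in hand, \thmref{thm:intro/cat} applies. Since the compact objects of $\D(\sX)$ are the perfect complexes, $\K(\D(\sX)) = \K(\Perf(\sX)) = \K(\sX)$, so the resulting cartesian square is the asserted one.

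For part \ref{item:intro/Milnor stack pro} I would model the formal completion $\sX^\wedge_\sZ$ by the cofiltered system of infinitesimal thickenings $\{\sZ_n\}_n$ of $\sZ$ in $\sX$, and set $\sZ'_n := \sZ_n\times_\sX\sX'$, so that by definition of continuous K-theory $\form{\K}(\sX^\wedge_\sZ) = \{\K(\sZ_n)\}_n$ and $\form{\K}(\sX'^\wedge_{\sZ'}) = \{\K(\sZ'_n)\}_n$. Each thickened square is again cartesian and cocartesian, with affine right vertical $f$ and closed-immersion bottom $\sZ_n\to\sX$, so the analysis of the first paragraph makes $\{\D(\Delta_n)\}_n$ a pro-Milnor square; the projective generation and boundedness hypotheses of \thmref{thm:pro excision} hold because perfect complexes compactly generate $\D(-)$ on ANS stacks and the thickenings are uniformly bounded, which I would check directly. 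The remaining point is pro-base change: one must show the pro-system of base change maps $\{i_n^*f_*\to f'_{n,*}g_n^*\}_n$ is a pro-equivalence. Its fiber is controlled by the higher Tor-sheaves of $\sO_{\sZ_n}$ relative to $f$, and these pro-vanish automatically in the noetherian setting by the ANS-stack analogue of Morrow's pro-Tor-unitality theorem; this is exactly why part \ref{item:intro/Milnor stack pro} needs no Tor-independence hypothesis. Granting it, \thmref{thm:intro/cat} yields the desired cartesian square of pro-spectra.

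The main obstacle is geometric rather than formal, and lies in transporting the clean affine mechanism to ANS stacks. For part \ref{item:intro/Milnor stack} the delicate input is the gluing (precartesian) step: the fiber sequence $\sO_\sX \to f_*\sO_{\sX'}\oplus i_*\sO_\sZ \to h_*\sO_{\sZ'}$ and the conclusion drawn from it rely on the good behaviour of $\D(-)$ on ANS stacks — compact generation by perfect complexes, the projection formula, and conservativity of affine and closed pushforwards — so much of the work is establishing these sheaf-theoretic inputs in the required generality. For part \ref{item:intro/Milnor stack pro} the crux is the automatic pro-Tor-independence: carrying Morrow's noetherian vanishing over to the completion of an ANS stack along a closed substack, together with the verification of the boundedness hypothesis of \thmref{thm:pro excision}, is where the noetherian and ANS assumptions are genuinely used.
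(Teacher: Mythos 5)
Your treatment of part \ref{item:intro/Milnor stack} is essentially correct and coincides with the paper's route: the square of derived categories is a categorical Milnor square by the Ferrand-pushout mechanism (\thmref{thm:Qcoh Ferrand}; the paper proves precartesianness by fpqc reduction to the affine case and \cite[Thm.~16.2.0.2]{SAG-20180204}, and your projection-formula argument is an acceptable variant of this), base change is equivalent to homotopy cartesianness, i.e.\ to Tor-independence (\examref{exam:base change}), and \thmref{thm:excision} then applies because ANS stacks are perfect (\thmref{thm:ANS perfect}).

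Part \ref{item:intro/Milnor stack pro}, however, has a genuine gap in the verification of the hypotheses of \thmref{thm:pro excision}. You claim the projective generation and boundedness hypotheses ``hold because perfect complexes compactly generate $\D(-)$ on ANS stacks''; this conflates compact generation with \emph{projective} generation. The hypothesis of \thmref{thm:pro excision} demands compact projective generators compatible with a t-structure in the sense of \defref{defn:weighted pstab}, and perfect complexes are compact but essentially never projective; for a general ANS stack, $\Qcoh(\sX)$ is not known to be projectively generated at all. This is exactly why the paper's proof of \thmref{thm:K formal excision} first reduces, via \'etale excision (\corref{cor:K Nis}) together with the local structure theorem for ANS stacks (\thmref{thm:ANS local structure}, \propref{prop:afsoub01}), to quotient stacks $\sX = [X/G]$ with $G$ nice and embeddable over an affine base and $X$ affine, where \propref{prop:X/G perf} supplies the projective generators $p^*(\sE)$ and linear reductivity yields the uniform truncation of mapping pro-spaces. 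This Nisnevich-local reduction is entirely absent from your proposal, and without it \thmref{thm:pro excision} cannot be invoked.

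The second gap is pro-base change. You appeal to ``the ANS-stack analogue of Morrow's pro-Tor-unitality theorem'', which is neither in the paper nor proved by you; moreover, pro-vanishing of the fibres of the exchange maps is not literally the paper's definition of pro-base change (existence of a representation satisfying base change levelwise), so even granting such a theorem you would need a further argument to match \defref{defn:pro Milnor}. The paper sidesteps Tor computations altogether: since formal completion depends only on the underlying closed subset, the formally completed square is automatically homotopy cartesian (\remref{rem:formal base change}) and hence admits a representation by levelwise \emph{derived}-pullback thickenings, which satisfy base change levelwise by \examref{exam:base change}; meanwhile, cocartesianness of the formal square (\lemref{lem:Milnor gives formal Milnor square} --- which, incidentally, is precisely the assertion you state without proof, that the thickened squares remain cocartesian) gives a second, levelwise cocartesian representation that is levelwise precartesian by \thmref{thm:Qcoh Ferrand}. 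Because both systems represent the same pro-object, the square is simultaneously pro-Milnor and satisfies pro-base change (\thmref{thm:Qcohform Ferrand}, \corref{cor:Qcohform Milnor}), with no stacky Morrow theorem needed. Your Tor-theoretic route could plausibly be completed in the noetherian setting, but as written it replaces the paper's short formal argument with an unproved global statement.
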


\begin{thmX}[Proper excision]\label{thm:intro/proper cdh}
  Consider a cartesian square of ANS noetherian algebraic stacks
  \[ \begin{tikzcd}[matrix scale=0.7]
    \sZ' \ar{r}\ar{d}
    & \sX' \ar{d}{f}
    \\
    \sZ \ar{r}{i}
    & \sX
  \end{tikzcd} \]
  where $i$ is a closed immersion and $f$ is a proper representable morphism which is an isomorphism away from $\sZ$.
  Then the induced square of pro-spectra
  \[ \begin{tikzcd}[matrix scale=0.7]
    \{K(\sX)\}\ar{r}\ar{d}
      & \form{K}(\sX^\wedge_{\sZ})\ar{d}
    \\
    \{K(\sX')\}\ar{r}
      & \form{K}(\sX'^\wedge_{\sZ'})
  \end{tikzcd} \]
  is cartesian.
\end{thmX}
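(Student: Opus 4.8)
The plan is to deduce the statement from the pro-categorical Milnor excision theorem \thmref{thm:pro excision}. Let $j\colon \sU \hook \sX$ be the open complement of $\sZ$; since $f$ is an isomorphism away from $\sZ$, it restricts to an isomorphism $\sX'\setminus\sZ' \isoto \sU$. Write $i_n\colon \sZ_n \hook \sX$ and $i'_n\colon \sZ'_n\hook\sX'$ for the $n$-th infinitesimal thickenings. As $\sZ' = \sZ\times_\sX\sX'$ and $f$ is representable, one has $\sZ'_n = \sZ_n\times_\sX\sX'$, so the levelwise squares
\[
  \begin{tikzcd}[matrix scale=0.7]
    \sZ'_n \ar{r}{i'_n}\ar{d}{g_n} & \sX' \ar{d}{f} \\
    \sZ_n \ar{r}{i_n} & \sX
  \end{tikzcd}
\]
are cartesian with $g_n$ proper and representable. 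I would then study the square of pro-objects in \pstabs
\[
  \begin{tikzcd}[matrix scale=0.7]
    \{\D(\sX)\} \ar{r}{i_n^*}\ar{d}{f^*} & \{\D(\sZ_n)\} \ar{d}{g_n^*} \\
    \{\D(\sX')\} \ar{r}{(i'_n)^*} & \{\D(\sZ'_n)\},
  \end{tikzcd}
\]
with constant left column, horizontal completion functors, and vertical pullbacks. Applying nonconnective K-theory recovers the square in the statement by the definition of continuous K-theory, so it suffices to check that this square is a pro-Milnor square satisfying pro-base change together with the projective-generation and boundedness hypotheses of \thmref{thm:pro excision}.

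For the generation hypotheses I would appeal to the structure theory of noetherian ANS stacks: each of $\D(\sX)$, $\D(\sX')$, $\D(\sZ_n)$, $\D(\sZ'_n)$ is compactly generated with compact objects the perfect complexes, and the four pullback functors are compact (they preserve perfect complexes) and generate their targets under colimits. The projective-generation and boundedness hypotheses are then verified on the hearts of the standard $t$-structures, where they follow from coherence and from the existence of a bounded generating set of vector bundles on a noetherian ANS stack.

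The two substantive inputs are pro-base change and the precartesian condition, and both reduce to the \emph{theorem on formal functions} for proper representable morphisms of ANS stacks. For pro-base change I must show that the transformation $\{i_n^*f_*\}\to\{g_{n*}(i'_n)^*\}$ of pro-functors $\D(\sX')\to\{\D(\sZ_n)\}$ is invertible; using representability of $f$, flat base change along a smooth atlas $u\colon W\to\sX$ (whose pullback $u^*$ is conservative) reduces the claim to proper morphisms of algebraic spaces, where the pro-comparison $(f_*\sF)^\wedge_\sZ \simeq \{g_{n*}(i'_n)^*\sF\}$ is Grothendieck's theorem on formal functions. For the precartesian condition I would show the comparison $\D(\sX)\to\{\D(\sX')\}\times_{\{\D(\sZ'_n)\}}\{\D(\sZ_n)\}$ is fully faithful on pro-objects; by adjunction this amounts, for each $\sG\in\D(\sX)$, to the pro-equivalence
\[
  \sG \isoto f_*f^*\sG \times_{i_{n*}g_{n*}g_n^*i_n^*\sG} i_{n*}i_n^*\sG .
\]
Applying $j^*$ annihilates the completion terms and, via $\sX'\setminus\sZ'\isoto\sU$, gives an equivalence over $\sU$; the remaining content is supported on $\sZ$ and is again supplied by formal functions, which identifies the pro-system of completions of $f_*f^*\sG$ with that of $\sG$.

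The hard part will be the pro-categorical bookkeeping. The levelwise squares are \emph{not} precartesian and satisfy base change only in the limit, so the formal-functions comparisons must be organized directly at the level of pro-objects, and it is precisely the boundedness hypotheses of \thmref{thm:pro excision} that license passing between the levelwise and pro-equivalent descriptions. Thus the technical heart of the argument is to establish the theorem on formal functions, together with the resulting formal-gluing equivalence, for proper representable morphisms of noetherian ANS stacks rather than merely for schemes; granting this, \thmref{thm:pro excision} applies and yields the asserted cartesian square of pro-spectra.
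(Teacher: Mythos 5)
There is a fatal gap at the very first step of your plan: for a general proper representable $f$ the square you write down is \emph{not} a pro-Milnor square, and no verification of precartesianness, base change, or boundedness can repair this. The definition of a (pro-)Milnor square (\defref{defn:Milnor square}, \defref{defn:pro Milnor}) requires that \emph{each of the four functors} in some levelwise representation be compact \emph{and generate its codomain under colimits}, i.e.\ have conservative right adjoint (\lemref{lem:generate}). Your vertical functors $f^* : \D(\sX) \to \D(\sX')$ and $g_n^* : \D(\sZ_n) \to \D(\sZ'_n)$ fail this whenever $f$ is not finite: for the blow-up $f$ of a point on a smooth surface with exceptional divisor $E \simeq \bP^1$, one has $f_*(\sO_E(-1)) \simeq 0$, so $f_*$ is not conservative and $f^*$ does not generate. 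Your closing caveat ("the levelwise squares are not precartesian and satisfy base change only in the limit") addresses only the other two conditions; the generation condition is a levelwise requirement on every representation, and you propose no representation for which it could hold. This is not a bookkeeping issue: K-theory genuinely does not satisfy Milnor-type excision for blow-up squares, and the blow-up case of the theorem carries geometric content (the semiorthogonal decomposition behind \thmref{thm:derived blow-up}, i.e.\ \cite[Thm.~A]{khan2018algebraic}) that cannot be recovered from the theorem on formal functions plus the $\odot$-construction machinery. Your proposal, if it worked, would derive Thomason-type blow-up excision from formal functions alone, which is too strong.

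This is exactly why the paper's proof is structured the way it is: the categorical pro-Milnor theorem \ref{thm:pro excision} is applied \emph{only} when $f$ is finite --- there $f$ is affine, so all four functors generate, and \thmref{thm:Qcohform Ferrand}, \corref{cor:Qcohform finite cdh} and \corref{cor:K formal finite excision} go through (this part of the paper is close in spirit to what you propose, with \lemref{lem:finite cdh gives formal Milnor square} playing the role of your Artin--Rees/formal-functions input). For general proper $f$, the paper instead proves a formally completed version of derived blow-up excision (\propref{prop:formal derived blow-up excision}), obtained by combining \thmref{thm:derived blow-up} with finite excision via the bi-indexed systems of \constrref{constr:biindex}, and then reduces an arbitrary proper cdh square to (admissible) blow-ups using Rydh's stacky flatification, following Kerz--Strunk--Tamme (\thmref{thm:K formal proper excision}). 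A secondary but real error in your write-up: projective generation does \emph{not} hold for the derived category of a general noetherian ANS stack (e.g.\ $\Qcoh(\bP^1)$ is compactly but not projectively generated, and general ANS stacks need not even have the resolution property), so it cannot be "verified on the hearts" directly; the paper must first reduce, via étale excision (\corref{cor:K Nis}) and the local structure theorem (\thmref{thm:ANS local structure}, \propref{prop:afsoub01}), to quotient stacks $[X/G]$ with $X$ affine and $G$ nice, where \propref{prop:X/G perf} supplies the projective generators.
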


\begin{rem}
  \thmref{thm:intro/proper cdh} can be viewed as a generalization of \cite[Thm.~A]{Kerz_2017}, with the minor caveat that a scheme is ANS if and only if it has affine diagonal.
  However, the statement for arbitrary noetherian schemes can be deduced from the case of schemes with affine diagonal using Zariski descent.
\end{rem}

\begin{rem}
  Theorems~\ref{thm:intro/Milnor stack} and \ref{thm:intro/proper cdh} also hold for arbitrary localizing invariants of stable \inftyCats.
  This is an improvement on documented results even in the case of schemes.
  In particular, we find that \thmref{thm:intro/affine Milnor}\ref{item:uoubu} generalizes to arbitrary localizing invariants.
\end{rem}

\begin{rem}
  The condition on nice stabilizers in Theorem~\ref{thm:intro/Milnor stack}, and in the finite case of \thmref{thm:intro/proper cdh}, can be relaxed.
  In fact, \thmref{thm:intro/Milnor stack}\ref{item:intro/Milnor stack} holds for all perfect stacks in the sense of \defref{defn:perfect stack}.
  In \thmref{thm:intro/Milnor stack}\ref{item:intro/Milnor stack pro} and the finite case of \thmref{thm:intro/proper cdh}, it suffices to assume that $\sX$ admits a scallop decomposition $(\sU_i, \sV_i, u_i)_i$ as in \cite[Def.~2.7]{sixstack} where $\sV_i$ are quotients of affines by actions of embeddable group schemes that are \emph{linearly reductive} (but not necessarily nice; compare \thmref{thm:ANS local structure}, \propref{prop:afsoub01}).
  The stronger condition of niceness of all stabilizers is important in our proof of the general case of \thmref{thm:intro/proper cdh} because it ensures that linear reductivity of the stabilizers is preserved under blow-ups.
\end{rem}

Finally, we apply \thmref{thm:intro/proper cdh} to generalize to stacks the proof in \cite{Kerz_2017} of Weibel's conjecture on negative K-theory (see \cite[2.9]{WeibelAnalytic}).

\begin{thmX}[Weibel's conjecture]\label{thm:intro/Weibel}
  Let $\sX$ be a noetherian ANS stack of covering dimension $d$.
  \begin{thmlist}
    \item
    The negative K-groups $\K_{-n}(\sX)$ vanish for all $n>d$.

    \item
    For every vector bundle $\pi : \sE \to \sX$, the map
    $$ \pi^*: \K_{-n}(\sX) \to \K_{-n}(\sE)$$
    is bijective for all $n \ge d$.
  \end{thmlist}
\end{thmX}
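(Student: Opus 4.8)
The plan is to adapt the proof of Kerz--Strunk--Tamme \cite{Kerz_2017} to ANS stacks, with \thmref{thm:intro/proper cdh} playing the role of their pro-cdh descent theorem. Write $\KH$ for homotopy K-theory and set $F = \Fib(\K \to \KH)$. I would reduce the theorem to two statements, valid for every noetherian ANS stack $\sX$ of covering dimension $d$: (a) the vanishing $\KH_{-n}(\sX) = 0$ for $n > d$, which I would deduce from cdh descent for $\KH$ of ANS stacks together with the cohomological-dimension bound encoded in the covering dimension; and (b) the comparison $F_{-n}(\sE) = 0$ for every vector bundle $\sE \to \sX$ and all $n \ge d$ (in particular, taking $\sE = \sX$, that $F_{-n}(\sX) = 0$ for $n \ge d$). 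Granting these, the long exact sequence of $F \to \K \to \KH$ gives $\K_{-n}(\sX) \cong \KH_{-n}(\sX)$ for $n \ge d$, so (i) follows from (a); and (ii) follows since, for $n \ge d$, one has $\K_{-n}(\sX) \cong \KH_{-n}(\sX) \cong \KH_{-n}(\sE) \cong \K_{-n}(\sE)$, the middle isomorphism being the vector-bundle-invariance of $\KH$ and the outer ones instances of (b). Crucially, negative K-theory is \emph{not} nilpotent-invariant, but this is absorbed by the nil-invariance of $\KH$ together with the pro-spectrum formulation of \thmref{thm:intro/proper cdh} --- which is exactly why the excision theorem must be stated for pro-objects.

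I would prove (b) by induction on $d$. Using Nisnevich descent for $\K$ (hence for $F$) and the local structure of ANS stacks (\thmref{thm:ANS local structure} and the scallop decompositions of \cite{sixstack}), I first reduce to a quotient stack $\sX = [\Spec A / G]$ with $G$ linearly reductive. The base case $d = 0$ asserts $F_{-n}(\sE) = 0$ for $n \ge 0$ on covering-dimension-zero stacks; since bundles over an Artinian local base are free, this is the equivariant form of the classical computation of the negative K-theory of polynomial rings over Artinian rings, and requires no homotopy invariance.

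For the inductive step I choose a nowhere-dense closed substack $\sZ \subset \sX$ containing the locus where $\sX$ fails to be regular, so that $\sZ$ has covering dimension $< d$ and $\sU := \sX \setminus \sZ$ is regular, where $F$ vanishes. Let $f : \sX' \to \sX$ be the blow-up along $\sZ$, with exceptional divisor $\sE := \sX' \times_\sX \sZ$, a Cartier divisor of covering dimension $< d$; then $f$ is proper, representable, and an isomorphism over $\sU$. Combining \thmref{thm:intro/proper cdh} for $\K$ with cdh descent for $\KH$ shows that the square of $F$'s attached to this blow-up is pro-cartesian, and in the associated Mayer--Vietoris sequence of pro-spectra the terms $\form{\K}(\sX^\wedge_\sZ)$ and $\form{\K}(\sX'^\wedge_\sE)$, hence those of $F$, have vanishing homotopy in degrees $\le -d+1$ by the inductive hypothesis applied to all infinitesimal thickenings of $\sZ$ and $\sE$ (which have covering dimension $< d$). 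The sequence then collapses to isomorphisms $F_{-n}(\sX) \xrightarrow{\sim} F_{-n}(\sX')$ for every $n \ge d$; running the same argument with an arbitrary vector bundle pulled back along $f$ propagates this to all bundles over $\sX$. Thus the vanishing of $F$ on $\sX$ is reduced to its vanishing on the blow-up $\sX'$.

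The main obstacle is closing this induction, since $\sX'$ need not be regular and resolution of singularities is unavailable. Following \cite{Kerz_2017} I would pass to the cofiltered limit over all blow-ups of $\sX$ with nowhere-dense center: as $\K$ and $\KH$ commute with the relevant filtered colimits, $F_{-n}(\sX)$ is computed by this limit, which is pro-represented by stacks whose local rings are valuation rings, and $F$ vanishes there because valuation rings are $\K$-regular and satisfy $\K = \KH$. Two points demand genuine care in the stacky generalization and are, I expect, the crux. First, as the remark following \thmref{thm:intro/proper cdh} emphasizes, every blow-up occurring above must remain an ANS stack, i.e. linear reductivity of the stabilizers must be preserved under blow-up along the chosen centers; this is exactly what the niceness hypothesis secures. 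Second, both the $\KH$-input (a) and the valuation-theoretic endpoint must be established equivariantly, presumably by descent along the local quotient presentations or to coarse moduli spaces; this equivariant bookkeeping, rather than any single new idea, is where the real work of the generalization lies.
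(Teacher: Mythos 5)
Your skeleton --- kill a negative class by blow-ups, feed the resulting abstract blow-up square into pro-excision, and dispose of the infinitesimal thickenings by induction on dimension --- is indeed the skeleton of the paper's proof, and you correctly identify why niceness (not mere linear reductivity) of stabilizers matters. But three of your steps have genuine gaps. First, you choose the blow-up centre to contain the non-regular locus, so that the complement is regular and $F$ vanishes there. This presupposes that the regular locus of a reduced noetherian ANS stack is open and dense, which requires excellence-type hypotheses assumed nowhere in the theorem; avoiding any appeal to the regular locus is precisely the point of the argument of \cite{Kerz_2017} that the paper follows. There the centres are never chosen by regularity considerations: the killing lemma (\propref{prop:fklkdajjk}, proved with supports via the Bass fundamental theorem and Rydh's flatification) hands you, for a given class $\gamma\in\K_{-i}(\sX~\mrm{on}~\sY)$, a sequence of blow-ups with nowhere dense centres along which $\gamma$ dies; then \thmref{thm:K formal proper excision} plus the inductive hypothesis applied to the thickenings $\sZ(n)$, $\sZ'(n)$ (which have smaller covering dimension) shows that $f^*$ is injective, hence $\gamma=0$. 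Second, your device for closing the induction --- the cofiltered limit over all blow-ups, whose ``local rings are valuation rings'' --- is not in \cite{Kerz_2017} at all (their induction closes by the injectivity argument just described; no limit is ever taken), and it is unsound as stated: the transition maps are proper rather than affine, so $\K$ does not commute with the limit, the limit is not an algebraic stack, and K-regularity of valuation rings is a far deeper and later input (Kelly--Morrow) with no equivariant counterpart available.

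Third, nil-invariance is not ``absorbed by the nil-invariance of $\KH$'': your fibre $F=\Fib(\K\to\KH)$ is nil-invariant in a range of degrees exactly when $\K$ is, and for stacks this is a genuine theorem, not a formal consequence. You need it to pass to the reduced stack before invoking flatification (the killing lemma requires a reduced base) and in your base case $d=0$, where $\K_{-n}([\Spec(A)/G])$ is not the negative K-theory of any ring, so ``the classical computation over Artinian rings'' does not apply (equivariant bundles on $[\Spec(A)/G]$ are not free, and the paper's $d=0$ case instead goes through regularity of reduced zero-dimensional stacks and homotopy invariance of G-theory). This is exactly the paper's new ingredient: \propref{nilinvariance} proves nil-invariance of $\K_{-n}$ for all $n\ge0$ on quotient stacks $[\Spec(A)/G]$ with $G$ embeddable linearly reductive, using the weight structure on equivariant modules and the weight-structure theorem of the heart, and \corref{nilinvariance_general} globalizes it through the Nisnevich descent spectral sequence at the cost of the cohomological dimension. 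Note finally that the paper never introduces $\KH$: both halves of the theorem are proved directly for $\K$, part (ii) by running the same induction on the cofibre of $\K(-)\to\K(-\fibprod_\sX\sE)$ with the killing lemma applied with supports along the smooth maps of a scallop decomposition. Your route instead outsources cdh descent and vector-bundle invariance of $\KH$ for ANS stacks to Hoyois--Krishna; the latter invariance is itself delicate, since vector bundles on quotient stacks are not locally trivial in the stacky Nisnevich topology.
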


\begin{rem}
  For Deligne--Mumford stacks, the covering dimension coincides with the Krull dimension and with the usual dimension as in \cite[Tag~0AFL]{StacksProject}.
  In general, the latter can be negative and so is not suitable for the purposes of \thmref{thm:intro/Weibel}.
  See \ssecref{ssec:dim} for the definitions.
\end{rem}

\begin{exam}[Equivariant K-theory]
  Let $k$ be a field and $G$ a group scheme over $k$ acting on a noetherian finite-dimensional $k$-scheme $X$ with affine diagonal.
  If $G$ is finite of order prime to the characteristic of $k$, or if $G$ is a torus, then the quotient stack $\sX = [X/G]$ is noetherian and ANS (see \ssecref{ssec:ANS} for more examples) and $\K(\sX)$ is the algebraic K-theory $\K^G(X)$ of $G$-equivariant perfect complexes on $X$ as in \cite{Krishna_2018} (see also \cite{ThomasonGroup}).
  Thus \thmref{thm:intro/Weibel} implies that $\K^G_{-n}(X)$ vanishes for all $n > \dim(X)$.
\end{exam}

The following example was pointed out to us by B.~Antieau.

\begin{exam}[Twisted K-theory]
 Let $\sX$ be a noetherian ANS stack of covering dimension $d$ and let $\sY \to \sX$ be a $\G_m$-gerbe over $\sX$.
 Then $\sY$ is also of covering dimension $d$, since $\sY \to \sX$ is smooth and $\sY$ is étale-locally on $\sX$ a trivial $\G_m$-gerbe.
 Therefore by \thmref{thm:intro/Weibel}, $\K_{-n}(\sY)$ vanishes for all $n > d$.
 For any Azumaya algebra $\sA$ over a $d$-dimensional noetherian scheme $X$, we may apply this to the $\G_m$-gerbe $\sY$ over $X$ associated to the Brauer class representing the Azumaya algebra $\sA$.
 Then $\Perf(\sY)$ is equivalent to the stable \inftyCat of $\sA$-twisted perfect complexes on $X$, and in particular we find that the $\sA$-twisted K-groups $\K^{\sA}_{-n}(X)$ of $X$ vanish for all $n > d$.
 This recovers the main result of \cite{stapleton} for Azumaya algebras.
\end{exam}

\begin{rem}
  To our knowledge, \thmref{thm:intro/Weibel} is the first result in the literature about negative K-groups of singular stacks, or even negative equivariant K-groups of singular schemes with group action.
  Recall that in the nonsingular case, all the groups $\K_{-n}(\sX)$ vanish for $n>0$.
  See \cite{KerzICM} for some discussion about the relationship between negative K-groups and singularities in the setting of schemes.
\end{rem}

\ssec*{Outline}

We define categorical Milnor squares, and their pro-versions, in \secref{sec:milnor}.

In \secref{sec:qcoh} we give many examples of categorical Milnor and pro-Milnor squares of derived \inftyCats of (derived) algebraic stacks.
The appearance of pro-\inftyCats comes from the key observation that failure of the base change property can often be rectified by passing to formal completions.
Let $\Qcohform(\sX)$ denote the canonical pro-$\infty$-categorical refinement of the derived \inftyCat $\Qcoh(\sX)$ of a formal stack (or ind-algebraic stack) $\sX$.
Then we prove, for instance, that if
\[ \begin{tikzcd}[matrix scale=0.7]
  \sZ' \ar{r}\ar{d}
  & \sX' \ar{d}
  \\
  \sZ \ar{r}
  & \sX
\end{tikzcd} \]
is either a Milnor square or finite cdh square of noetherian algebraic stacks, then the induced square of pro-\inftyCats
\[ \begin{tikzcd}
  \{\Qcoh(\sX)\} \ar{r}\ar{d}
    & \Qcohform(\sX^\wedge_\sZ) \ar{d}
  \\
  \{\Qcoh(\sX')\} \ar{r}
    & \Qcohform(\sX'^\wedge_{\sZ'})
\end{tikzcd} \]
is not only a pro-Milnor square but also satisfies pro-base change (see Corollaries~\ref{cor:Qcohform Milnor} and \ref{cor:Qcohform finite cdh}).
These results can be viewed as pro-refinements of some results of Halpern-Leistner and Preygel \cite{HLP-h}.

\secref{sec:localizing_Milnor} deals with the proof of \thmref{thm:intro/cat}.
Our main tool is a categorical version of the \emph{$\odot$-construction} introduced in \cite{LandTamme}.
For every categorical Milnor square $\Delta$ of the form \eqref{eq:intro/cat Milnor}, we construct a new square $\Delta_0$
\[ \begin{tikzcd}[matrix scale=0.7]
  \sA \ar{r}{f^*}\ar{d}{p^*}
  & \sB \ar{d}{q_0^*}
  \\
  \sA' \ar{r}{g_0^*}
  & \sA' \odot_{\sA}^{\sB'} \sB,
\end{tikzcd} \]
which is isomorphic to $\Delta$ precisely when the latter satisfies base change (\thmref{thm:odot}).
This is the generalization of \cite[Main~Theorem]{LandTamme} in this setting.
The proof of the pro-variant is somewhat involved, as it requires a user-friendly criterion for a cofiltered system of compact colimit-preserving functors of \pstabs to induce a pro-equivalence (see \corref{cor:proequiv criterion stable}).
This question is surprisingly subtle and forces us to impose strong projective generation hypotheses on our categories, which roughly puts us in the situation of ``weighted'' \inftyCats (see \remref{rem:iohnpa}).
The reader willing to accept \thmref{thm:intro/cat} as a black box can safely skip this section.

Theorems~\ref{thm:intro/Milnor stack} and \ref{thm:intro/proper cdh} are proven in \secref{sec:stacky_applications}.
This involves three main steps:
\begin{itemize}
  \item
  Applying \thmref{thm:intro/cat} to the categorical pro-Milnor squares constructed in Corollaries~\ref{cor:Qcohform Milnor} and \ref{cor:Qcohform finite cdh}, we get \thmref{thm:intro/Milnor stack} as well as \thmref{thm:intro/proper cdh} in the case of finite morphisms.
  See Corollaries~\ref{cor:K formal Milnor excision} and \ref{cor:K formal finite excision}.

  \item
  Next we prove a formally completed version of \cite[Thm.~A]{khan2018algebraic}, which yields formal excision for blow-ups in quasi-smooth derived centres (\propref{prop:formal derived blow-up excision}).
  Combining this with finite excision yields the case of blow-ups (in arbitrary centres).

  \item
  For the general case of \thmref{thm:intro/proper cdh}, we use a generalization of the arguments of \cite{Kerz_2017} to reduce to the case of a blow-up.
  The key ingredient, as in \cite{hoyoiskrishna} and \cite[5.3.4]{khan2018algebraic}, is Rydh's stacky extension of the flatification theorem of Raynaud and Gruson.
  See \thmref{thm:K formal proper excision}.
\end{itemize}

The proof of \thmref{thm:intro/Weibel} is accomplished in \secref{sec:weightstructures}.
As in \cite{Kerz_2017}, the proof is a relatively straightforward consequence of proper excision (\thmref{thm:intro/proper cdh}) and a ``killing lemma'' that allows one to kill negative K-classes by blowing up (see \cite[Prop.~5]{KerzStrunkKH}, \cite[Prop.~7.3]{hoyoiskrishna} for stacks, and \propref{prop:fklkdajjk} for our version with supports).
To use the latter, we also need a nil-invariance result for low enough K-groups (\corref{nilinvariance_general}).

Appendices~\ref{sec:stacks} and \ref{sec:formal} develop some preliminary material on (derived) algebraic and formal stacks.
In appendix~\ref{sec:weak}, we study a ``weak'' version of categorical pro-Milnor squares, where pro-\inftyCats are not regarded up to isomorphism but rather only up to weak pro-equivalence.
This can be used to drop the boundedness condition imposed in \thmref{thm:intro/cat}, and hence also the boundedness conditions on the structure sheaves of derived stacks in some statements in Sects.~\ref{sec:qcoh} and \ref{sec:stacky_applications}.

\ssec*{Related work}

\thmref{thm:intro/cat} can be contrasted with a recent result of \cite[Thm.~18]{Tamme_2018} (cf. \cite[Cor.~13]{HoyoisEfimov}), which provides a different criterion for a square of compactly generated stable \inftyCats as in \eqref{eq:intro/cat Milnor} to induce a cartesian square in algebraic K-theory: namely, it suffices that the square be cartesian and the functor $p_*$ fully faithful.
Our base change condition can be regarded as a generalization of this, since for a Milnor square with $p_*$ fully faithful, the base change property is automatic (see \lemref{lem:jbnoauq}).
However, \thmref{thm:intro/cat} is not strictly more general than Tamme's criterion because the definition of Milnor square also requires that the functors $f^*$ and $g^*$ generate under colimits (or equivalently, that their right adjoints $f_*$ and $g_*$ are conservative).

The idea of passing to formal completions to prove excision statements for derived \inftyCats of quasi-coherent sheaves is present in the work of Halpern-Leistner and Preygel (compare \corref{cor:Qcohform finite cdh} with \cite[Lem.~3.3.4]{HLP-h}, for example).
To our knowledge, the more refined invariant $\Qcohform(\sX^\wedge_\sZ)$ (see \ssecref{ssec:formal/qcoh}) has not been considered in the literature before.

The analogue of Theorem~\ref{thm:intro/proper cdh} in homotopy invariant K-theory was obtained recently by Hoyois and Krishna \cite{hoyoiskrishna}, under slightly weaker hypotheses.
Another proof in homotopy invariant K-theory (that applies more generally to truncating invariants in the sense of \cite{LandTamme}) was given in \cite[Thm.~5.6, Rmk.~5.11(iii)]{khan2018algebraic}, modulo a derived invariance property that was later established independently in \cite{ElmantoSosnilo} and \cite[Cor.~F]{sixstack}.
Our proof combines the arguments of \cite{Kerz_2017} and \cite{khan2018algebraic}.

Hoyois and Krishna also proved a variant of \thmref{thm:intro/Weibel} in homotopy invariant K-theory (see \cite[Thm.~1.1]{hoyoiskrishna}).
Compared to their result, our \thmref{thm:intro/Weibel} applies to a larger class of stacks at the cost of a possibly less sharp bound (using covering dimension instead of blow-up dimension).

\ssec*{Notation and conventions}

We freely use the language of \inftyCats and derived algebraic geometry.
We generally follow the notation of \cite{HTT,HA-20170918}, \cite{GaitsgoryRozenblyum} and \cite{HLP-h}.
Some exceptions are as follows:
\begin{itemize}
  \item
  In an \inftyCat $\sC$, we write $\Maps_\sC(C, D)$ for the mapping space between any two objects $C$ and $D$.

  \item
  We write $\Spc$ and $\Spt$ for the \inftyCats of spaces and spectra, respectively.
  The \inftyCat of presentable \inftyCats and  colimit-preserving functors (see \cite[Defn.~5.5.3.1]{HTT}) will be denoted $\Pres$.
  The (non-full) subcategory of $\Pres$ where the morphisms are \emph{compact} colimit-preserving functors will be denoted $\Presc$.
  Recall that a colimit-preserving functor is compact if its right adjoint preserves filtered colimits (see e.g. \cite[Defn.~C.3.4.2]{SAG-20180204}).

  \item
  A \emph{derived commutative ring} is an object of the nonabelian derived \inftyCat of ordinary commutative rings.
  This \inftyCat can be realized as the localization of the category of simplicial commutative rings.
  See \cite[25.1]{SAG-20180204} for details.
  The reader may also choose to read the term ``derived commutative ring'' as ``connective $\sE_\infty$-ring'' as in \cite[Chap.~7]{HA-20170918}.

  \item
  We write $\LMod_R$ for the stable \inftyCat of left modules over an $\sE_1$-ring spectrum $R$.
  If $R$ is a derived commutative ring, we write $\Mod_R$ for the stable \inftyCat of modules over the underlying $\sE_\infty$-ring spectrum.

  \item
  Given a derived commutative ring $R$ and a collection of elements $f_1,\ldots,f_n \in \pi_0(R)$, we write $R\modmod(f_1,\ldots,f_n)$ for the derived commutative ring of functions on the derived zero locus of $f_1,\ldots,f_n$.
  That is, $R\modmod(f_1,\ldots,f_n)$ is the derived tensor product of $R$ and $\bZ$ over $\bZ[T_1,\ldots,T_n]$ as in \cite[2.3.1]{khan2018virtual}.

  \item
  A derived algebraic stack is a derived 1-Artin stack as in \cite[Chap.~2, 4.1]{GaitsgoryRozenblyum}.
  All derived algebraic stacks are assumed quasi-compact and quasi-separated.
  A derived algebraic stack $\sX$ is \emph{\evcoconn} if it admits a smooth surjection $\Spec(A) \twoheadrightarrow \sX$ where $A$ is a bounded \scr ($\pi_i(A) = 0$ for $i \gg 0$).

  \item
  We write $\Qcoh(\sX)$ for the derived \inftyCat of quasi-coherent sheaves on a derived algebraic stack $\sX$, defined as in \cite[Chap.~3, 1.1.4]{GaitsgoryRozenblyum}.
  When $\sX$ is a classical algebraic stack, this agrees with the derived \inftyCat of $\sO_\sX$-modules with quasi-coherent cohomology (see \cite[Prop.~1.3]{Hall_2017}).
\end{itemize}

\ssec*{Acknowledgments}

We would like to thank the organizers of the event ``Algebraic Groups and Motives'' in St.~Petersburg in September 2019, where the preliminary versions of these results were first obtained.
We thank David Rydh for helpful discussions about \cite{AlperHallRydhLocal} and \cite{ahhr}, and for extensive comments on a draft.
We thank Charles Weibel for his comments on a draft.
The third author thanks Benjamin Antieau for discussions related to the `many-objects version' of the Land--Tamme excision theorem.
The fourth author thanks Elden Elmanto for discussing the relations between various excision results.

The second author acknowledges partial support from SFB 1085 Higher Invariants, Universit\"at Regensburg, the Simons Collaboration on Homological Mirror Symmetry, and MOST 110-2115-M-001-016-MY3.
The third author also acknowledges support from SFB 1085 Higher Invariants.
The fourth author's work on sections 1, 3, 4, 5, and Appendix C was completed under the support of Russian Science Foundation grant 20-41-04401.

\changelocaltocdepth{2}

\section{Milnor squares of stable \texorpdfstring{$\infty$}{oo}-categories}
\label{sec:milnor}

\subsection{The base change property}

  \begin{notation}\label{notat:paisj}
    Recall that any colimit-preserving functor between presentable \inftyCats admits a right adjoint by the adjoint functor theorem.
    We will usually use a symbol of the form $f^*$ to denote such a functor, and $f_*$ for its right adjoint.
    We also write $\eta_f : \id \to f_*f^*$ and $\varepsilon_f : f^*f_* \to \id$ for the unit and co-unit transformations.
    This is just a notational device: we have not assigned any meaning to the symbol ``$f$'' itself.
  \end{notation}

  \begin{constr}[Exchange transformation]
    Let $\Delta$ be a commutative square
    \begin{equation*}
      \begin{tikzcd}
        \sA \arrow[r,"f^*"]\arrow[d,"p^*"] & \sB\arrow[d,"q^*"] \\
        \sA' \arrow[r, "g^*"]              & \sB'
      \end{tikzcd}
    \end{equation*}
    of presentable \inftyCats and colimit-preserving functors.
    The \emph{exchange transformation} associated to $\Delta$ is the canonical natural transformation
    \[
      \Ex_\Delta : f^*p_* \to q_*g^*
    \]
    of functors $\sA' \to \sB$ defined as the composite
    \[
      f^*p_* \xrightarrow{\eta_q} q_*q^*f^*p_* \simeq q_*g^*p^*p_* \xrightarrow{\varepsilon_p} q_*g^*.
    \]
  \end{constr}

  \begin{defn}
    When $\Ex_\Delta$ is invertible, we say that the square $\Delta$ \emph{satisfies base change}\footnote{It is also common to say that the square satisfies the \emph{Beck--Chevalley condition}, or that it is vertically \emph{right-adjointable}.}.
  \end{defn}

  \begin{exam}\label{ex:rings}
    Suppose given a commutative square
    \[
      \begin{tikzcd}
        A \ar{r}\ar{d}
          & B \ar{d}
        \\
        A' \ar{r}
          & B'
      \end{tikzcd}
    \]
    of rings or more generally of $\sE_1$-ring spectra.
    This induces a square of stable \inftyCats of module spectra
    \[
      \begin{tikzcd}
        \LMod_A \ar{r}\ar{d}
          & \LMod_{B} \ar{d}
        \\
        \LMod_{A'} \ar{r}
          & \LMod_{B'},
      \end{tikzcd}
    \]
    where the functors are each given by (derived) extension of scalars.
    Then the exchange transformation evaluated on any object $M' \in \LMod_{A'}$ is the canonical $B$-module morphism
    \[ B \otimes_A M' \to B' \otimes_{A'} M'. \]
    Since $\LMod_{A'}$ is generated under colimits by $A' \in \LMod_{A'}$, we see that this square satisfies base change if and only if the canonical morphism
    \[ B \otimes_A A' \to B' \]
    is invertible.
  \end{exam}

\subsection{Precartesian squares}

  \begin{defn}
    We say that a commutative square in $\Pres$
      \begin{equation*}
        \begin{tikzcd}
          \sA \arrow[r,"f^*"]\arrow[d,"p^*"] & \sB\arrow[d,"q^*"] \\
          \sA' \arrow[r, "g^*"]              & \sB'
        \end{tikzcd}
      \end{equation*}
    is \emph{precartesian} if the canonical functor
    \[ (p^*, f^*) : \sA \to \sA' \times_{\sB'} \sB \]
    is fully faithful.
    This is equivalent to the condition that the square of natural transformations
    \begin{equation}\label{eq:shfoin}
      \begin{tikzcd}
        \id \ar{r}{\eta_f}\ar{d}{\eta_p}
        & f_*f^* \ar{d}{\eta_q}
        \\
        p_*p^* \ar{r}{\eta_g}
        & f_*q_*q^*f^*
      \end{tikzcd}
    \end{equation}
    is cartesian.
  \end{defn}

  \begin{exam}
    In the situation of \examref{ex:rings}, the square is precartesian if and only if the original square of $\sE_1$-rings is cartesian.
  \end{exam}

  \begin{warn}
    A cartesian square of ordinary rings
    \[
      \begin{tikzcd}
        A \ar{r}\ar{d}
          & B \ar{d}
        \\
        A' \ar{r}
          & B'
      \end{tikzcd}
    \]
    is not necessarily cartesian as a square of $\sE_1$-rings, as the fibred product $A' \times_{B'} B$ may acquire negative homotopy groups when taken in the \inftyCat of $\sE_1$-rings.
    A sufficient condition is that $A' \to B'$ is surjective.
    For example, Milnor squares of rings give rise to cartesian squares of $\sE_1$-rings and hence to precartesian squares of \pstabs.
  \end{warn}

  \begin{exam}\label{exam:Ind cart}
    If
    \begin{equation*}
      \begin{tikzcd}
        \sA \ar{r}{f}\ar{d}{p}
        & \sB \ar{d}{q}
        \\
        \sA' \ar{r}{g}
        & \sB'
      \end{tikzcd}
    \end{equation*}
    is a cartesian square of small stable \inftyCats and exact functors, then the induced square
    \[ \begin{tikzcd}
      \Ind(\sA) \ar{r}{f^*}\ar{d}{p^*}
      & \Ind(\sB) \ar{d}{q^*}
      \\
      \Ind(\sA') \ar{r}{g^*}
      & \Ind(\sB')
    \end{tikzcd} \]
    is precartesian.
    (Indeed, the fully faithful functor
    \[ \sA \isoto \sA' \fibprod_{\sB'} \sB \hooklong \Ind(\sA') \fibprod_{\Ind(\sB')} \Ind(\sB) \]
    factors through the full subcategory of compact objects, since filtered colimits commute with finite limits of spaces.)
    However, it is cartesian only under additional hypotheses: for example, when $g_*$ is fully faithful \cite[Lem.~4]{HoyoisMilnor}.
  \end{exam}

\subsection{Milnor squares}

  \begin{lem}\label{lem:generate}
    Let $f^* : \sA \to \sB$ be a colimit-preserving functor between \pstabs.
    Then the following conditions are equivalent:
    \begin{thmlist}
      \item If $\sB_0 \subseteq \sB$ is a cocomplete stable subcategory containing the essential image of $f^* : \sA \to \sB$, then $\sB_0 = \sB$.
      \item The right orthogonal to the essential image of $f^* : \sA \to \sB$ is the zero subcategory.
      \item The right adjoint $f_* : \sB \to \sA$ is conservative.
    \end{thmlist}
  \end{lem}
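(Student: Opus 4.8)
The plan is to prove the cycle (i) $\Rightarrow$ (ii) $\Leftrightarrow$ (iii) $\Rightarrow$ (i), isolating essentially all of the content in the last implication. Throughout write $E \subseteq \sB$ for the essential image of $f^*$, and recall that the adjunction $(f^*, f_*)$ supplies a natural equivalence $\Maps_\sB(f^* A, X) \simeq \Maps_\sA(A, f_* X)$ for $A \in \sA$ and $X \in \sB$.

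For (ii) $\Leftrightarrow$ (iii): since $f_*$ is a right adjoint between stable categories it is exact, so it is conservative if and only if it reflects zero objects (if $f_* g$ is an equivalence then $f_*\Cofib(g) \simeq \Cofib(f_* g) \simeq 0$). In the stable category $\sA$ an object $Y$ vanishes if and only if $\Maps_\sA(A, Y) \simeq *$ for all $A$; applying this to $Y = f_* X$ and using the adjunction equivalence shows that $f_* X \simeq 0$ if and only if $\Maps_\sB(f^* A, X) \simeq *$ for all $A$, i.e.\ if and only if $X$ lies in the right orthogonal of $E$. Thus "$f_*$ reflects zero" is literally the assertion that the right orthogonal of $E$ is the zero subcategory, and the two conditions coincide.

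For (i) $\Rightarrow$ (ii): given $X$ in the right orthogonal of $E$, consider the left orthogonal ${}^{\perp}X = \{\, Y \in \sB : \Maps_\sB(Y, X) \simeq *\,\}$. Since mapping spectra out of $\sB$ convert colimits into limits and cofiber sequences into fiber sequences, ${}^{\perp}X$ is closed under all colimits, shifts and cofibers, hence is a cocomplete stable subcategory; and it contains $E$ by the hypothesis on $X$. By (i) it must equal $\sB$, so in particular $X \in {}^{\perp}X$, forcing $\id_X \simeq 0$ and therefore $X \simeq 0$.

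The main point is (ii) $\Rightarrow$ (i), and this is where presentability enters. First reduce to the smallest cocomplete stable subcategory $\langle E \rangle$ containing $E$: any cocomplete stable $\sB_0 \supseteq E$ contains $\langle E \rangle$, so it suffices to prove $\langle E \rangle = \sB$. Because $\sA$ is presentable it is generated under colimits by a set of (say $\kappa$-compact) objects $A_\alpha$, and since $f^*$ preserves colimits, $\langle E \rangle$ coincides with the localizing subcategory generated by the set $\{f^* A_\alpha\}$; in particular it is a localizing subcategory of the presentable stable category $\sB$ generated by a set. Such a subcategory is the left-hand piece of a Bousfield localization: the inclusion $\langle E\rangle \hookrightarrow \sB$ admits a right adjoint, and every $X \in \sB$ sits in a fiber sequence $LX \to X \to RX$ with $LX \in \langle E \rangle$ and $RX \in \langle E \rangle^{\perp}$. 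The right orthogonal $\langle E \rangle^{\perp}$ equals the right orthogonal of $E$ (as $\langle E\rangle$ is generated by $E$ under colimits, shifts and cofibers), which vanishes by (ii); hence $RX \simeq 0$ and $X \simeq LX \in \langle E\rangle$. Thus $\langle E\rangle = \sB$, completing the argument. The only nonformal ingredient is this appeal to Bousfield localization, which is exactly the place where the generation of $E$ by a set — a consequence of presentability of $\sA$ together with colimit-preservation of $f^*$ — is indispensable; the remaining equivalences are purely adjunction-theoretic.
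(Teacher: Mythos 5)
Your proof is correct and follows essentially the same route as the paper: the paper handles (ii)$\Leftrightarrow$(iii) by the same adjunction observation, and outsources (i)$\Leftrightarrow$(ii) to a citation (\cite[Lemma~7.6]{MNNNilpotence}), whose standard proof is precisely your Bousfield (co)localization argument using that the localizing subcategory generated by a set of objects in a presentable stable $\infty$-category is coreflective. You have simply written out in full the argument the paper cites.
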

  \begin{proof}
    See e.g. \cite[Lemma~7.6]{MNNNilpotence} for the equivalence of the first two conditions.
    The second and third are equivalent by adjunction.
  \end{proof}

  \begin{defn}
    If $f^* : \sA \to \sB$ satisfies the equivalent conditions of \lemref{lem:generate}, then we say \emph{$f^*$ generates $\sB$ under colimits}.
  \end{defn}

  \begin{defn}\label{defn:Milnor square}
    Let $\Delta$ be a commutative square in $\Pres$ of the form
      \begin{equation*}
        \begin{tikzcd}
          \sA \arrow[r,"f^*"]\arrow[d,"p^*"] & \sB\arrow[d,"q^*"] \\
          \sA' \arrow[r, "g^*"]              & \sB'.
        \end{tikzcd}
      \end{equation*}
    We say $\Delta$ is a \emph{Milnor square} if it is precartesian and each of the functors $f^*$, $g^*$, $p^*$ and $q^*$ is compact and generates its codomain under colimits.
  \end{defn}

  \begin{lem}\label{lem:jbnoauq}
    Suppose given a Milnor square $\Delta$ of \pstabs as above.
    If the functor $p^*$ is a localization, i.e., its right adjoint $p_*$ is fully faithful, then $\Delta$ satisfies base change.
  \end{lem}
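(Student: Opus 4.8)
The plan is to exploit the full-faithfulness of $p_*$ to trivialize the second half of the exchange transformation, and to extract the invertibility of the first half from the precartesian condition evaluated on the reflective subcategory $p_*(\sA') \sub \sA$.

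First I would unwind the definition: $\Ex_\Delta$ is the composite
\[
  f^*p_* \xrightarrow{\eta_q} q_*q^*f^*p_* \simeq q_*g^*p^*p_* \xrightarrow{\varepsilon_p} q_*g^*.
\]
Since $p_*$ is fully faithful, the counit $\varepsilon_p : p^*p_* \to \id$ is invertible, so the last arrow $q_*g^*(\varepsilon_p)$ is an equivalence. Hence $\Ex_\Delta$ is an equivalence if and only if the first arrow, the unit map $(\eta_q)_{f^*p_* A'} : f^*p_* A' \to q_*q^*f^*p_* A'$, is an equivalence for every $A' \in \sA'$.

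The key step is to feed the objects $p_*A'$ into the precartesian square \eqref{eq:shfoin}. Evaluated at $p_*A'$, this is a cartesian square in the stable category $\sA$ whose left vertical arrow is $(\eta_p p_*)_{A'} : p_*A' \to p_*p^*p_*A'$. By the triangle identity $p_*\varepsilon_p \circ \eta_p p_* = \id_{p_*}$ together with the invertibility of $\varepsilon_p$, this left vertical arrow is an equivalence. Because the square is cartesian in a stable category, the fibres of its two vertical arrows are identified, so the right vertical arrow — which is exactly $f_*\big((\eta_q)_{f^*p_*A'}\big)$, i.e.\ $f_*$ applied to the map we care about — is an equivalence as well.

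Finally I would strip off the $f_*$. Because $\Delta$ is a Milnor square, $f^*$ generates $\sB$ under colimits, so by \lemref{lem:generate} its right adjoint $f_*$ is conservative; conservativity upgrades the equivalence $f_*\big((\eta_q)_{f^*p_*A'}\big)$ to an equivalence $(\eta_q)_{f^*p_*A'}$ itself, which is what remained to be shown. The main obstacle is precisely this last passage: the functors $f^*p_*$ and $q_*g^*$ need not preserve colimits, since $p_*$ is only a right adjoint, so one cannot simply verify $\Ex_\Delta$ on a set of generators of $\sA'$. It is the conservativity of $f_*$, built into the definition of a Milnor square via the generation hypothesis, that legitimizes descending from $f_*(\eta_q)$ to $\eta_q$ and thereby closes the argument.
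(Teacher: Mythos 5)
Your proof is correct and follows essentially the same route as the paper's: reduce to invertibility of the unit arrow $\eta_q$ using that $\varepsilon_p$ is invertible, precompose the precartesian square \eqref{eq:shfoin} with $p_*$, invert the left vertical arrow via the triangle identity, deduce invertibility of the right vertical arrow by stability of the cartesian square, and strip off $f_*$ using its conservativity from \lemref{lem:generate}. The only difference is expository: you spell out the fibre comparison and the generation/conservativity point that the paper compresses into ``by stability'' and a citation.
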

  \begin{proof}
    The exchange transformation is by definition the composite
    \[
      f^*p_*
      \xrightarrow{\eta_q} q_*q^*f^*p_*
      \simeq q_* g^*p^* p_*
      \xrightarrow{\varepsilon_p} q_*g^*.
    \]
    The counit $\varepsilon_p$ is invertible by assumption, so it will suffice to show that the first arrow induced by the unit $\eta_q$ is invertible.
    Since $\Delta$ is precartesian, we have by precomposition of \eqref{eq:shfoin} with $p_*$ the cartesian square
    \[ \begin{tikzcd}
      p_* \ar{r}{\eta_f}\ar{d}{\eta_p}
      & f_*f^*p_* \ar{d}{\eta_q}
      \\
      p_*p^*p_* \ar{r}{\eta_g}
      & f_*q_*q^*f^*p_*.
    \end{tikzcd} \]
    Since $f_*$ is conservative (\lemref{lem:generate}), it will suffice to show that the right-hand arrow is invertible.
    The left-hand arrow in the square is invertible by the assumption that $\varepsilon_p$ is invertible and the adjunction identities.
    By stability, the claim follows.
  \end{proof}

\subsection{Pro-Milnor squares}

  We now define a pro-version of \defref{defn:Milnor square}.

  For a presentable $\infty$-category $\sC$, we let $\Pro(\sC)$ denote the \inftyCat of pro-objects in $\sC$ as in \cite[A.8.1]{SAG-20180204}.
  Any pro-object $X \in \Pro(\sC)$ can be represented (non-uniquely) by a cofiltered system $\{X_i\}_{i\in \sI}$, i.e., a diagram $\sI \to \sC$ from a cofiltered \inftyCat $\sI$.
  If $X \in \Pro(\sC)$ and $Y \in \Pro(\sC)$ are represented by cofiltered systems $\{X_i\}_i$ and $\{Y_j\}_j$, respectively, then the mapping space can be computed by the formula
  \[ \Maps(X, Y) \simeq \lim_j \colim_i \Maps_\sC(X_i, Y_j), \]
  see \cite[Rem.~A.8.1.5]{SAG-20180204}.
  From this one sees that the functor $\sC \to \Pro(\sC)$ sending an object $C \in \sC$ to the constant pro-system $\{C\}$ is fully faithful.
  It also implies that for any cofiltered \inftyCat $\sI$, the functor of ``passage to pro-objects''
  \[ \Fun(\sI, \sC) \to \Pro(\sC) \]
  commutes with finite limits and colimits, since filtered colimits of spaces commute with finite limits.

  \begin{defn}\label{defn:pro Milnor}
    A commutative square $\Delta$ in $\Pro(\Presc)$ is \emph{pro-precartesian}, \emph{pro-Milnor}, or \emph{satisfies pro-base change} if it can be represented by a cofiltered system $\{\Delta_n\}_n$ of commutative squares
    \[ \begin{tikzcd}
      \sA_n \ar{r}{f^*_n}\ar{d}{p^*_n}
        & \sB_n \ar{d}{q^*_n}
      \\
      \sA'_n \ar{r}{g^*_n}
        & \sB'_n
    \end{tikzcd} \]
    such that every $\Delta_n$ has the respective property.
  \end{defn}


\section{Quasi-coherent sheaves on algebraic stacks}
\label{sec:qcoh}

In this section, we give many examples of Milnor and pro-Milnor squares coming from squares of (derived) schemes and stacks.

\subsection{Nisnevich, cdh, and Milnor squares of stacks}

  \begin{defn}\label{defn:Nis}
    A \emph{Nisnevich square} of derived algebraic stacks is a cartesian square
    \begin{equation}\label{eq:Nis}
      \begin{tikzcd}
        \sU' \ar{r}\ar{d}
          & \sX' \ar{d}{f}
        \\
        \sU \ar{r}{j}
          & \sX,
      \end{tikzcd}
    \end{equation}
    where $j$ is a quasi-compact open immersion and $f$ is a representable étale morphism of finite presentation, inducing an isomorphism $f^{-1}(\sZ) \to \sZ$ for some closed immersion $i : \sZ \to \sX$ complementary to $j$.
    An \emph{affine Nisnevich square} is a Nisnevich square where $f$ is affine.
  \end{defn}

  \begin{defn}
    A \emph{Milnor square} of algebraic stacks is a commutative square
    \begin{equation}\label{eq:Milnor square of stacks}
      \begin{tikzcd}
      \sZ' \ar{r}{i'}\ar{d}{g}
      & \sX' \ar{d}{f}
      \\
      \sZ \ar{r}{i}
      & \sX
      \end{tikzcd}
    \end{equation}
    which is cartesian and cocartesian, where $f$ is an affine morphism and $i$ is a closed immersion with quasi-compact open complement.
  \end{defn}

  \begin{defn}\label{defn:cdh square}
    A \emph{proper cdh square} (or \emph{abstract blow-up square}) of derived algebraic stacks is a commutative square
    \begin{equation}\label{eq:abstract blow-up square}
      \begin{tikzcd}
        \sZ' \ar{r}{i'}\ar{d}{g}
          & \sX' \ar{d}{f}
        \\
        \sZ \ar{r}{i}
          & \sX
      \end{tikzcd}
    \end{equation}
    satisfying the following properties:
    \begin{defnlist}
      \item\label{item:paofpoql}
      The square is cartesian on classical truncations, i.e. $\sZ' \to \sZ \fibprod_\sX \sX'$ induces an isomorphism $\sZ'_\cl \simeq (\sZ \fibprod_\sX \sX')_\cl$.

      \item
      The morphism $f$ is representable and proper, and $i$ is a closed immersion with quasi-compact open complement.

      \item
      The induced map $f_\sU : \sU' \to \sU$ is invertible, where $\sU$ (resp. $\sU'$) is the open complement of $\sZ$ in $\sX$ (resp. of $\sZ'$ in $\sX'$).
    \end{defnlist}
    A \emph{finite cdh square} is a proper cdh square as above where the morphism $f$ is finite.
    The class of \emph{cdh squares} is the union of Nisnevich squares and proper cdh squares.
  \end{defn}

  \begin{exam}\label{exam:pinching}
    Given a diagram of algebraic spaces or stacks
    \[ \sX_0 \xleftarrow{i_0} \sX_{01} \xrightarrow{f} \sX_1, \]
    where $i_0$ is a closed immersion with quasi-compact open complement and $f$ is a finite morphism, the operation of forming the pushout $\sX = \sX_0 \fibcoprod_{\sX_{01}} \sX_1$ is often called ``pinching''.
    The square
    \[
      \begin{tikzcd}
        \sX_{01} \ar{r}{i_0}\ar{d}{f}
          & \sX_0 \ar{d}{f'}
        \\
        \sX_1 \ar{r}
          & \sX
      \end{tikzcd}
    \]
    is a Milnor square by \cite[Thm.~A.4]{RydhCompact}.
    See also \cite[Prop.~1]{Raoult} for the case of (separated noetherian) algebraic spaces.
    Note that, by \cite[Thm.~A.4]{RydhCompact}, the square is also almost a finite cdh square, except that $f'$ is only integral but not necessarily of finite type.
    If all stacks involved are of finite type over a noetherian base, then $f'$ is of finite type and hence finite.
  \end{exam}

  \begin{exam}
    In \examref{exam:pinching}, if the morphism $f$ is only affine, then the square is called a \emph{Ferrand pushout} after \cite{Ferrand}.
    See \cite[Thm.~6.1]{ArtinAlg} and \cite{ahhr} for the theory of Ferrand pushouts in the setting of algebraic spaces and stacks, respectively.
  \end{exam}

\subsection{Proto-excision statements}

  In this subsection we begin studying the behaviour of the functor $\sX \mapsto \Qcoh(\sX)$ with respect to various classes of squares of stacks.

  \begin{rem}\label{rem:f^* compact}
    Let $\sX$ and $\sY$ be derived algebraic stacks.
    For any representable morphism $f : \sX \to \sY$, the functor $f^* : \Qcoh(\sY) \to \Qcoh(\sX)$ is compact (see e.g. \cite[Prop.~A.1.5]{HLP-h}).
    If $f$ is affine (or quasi-affine), then $f^*$ generates under colimits.
  \end{rem}

  \begin{exam}[Base change]\label{exam:base change}
    Suppose given a commutative square of derived algebraic stacks
    \[ \begin{tikzcd}
      \sX' \ar{r}{g}\ar{d}{q}
      & \sY' \ar{d}{p}
      \\
      \sX \ar{r}{f}
      & \sY
    \end{tikzcd} \]
    where $p$ is representable.
    If the square is homotopy cartesian, then the induced square
    \[ \begin{tikzcd}
      \Qcoh(\sY) \ar{r}{f^*}\ar{d}{p^*}
        & \Qcoh(\sX) \ar{d}{q^*}
      \\
      \Qcoh(\sY') \ar{r}{g^*}
        & \Qcoh(\sX')
    \end{tikzcd} \]
    satisfies base change by \cite[Cor.~3.4.2.2]{SAG-20180204}.
    In fact, this remains valid for non-algebraic stacks (i.e., for arbitrary derived prestacks, see \cite[Prop.~A.1.5]{HLP-h}).
    Note moreover that this condition is both sufficient and necessary for base change.
  \end{exam}

  Affine Nisnevich squares give rise to categorical Milnor squares.
  More generally, we have:

  \begin{thm}[Excision]\label{thm:Qcoh excision}
    Let $f : \sX' \to \sX$ be a representable morphism of derived algebraic stacks.
    Suppose there exists a closed immersion $i : \sZ \to \sX$ with quasi-compact open complement such that $f$ is an isomorphism infinitely near $\sZ$, i.e., the induced morphism $\sX'^\wedge_{f^{-1}(\sZ)} \to \sX^\wedge_{\sZ}$ is invertible.
    Then the induced square
    \[ \begin{tikzcd}
      \Qcoh(\sX) \ar{r}{j^*}\ar{d}{f^*}
        & \Qcoh(\sX\setminus\sZ) \ar{d}
      \\
      \Qcoh(\sX') \ar{r}
        & \Qcoh(\sX'\setminus f^{-1}(\sZ))
    \end{tikzcd} \]
    is a cartesian square in $\Presc$ satisfying base change.
    If $f$ is affine, then it is moreover a Milnor square.
  \end{thm}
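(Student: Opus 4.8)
The plan is to treat this as a statement about open–closed recollements, deducing base change formally from \examref{exam:base change} and reducing the cartesian property to the hypothesis on formal completions. Throughout I write $j\colon\sU\hook\sX$ for the open complement of $\sZ$, put $\sZ'=f^{-1}(\sZ)$, $\sU'=f^{-1}(\sU)=\sX'\setminus\sZ'$, $j'\colon\sU'\hook\sX'$, and let $f_\sU\colon\sU'\to\sU$ be the restriction of $f$. Base change is immediate: the geometric square
\[
\begin{tikzcd}
\sU' \ar{r}{j'}\ar{d}{f_\sU} & \sX' \ar{d}{f} \\
\sU \ar{r}{j} & \sX
\end{tikzcd}
\]
is cartesian (since $\sU'=f^{-1}(\sU)$) and $f$ is representable, so \examref{exam:base change} applies and shows that the exchange transformation $j^*f_*\to (f_\sU)_*\,j'^*$ is invertible, which is exactly the base change property for the square in the statement.

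For the cartesian claim I would exploit the recollement structure. The functors $j^*$ and $j'^*$ are localizations (their right adjoints $j_*$, $j'_*$ are fully faithful), with kernels the full subcategories $\Qcoh_{\sZ}(\sX)$ and $\Qcoh_{\sZ'}(\sX')$ of sheaves set-theoretically supported on $\sZ$, resp.\ $\sZ'$, so both rows of the square are localization sequences
\[
\Qcoh_{\sZ}(\sX)\to\Qcoh(\sX)\xrightarrow{j^*}\Qcoh(\sU),\qquad
\Qcoh_{\sZ'}(\sX')\to\Qcoh(\sX')\xrightarrow{j'^*}\Qcoh(\sU').
\]
I then form the comparison $\Phi=(f^*,j^*)\colon\Qcoh(\sX)\to\sP:=\Qcoh(\sX')\times_{\Qcoh(\sU')}\Qcoh(\sU)$, noting that all functors involved are compact (\remref{rem:f^* compact}), so that $\sP$ computes the pullback in $\Presc$. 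The projection $\pi\colon\sP\to\Qcoh(\sU)$ is the base change of the localization $j'^*$ along $f_\sU^*$, hence is again a localization, and its kernel is identified through the first projection with $\Qcoh_{\sZ'}(\sX')$. Since $\pi\circ\Phi=j^*$ and, for $\sF$ supported on $\sZ$, one has $\Phi(\sF)=(f^*\sF,0)$, the functor $\Phi$ is a map of localization sequences which is the identity on the common quotient $\Qcoh(\sU)$ and restricts on kernels to $f^*\colon\Qcoh_{\sZ}(\sX)\to\Qcoh_{\sZ'}(\sX')$. By the five lemma for stable localization sequences, $\Phi$ is an equivalence as soon as this restriction is.

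It therefore remains to show that $f^*$ induces an equivalence $\Qcoh_{\sZ}(\sX)\xrightarrow{\sim}\Qcoh_{\sZ'}(\sX')$, and this is where the hypothesis enters and where I expect the main difficulty to lie. Here I would invoke the identification of the category of sheaves supported on a closed substack with quasi-coherent sheaves on the associated formal completion, developed in Appendix~\ref{sec:formal}, giving $\Qcoh_{\sZ}(\sX)\simeq\Qcoh(\sX^\wedge_\sZ)$ and $\Qcoh_{\sZ'}(\sX')\simeq\Qcoh(\sX'^\wedge_{\sZ'})$ compatibly with pullback. Since $f$ restricts to the morphism $\sX'^\wedge_{\sZ'}\to\sX^\wedge_\sZ$, which is invertible precisely because $f$ is an isomorphism infinitely near $\sZ$, the induced functor on completions is an equivalence, hence so is $f^*$ on the two kernels. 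The careful bookkeeping of this formal-completion identification, in particular its naturality with respect to $f^*$, is the crux of the argument; everything else is formal.

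Finally, suppose $f$ is affine. The square is precartesian because it is cartesian. All four functors are compact by \remref{rem:f^* compact}, as $f$, $f_\sU$ and the open immersions $j$, $j'$ are representable. The functors $f^*$ and $f_\sU^*$ generate their codomains under colimits since $f$ and $f_\sU$ are affine (\remref{rem:f^* compact}), while $j^*$ and $j'^*$ generate because localizations are essentially surjective. Thus $\Delta$ is a Milnor square in the sense of \defref{defn:Milnor square}.
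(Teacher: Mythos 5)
Your treatment of base change and of the final Milnor-square verification is correct and essentially identical to the paper's (the paper likewise quotes \examref{exam:base change} and \remref{rem:f^* compact}). For cartesianness you take a genuinely different route: the paper directly shows that the unit and counit of the adjunction $\Qcoh(\sX) \rightleftarrows \Qcoh(\sU) \fibprod_{\Qcoh(\sU')} \Qcoh(\sX')$ are invertible, by checking them after applying $j^*$ and $\hat{i}^*$ and using the exact triangle $\hat{i}_\sharp \hat{i}^* \to \id \to j_*j^*$ of \eqref{eq:sp0bf01b}. Your recollement strategy is reasonable in outline, but it has a genuine gap: the ``five lemma for stable localization sequences'' you invoke is \emph{false} in the generality you use it. A localization sequence does not determine the middle category from its kernel and quotient; one also needs the gluing data, namely the mapping spectra from local objects ($j_*u$) to supported objects. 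Concretely, take $\sA = \Fun(\Delta^1,\Mod_k)$ with the localization $\mathrm{ev}_0 : (x\to y)\mapsto x$ (its right adjoint $x \mapsto (x\to 0)$ is fully faithful, with kernel $\{0\to y\}\simeq \Mod_k$), take $\sB = \Mod_k\times\Mod_k$ with the first projection, and let $\Phi(x\to y)=(x,y)$. Then $\Phi$ is a map of localization sequences which is the identity on the common quotient $\Mod_k$, an equivalence on the kernels, and even commutes with the localization and colocalization functors and with the fully faithful right adjoints; yet $\Phi$ is not an equivalence, since
\[
\Maps_{\sA}\bigl((x\to 0),(0\to y)\bigr)\simeq \Omega\,\Maps_{\Mod_k}(x,y),
\qquad
\Maps_{\sB}\bigl((x,0),(0,y)\bigr)\simeq \ast .
\]

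What your argument must additionally verify is precisely this gluing compatibility for $\Phi=(f^*,j^*)$: that for every $u\in\Qcoh(\sU)$ and every $\gamma$ supported on $\sZ$, the map $\Maps_{\Qcoh(\sX)}(j_*u,\gamma)\to \Maps_{\sP}(\Phi j_*u,\Phi\gamma)\simeq \Maps_{\Qcoh(\sX')}(f^*j_*u,f^*\gamma)$ is an equivalence. By adjunction and the base change isomorphism $f^*j_*\simeq j'_*f_{\sU}^*$ this amounts to showing that the unit $\gamma\to f_*f^*\gamma$ is invertible for all $\gamma\in\Qcoh_\sZ(\sX)$ --- which is not formal: it uses that $f_*f^*\gamma$ is again supported on $\sZ$ (base change for $j$) and that pullback intertwines the completion equivalences, $f^*\hat{i}_\sharp\simeq \hat{i}'_\sharp\hat{f}^*$, so that invertibility of $\hat{f}$ can be brought to bear. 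This is exactly the content the paper extracts from \eqref{eq:sp0bf01b}, so your proof is fixable, but the step you dismiss as formal is where the real work lies --- in addition to (and more seriously than) the naturality of the identification $\Qcoh_\sZ(\sX)\simeq\Qcoh(\sX^\wedge_\sZ)$ that you do flag. Two smaller points: that identification is not established in Appendix~\ref{sec:formal} of this paper, but is the cited result \cite[Thm.~2.2.3]{HLP-h} (or \cite[7.1]{GR-DGIndSch}); and your claims that $\pi:\sP\to\Qcoh(\sU)$ is a localization with kernel $\Qcoh_{\sZ'}(\sX')$, and that $\Phi$ restricts on kernels to $f^*$, are correct and can be checked directly.
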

  \begin{proof}
    Since open immersions are representable, the base change formula holds by \examref{exam:base change} applied to the homotopy cartesian square
    \[
      \begin{tikzcd}
        \sX' \setminus f^{-1}(\sZ) \ar{r}{j'}\ar{d}{f'}
        & \sX' \ar{d}{f}
        \\
        \sX \setminus \sZ \ar{r}{j}
        & \sX.
      \end{tikzcd}
    \]
    By \remref{rem:f^* compact} it remains to show the cartesianness.
    For this, consider the adjunction
    \[ \Qcoh(\sX) \rightleftarrows \Qcoh(\sX\setminus\sZ) \fibprod_{\Qcoh(\sX'\setminus f^{-1}(\sZ))} \Qcoh(\sX'). \]
    It will suffice to show that the unit and counit are invertible.
    We will make use of the canonical exact triangle of endofunctors of $\D(\sX)$
    \begin{equation}\label{eq:sp0bf01b}
      \hat{i}_\sharp \hat{i}^* \to \id \to j_*j^*
    \end{equation}
    where $\hat{i} : \sX^\wedge_\sZ \hook \sX$ is the inclusion and $\hat{i}_\sharp$ is left adjoint to $\hat{i}^*$; see \cite[7.1]{GR-DGIndSch} or \cite[Thm.~2.2.3]{HLP-h}.

    The unit evaluated on $\sF \in \D(\sX)$ is the canonical morphism
    \begin{equation*}
      \sF \to f_*f^*(\sF) \fibprod_{g_*g^*(\sF)} j_*j^*(\sF),
    \end{equation*}
    where $g : \sX' \setminus f^{-1}(\sZ) \to \sX$.
    Using \eqref{eq:sp0bf01b}, it will suffice to show that it is invertible after applying either $j^*$ or $\hat{i}^*$.
    By the base change formula for $j_*$ and $f_*$ (\examref{exam:base change}), the map becomes the identity of $j^*(\sF)$ in the former case and the unit $\hat{i}^*(\sF) \to \hat{f}_*\hat{f}^*\hat{i}^*(\sF)$ in the latter, where $\hat{f} : \sX'^\wedge_{f^{-1}(\sZ)} \to \sX^\wedge_{\sZ}$ is the base change (which is invertible by assumption).
    
    For the counit, let $\sF_\sU \in \D(\sX\setminus\sZ)$, $\sF_{\sX'} \in \D(\sX')$, and $\sF_{\sU'} \in \D(\sX'\setminus f^{-1}(\sZ))$ such that there are isomorphisms $f'^*(\sF_\sU) \simeq \sF_{\sU'} \simeq j'^*(\sF_{\sX'})$.
    It will suffice to show that the canoincal morphisms
    \begin{align*}
      j^*(j_*\sF_\sU \fibprod_{g_*\sF_{\sU'}} f_*\sF_{\sX'})
      &\to \sF_{\sU}\\
      f^*(j_*\sF_\sU \fibprod_{g_*\sF_{\sU'}} f_*\sF_{\sX'})
      &\to \sF_{\sX'}
    \end{align*}
    are invertible.
    The first is straightforward using base change, and the second can be checked using \eqref{eq:sp0bf01b} again (applied to $\sX'$ and $\sZ'$ this time instead of $\sX$ and $\sZ$).
  \end{proof}

  Specializing to étale neighbourhoods, we get:

  \begin{cor}[Étale excision]\label{cor:Qcoh Nis}
    Suppose given a Nisnevich square of derived algebraic stacks of the form \eqref{eq:Nis}.
    Then the induced square
    \[ \begin{tikzcd}
      \Qcoh(\sX) \ar{r}{j^*}\ar{d}{f^*}
      & \Qcoh(\sU) \ar{d}
      \\
      \Qcoh(\sX') \ar{r}
      & \Qcoh(\sU')
      \end{tikzcd} \]
    is cartesian and satisfies base change.
    In particular, if \eqref{eq:Nis} is an affine Nisnevich square, then the above is a Milnor square satisfying base change.
  \end{cor}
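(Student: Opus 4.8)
The plan is to deduce the corollary directly from \thmref{thm:Qcoh excision}. Both $j$ (an open immersion) and $f$ (representable étale) are representable, and by hypothesis $i : \sZ \to \sX$ is a closed immersion with quasi-compact open complement $\sU = \sX \setminus \sZ$; moreover the Nisnevich square is cartesian, so $\sU' = f^{-1}(\sU) = \sX' \setminus f^{-1}(\sZ)$. Hence the square in the statement is literally the square output by \thmref{thm:Qcoh excision} for the morphism $f$ and the closed immersion $i$, and all of its conclusions (cartesianness in $\Presc$, base change, and the Milnor property when $f$ is affine) would follow at once, \emph{provided} I verify the single geometric hypothesis of that theorem: that $f$ is an isomorphism infinitely near $\sZ$, i.e. that the induced morphism of formal completions
\[ \hat f : \sX'^\wedge_{f^{-1}(\sZ)} \to \sX^\wedge_\sZ \]
is invertible. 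Thus the whole proof reduces to establishing this one equivalence; the ``affine Nisnevich $\Rightarrow$ Milnor'' clause is then immediate, since an affine Nisnevich square has $f$ affine.

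To check that $\hat f$ is invertible I would argue on the functor of points of the formal completion. Writing $R_\red$ for the reduction of a derived ring $R$, an $R$-point of $\sX^\wedge_\sZ$ is an $R$-point $x$ of $\sX$ whose restriction $x|_{R_\red}$ factors through $\sZ$, and similarly for $\sX'^\wedge_{f^{-1}(\sZ)}$. Given such a point $x$ of $\sX^\wedge_\sZ$, the reduced point $x|_{R_\red} : \Spec R_\red \to \sZ$ lifts uniquely along the isomorphism $f^{-1}(\sZ) \isoto \sZ$ to a point of $\sX'$; since $f$ is étale, hence formally étale, and $R \to R_\red$ is a nil-isomorphism, this reduced lift extends uniquely to an $R$-point $x'$ of $\sX'$ lying over $x$, with $x'|_{R_\red}$ factoring through $f^{-1}(\sZ)$ by construction. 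This produces a point of $\sX'^\wedge_{f^{-1}(\sZ)}$ over $x$, inverse to $\hat f$ on $R$-points, and running this naturally in $R$ shows $\hat f$ is an equivalence. Equivalently, one may present $\sX^\wedge_\sZ = \colim_n \sZ^{(n)}$ and $\sX'^\wedge_{f^{-1}(\sZ)} = \colim_n \sZ'^{(n)}$ as colimits of infinitesimal thickenings, observe that flatness of $f$ gives $\sZ'^{(n)} \simeq \sZ^{(n)} \times_\sX \sX'$ so that each $\sZ'^{(n)} \to \sZ^{(n)}$ is étale and an isomorphism on reductions, and conclude $\sZ'^{(n)} \isoto \sZ^{(n)}$ by topological invariance of the étale site, passing to the colimit.

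The step that requires the most care — and the only real content beyond bookkeeping — is this invariance statement in the derived and stacky setting. In the classical case it is exactly topological invariance of the small étale site under nilpotent thickenings; in the derived case one must know in addition that étale morphisms are insensitive to the (derived) nil-isomorphism $R \to R_\red$, i.e. that $f$ is formally étale in the sense that $\sX'(R) \to \sX'(R_\red) \times_{\sX(R_\red)} \sX(R)$ is an equivalence for all $R$. I would therefore phrase the argument using the definition of the formal completion $\sX^\wedge_\sZ$ and its basic properties recorded in Appendix~\ref{sec:formal} (compare \cite{GR-DGIndSch}), from which both the functor-of-points description and the formal étaleness of the representable étale morphism $f$ are available; granting these, the invertibility of $\hat f$, and hence the corollary, follows.
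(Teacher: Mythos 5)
Your proposal is correct and follows exactly the paper's route: the paper derives \corref{cor:Qcoh Nis} as an immediate specialization of \thmref{thm:Qcoh excision}, the only implicit content being that a representable étale morphism restricting to an isomorphism $f^{-1}(\sZ) \isoto \sZ$ is automatically an isomorphism infinitely near $\sZ$. Your verification of this hypothesis (via unique étale lifting along the nil-immersion $\Spec R_\red \to \Spec R$, or equivalently via topological invariance of the étale site applied to the thickenings representing the formal completions, cf. \remref{rem:formal colim} and \remref{rem:formal base change}) correctly supplies the detail the paper leaves unstated.
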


  In contrast with the case of Nisnevich squares, Milnor squares of stacks are not generally homotopy cartesian.
  Therefore the induced square of stable \inftyCats does not usually satisfy base change.
  However, it will at least be precartesian in the following class of examples:

  \begin{thm}\label{thm:Qcoh Ferrand}
    Suppose given a Ferrand pushout square of derived algebraic stacks
    \begin{equation}\label{eq:pinapih}
      \begin{tikzcd}
        \sZ' \ar{d}{g}\ar{r}{i'}
        & \sX' \ar{d}{f}
        \\
        \sZ \ar{r}{i}
        & \sX.
      \end{tikzcd}
    \end{equation}
    That is, \eqref{eq:pinapih} is cocartesian, $i'$ is a closed immersion, and $g$ is affine.
    Then the induced square in $\Presc$
    \[
      \begin{tikzcd}
        \Qcoh(\sX) \ar{r}{i^*}\ar{d}{f^*}
          & \Qcoh(\sZ)\ar{d}{g^*}
        \\
        \Qcoh(\sX') \ar{r}{i'^*}
          & \Qcoh(\sZ')
      \end{tikzcd}
    \]
    is a Milnor square.
    It satisfies base change if and only if \eqref{eq:pinapih} is homotopy cartesian.
  \end{thm}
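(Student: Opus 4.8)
The plan is to verify the two assertions of Theorem~\ref{thm:Qcoh Ferrand} separately: first that the induced square is a Milnor square (which is unconditional), and then the base-change criterion. For the first part, I must check the three defining conditions of \defref{defn:Milnor square}: precartesianness, compactness of all four functors, and the generation-under-colimits property. The second part requires an if-and-only-if argument tying the invertibility of $\Ex_\Delta$ to the homotopy cartesianness of \eqref{eq:pinapih}.

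**The Milnor-square conditions.**

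For compactness and generation, the morphisms $i$, $i'$ are closed immersions and $f$, $g$ are affine, so in particular all four are representable and affine. By \remref{rem:f^* compact}, pullback along a representable morphism is compact, and pullback along an affine morphism generates the target under colimits; this immediately handles the compactness and generation requirements for $i^*$, $f^*$, $i'^*$, and $g^*$. The genuinely interesting condition is precartesianness, i.e.\ that $\Qcoh(\sX) \to \Qcoh(\sZ) \fibprod_{\Qcoh(\sZ')} \Qcoh(\sX')$ is fully faithful. Here I would exploit that \eqref{eq:pinapih} is a pushout in derived algebraic stacks. I expect that the quasi-coherent sheaf functor $\Qcoh(-)$ sends this colimit of stacks to a limit of presentable categories, at least at the level of the lax/fully-faithful comparison: a quasi-coherent sheaf on the pushout $\sX$ is determined by compatible descent data on $\sX'$ and $\sZ$ agreeing on $\sZ'$, and the point is that the comparison functor to the pullback category is fully faithful even when it fails to be an equivalence. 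The key input is the structure of $\sO_\sX$ as the fibre product $f_*\sO_{\sX'} \fibprod_{g_*\sO_{\sZ'}} i_*\sO_\sZ$ coming from the cocartesian property, together with the fact that closed immersions and affine morphisms behave well under $\Qcoh$; concretely, I would show \eqref{eq:shfoin} is cartesian by reducing the statement to the identity on global sections using the pushout relation on structure sheaves.

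**The base-change criterion.**

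For the final equivalence, one direction is \examref{exam:base change}: if \eqref{eq:pinapih} is homotopy cartesian, then since $f$ (equivalently $i$) is representable, the induced square satisfies base change. For the converse, I would argue that base change, i.e.\ invertibility of $\Ex_\Delta : i^* f_* \to g_* i'^*$, forces the comparison $\sZ' \to \sZ \fibprod_\sX \sX'$ to be an isomorphism of derived stacks, which is exactly homotopy cartesianness. The strategy is to evaluate the exchange transformation on the structure sheaf $\sO_{\sX'} \in \Qcoh(\sX')$ (or to use that $f^*$ generates and test on generators), translating the natural transformation into a morphism of quasi-coherent algebras; invertibility of $\Ex_\Delta$ then recovers $\sO_{\sZ \fibprod_\sX \sX'} \simeq g_* \sO_{\sZ'}$, and since all morphisms are affine this algebra-level isomorphism detects the isomorphism of the stacks themselves.

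**Main obstacle.**

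I expect the principal difficulty to lie in the precartesianness argument, specifically in extracting fully faithfulness from the pushout property of \eqref{eq:pinapih} without assuming the square is homotopy cartesian. The subtlety is that a Ferrand/pinching pushout is generally \emph{not} a pullback, so $\Qcoh(\sX)$ is genuinely smaller than the full fibre product $\Qcoh(\sZ) \fibprod_{\Qcoh(\sZ')} \Qcoh(\sX')$; I must show the comparison is fully faithful while acknowledging it need not be essentially surjective. The cleanest route is likely to reduce to the affine case via an affine smooth atlas of $\sX$ (using that affineness and closed immersions are smooth-local and that pushouts of this type commute with the relevant base changes by \cite[Thm.~A.4]{RydhCompact}), and then invoke the classical Milnor-patching description of modules over the ring-theoretic fibre product, lifted to the derived setting using that $\sO_\sX \simeq f_*\sO_{\sX'} \fibprod_{g_*\sO_{\sZ'}} i_*\sO_\sZ$.
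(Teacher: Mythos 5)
Your proposal follows essentially the same route as the paper's proof: compactness and generation are dispatched by \remref{rem:f^* compact}, precartesianness is proved by fpqc/smooth-local reduction to the affine case followed by derived Milnor patching for modules over the cartesian square of structure rings (the paper cites \cite[Thm.~16.2.0.2]{SAG-20180204}), and the base-change criterion in both directions is exactly the content of \examref{exam:base change}. The only point your sketch leaves implicit --- handled in one line in the paper --- is that the cartesian square of \emph{connective} derived rings remains cartesian as a square of $\sE_1$-rings precisely because $\sO_\sX \to \sO_\sZ$ is surjective on $\pi_0$ (as $i$ is a closed immersion), which is what makes the patching theorem applicable.
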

  \begin{proof}
    By \remref{rem:f^* compact} it is enough to show precartesianness.
    By fpqc descent we immediately reduce to the case where $\sX$ is affine, hence so are $\sX'$, $\sZ$ and $\sZ'$.
    Consider the corresponding cartesian square of \scrs
    \[ \begin{tikzcd}
      \sO_\sX \ar{r}\ar{d}
      & \sO_\sZ \ar{d}
      \\
      \sO_{\sX'} \ar{r}
      & \sO_{\sZ'}.
    \end{tikzcd} \]
    Since $i$ is a closed immersion, the homomorphism of \scrs $\sO_\sX \to \sO_\sZ$ is surjective on $\pi_0$, and thus the square is also cartesian on underlying (nonconnective) $\sE_1$-rings.
    Hence the claim follows from \cite[Thm.~16.2.0.2]{SAG-20180204}.
    The last part follows from \examref{exam:base change}.
  \end{proof}

\subsection{Formal completions of squares}

  \begin{constr}\label{constr:formal completion of square}
    Suppose given a commutative square of derived algebraic stacks
    \begin{equation*}
      \begin{tikzcd}
        \sZ' \ar{r}{i'}\ar{d}{g}
          & \sX' \ar{d}{f}
        \\
        \sZ \ar{r}{i}
          & \sX
      \end{tikzcd}
    \end{equation*}
   where $i$ and $i'$ are closed immersions with quasi-compact open complements.
    Formally completing the horizontal arrows (see \ssecref{ssec:formal}) gives rise to a commutative square
    \begin{equation}\label{eq:formal cdh square}
      \begin{tikzcd}
        \sX'^\wedge_{\sZ'} \ar[hookrightarrow]{r}\ar{d}{f^\wedge}
          & \sX' \ar{d}{f}
        \\
        \sX^{\wedge}_{\sZ} \ar[hookrightarrow]{r}
          & \sX.
      \end{tikzcd}
    \end{equation}
    If the original square is cartesian on underlying classical stacks as in \defref{defn:cdh square}\ref{item:paofpoql}, then the formally completed square is homotopy cartesian (\remref{rem:formal base change}).
    In particular, this holds for cdh squares and Milnor squares.
  \end{constr}

  We now show that the square \eqref{eq:formal cdh square} is also \emph{co}cartesian when the original square is a finite cdh square or a Milnor square (under mild finiteness assumptions).

  \begin{lem}\label{lem:finite cdh gives formal Milnor square}
    Suppose given a finite cdh square of \evcoconn noetherian derived algebraic stacks of the form \eqref{eq:abstract blow-up square}.
    Then the formally completed square \eqref{eq:formal cdh square} is cocartesian.
  \end{lem}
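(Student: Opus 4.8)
The plan is to reduce the statement to the affine case, where cocartesianness of the formally completed square becomes a \emph{pullback} square of (derived) structure rings, and then to deduce that pullback from the observation that the finite cdh hypothesis forces the relevant cofibre to be supported on $\sZ$, hence to be $I$-adically complete.

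First I would recall that by \constrref{constr:formal completion of square} the square \eqref{eq:formal cdh square} is already homotopy cartesian, so only cocartesianness is at issue. By the gluing theory for formal stacks (Appendix~\ref{sec:formal}), cocartesianness of such a square is equivalent to the assertion that the induced square of structure algebras is cartesian; this is a condition on quasi-coherent sheaves on $\sX$ and, since formal completions commute with flat base change (\remref{rem:formal base change}) and finite morphisms are stable under base change, it may be checked smooth-locally on $\sX$. Choosing a smooth atlas $\Spec(A) \twoheadrightarrow \sX$ therefore reduces us to the case $\sX = \Spec(A)$ with $A$ a bounded noetherian \scr and $\sZ$ cut out by a finitely generated ideal $I \subseteq \pi_0(A)$; as $f$ is finite, $\sX' = \Spec(B)$ is affine with $B$ a finite (hence bounded, coherent) $A$-algebra, and $\sZ' = \Spec(B) \fibprod_{\Spec(A)} \sZ$ on classical truncations.

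Second, in this affine situation I would identify the cocartesianness of \eqref{eq:formal cdh square} with the statement that the square of derived rings
\[
  \begin{tikzcd}[matrix scale=0.7]
    A \ar{r}\ar{d} & B \ar{d} \\
    \form{A} \ar{r} & \form{B}
  \end{tikzcd}
\]
is cartesian, where $\form{A} = A^{\wedge}_{I}$ and $\form{B} = B^{\wedge}_{IB} \simeq B^{\wedge}_{I}$ are the $I$-adic completions (the two completions of $B$ agreeing since $B$ is finite over $A$). Equivalently, since we are in a stable setting, it suffices to show that the natural comparison map on horizontal cofibres $\Cofib(A \to B) \to \Cofib(\form{A} \to \form{B})$ is invertible; this map is precisely the $I$-adic completion map applied to $\Cofib(A \to B)$.

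The crux is then the following: because $f$ is finite and restricts to an isomorphism over the open complement $\sU = \sX \setminus \sZ$, the map $A \to B$ becomes invertible after restriction to $\sU$, so $\Cofib(A \to B)$ vanishes on $\sU$ and is therefore supported on $\sZ$. As $A$ is bounded noetherian, $\Cofib(A \to B)$ is a bounded complex with coherent homotopy groups over $\pi_0(A)$, each of which is then a finitely generated $I$-power-torsion module; consequently $\Cofib(A \to B)$ is $I$-adically complete. Since derived $I$-adic completion is exact, $\Cofib(\form{A} \to \form{B}) \simeq \Cofib(A \to B)^{\wedge}_{I} \simeq \Cofib(A \to B)$, which is exactly the desired invertibility, and hence the square of rings is cartesian.

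The main obstacle I anticipate is not the ring-level computation, which is clean, but rather making precise the descent step: one must fix the correct meaning of ``cocartesian'' for formal/ind-algebraic stacks (via the structure-sheaf pullback furnished by the formal gluing theory of Appendix~\ref{sec:formal}) and verify that this condition is genuinely flat-local on $\sX$, so that the reduction to the affine atlas is legitimate. The \evcoconn\ and noetherian hypotheses are used exactly to guarantee that $\Cofib(A \to B)$ is bounded and coherent, so that support on $\sZ$ implies $I$-completeness and derived completion is exact.
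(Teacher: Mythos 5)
Your reduction to the affine case matches the paper's first step, and your ring-level computation (the cofibre of $A \to B$ is bounded, coherent, and supported on $\sZ$, hence derived $I$-complete, hence unchanged by $I$-completion) is correct as far as it goes. The genuine gap is in your second step, where you identify cocartesianness of \eqref{eq:formal cdh square} with cartesianness of the square of rings $A$, $B$, $\form{A} = A^\wedge_I$, $\form{B} = B^\wedge_I$. In this paper the formal completion $\sX^\wedge_\sZ$ is \emph{not} $\Spec(A^\wedge_I)$: it is the ind-stack $\{\Spec(A \modmod (f_1^n,\ldots,f_m^n))\}_n$ (\defref{defn:formal completion}, \examref{exam:oizchvnawl}). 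Since colimits of prestacks are computed pointwise and pushouts of affines along a closed immersion and an affine morphism are again affine, cocartesianness of the formally completed square amounts to the assertion that the square of \emph{pro}-rings $\{A\}$, $\{B\}$, $\{A\modmod(f_1^n,\ldots,f_m^n)\}_n$, $\{B\modmod({f'_1}^n,\ldots,{f'_m}^n)\}_n$ is cartesian in the pro-category, i.e.\ that $\{A\} \to \{A\modmod(f^n) \times_{B\modmod(f'^n)} B\}_n$ is a pro-isomorphism. This is strictly stronger than cartesianness after applying $\lim_n$: the limit functor destroys pro-information (for instance $\{p^n\bZ\}_n$ with the inclusion transition maps is a nonzero pro-abelian group whose limit vanishes), so your completed-ring statement does not imply the pro-statement. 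The pro-statement is also exactly what is needed downstream: \thmref{thm:Qcohform Ferrand}, \corref{cor:Qcohform finite cdh} and the continuous K-theory pro-spectra $\{\K(A/I^n)\}_n$ all live at the level of pro-objects, not of completions.

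Concretely, what is missing is the Artin--Rees-type control of the pro-systems that occupies most of the paper's proof. There one first identifies $\{\pi_k(A\modmod(f_1^n,\ldots,f_m^n))\}_n \simeq \{\pi_k(A)/(f_1^n,\ldots,f_m^n)\}_n$ as pro-abelian groups (\cite[Lem.~8.4.4.5]{SAG-20180204} or \cite[Lem.~4.10]{Kerz_2017}), so that the fibres of the horizontal maps are controlled by the pro-groups $\{(f_1^n,\ldots,f_m^n)\cdot\pi_k(A)\}_n$; one then shows that the comparison map $\{(f_1^n,\ldots,f_m^n)\cdot\pi_k(A)\}_n \to \{({f'_1}^n,\ldots,{f'_m}^n)\cdot\pi_k(B)\}_n$ is a pro-isomorphism, using finiteness of $f$ (surjectivity, via finite generation of $\pi_k(B)$ over $\pi_0(A)$), the isomorphism away from $\sZ$, and the Artin--Rees lemma (injectivity). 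None of this is recoverable from derived $I$-completeness of $\Cofib(A \to B)$ alone; to repair your argument you would need to show that the fibre of $\{A \to A\modmod(f^n)\}_n$ maps pro-isomorphically to the fibre of $\{B \to B\modmod(f'^n)\}_n$, which is precisely where this extra pro-level work enters.
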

  \begin{proof}
    Since $\sX$ is algebraic, it will suffice to show that the square is cocartesian after smooth base change to any affine derived scheme.
    Since the latter operation preserves cocartesian squares, we may assume $\sX$ is affine.
    Let $\sX = \Spec(A)$, $\sX' = \Spec(A')$, and choose $f_1,\ldots,f_m \in \pi_0(A)$ such that $\sZ_\cl = \Spec(\pi_0(A)/(f_1,\ldots,f_m))$. Let $f'_1,\ldots,f'_m \in \pi_0(A')$
    denote their respective images.
    By \cite[7.2.4.31]{HA-20170918}, $A'$ is almost of finite presentation over $A$.
    By \examref{exam:oizchvnawl} it will suffice to show that the commutative squares
    \begin{equation*}
      \begin{tikzcd}
        A \ar{r}\ar{d}
          & A\modmod(f_1^n,\ldots,f_m^n)\ar{d}
        \\
        A' \ar{r}
          & A'\modmod({f'_1}^n,\ldots,{f'_m}^n)
      \end{tikzcd}
    \end{equation*}
    induce a cartesian square of pro-\scrs as $n$ varies.
    For this it is enough that the underlying square of pro-spectra is cartesian, or equivalently that the induced morphism $F \to F'$ on horizontal homotopy fibres induces isomorphisms of pro-abelian groups $\pi_k(F) \to \pi_k(F')$ for all $k\ge0$ (since the \scrs in the square are all bounded above).
    By the five lemma it suffices to show that the right- and left-hand vertical arrows are invertible in the commutative diagram of exact sequences
      \begin{equation*}
        \begin{tikzcd}
        0 \ar{r}
          & \Coker(\phi_{k+1})\ar{r}\ar{d}
          & \pi_k(F)\ar{r}\ar{d}
          & \Ker(\phi_{k})\ar{r}\ar{d}
          & 0
        \\
        0 \ar{r}
          & \Coker(\phi'_{k+1})\ar{r}
          & \pi_k(F')\ar{r}
          & \Ker(\phi'_{k})\ar{r}
          & 0
        \end{tikzcd}
      \end{equation*}
    where $\phi_k$ and $\phi'_k$ denote the induced morphisms $\{\pi_{k}(A)\} \to \{\pi_{k}(A\modmod(f_1^n,\ldots,f_m^n))\}_n$ and $\{\pi_{k}(A') \to \pi_{k}(A'\modmod({f'_1}^n,\ldots,{f'_m}^n))\}_n$, respectively.
    By \cite[Lem.~8.4.4.5]{SAG-20180204} or \cite[Lem.~4.10]{Kerz_2017} the canonical morphisms
      \begin{align*}
        \{\pi_k(A\modmod(f_1^n,\ldots,f_m^n))\}_n &\to \{\pi_k(A)/(f_1^n,\ldots,f_m^n)\}_n,\\
        \{\pi_k(A'\modmod({f'_1}^n,\ldots,{f'_m}^n))\}_n &\to \{\pi_k(A')/({f'_1}^n,\ldots,{f'_m}^n)\}_n
      \end{align*}
    are invertible for all $k\ge 0$.
    In particular, the morphisms $\phi_k$ and $\phi'_k$ are all surjective, with kernels $\{(f_1^n,\ldots,f_m^n) \cdot \pi_{k}(A)\}_n$ and $\{({f'_1}^n,\ldots,{f'_m}^n) \cdot \pi_{k}(A')\}_n$, respectively.
    Since $A'$ is noetherian and the morphism $f : \Spec(A') \to \Spec(A)$ is finite, the homotopy groups $\pi_k(A')$ are finitely generated over $\pi_0(A')$ and hence over $\pi_0(A)$.
    Now it is straightforward to check, using the assumption that $f: \Spec(A') \to\Spec(A)$ induces an isomorphism away from $Z$, that the canonical morphisms
      \begin{equation*}
        \{(f_1^n,\ldots,f_m^n)\cdot \pi_{k}(A)\}_n \to \{({f'_1}^n,\ldots,{f'_m}^n) \cdot\pi_{k}(A')\}_n,
      \end{equation*}
    are invertible.
    Indeed for each $i$, the morphism $\pi_{k}(A)[1/f_i] \to \pi_{k}(A')[1/f_i]$ is invertible by assumption. 
    This implies in particular that there exists an $m$ such that for all $m \geq n$, $(f_1^n,\ldots,f_m^n)$ annihilates the kernel (which is finitely generated by the noetherian assumption) and $(f_1^n,\ldots,f_m^n)\cdot \pi_{k}(A)\ \to \{({f'_1}^n,\ldots,{f'_m}^n)\pi_{k}(A')$ is surjective (since $\pi_k(A')$ is finitely generated over $\pi_0(A)$). 
    By the former claim and Artin-Rees lemma we can also ensure (by choosing a larger $m$ if necessary) that the morphism is injective.
    The claim follows.
  \end{proof}

  \begin{cor}\label{cor:nil formal Milnor square}
    Let $\sX$ be a \evcoconn noetherian derived algebraic stack.
    For any closed immersion $i : \sZ \to \sX$ with $0$-truncated quasi-compact open complement $\sX\setminus\sZ$, the square
    \[ \begin{tikzcd}
      (\sX_\cl)^\wedge_{\sZ_\cl} \ar[hookrightarrow]{r}\ar{d}
        & \sX_\cl \ar{d}
      \\
      \sX^{\wedge}_{\sZ} \ar[hookrightarrow]{r}
        & \sX
    \end{tikzcd} \]
    is cocartesian, where the right-hand vertical arrow is the inclusion of the classical truncation.
  \end{cor}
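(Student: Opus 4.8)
The plan is to recognize the displayed square as the formal completion of a \emph{finite cdh square} and then to invoke \lemref{lem:finite cdh gives formal Milnor square}.

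First, since the formal completion $\sX^\wedge_\sZ$ depends only on the underlying closed subset of $\sZ$ (see \ssecref{ssec:formal}), I may replace $\sZ$ by its classical truncation $\sZ_\cl$ without affecting any corner of the square; in particular $\sX^\wedge_\sZ = \sX^\wedge_{\sZ_\cl}$. Thus I reduce to the case $\sZ = \sZ_\cl$. Now consider the square
\[
  \begin{tikzcd}
    \sZ_\cl \ar{r}\ar{d}{\id}
      & \sX_\cl \ar{d}{f}
    \\
    \sZ_\cl \ar{r}{i_\cl}
      & \sX,
  \end{tikzcd}
\]
where $f : \sX_\cl \to \sX$ is the inclusion of the classical truncation and $i_\cl : \sZ_\cl \to \sX$ is the induced closed immersion. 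I claim this is a finite cdh square of \evcoconn noetherian stacks in the sense of \defref{defn:cdh square}. Indeed, all four corners are \evcoconn noetherian, since $\sZ_\cl$ and $\sX_\cl$ are classical and $\sX$ is \evcoconn noetherian by hypothesis. The morphism $f$ is a closed immersion, hence representable, proper, and finite, and $i_\cl$ is a closed immersion whose open complement is $\sX \setminus \sZ$, which is quasi-compact. The square is cartesian on classical truncations because the upper-left corner already agrees with $(\sZ_\cl \times_\sX \sX_\cl)_\cl = \sZ_\cl$.

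The one substantive point is the third condition, that $f$ restricts to an isomorphism over the open complement. This is precisely where the hypothesis that $\sX \setminus \sZ$ is $0$-truncated is used: since $\sX \setminus \sZ$ is classical, pulling it back along $\sX_\cl \to \sX$ gives $\sX_\cl \setminus \sZ_\cl = (\sX \setminus \sZ)_\cl = \sX \setminus \sZ$, so $f$ restricts to the identity there. Granting the claim, formally completing the two horizontal arrows of this finite cdh square (and using $\sX^\wedge_{\sZ_\cl} = \sX^\wedge_\sZ$) produces exactly the square in the statement, whence \lemref{lem:finite cdh gives formal Milnor square} shows it is cocartesian. I expect no real obstacle beyond this verification; the only bookkeeping to keep in mind is that the classical truncation inclusion is genuinely finite (a closed immersion of noetherian stacks), so that the finite cdh hypothesis of the lemma is met.
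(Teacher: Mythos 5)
Your proof is correct and is essentially the paper's intended argument: the corollary is stated as an immediate consequence of \lemref{lem:finite cdh gives formal Milnor square}, applied to the square formed by $\sZ_\cl \to \sX_\cl$ and $\sZ \to \sX$ with the truncation inclusion $\sX_\cl \to \sX$ as the finite morphism, exactly as you do (your preliminary replacement of $\sZ$ by $\sZ_\cl$ is a harmless normalization). You also correctly isolate the role of the $0$-truncatedness hypothesis, namely that it supplies condition (iii) of \defref{defn:cdh square}.
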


  \begin{lem}\label{lem:Milnor gives formal Milnor square}
    Suppose given a Milnor square of noetherian algebraic stacks of the form \eqref{eq:Milnor square of stacks}.
    Then the formally completed square \eqref{eq:formal cdh square} is cocartesian.
  \end{lem}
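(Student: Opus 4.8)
The plan is to mimic the proof of \lemref{lem:finite cdh gives formal Milnor square}, the essential difference being that the Milnor (rather than merely finite cdh) hypothesis supplies a much stronger algebraic input at the last step. First I would reduce to the affine case: since $\sX$ is algebraic and both formal completion and cocartesianness may be checked after smooth base change to an affine scheme (\remref{rem:formal base change}), and since such base change carries Milnor squares to Milnor squares, we may assume $\sX = \Spec(A)$ with $A$ noetherian. As $f$ is affine we then have $\sX' = \Spec(B)$, while $\sZ = \Spec(A/I)$ and $\sZ' = \Spec(B/J)$ with $I = \ker(A \to \sO_\sZ)$ and $J = \ker(B \to \sO_{\sZ'})$. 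Cocartesianness of the stacky square says precisely that
\[
\begin{tikzcd}[matrix scale=0.7]
A \ar{r}\ar{d} & A/I \ar{d} \\ B \ar{r} & B/J
\end{tikzcd}
\]
is a cartesian square of (discrete) rings, and cartesianness of the stacky square gives in addition $B/J \simeq (A/I)\otimes_A B$, hence $J = IB$; it is the former, cartesian, property that I will actually use.

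Next, exactly as in \lemref{lem:finite cdh gives formal Milnor square}, I would choose $f_1,\dots,f_m \in A$ with $\sZ_\cl = \Spec(A/(f_1,\dots,f_m))$, let $f'_1,\dots,f'_m$ be their images in $B$, and invoke \examref{exam:oizchvnawl} to reduce cocartesianness of \eqref{eq:formal cdh square} to cartesianness of the pro-\scr square whose rows are the completion maps $A \to \{A\modmod(f_1^n,\dots,f_m^n)\}_n$ and $B \to \{B\modmod({f'_1}^n,\dots,{f'_m}^n)\}_n$. This in turn reduces to showing that the map $F \to F'$ induced on horizontal homotopy fibres is a pro-equivalence. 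Here the bookkeeping of \lemref{lem:finite cdh gives formal Milnor square} simplifies dramatically because $A$ and $B$ are discrete: using the comparison $\{A\modmod(f_i^n)\}_n \simeq \{A/(f_1^n,\dots,f_m^n)\}_n$ of \cite[Lem.~8.4.4.5]{SAG-20180204} (together with cofinality of $\{(f_1^n,\dots,f_m^n)\}_n$ with $\{I^n\}_n$), all higher homotopy groups vanish pro-systemically, leaving $F \simeq \{I^n\}_n$ and $F' \simeq \{J^n\}_n$, with $F \to F'$ the map $\{I^n\}_n \to \{J^n\}_n$ induced by $f$.

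The key step, where the Milnor hypothesis enters, is to see that $\{I^n\}_n \to \{J^n\}_n$ is a pro-equivalence; in fact I claim each $f\colon I^n \to J^n$ is already a bijection. Milnor patching for the cartesian ring square identifies $I = \ker(A\to A/I)$ with $J = \ker(B\to B/J)$ via $f$; since this identification is the restriction of a ring homomorphism, it is an isomorphism of \emph{non-unital} rings, and because the $n$-th ideal power of $I$ (resp.\ $J$) coincides with its $n$-th non-unital-ring power, multiplicativity and surjectivity give $f(I^n) = f(I)^n = J^n$, while injectivity of $f|_I$ forces injectivity of $f|_{I^n}$. Hence $F \to F'$ is a levelwise equivalence, the pro-ring square is cartesian, and \eqref{eq:formal cdh square} is cocartesian. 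The point I would be most careful about is the fibre identification—making sure the Koszul higher homotopy really dies pro-systemically via the comparison lemma—whereas the genuinely new input $f(I^n) = J^n$ is immediate and, unlike in the finite cdh case, needs neither Artin--Rees nor the noetherian hypothesis.
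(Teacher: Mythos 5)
Your proof is correct and follows essentially the same route as the paper's: reduce to the affine case by smooth base change, use the Milnor hypothesis to identify the ideals cutting out the two completions power by power, and conclude via \examref{exam:oizchvnawl}. Two points of comparison are worth recording.

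First, your detour through the derived quotients $A\modmod(f_1^n,\ldots,f_m^n)$ and the pro-vanishing of their higher homotopy groups is unnecessary here: since $A$ and $B$ are classical noetherian rings, \examref{exam:oizchvnawl} identifies both completions with ind-systems of \emph{classical} quotients, so everything reduces at once to a question about discrete rings. This is exactly why the paper's proof of this lemma is so much shorter than that of \lemref{lem:finite cdh gives formal Milnor square}.

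Second, and more interestingly, your key step is \emph{more careful than the paper's own}. The paper asserts that its displayed square with ideals $(f_1^n,\ldots,f_m^n)$ is cartesian for every $n$ (``the same holds for all $n>0$''); taken literally this is false. For the cusp $A=\bZ[x^2,x^3]\sub A'=\bZ[x]$, a Milnor square with common ideal $I=(x^2,x^3)A=x^2\bZ[x]$, one computes $(x^4,x^6)A=x^4\bZ+x^6\bZ[x]$ while $(x^4,x^6)A'=x^4\bZ[x]$; thus $x^5$ lies in the latter but not in the image of the former, and the $n=2$ square is not cartesian. What is true --- and what you prove, via the observation that $f\colon I\to J$ is an isomorphism of non-unital rings and that ideal powers coincide with non-unital-ring powers --- is that $f(I^n)=J^n$ for all $n$. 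Together with the intertwining $I^{mn}\sub(f_1^n,\ldots,f_m^n)\sub I^n$ (so the two pro-ideals are isomorphic), this gives pro-cartesianness of the ring squares, which is all that the conclusion requires. So your argument is the correct repair of a real, though ultimately harmless, imprecision at the one point where the paper's proof has any content.
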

  \begin{proof}
    As in the proof of \lemref{lem:finite cdh gives formal Milnor square}, we may assume that $\sX$ (and hence $\sX'$) is affine.
    Let $\sX = \Spec(A)$, $\sX' = \Spec(A')$, and $f_1,\ldots,f_m \in A$ such that $\sZ = \Spec(A/(f_1,\ldots,f_m))$ and $\sZ' = \Spec(A'/(f_1,\ldots,f_m)A')$.
    By assumption, the square
    \[
      \begin{tikzcd}
        A \ar{r}\ar{d}
          & A/(f_1^n,\ldots,f_m^n) \ar{d}
        \\
        A' \ar{r}
          & A'/(f_1^n,\ldots,f_m^n)
      \end{tikzcd}
    \]
    is cartesian for $n=1$, i.e., $A \to A'$ sends the ideal $(f_1,\ldots,f_m)$ isomorphically onto $(f_1,\ldots,f_m)A'$.
    Then the same holds for all $n>0$ and hence by \examref{exam:oizchvnawl} it follows that the square
    \[
      \begin{tikzcd}
        \Spec(A')^\wedge \ar[hookrightarrow]{r}\ar{d}
          & \Spec(A') \ar{d}
        \\
        \Spec(A)^{\wedge} \ar[hookrightarrow]{r}
          & \Spec(A)
      \end{tikzcd}
    \]
    is cocartesian.
  \end{proof}

\subsection{Formal Milnor and finite excision}

  In this subsection we prove the following result, which shows that if we pass to formal completions in \thmref{thm:Qcoh Ferrand}, then we get a pro-Milnor square.

  \begin{thm}\label{thm:Qcohform Ferrand}
    Suppose given a square of \evcoconn noetherian derived algebraic stacks of the form
    \[
      \begin{tikzcd}
        \sZ' \ar{d}{g}\ar{r}{i'}
        & \sX' \ar{d}{f}
        \\
        \sZ \ar{r}{i}
        & \sX.
      \end{tikzcd}
    \]
    Assume that $i$ is a closed immersion, $f$ is affine, the square is cartesian on classical truncations, and the formally completed square \eqref{eq:formal cdh square} is cocartesian.
    Then the induced commutative square in $\Pro(\Presc)$
    \begin{equation}\label{eq:oasvnlq}
      \begin{tikzcd}
        \{\Qcoh(\sX)\} \ar{r}{\hat{i}^*}\ar{d}{f^*}
          & \Qcohform(\sX^\wedge) \ar{d}{(f^\wedge)^*}
        \\
        \{\Qcoh(\sX')\} \ar{r}{\hat{i'}^*}
          & \Qcohform(\sX'^\wedge).
      \end{tikzcd}
    \end{equation}
    is a pro-Milnor square satisfying pro-base change.
  \end{thm}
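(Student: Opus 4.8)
The plan is to exhibit \eqref{eq:oasvnlq} as a pro-square in two \emph{different} ways: one representation witnessing pro-base change and another witnessing pro-Milnor. This is legitimate because \defref{defn:pro Milnor} only requires that \emph{some} cofiltered representation be termwise base-change, resp.\ termwise Milnor, and there is no need for a single representation to enjoy both properties at once. Representing the formal completions by their infinitesimal neighbourhoods, $\Qcohform(\sX^\wedge) = \{\Qcoh(\sZ_n)\}_n$ and $\Qcohform(\sX'^\wedge) = \{\Qcoh(\sZ'_n)\}_n$ with $\sZ'_n = \sZ_n \fibprod_{\sX} \sX'$, the square \eqref{eq:oasvnlq} is represented by the cofiltered system $\{\Delta_n\}_n$ of squares
\[
\begin{tikzcd}
\Qcoh(\sX) \ar{r}{i_n^*}\ar{d}{f^*}
  & \Qcoh(\sZ_n) \ar{d}{g_n^*}
\\
\Qcoh(\sX') \ar{r}{i_n'^*}
  & \Qcoh(\sZ'_n),
\end{tikzcd}
\]
where $g_n$ is the base change of $f$ and $i_n,i_n'$ are the inclusions of the neighbourhoods.

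Pro-base change is the easy half. The square of stacks with corners $\sZ'_n,\sX',\sZ_n,\sX$ is homotopy cartesian by the very definition of $\sZ'_n$, and $f$ is representable, so \examref{exam:base change} shows that each $\Delta_n$ satisfies base change; all four functors are compact by \remref{rem:f^* compact}, since every morphism in sight is affine. Hence $\{\Delta_n\}_n$ already witnesses pro-base change.

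For pro-Milnor the system $\{\Delta_n\}_n$ is inadequate: the stack square above is cartesian but not cocartesian, so $\Delta_n$ is typically not precartesian. I would therefore pass to a second representation built from Ferrand pushouts. Put $W_n := \sZ_n \fibcoprod_{\sZ'_n} \sX'$, which exists by \examref{exam:pinching} as $g_n$ is affine and $i_n'$ is a closed immersion. The square with corners $\sZ'_n, \sX', \sZ_n, W_n$ is then a Ferrand pushout, so by \thmref{thm:Qcoh Ferrand} the induced square
\[
\begin{tikzcd}
\Qcoh(W_n) \ar{r}\ar{d}
  & \Qcoh(\sZ_n) \ar{d}{g_n^*}
\\
\Qcoh(\sX') \ar{r}{i_n'^*}
  & \Qcoh(\sZ'_n)
\end{tikzcd}
\]
is a \emph{Milnor} square $\tilde\Delta_n$. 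It differs from $\Delta_n$ only in its upper-left corner, $\Qcoh(W_n)$ in place of the constant $\Qcoh(\sX)$, together with the two functors emanating from it; these are compatible through the canonical map $W_n \to \sX$ furnished by the universal property of the pushout. Consequently $\{\tilde\Delta_n\}_n$ is a second representation of \eqref{eq:oasvnlq}, and hence witnesses pro-Milnor, \emph{provided} the comparison $\{\Qcoh(\sX)\} \to \{\Qcoh(W_n)\}_n$ is an equivalence in $\Pro(\Presc)$.

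This last pro-equivalence is the categorical incarnation of the cocartesianness hypothesis, and establishing it is the main obstacle. Since the assertion concerns quasi-coherent sheaves and all the maps $W_n \to \sX$ and $W_n \to W_{n+1}$ are affine, it may be checked after smooth base change to an affine scheme, exactly as in the proofs of \lemref{lem:finite cdh gives formal Milnor square} and \lemref{lem:Milnor gives formal Milnor square}. Writing $\sX = \Spec(A)$, $\sX' = \Spec(A')$ and $\sZ = \Spec(A/(f_1,\dots,f_m))$, one has $W_n = \Spec(A_n \fibprod_{A'_n} A')$ with $A_n = A\modmod(f_1^n,\dots,f_m^n)$ and $A'_n = A'\modmod(f_1^n,\dots,f_m^n)$, the fibre product being a genuine Milnor square of derived rings because $A' \to A'_n$ is surjective on $\pi_0$. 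By \examref{exam:oizchvnawl}, the hypothesis that \eqref{eq:formal cdh square} is cocartesian translates into the cartesianness of the pro-ring square $\{A\} \to \{A_n\}_n \fibprod_{\{A'_n\}_n} \{A'\}$; since finite limits commute with passage to pro-objects, this says precisely $\{A\} \simeq \{A_n \fibprod_{A'_n} A'\}_n$ as pro-derived-rings. Applying the functor $\LMod_{(-)}$, which carries this pro-equivalence of rings to a pro-equivalence of module categories, yields $\{\Qcoh(\sX)\} \simeq \{\Qcoh(W_n)\}_n$, identifying the two representations. Granting this, $\{\Delta_n\}_n$ is termwise base-change and $\{\tilde\Delta_n\}_n$ is termwise Milnor, so \eqref{eq:oasvnlq} is a pro-Milnor square satisfying pro-base change. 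The conceptual crux to keep in view is that neither representation suffices alone—the neighbourhood squares are cartesian-but-not-cocartesian and the Ferrand-pushout squares are cocartesian-but-not-cartesian—and that the cocartesianness hypothesis is exactly what glues them into a single pro-square.
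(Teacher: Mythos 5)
Your proposal is correct in outline and takes essentially the same route as the paper: the paper's proof also represents \eqref{eq:oasvnlq} by two cofiltered systems --- the levelwise (homotopy) cartesian squares of stacks, which satisfy levelwise base change by \examref{exam:base change}, and the levelwise Ferrand-pushout squares, which are levelwise Milnor squares by \thmref{thm:Qcoh Ferrand} --- with the cocartesianness hypothesis providing the glue between the two representations. The only divergence is your treatment of the comparison $\{\Qcoh(\sX)\}\simeq\{\Qcoh(W_n)\}_n$, which you single out as the main obstacle: in the paper this step is formal, because finite colimits of ind-objects are computed levelwise, so the hypothesis that the completed square is cocartesian says exactly that the canonical map of ind-stacks $\{W_n\}_n \to \{\sX\}$ is an isomorphism, and applying the functor $\Qcohform$ (the $\Pro$-extension of $\Qcoh$) to this isomorphism yields the identification in $\Pro(\Presc)$. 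By contrast, your affine-local argument rests on the claim that an isomorphism in $\Pro(\Presc)$ may be checked after smooth base change to affines; this is not justified --- the smooth-local reductions in Lemmas~\ref{lem:finite cdh gives formal Milnor square} and \ref{lem:Milnor gives formal Milnor square} concern cocartesianness of squares of \emph{stacks}, not pro-equivalences of $\Qcoh$-categories, and no descent principle for pro-systems of presentable $\infty$-categories is available. The repair is simply to invoke the hypothesis at the level of ind-stacks and conclude by functoriality as above, after which your affine $\LMod$ computation becomes unnecessary.
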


  \begin{proof}
    By \constrref{constr:formal completion of square} and the assumption, the square
    \begin{equation*}
      \begin{tikzcd}
        \sX'^\wedge_{\sZ'} \ar[hookrightarrow]{r}\ar{d}{f^\wedge}
          & \sX' \ar{d}{f}
        \\
        \sX^{\wedge}_{\sZ} \ar[hookrightarrow]{r}
          & \sX
      \end{tikzcd}
    \end{equation*}
    is both cartesian and cocartesian.
    Thus by \remref{rem:formal colim} it can be represented by either of the following filtered systems
    \begin{equation*}
      \begin{tikzcd}
        \widetilde{\sZ}' \ar[hookrightarrow]{r}\ar{d}
          & \sX' \ar{d}{f}
        \\
        \widetilde{\sZ} \ar[hookrightarrow]{r}
          & \sX,
      \end{tikzcd}
      \qquad
      \begin{tikzcd}
        \widetilde{\sZ}' \ar[hookrightarrow]{r}\ar{d}
          & \sX' \ar{d}{f}
        \\
        \widetilde{\sZ} \ar[hookrightarrow]{r}
          & \widetilde{\sZ}\fibcoprod_{\widetilde{\sZ}'} \sX',
      \end{tikzcd}
    \end{equation*}
    indexed by $\widetilde{\sZ}$ as in \remref{rem:formal colim}, where $\widetilde{\sZ}' = \widetilde{\sZ}\fibprod_\sX\sX'$.
    The left-hand squares are levelwise cartesian and the right-hand squares are levelwise cocartesian.
    Thus by \examref{exam:base change} and \thmref{thm:Qcoh Ferrand} the induced square \eqref{eq:oasvnlq} can be represented alternatively by squares that satisfy levelwise base change or are levelwise precartesian.
    In particular, it is a pro-Milnor square.
  \end{proof}

  Combining this with Lemmas~\ref{lem:finite cdh gives formal Milnor square} and \ref{lem:Milnor gives formal Milnor square} yields:

  \begin{cor}[Formal Milnor excision]\label{cor:Qcohform Milnor}
    Suppose given a Milnor square of noetherian algebraic stacks of the form \eqref{eq:Milnor square of stacks}.
    Then the induced commutative square in $\Pro(\Presc)$
    \begin{equation*}
      \begin{tikzcd}
        \{\Qcoh(\sX)\} \ar{r}{\hat{i}^*}\ar{d}{f^*}
          & \Qcohform(\sX^\wedge) \ar{d}{(f^\wedge)^*}
        \\
        \{\Qcoh(\sX')\} \ar{r}{\hat{i'}^*}
          & \Qcohform(\sX'^\wedge).
      \end{tikzcd}
    \end{equation*}
    is a pro-Milnor square satisfying pro-base change.
  \end{cor}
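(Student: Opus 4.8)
The plan is to deduce this directly from \thmref{thm:Qcohform Ferrand} by checking that a Milnor square of noetherian algebraic stacks satisfies the hypotheses of that theorem. Recall that \thmref{thm:Qcohform Ferrand} applies to a square of \evcoconn noetherian derived algebraic stacks in which $i$ is a closed immersion, $f$ is affine, the square is cartesian on classical truncations, and the formally completed square \eqref{eq:formal cdh square} is cocartesian; its conclusion is exactly the statement that the induced square in $\Pro(\Presc)$ is pro-Milnor and satisfies pro-base change, which is what we want. So the entire proof reduces to verifying these four conditions for the given Milnor square \eqref{eq:Milnor square of stacks}.

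Three of the four conditions are immediate from the definition of a Milnor square. First, regarding the classical noetherian algebraic stacks $\sZ'$, $\sX'$, $\sZ$, $\sX$ as derived algebraic stacks, they are automatically \evcoconn, since any classical stack is bounded. Second, $i$ is a closed immersion and $f$ is affine by the very definition of a Milnor square. Third, a Milnor square is cartesian; as all the stacks involved are classical, this says precisely that the square is cartesian on classical truncations. Hence the only remaining, and genuinely nontrivial, input is the cocartesianness of the formally completed square \eqref{eq:formal cdh square}.

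This last point is exactly the content of \lemref{lem:Milnor gives formal Milnor square}, so the step I would expect to be the main obstacle has in fact already been carried out there: one reduces to the affine case $\sX = \Spec(A)$, $\sX' = \Spec(A')$, uses that the Milnor square condition forces $A \to A'$ to carry the ideal $(f_1,\ldots,f_m)$ isomorphically onto $(f_1,\ldots,f_m)A'$, observes that the analogous statement then persists after raising to all powers $n$, and invokes the criterion for cocartesianness of formal completions to conclude. Granting \lemref{lem:Milnor gives formal Milnor square}, all four hypotheses of \thmref{thm:Qcohform Ferrand} are met, and applying that theorem to the Milnor square \eqref{eq:Milnor square of stacks} yields at once that the induced square in $\Pro(\Presc)$ is a pro-Milnor square satisfying pro-base change. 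Thus the proof is a direct combination of \thmref{thm:Qcohform Ferrand} with \lemref{lem:Milnor gives formal Milnor square}, with no further calculation required beyond unwinding the definition of a Milnor square.
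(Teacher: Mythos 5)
Your proposal is correct and is exactly the paper's own argument: the corollary is stated as the combination of \thmref{thm:Qcohform Ferrand} with \lemref{lem:Milnor gives formal Milnor square}, the latter supplying the cocartesianness of the formally completed square, while the remaining hypotheses (boundedness, $i$ a closed immersion, $f$ affine, cartesianness on classical truncations) follow from the definition of a Milnor square of classical stacks just as you verify.
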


  \begin{cor}[Formal finite excision]\label{cor:Qcohform finite cdh}
    Suppose given a finite cdh square of \evcoconn noetherian derived algebraic stacks of the form \eqref{eq:abstract blow-up square}.
    Then the induced commutative square in $\Pro(\Presc)$
    \begin{equation*}
      \begin{tikzcd}
        \{\Qcoh(\sX)\}\ar{r}\ar{d}
          & \Qcohform(\sX^\wedge_{\sZ})\ar{d}
        \\
        \{\Qcoh(\sX')\}\ar{r}
          & \Qcohform(\sX'^\wedge_{\sZ'})
      \end{tikzcd}
    \end{equation*}
    is a pro-Milnor square satisfying pro-base change.
  \end{cor}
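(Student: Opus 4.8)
The plan is to deduce this as an immediate consequence of \thmref{thm:Qcohform Ferrand}, once we check that every hypothesis of that theorem is met by a finite cdh square. First I would unpack \defref{defn:cdh square}: in a finite cdh square of the form \eqref{eq:abstract blow-up square}, the morphism $f$ is finite and therefore affine, the morphism $i$ is a closed immersion with quasi-compact open complement, and by part \ref{item:paofpoql} the square is cartesian on classical truncations. These observations supply three of the four hypotheses of \thmref{thm:Qcohform Ferrand} directly from the definition, and the standing \evcoconn and noetherian assumptions on the stacks match the corresponding assumptions there.

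The one remaining hypothesis is that the formally completed square \eqref{eq:formal cdh square} be cocartesian. This is exactly the assertion of \lemref{lem:finite cdh gives formal Milnor square}, which applies because the stacks are bounded and noetherian and $f$ is finite. Thus all four hypotheses of \thmref{thm:Qcohform Ferrand} hold, and I would invoke that theorem to conclude that the induced square in $\Pro(\Presc)$ is a pro-Milnor square satisfying pro-base change, which is the claim.

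Since the substantive analysis has already been carried out---the cocartesianness of the formal completion in \lemref{lem:finite cdh gives formal Milnor square} (via reduction to the affine case, the description of the pro-systems $\{\pi_k(A\modmod(f_1^n,\ldots,f_m^n))\}_n$, and the Artin--Rees lemma) together with the levelwise representation argument inside \thmref{thm:Qcohform Ferrand}---there is no genuine obstacle at this step. The only point demanding attention is bookkeeping: ensuring that the definitional hypotheses of a finite cdh square line up exactly with the list required by \thmref{thm:Qcohform Ferrand}, in particular that ``finite'' enters only through its weaker consequence ``affine'' (as needed by the theorem) on the one hand, and through the genuine finiteness used in \lemref{lem:finite cdh gives formal Milnor square} (to guarantee that the homotopy groups $\pi_k(A')$ are finitely generated over $\pi_0(A)$) on the other.
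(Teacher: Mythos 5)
Your proposal is correct and follows exactly the paper's own route: the corollary is obtained by combining \thmref{thm:Qcohform Ferrand} with \lemref{lem:finite cdh gives formal Milnor square}, with the only verification being that a finite cdh square satisfies the hypotheses of the theorem (finite implies affine, cartesianness on classical truncations is part of \defref{defn:cdh square}, and the lemma supplies cocartesianness of the formal completion). Your closing remark correctly identifies where genuine finiteness (as opposed to affineness) is actually used, namely inside the proof of \lemref{lem:finite cdh gives formal Milnor square}.
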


  \begin{cor}[Formal nil-excision]\label{cor:Qcohform nil}
    Let $\sX$ be a \evcoconn noetherian derived algebraic stack with classical truncation $\sX_\cl$.
    Then for any closed immersion $\sZ \hook \sX$ with $0$-truncated quasi-compact open complement $\sX\setminus\sZ$, the induced commutative square in $\Pro(\Presc)$
      \begin{equation*}
        \begin{tikzcd}
          \{\Qcoh(\sX)\}\ar{r}\ar{d}
            & \Qcohform(\sX^\wedge_\sZ)\ar{d}
          \\
          \{\Qcoh(\sX_\cl)\}\ar{r}
            & \Qcohform((\sX_\cl)^\wedge_{\sZ_\cl})
        \end{tikzcd}
      \end{equation*}
    is a pro-Milnor square satisfying pro-base change.
  \end{cor}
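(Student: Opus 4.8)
The plan is to obtain this as a special case of \thmref{thm:Qcohform Ferrand}, applied to the square
\[
  \begin{tikzcd}
    \sZ_\cl \ar{d}{g}\ar{r}{i'}
    & \sX_\cl \ar{d}{f}
    \\
    \sZ \ar{r}{i}
    & \sX,
  \end{tikzcd}
\]
in which $f : \sX_\cl \to \sX$ is the inclusion of the classical truncation and $i'$, $g$ are the evident induced morphisms. Under the identification $\sX' = \sX_\cl$ and $\sZ' = \sZ_\cl$, the pro-square produced by that theorem is precisely the one in the statement (note $\Qcohform(\sX'^\wedge) = \Qcohform((\sX_\cl)^\wedge_{\sZ_\cl})$), so it suffices to verify the four hypotheses of \thmref{thm:Qcohform Ferrand}. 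First I would record that all four stacks are bounded and noetherian: $\sX$ is so by assumption, $\sX_\cl$ is classical and noetherian, and $\sZ$, $\sZ_\cl$ inherit these properties as closed substacks.

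I would then check the hypotheses in turn. The morphism $i$ is a closed immersion by assumption, and $f$ is affine because the inclusion of the classical truncation is a (nilpotent) closed immersion. For cartesianness on classical truncations, I would use that classical truncation carries derived fibre products to ordinary ones (on affines this is the identity $\pi_0(B \otimes^{\bL}_A C) \simeq \pi_0(B) \otimes_{\pi_0(A)} \pi_0(C)$, which globalizes), so that $(\sZ \fibprod_\sX \sX_\cl)_\cl \simeq \sZ_\cl \fibprod_{\sX_\cl} \sX_\cl \simeq \sZ_\cl$, the last step using that $f$ restricts to the identity on classical truncations; this matches the upper-left corner $\sZ_\cl$.

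The one substantive hypothesis, namely that the formally completed square be cocartesian, is exactly the content of \corref{cor:nil formal Milnor square}, whose standing assumptions — bounded noetherian $\sX$ and a closed immersion $\sZ \hook \sX$ with $0$-truncated quasi-compact open complement — are precisely those in force here. With all four hypotheses in hand, \thmref{thm:Qcohform Ferrand} yields the desired pro-Milnor square satisfying pro-base change. I do not anticipate a genuine obstacle: the argument is purely a verification of hypotheses, and the only nonformal input, the cocartesianness of the completed square, has already been isolated as \corref{cor:nil formal Milnor square}.
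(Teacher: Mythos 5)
Your proposal is correct and follows essentially the paper's own (implicit) derivation: the corollary is exactly \thmref{thm:Qcohform Ferrand} applied to the square with $\sX' = \sX_\cl$, $\sZ' = \sZ_\cl$, where the only substantive hypothesis—cocartesianness of the formally completed square—is \corref{cor:nil formal Milnor square}, the remaining hypotheses being the routine verifications you carry out. This is the same reduction the paper intends when it groups this statement with \corref{cor:Qcohform finite cdh} (one may equivalently observe that your square is a finite cdh square, since $f$ is an isomorphism away from $\sZ$ by the $0$-truncatedness of the complement, and quote that corollary directly).
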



\section{Categorical Milnor excision}
\label{sec:localizing_Milnor}

In this section we prove categorical Milnor excision (\thmref{thm:intro/cat}).
We will first prove a characterization of Milnor squares in terms of the ``$\odot$-construction'' (see \thmref{thm:odot}).

\subsection{Adjustments of squares}

  Of particular interest for us will be the behaviour of Milnor squares as $\sB'$ is allowed to vary (especially among categories of non-geometric origin).
  For convenience, we refer to the operation of extending the square $\Delta$ along a colimit-preserving functor $a^* : \sB' \to \sB''$ as ``adjustment''.

  \begin{defn}
    Let $\Delta$ be a commutative square in $\Pres$ of the form
    \begin{equation}\label{eq:square1}
      \begin{tikzcd}
        \sA \arrow[r,"f^*"]\arrow[d,"p^*"] & \sB\arrow[d,"q^*"] \\
        \sA' \arrow[r, "g^*"]              & \sB'.
      \end{tikzcd}
    \end{equation}
    Any choice of another presentable \inftyCat $\sB''$ and a colimit-preserving functor $a^* : \sB' \to \sB''$ determines a new square $\Delta_a$ of the form
    \begin{equation*}
      \begin{tikzcd}
        \sA \arrow[r,"f^*"]\arrow[d,"p^*"] & \sB\arrow[d,"q_a^*"] \\
        \sA' \arrow[r, "g_a^*"]           & \sB'',
      \end{tikzcd}
    \end{equation*}
    where $q_a^* = a^* \circ q^*$ and $g_a^* = a^* \circ g^*$.
    We call $\Delta_a$ the \emph{adjustment of $\Delta$ by $a^*$}.
  \end{defn}

  \begin{lem}\label{lem:base change adjustment}
    Let $\Delta$ be a commutative square in $\Pres$ of the form \eqref{eq:square1}.
    Let $\Delta_a$ be the adjustment of $\Delta$ by a colimit-preserving functor $a^* : \sB' \to \sB''$.
    Then the exchange transformations for the respective squares fit into a canonical commutative square
    \[
      \begin{tikzcd}
        f^*p_* \ar{r}{\Ex_\Delta}\ar[equals]{d}
          & q_*g^* \ar{d}
        \\
        f^*p_* \ar[swap]{r}{\Ex_{\Delta_a}}
          & q_{a,*}g_a^*
      \end{tikzcd}
    \]
    where the vertical arrow is the morphism $q_*g^* \to q_*a_*a^*g^* \simeq q_{a,*}g_a^*$ induced by the unit $\eta_a$.
  \end{lem}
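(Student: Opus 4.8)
The plan is to unwind both exchange transformations into their defining composites and compare them edge by edge, reducing the entire claim to a single naturality square for the unit $\eta_a$. First I would record that since $q_a^* = a^* \circ q^*$, the right adjoint is $q_{a,*} = q_* \circ a_*$, and the unit of the composite adjunction decomposes as
\[
  \eta_{q_a} \;\simeq\; q_*(\eta_a q^*) \circ \eta_q
\]
as natural transformations $\id \to q_* a_* a^* q^*$. I would also fix notation $\alpha : q^* f^* \isoto g^* p^*$ for the canonical isomorphism witnessing commutativity of $\Delta$; since $\Delta_a$ is obtained by postcomposing the lower and right edges with $a^*$, the corresponding isomorphism for $\Delta_a$ is just $a^*\alpha : q_a^* f^* \isoto g_a^* p^*$.

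Next I would write out both composites explicitly. By definition $\Ex_\Delta$ is
\[
  f^*p_* \xrightarrow{\eta_q} q_* q^* f^* p_* \xrightarrow{q_*\alpha p_*} q_* g^* p^* p_* \xrightarrow{q_* g^* \varepsilon_p} q_* g^*,
\]
and the vertical arrow of the claimed square is $q_*(\eta_a g^*)$. On the other hand, substituting the decomposition of $\eta_{q_a}$, the transformation $\Ex_{\Delta_a}$ becomes
\[
  f^*p_* \xrightarrow{\eta_q} q_* q^* f^* p_* \xrightarrow{q_*(\eta_a q^*)} q_* a_* a^* q^* f^* p_* \xrightarrow{q_* a_* a^*(\alpha) p_*} q_* a_* a^* g^* p^* p_* \xrightarrow{q_* a_* a^* g^*\varepsilon_p} q_* a_* a^* g^*.
\]
Both $\Ex_{\Delta_a}$ and the target composite $q_*(\eta_a g^*) \circ \Ex_\Delta$ begin with $\eta_q$ applied to $f^*p_*$, and every remaining morphism carries $q_*$ as an outer factor. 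It therefore suffices to prove that the two inner composites of natural transformations $q^*f^*p_* \to a_* a^* g^*$ agree, namely
\[
  (a_* a^* g^*\varepsilon_p)\circ (a_* a^*(\alpha) p_*)\circ(\eta_a q^* f^* p_*)
  \;=\;
  (\eta_a g^*)\circ(g^*\varepsilon_p)\circ(\alpha p_*).
\]

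The only substantive point is this last identity, which I expect to be the main (though entirely routine) obstacle: it is precisely the naturality square of $\eta_a : \id \to a_* a^*$ evaluated on the morphism $\beta := (g^*\varepsilon_p)\circ(\alpha p_*) : q^*f^*p_* \to g^*$. Indeed, naturality of $\eta_a$ applied to $\beta$ gives $(\eta_a g^*)\circ\beta = (a_* a^*\beta)\circ(\eta_a q^*f^*p_*)$; the left-hand side is the right-hand composite above, while $a_* a^*\beta$ unfolds as $(a_* a^* g^*\varepsilon_p)\circ(a_* a^*(\alpha)p_*)$, yielding the left-hand composite. Whiskering this equality by $q_*$ and precomposing with $\eta_q$ on $f^*p_*$ then identifies the two edges of the square, so it commutes. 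The bookkeeping of the composite-unit decomposition, together with the observation that all the relevant morphisms factor through $q_*$, are the only steps requiring any care.
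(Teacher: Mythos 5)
Your proof is correct and takes essentially the same approach as the paper: the paper's proof subdivides the rectangle into three tautologically commutative squares (the unit decomposition $\eta_{q_a} = (q_*\eta_a q^*)\circ \eta_q$ for the composite adjunction, plus two naturality squares for $\eta_a$), which is precisely your computation with the two naturality squares merged into a single application of naturality of $\eta_a$ to the composite $\beta = (g^*\varepsilon_p)\circ(\alpha p_*)$. There are no gaps.
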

  \begin{proof}
    The diagram in question can be subdivided as follows
    \[
      \begin{tikzcd}
        f^*p_* \ar{r}{\eta_q}\ar[equals]{d}
          & q_*q^*f^*p_* \ar{r}{\sim}\ar{d}{\eta_a}
          & q_*g^*p^*p_* \ar{r}{\varepsilon_p}\ar{d}{\eta_a}
          & q_*g^*\ar{d}{\eta_a}\\
        f^*p_* \ar{r}{\eta_{aq}}
          & q_*a_*a^*q^*f^*p_* \ar{r}{\sim}
          & q_*a_*a^*g^*p^*p_* \ar{r}{\varepsilon_p}
          & q_*a_*a^*g^*
      \end{tikzcd}
    \]
    where each square is tautologically commutative.
  \end{proof}

  \begin{cor}\label{cor:base change adjustment}
    Let $\Delta$ be a commutative square of \pstabs and colimit-preserving functors of the form \eqref{eq:square1}.
    Suppose that $\Delta$ satisfies base change.
    For any \pstab $\sB''$ and colimit-preserving functor $a^* : \sB' \to \sB''$, consider the following conditions:
    \begin{thmlist}
      \item The functor $a^* : \sB' \to \sB''$ is fully faithful.
      \item The adjustment $\Delta_a$ satisfies base change.
    \end{thmlist}
    The implication (i) $\implies$ (ii) always holds, and the converse holds if $g^*$ and $q^*$ generate $\sB'$ under colimits.
  \end{cor}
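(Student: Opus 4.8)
The plan is to route everything through the commutative square of exchange transformations supplied by \lemref{lem:base change adjustment} and to isolate its right-hand vertical arrow. Recall that for the adjustment one has $q_a^* = a^*q^*$ and $g_a^* = a^*g^*$, so $q_{a,*} = q_*a_*$ and $q_{a,*}g_a^* \simeq q_*a_*a^*g^*$; the vertical arrow of \lemref{lem:base change adjustment} is then the whiskered unit
\[ u := q_*\bigl((\eta_a)_{g^*}\bigr) : q_*g^* \longrightarrow q_*a_*a^*g^* \simeq q_{a,*}g_a^*. \]
That square reads $\Ex_{\Delta_a} = u \circ \Ex_\Delta$, and $\Ex_\Delta$ is invertible by hypothesis, so I obtain the key reformulation on which both implications rest: \emph{$\Delta_a$ satisfies base change if and only if $u$ is invertible.}

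For (i) $\implies$ (ii), suppose $a^*$ is fully faithful. Then the unit $\eta_a : \id \to a_*a_*^{}a^*$ (that is, $\id \to a_*a^*$) is invertible, hence so is the whiskered transformation $(\eta_a)_{g^*}$, and therefore so is $u = q_*((\eta_a)_{g^*})$. By the reformulation above, $\Delta_a$ satisfies base change. Note that this direction uses only that $\Delta$ satisfies base change, and neither generation hypothesis.

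For the converse, assume $u$ is invertible and that $g^*$ and $q^*$ generate $\sB'$ under colimits. Since $q^*$ generates $\sB'$, its right adjoint $q_*$ is conservative by \lemref{lem:generate}; as $u = q_*((\eta_a)_{g^*})$ is invertible, conservativity forces $(\eta_a)_{g^*}$ to be invertible, i.e.\ $(\eta_a)_Y$ is an equivalence for every $Y$ in the essential image of $g^*$. It remains to propagate invertibility of the unit from the essential image of $g^*$ to all of $\sB'$. For this I consider the full subcategory $\sB'_\eta \subseteq \sB'$ spanned by the objects $Y$ for which $(\eta_a)_Y$ is invertible. Because $a^*$ and $a_*$ are exact, $\eta_a$ is a natural transformation of exact endofunctors and $\sB'_\eta$ is the locus where the exact functor $\Cofib(\eta_a)$ vanishes; hence $\sB'_\eta$ is a stable subcategory, and it contains the essential image of $g^*$. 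If $\sB'_\eta$ is moreover closed under colimits, then \lemref{lem:generate}(i) applied to $g^*$ gives $\sB'_\eta = \sB'$, so $\eta_a$ is invertible and $a^*$ is fully faithful.

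The step I expect to be the main obstacle is exactly the cocompleteness of $\sB'_\eta$. Writing $Y = \colim_i Y_i$ with $Y_i \in \sB'_\eta$ and using naturality of $\eta_a$, one factors $(\eta_a)_Y$ as the composite of the equivalence $\colim_i (\eta_a)_{Y_i}$ with the canonical comparison map $\colim_i a_*a^*Y_i \to a_*\bigl(\colim_i a^*Y_i\bigr) = a_*a^*Y$. Since $a^*$ preserves colimits, the invertibility of $(\eta_a)_Y$ thus reduces to the assertion that $a_*$ preserves the colimit $\colim_i a^*Y_i$. This is where preservation of filtered colimits by $a_*$ (equivalently, compactness of $a^*$, as is available in the compactly generated setting of interest) must be invoked; combined with the exactness of $a_*$, it yields that $a_*a^*$ is colimit-preserving and hence that $\sB'_\eta$ is cocomplete. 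I expect this colimit-preservation point to be the only delicate ingredient, everything else being formal manipulation of units, counits, and the reformulation of the first paragraph.
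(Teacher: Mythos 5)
Your proposal follows the same route as the paper's proof, which is a one-liner: condition (i) is equivalent to invertibility of $\eta_a$, and the claim is said to follow ``immediately'' from \lemref{lem:base change adjustment}. Your factorization $\Ex_{\Delta_a} = u \circ \Ex_\Delta$ with $u = q_*\bigl((\eta_a)_{g^*}\bigr)$, the resulting reformulation (``$\Delta_a$ satisfies base change iff $u$ is invertible''), the direction (i) $\implies$ (ii), and the first half of the converse (conservativity of $q_*$ via \lemref{lem:generate} yields invertibility of $(\eta_a)_{g^*}$) are exactly the intended argument.

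Where you depart from the paper is in making explicit the step its proof elides, and here your diagnosis is correct and important. Passing from invertibility of $\eta_a$ on the essential image of $g^*$ to invertibility on all of $\sB'$ requires the locus $\sB'_\eta$ to be closed under colimits, i.e.\ requires $a_*a^*$ to preserve colimits; this does \emph{not} follow from the corollary's stated hypotheses, since a colimit-preserving functor between presentable stable $\infty$-categories need not have a colimit-preserving (or even filtered-colimit-preserving) right adjoint. Your fix --- compactness of $a^*$, so that $a_*$ preserves filtered colimits and hence, being exact, all colimits --- is precisely the hypothesis the paper itself supplies when it actually uses this corollary: in the proof of \thmref{thm:odot}, compactness of $b^*$ is explicitly recorded (via \lemref{lem:q_0 and g_0 generation}) before the corollary is invoked, even though compactness appears nowhere in the corollary's statement. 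So, strictly speaking, your proof establishes the converse only under an added compactness assumption; but this is the same assumption the paper's glossed argument tacitly needs and has available in its application. In short, your write-up is the paper's proof done carefully, and it correctly exposes that the converse, as literally stated, is missing a compactness hypothesis on $a^*$.
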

  \begin{proof}
    Recall that the first condition is equivalent to invertibility of the unit $\eta_a : \id \to a_*a^*$.
    Therefore the claim follows immediately from \lemref{lem:base change adjustment}.
  \end{proof}

  \begin{lem}\label{lem:precartesian adjustment}
    Let $\Delta$ be a commutative square in $\Pres$ of the form \eqref{eq:square1}.
    If a colimit-preserving functor $a^* : \sB' \to \sB''$ induces monomorphisms on mapping spaces (e.g. it is fully faithful, or more generally if it is a monomorphism of \inftyCats), the following conditions are equivalent:
    \begin{thmlist}
      \item The square $\Delta$ is precartesian.
      \item The adjustment $\Delta_a$ is precartesian.
    \end{thmlist}
  \end{lem}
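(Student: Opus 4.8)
The plan is to reduce both assertions to the level of mapping spaces and to isolate one elementary fact about fibre products of spaces along monomorphisms. Recall that $\Delta$ is precartesian exactly when the comparison functor $(p^*,f^*)\colon \sA \to \sA'\times_{\sB'}\sB$ is fully faithful, and that $\Delta_a$ is precartesian exactly when $(p^*,f^*)\colon \sA \to \sA'\times_{\sB''}\sB$ is fully faithful, the target pullback here being formed using $g_a^* = a^*g^*$ and $q_a^* = a^*q^*$. First I would observe that $a^*$ induces a functor $\Phi\colon \sA'\times_{\sB'}\sB \to \sA'\times_{\sB''}\sB$, and that the triangle relating the two comparison functors out of $\sA$ through $\Phi$ commutes: indeed both legs are built from $p^*$, $f^*$ and the structural identification $g^*p^*\simeq q^*f^*$ encoding commutativity of $\Delta$, and by construction the commutativity datum of $\Delta_a$ is obtained by applying $a^*$ to that of $\Delta$. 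Consequently, since precomposing the fully-faithfulness comparison with an equivalence does not affect whether it is an equivalence, it suffices to prove that $\Phi$ is fully faithful; then the left leg is fully faithful if and only if the right leg is, which is exactly the desired equivalence of (i) and (ii).

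The heart of the matter is therefore the fully faithfulness of $\Phi$, which I would check directly on mapping spaces. For two objects of $\sA'\times_{\sB'}\sB$ the mapping space is computed as a fibre product $M\times_P N$, where $M$ and $N$ are mapping spaces in $\sA'$ and $\sB$ and $P$ is the relevant mapping space in $\sB'$; the functor $\Phi$ replaces the base $P$ by the corresponding mapping space $P''$ in $\sB''$, the comparison map $P\to P''$ being precisely the map induced by $a^*$. By hypothesis this map is a monomorphism of spaces. So the whole statement reduces to the elementary claim that, for a monomorphism $m\colon P\to P''$ of spaces and arbitrary maps $\alpha\colon M\to P$ and $\beta\colon N\to P$, the canonical map $M\times_P N \to M\times_{P''}N$ is an equivalence.

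To prove this claim I would compute the fibre of $M\times_P N \to M\times_{P''}N$ over a point $(x,y,\gamma)$, where $\gamma$ is a path from $m\alpha(x)$ to $m\beta(y)$ in $P''$: this fibre is the fibre over $\gamma$ of the induced map of path spaces $\Maps_P(\alpha(x),\beta(y)) \to \Maps_{P''}(m\alpha(x),m\beta(y))$. Since a monomorphism of spaces is, up to equivalence, the inclusion of a collection of connected components, it induces equivalences on all path spaces, so this fibre is contractible; as this holds over every point, the comparison map is an equivalence. Assembling these pointwise equivalences over all pairs of objects gives that $\Phi$ is fully faithful, and the lemma follows.

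I expect the only genuinely delicate point to be bookkeeping rather than mathematics. One must set up the fibre-product descriptions of mapping spaces in $\sA'\times_{\sB'}\sB$ and $\sA'\times_{\sB''}\sB$ so that $\Phi$ really acts as post-composition with $m$ on the base $P$, which requires tracking the twisting by the structural equivalences $g^*(-)\simeq q^*(-)$. Once this compatibility is pinned down, the reduction to the elementary claim about spaces is automatic, and the claim itself is the one-line fibre computation above.
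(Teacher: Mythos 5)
Your proof is correct and takes essentially the same route as the paper's: the paper's entire proof is the observation that for a diagram of spaces $X \to Z \gets Y$ and a monomorphism $Z \to Z'$, the induced map $X \fibprod_Z Y \to X \fibprod_{Z'} Y$ is an equivalence, applied to the mapping-space fibre products exactly as you do. You merely make explicit the bookkeeping (the comparison functor $\Phi$ between the two pullbacks and the pointwise fibre computation proving the basic fact) that the paper leaves implicit.
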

  \begin{proof}
    This follows from the following basic fact: given a diagram of spaces $X \to Z \gets Y$ and a map $f : Z \to Z'$, the induced map
    \[ X\fibprod_ZY \to X\fibprod_{Z'}Y \]
    is invertible when $Z \to Z'$ is a monomorphism.
  \end{proof}

\subsection{The \texorpdfstring{$\odot$}{odot}-construction}

  In this subsection we introduce the main tool in our analysis of categorical Milnor squares (see \constrref{constr:odot}).
  This material closely follows \cite{LandTamme}, where it is developed in the special case of \inftyCats of module spectra.

  Fix a commutative square $\Delta$ in $\Presc$ of the form
  \begin{equation*}
    \begin{tikzcd}
      \sA \arrow[r,"f^*"]\arrow[d,"p^*"] & \sB\arrow[d,"q^*"] \\
      \sA' \arrow[r, "g^*"]              & \sB'.
    \end{tikzcd}
  \end{equation*}
  We begin with the following preliminary construction.

  \begin{constr}
    Consider the \inftyCat $\sC$ of triples $(X', Y, \theta)$, where $X' \in \sA'$ and $Y \in \sB$ are objects and $\theta : g^*(X') \to q^*(Y)$ is a morphism in $\sB'$.
    A morphism $(X'_1, Y_1, \theta_1) \to (X'_2, Y_2, \theta_2)$ in $\sC$ is the data of a morphism $\alpha : X'_1 \to X'_2$ in $\sA'$, a morphism $\beta : Y_1 \to Y_2$ in $\sB$, and a commutative square
    \[
      \begin{tikzcd}
        g^*(X'_1) \ar{r}{g^*(\alpha)}\ar{d}{\theta_1}
          & g^*(X'_2) \ar{d}{\theta_2}
        \\
        q^*(Y_1) \ar{r}{q^*(\beta)}
          & q^*(Y_2)
      \end{tikzcd}
    \]
    in $\sB'$.
    More precisely, $\sC$ may be defined by the cartesian square
    \[
      \begin{tikzcd}
        \sC \arrow[d]\arrow[r]
          & \Fun(\Delta^1,\sB') \arrow[d]\\
        \sA' \times \sB \arrow[r]
          & \sB' \times \sB'.
      \end{tikzcd}
    \]
    This construction is called the \emph{lax pullback} or \emph{oriented fibre product}.
    Note that the usual fibre product can be identified with the full subcategory of $\sC$ spanned by objects $(X', Y, \theta)$ such that $\theta$ is an isomorphism.
  \end{constr}

  \begin{rem}\label{rem:C=<B,A'>}
    By construction, the \inftyCat $\sC$ is stable and presentable (see \cite[Lemma~8]{Tamme_2018}).
    The full subcategory of $\sC$ spanned by objects of the form $(X', 0, 0)$ is canonically identified with $\sA'$, and similarly $\sB$ is identified with the full subcategory spanned by objects of the form $(0, Y, 0)$.
    Every object $(X', Y, \theta) \in \sC$ fits functorially into a cofibre sequence
    \begin{equation}\label{eq:decomp C}
      (0, Y, 0) \to (X', Y, \theta) \to (X', 0, 0).
    \end{equation}
    In fact, there is a semi-orthogonal decomposition $\sC = \langle\sB,\sA'\rangle$, see \cite[Proposition~10]{Tamme_2018}.
  \end{rem}

  \begin{rem}\label{rem:d_*}
    Consider the functor $d_* : \sA \to \sC$ defined as the composite
    \[
      (p^*,f^*) : \sA \to \sA' \times_{\sB'} \sB \subseteq \sC.
    \]
    whose essential image is spanned by objects of the form $(p^*(X), f^*(X), \theta_\Delta) \in \sC$, where $X \in \sA$ and $\theta_\Delta : g^*p^*(X) \simeq q^*f^*(X)$ is the isomorphism determined by the commutative square $\Delta$.
    Note that $d_*$ is fully faithful if and only if the square $\Delta$ is precartesian.
    Since $d_*$ preserves colimits it admits a right adjoint\footnote{%
      The departure from our usual notation (\ref{notat:paisj}) is because of the semi-orthogonal decomposition we will see below (\remref{rem:C=<A,Q>}).
    } $d^! : \sC \to \sA$ whose value on any object $(X',Y,\theta)\in\sC$ can be computed by the cartesian square
    \[
      \begin{tikzcd}
        d^!(X', Y, \theta) \ar{rr}\ar{dd}
        & & f_*(Y) \ar{d}{\eta_q}
      \\
        & & f_*q_*q^*(Y) \ar[equals]{d}
      \\
        p_*(X') \ar{r}{\eta_g} & p_*g_*g^*(X') \ar{r}{\theta} & p_*g_*q^*(Y).
      \end{tikzcd}
    \]
  \end{rem}

  \begin{rem}
    For every object $X \in \sA$, the cofibre sequence \eqref{eq:decomp C} may be evaluated at the object $d_*(X) = (p^*(X),f^*(X),\theta_\Delta) \in \sC$ to get a cofibre sequence
    \[
      (0, f^*(X), 0) \to d_*(X) \to (p^*(X), 0, 0)
    \]
    with boundary map
    \[
      \partial : (p^*(X), 0, 0) \to \Sigma(0, f^*(X), 0) \simeq (0, \Sigma f^*(X), 0).
    \]
    As the construction is functorial in $X$, $\partial$ defines a natural transformation of functors $\sA \to \sC$, up to which the square
    \begin{equation}\label{eq:Delta_C}
      \begin{tikzcd}
        \sA \ar{rr}{f^*}\ar{d}{p^*}
          & & \sB \ar{d}{(0, \Sigma, 0)}
        \\
        \sA' \ar[swap]{rr}{(\id, 0, 0)}\ar[urr, phantom, "\rotatebox{30}{\(\Rightarrow \)}"]
          & & \sC.
      \end{tikzcd}
    \end{equation}
    commutes.
    Note that the adjustment of this square by the canonical colimit-preserving functor $c^* : \sC \to \sB'$, sending $(X', Y, \theta)$ to the fibre of $\theta$, is canonically identified with our original square $\Delta$.
  \end{rem}

  The $\odot$-construction is obtained by forcing the square \eqref{eq:Delta_C} to become commutative, which amounts to killing the essential image of $d_* : \sA \to \sC$.

  \begin{constr}[$\odot$-Construction]\label{constr:odot}
    We let $\sA' \odot_{\sA}^{\sB'} \sB$ denote the Verdier quotient of the \pstab $\sC$ by the essential image of the functor $d_* : \sA \to \sC$.
    We write $a^* : \sC \to \sA' \odot_{\sA}^{\sB'} \sB$ for the quotient functor.
    Thus we have a canonical commutative square $\Delta_0$
    \begin{equation}\label{eq:Delta_0}
      \begin{tikzcd}
        \sA \ar{rr}{f^*}\ar{d}{p^*}
          & & \sB \ar{d}{q_0^*}
        \\
        \sA' \ar{rr}{g_0^*}
          & & \sA' \odot_{\sA}^{\sB'} \sB,
      \end{tikzcd}
    \end{equation}
    where $g_0^* = a^*(\id, 0, 0)$ and $q_0^* = \Sigma a^* (0, \id, 0)$.
    Since the composite 
    \[
      \sA \xrightarrow{d_*} \sC \xrightarrow{c^*} \sB'
    \]
    is canonically null-homotopic, the colimit-preserving functor $c^* : \sC \to \sB'$ descends to a canonical colimit-preserving functor
    \[
      b^* : \sA' \odot_{\sA}^{\sB'} \sB \to \sB'.
    \]
    By construction, the adjustment of $\Delta_0$ by $b^*$ is canonically identified with $\Delta$.
  \end{constr}

  \begin{rem}\label{rem:a^* splitting}
    Upon application of the quotient functor $a^* : \sC \to \sA' \odot_{\sA}^{\sB'} \sB$, the cofibre sequences of \remref{rem:C=<B,A'>} induce cofibre sequences of the form
    \[
      \Omega q_0^*(Y) \to a^*(X', Y, \theta) \to g_0^*(X')
    \]
    for every object $(X', Y, \theta) \in \sC$.
  \end{rem}

  \begin{rem}\label{rem:C=<A,Q>}
    By construction, we have a semi-orthogonal decomposition $\sC = \langle \sA, \sA' \odot_{\sA}^{\sB'} \sB \rangle$.
    In particular there is a cofibre sequence
    \begin{equation*}
      d_*d^! \xrightarrow{\varepsilon_d} \id \xrightarrow{\eta_a} a_*a^*
    \end{equation*}
    of natural transformations $\sC \to \sC$.
  \end{rem}

  \begin{lem}\label{lem:q_0 and g_0 generation}\leavevmode
    \begin{thmlist}
      \item If $p^* : \sA \to \sA'$ generates its codomain under colimits, then so does $q_0^* : \sB \to \sA' \odot_{\sA}^{\sB'} \sB$.
      \item If $f^* : \sA \to \sB$ generates its codomain under colimits, then so does $g_0^* : \sA' \to \sA' \odot_{\sA}^{\sB'} \sB$.
      \item If $q^* : \sB \to \sB'$ generates its codomain under colimits, then so does $b^* : \sA' \odot_{\sA}^{\sB'} \sB \to \sB'$.
      \item If $p^*$, $f^*$ and $q^*f^* \simeq g^*p^*$ are compact, then so is $a^*$.
      \item If $q^*$ is compact, then so is $c^*$.
      \item If $f^*$, $p^*$, and $q^*$ are compact, then so is $b^*$.
      \item If $p^*$ and $f^*$ are compact, then so are $q_0^*$ and $g_0^*$.
    \end{thmlist}
  \end{lem}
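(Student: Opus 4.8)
The plan is to separate the three generation statements (i)--(iii) from the four compactness statements (iv)--(vii). Since $\Delta$ lies in $\Presc$, all four functors $f^*,g^*,p^*,q^*$ are compact, so the per-item compactness hypotheses are automatically met; they merely isolate the conditions driving each conclusion. For generation I would start from \remref{rem:C=<B,A'>}: the cofibre sequence there exhibits $\sC$ as generated under colimits by the images of the inclusions $\iota_{\sA'}\colon X'\mapsto(X',0,0)$ and $\iota_{\sB}\colon Y\mapsto(0,Y,0)$. As the quotient functor $a^*$ is colimit-preserving and essentially surjective, $\sA'\odot_{\sA}^{\sB'}\sB$ is generated under colimits by the images of $g_0^*=a^*\iota_{\sA'}$ and $q_0^*=\Sigma a^*\iota_{\sB}$. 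Commutativity of $\Delta_0$ gives $g_0^*p^*\simeq q_0^*f^*$, so the image of $g_0^*p^*$ lies in that of $q_0^*$ and conversely. Hence if $p^*$ generates $\sA'$, the image of $g_0^*$ lies in the cocomplete subcategory generated by the image of $q_0^*$, so $q_0^*$ already generates, which is (i); exchanging $(p^*,g_0^*)$ with $(f^*,q_0^*)$ gives (ii). For (iii), the identification of $\Delta$ with the adjustment of $\Delta_0$ by $b^*$ yields $b^*q_0^*\simeq q^*$, so the image of $q^*$ sits inside that of $b^*$, and if $q^*$ generates $\sB'$ then so does $b^*$. Each conclusion is then read off from \lemref{lem:generate}.

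For the compactness statements I would use throughout that a colimit-preserving functor is compact exactly when its right adjoint preserves filtered colimits, together with two formal facts in presentable stable categories: compactness is closed under composition, and the right adjoints of a fibre sequence of colimit-preserving functors again form a fibre sequence, so a third term is compact as soon as the other two are (filtered colimits being exact). The crux is (iv). I would first prove $d_*$ compact, i.e. that $d^!$ preserves filtered colimits: by \remref{rem:d_*}, $d^!$ is computed as a pullback of the functors $(X',Y,\theta)\mapsto f_*(Y)$, $\mapsto p_*(X')$ and $\mapsto p_*g_*q^*(Y)\simeq f_*q_*q^*(Y)$, each preserving filtered colimits because $f^*,p^*,g^*,q^*$ are compact, and in a stable $\infty$-category filtered colimits commute with finite limits. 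The cofibre sequence $d_*d^!\to\id\to a_*a^*$ of \remref{rem:C=<A,Q>} then shows $a_*a^*$ preserves filtered colimits, and the localization identity $a^*a_*\simeq\id$ promotes this to the same property for $a_*$ itself; hence $a^*$ is compact.

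The remaining statements reduce to (iv) and to the adjoint structure of the lax pullback. For (v) I would present $c^*$ as the third term of the fibre sequence $c^*\to g^*\pr_{\sA'}\to q^*\pr_{\sB}$ of colimit-preserving functors $\sC\to\sB'$ induced by the tautological transformation $\theta$. Both outer terms are compact: $\pr_{\sA'}$ and $\pr_{\sB}$ are compact, their right adjoints being $\iota_{\sA'}$ (which preserves all colimits) and $Y_0\mapsto(g_*q^*Y_0,Y_0,\varepsilon_g)$ (which preserves filtered colimits because $g^*$ and $q^*$ are compact), and $g^*,q^*$ are compact; hence $c^*$ is compact. For (vi), applying $a^*$ to the identity $c_*\simeq a_*b_*$ arising from $c^*\simeq b^*a^*$ and using $a^*a_*\simeq\id$ gives $b_*\simeq a^*c_*$, which preserves filtered colimits by (v). For (vii), I would write $g_0^*=a^*\iota_{\sA'}$ and $q_0^*=\Sigma a^*\iota_{\sB}$ as composites of the compact functor $a^*$ with the inclusions $\iota_{\sA'},\iota_{\sB}$, which are compact because their right adjoints $\pr_{\sB}$ and $(X',Y,\theta)\mapsto\Fib(X'\to g_*q^*Y)$ preserve filtered colimits.

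I expect the main obstacle to be (iv), and within it the passage from ``$a_*a^*$ preserves filtered colimits'' to the same statement for $a_*$, which uses in an essential way that $a^*$ is a Bousfield localization. Underlying all of (iv)--(vii) is the preliminary task of pinning down the adjoint data of the lax pullback $\sC$---the two Verdier sequences relating $\sA'$, $\sB$ and $\sC$ and the explicit right adjoints of $\pr_{\sB}$ and $\iota_{\sA'}$ computed above---which is routine but is precisely what makes the fibre-sequence and localization arguments available.
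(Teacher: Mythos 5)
Your proposal is correct, and its overall architecture coincides with the paper's: generation in (i)--(iii) via the two semi-orthogonal decompositions of $\sC$ together with the commutativity isomorphisms $g_0^*p^*\simeq q_0^*f^*$ and $b^*q_0^*\simeq q^*$, and compactness of $a^*$ in (iv) via the cofibre sequence $d_*d^!\to\id\to a_*a^*$ of \remref{rem:C=<A,Q>} combined with the pullback formula for $d^!$ of \remref{rem:d_*}. Three points of comparison. First, your promotion step in (iv) — deducing that $a_*$ preserves filtered colimits from the same property of $a_*a^*$ via the localization identity $a^*a_*\simeq\id$ — is valid and is in fact more explicit than the paper's terse appeal to generation. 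Second, for (v) and (vi) you take genuinely different routes: the paper proves (v) by simply exhibiting the formula $c_*(Y')=(0,\Sigma q_*(Y'),0)$, which uses only the stated hypothesis that $q^*$ is compact, and proves (vi) by transferring the question from $b_*$ to $a_*b_*\simeq c_*$ using that $a_*$ is conservative and colimit-preserving; your fibre-sequence argument $c^*\to g^*\pr_{\sA'}\to q^*\pr_{\sB}$ for (v) and your identity $b_*\simeq a^*c_*$ for (vi) are both correct — indeed (vi) comes out slicker your way, needing only full faithfulness of $a_*$ rather than item (iv). The trade-off is that your arguments for (iv) and (v) invoke compactness of $g^*$ and $q^*$ individually where the per-item hypotheses supply less (only the composite $g^*p^*\simeq q^*f^*$ in (iv), only $q^*$ in (v)); as you note this is harmless since the subsection fixes $\Delta$ in $\Presc$, but it means the items are not established under their stated minimal hypotheses, which the paper's proofs do respect. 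Finally, a small slip in (vii): the adjunctions are $\iota_{\sB}\dashv\pr_{\sB}$ and $\iota_{\sA'}\dashv\bigl((X',Y,\theta)\mapsto\Fib(X'\to g_*q^*Y)\bigr)$, i.e.\ the opposite pairing to the one your sentence suggests; since both right adjoints are correctly identified and both preserve filtered colimits, the conclusion is unaffected.
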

  \begin{proof}\leavevmode
    \begin{defnlist}
      \item
      If the essential image of $p^*$ generates $\sA'$ under colimits, then it follows from \remref{rem:C=<B,A'>} that $\sC$ is generated under colimits by objects of the form $(p^*(X), 0, 0)$ and $(0, Y, 0)$ as $X$ and $Y$ vary among objects of $\sA$ and $\sB$, respectively.
      In particular, the Verdier quotient $\sA' \odot_{\sA}^{\sB'} \sB$ is generated under colimits by the images of such objects under the quotient functor $a^* : \sC \to \sA' \odot_{\sA}^{\sB'} \sB$, i.e., by objects of the forms $g_0^*p^*(X)$ and $q_0^*(Y)$.
      But the commutativity of $\Delta_0$ \eqref{eq:Delta_0} provides isomorphisms
      \[ g_0^*p^*(X) \simeq q_0^*f^*(X) \]
      in $\sA' \odot_{\sA}^{\sB'} \sB$ for every $X \in \sA$.

      \item
      Similar to (i).

      \item
      By assumption, $\sB'$ is generated under colimits by the essential image of $q^*\simeq b^*q_0^*$.
      Hence $b^*$ also generates $\sB'$ under colimits.

      \item
      Since $a^*$ generates under colimits, it will suffice to show that $a_*a^*$ preserves filtered colimits.
      Using the cofibre sequence of \remref{rem:C=<A,Q>}, we reduce to showing that $d^!$ preserves filtered colimits.
      This follows immediately from the description of \remref{rem:d_*} (in view of the assumptions).

      \item
      Note that the right adjoint $c_* : \sB' \to \sC$ is given by the formula $Y' \mapsto (0, \Sigma q_*(Y'), 0)$.

      \item
      Recall that $a_* : \sA' \odot_\sA^{\sB'} \sB \to \sC$ is conservative (as $a^*$ is essentially surjective) and colimit-preserving by (iii).
      Therefore $b_*$ preserves colimits if and only if $a_*b_* \simeq c_*$ does.
      This is true under the assumptions by (iv).

      \item
      By the definition of $q_0^*$ and $g_0^*$, this follows immediately from (iii).
    \end{defnlist}
  \end{proof}

\subsection{Characterization of Milnor squares}

  The following result shows that every Milnor square $\Delta$ satisfying base change is isomorphic to one coming from the $\odot$-construction.
  More precisely, it is an adjustment of the square $\Delta_0$ \eqref{eq:Delta_0} along the canonical functor $b^* : \sA' \odot_{\sA}^{\sB'} \sB \to \sB'$, which is an equivalence.

  \begin{thm}\label{thm:odot}
    Let $\Delta$ be a Milnor square of \pstabs of the form
    \[ \begin{tikzcd}
      \sA \arrow[r,"f^*"]\arrow[d,"p^*"] & \sB\arrow[d,"q^*"] \\
      \sA' \arrow[r, "g^*"]              & \sB'.
    \end{tikzcd} \]
    Then we have:
    \begin{thmlist}
      \item\label{item:nalnfiowa}
      The square $\Delta_0$ \eqref{eq:Delta_0} satisfies base change.

      \item
      If $\Delta$ satisfies base change, then the canonical functor
      \[ b^* : \sA' \odot_{\sA}^{\sB'} \sB \to \sB'\]
      is an equivalence.
      In particular, there is an isomorphism of squares $\Delta \simeq \Delta_0$.
    \end{thmlist}
  \end{thm}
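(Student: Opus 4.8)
The plan is to prove \ref{item:nalnfiowa} by an explicit computation of the right adjoints that occur in the exchange transformation of $\Delta_0$, and then to deduce the second assertion formally from the first together with \corref{cor:base change adjustment} and \lemref{lem:q_0 and g_0 generation}. So I would treat \ref{item:nalnfiowa} as the substantive step and regard the rest as bookkeeping.

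For \ref{item:nalnfiowa}, I would first record the semiorthogonal decomposition $\sC = \langle \sB, \sA'\rangle$ of \remref{rem:C=<B,A'>} through the projection $s : \sC \to \sB$, $(X',Y,\theta)\mapsto Y$, which is right adjoint to the inclusion $i_\sB : Y \mapsto (0,Y,0)$, which annihilates the inclusion $i_{\sA'} : X' \mapsto (X',0,0)$, and which satisfies $s\circ d_* = f^*$. Since $q_0^* = \Sigma\,a^*\,i_\sB$, its right adjoint is $q_{0,*} = s\,a_*\,\Omega$, so the exchange transformation of $\Delta_0$ is a natural map $f^*p_* \to q_{0,*}g_0^* \simeq s\,a_*\,a^*\,\Omega\,i_{\sA'}$, and the heart of the matter is to evaluate the target. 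Applying the exact functor $s$ to the defining cofibre sequence $d_*d^! \to \id \to a_*a^*$ of \remref{rem:C=<A,Q>} at the object $(\Omega X',0,0)$, and using $s\,i_{\sA'} = 0$ together with $s\,d_* = f^*$, I obtain a natural identification $s\,a_*a^*(\Omega X',0,0) \simeq \Sigma\,f^*\,d^!(\Omega X',0,0) \simeq f^*\,d^!(X',0,0)$. Finally the pullback formula of \remref{rem:d_*} degenerates on objects of the form $(X',0,0)$, since the terms involving $f_*(Y)$ and $\theta$ vanish, giving $d^!(X',0,0)\simeq p_*(X')$. Composing these isomorphisms yields $q_{0,*}g_0^* \simeq f^*p_*$, establishing base change.

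Granting \ref{item:nalnfiowa}, the second assertion is formal. By \constrref{constr:odot} the adjustment of $\Delta_0$ by $b^* : \sA' \odot_{\sA}^{\sB'} \sB \to \sB'$ is canonically $\Delta$, which satisfies base change by hypothesis. Since $\Delta$ is a Milnor square, $p^*$ and $f^*$ generate their codomains under colimits, so by \lemref{lem:q_0 and g_0 generation}\,(i),(ii) the functors $q_0^*$ and $g_0^*$ generate $\sA' \odot_{\sA}^{\sB'} \sB$ under colimits. Applying the converse implication of \corref{cor:base change adjustment} to the square $\Delta_0$ (which satisfies base change by \ref{item:nalnfiowa}) and its adjustment $\Delta$ (which satisfies base change), I conclude that $b^*$ is fully faithful. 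On the other hand $q^*$ generates $\sB'$ under colimits, so by \lemref{lem:q_0 and g_0 generation}\,(iii) the functor $b^*$ generates $\sB'$ under colimits as well. A fully faithful colimit-preserving functor whose essential image generates the target under colimits is essentially surjective: its essential image is a cocomplete stable subcategory to which \lemref{lem:generate} applies, forcing it to be all of $\sB'$. Hence $b^*$ is an equivalence, giving the isomorphism of squares $\Delta \simeq \Delta_0$.

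The main obstacle is the last identification in \ref{item:nalnfiowa}. Producing \emph{some} natural isomorphism $q_{0,*}g_0^* \simeq f^*p_*$ is the content of the computation above, but verifying that it coincides with the canonically defined exchange transformation $\Ex_{\Delta_0}$, rather than merely being an abstract equivalence of these two functors, requires a careful diagram chase matching the unit and counit data that assemble both the cofibre sequence $d_*d^!\to\id\to a_*a^*$ and the pullback formula for $d^!$ against the units and counits defining $\Ex_{\Delta_0}$.
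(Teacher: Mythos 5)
Your part (ii) is correct and is essentially the paper's own argument: realize $\Delta$ as the adjustment of $\Delta_0$ along $b^*$, apply the converse of \corref{cor:base change adjustment} together with the generation statements of \lemref{lem:q_0 and g_0 generation} to get full faithfulness, then generation plus cocompleteness to get essential surjectivity (the paper also records that $b^*$ is compact, via \lemref{lem:q_0 and g_0 generation}, which is what really licenses the converse direction of the corollary; you should cite that too). Part (i), however, has a genuine gap, and you have named it yourself: your computation produces an \emph{abstract} natural isomorphism $q_{0,*}g_0^* \simeq f^*p_*$, whereas \ref{item:nalnfiowa} asserts invertibility of the specific transformation $\Ex_{\Delta_0}$. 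Nothing formal bridges these: a natural transformation between abstractly isomorphic functors need not be invertible (the zero endotransformation of the identity functor already shows this), so exhibiting an equivalence of source and target proves nothing about $\Ex_{\Delta_0}$. The "careful diagram chase" you defer is not bookkeeping; it is the entire content of (i), and as written your proposal does not prove it. (Your computation itself is fine, and note that it does secretly use precartesianness: the cofibre sequence $d_*d^! \to \id \to a_*a^*$ of \remref{rem:C=<A,Q>} requires $d_*$ to be fully faithful.)

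The paper's proof is organized precisely to avoid this trap, by never separating the functors from the canonical maps. Instead of whiskering the localization sequence with $i_{\sA'}\Omega$ (which computes the \emph{target} of $\Ex_{\Delta_0}$ but severs it from the transformation), \lemref{lem:phi_q0} whiskers it with $(0,\id,0) : \sB \to \sC$ and the projection back to $\sB$: since the second map of the localization sequence \emph{is} the unit $\eta_a$, which whiskers to $\eta_{q_0}$, this identifies the fibre $\phi_{q_0}$ of the unit $\eta_{q_0}$ as $f^*d^!(0,\id,0) \simeq f^*f_*\phi_q$ \emph{together with} its canonical map to $\id$ (compatibly with $\varepsilon_f$). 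Then \lemref{lem:base change Delta_0} compares $\eta_p$ with $\eta_{q_0}$ through the square \eqref{eq:ldsjf}, which commutes tautologically by the very definition of the exchange transformation, takes horizontal fibres, and observes that the induced map of fibres is $f^*$ applied to the comparison map $\phi_p \to f_*\phi_q f^*$ whose invertibility is exactly the precartesian condition \eqref{eq:shfoin}; hence $\Ex_{\Delta_0}p^*$ is invertible, and then $\Ex_{\Delta_0}$ is invertible because $p^*$ generates under colimits and both $f^*p_*$ and $q_{0,*}g_0^*$ preserve colimits (compactness of $p^*$ and $q_0^*$). If you want to salvage your route, redo your whiskering on the $\sB$-side so that the unit, not just the underlying functor, is identified, and then run this fibre comparison; without that, part (i) remains unproven.
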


  \begin{lem}\label{lem:phi_q0}
    Let $\Delta$ be a commutative square of \pstabs and colimit-preserving functors as above.
    Let $\phi_{q_0}$ denote the fibre of the unit $\eta_{q_0}$, and similarly for $\phi_q$.
    Then there is a canonical isomorphism $f^*f_* \phi_q \simeq \phi_{q_0}$ such that the diagram
    \[
      \begin{tikzcd}
        f^*f_* \phi_q \ar{r}{f^*f_* \mrm{can}}\ar[equals]{d}
          & f^*f_* \ar{d}{\varepsilon_f}
        \\
        \phi_{q_0} \ar{r}{\mrm{can}}
          & \id
      \end{tikzcd}
    \]
    commutes, where $\mrm{can}$ denotes the canonical inclusion of the respective fibre.
  \end{lem}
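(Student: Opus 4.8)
The plan is to compute $\phi_{q_0}$ explicitly and then match the canonical maps, using the semi-orthogonal decomposition $\sC=\langle\sB,\sA'\rangle$ of \remref{rem:C=<B,A'>}. Write $\iota_\sB\colon\sB\hook\sC$ for the inclusion $Y\mapsto(0,Y,0)$ of the left-hand factor and $\pi_\sB\colon\sC\to\sB$, $(X',Y,\theta)\mapsto Y$, for its (exact) right adjoint, so $\pi_\sB\iota_\sB\simeq\id_\sB$. Since $q_0^*=\Sigma\,a^*\iota_\sB$ by \constrref{constr:odot}, passing to right adjoints gives $q_{0*}\simeq\pi_\sB\,a_*\,\Omega$; using $\Omega\Sigma\simeq\id$ one obtains $q_{0*}q_0^*\simeq\pi_\sB\,a_*a^*\iota_\sB$, and a short check of the unit of this composite adjunction identifies $\eta_{q_0}$ with $\pi_\sB(\eta_a)\iota_\sB$. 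Whiskering the cofibre sequence $d_*d^!\xrightarrow{\varepsilon_d}\id\xrightarrow{\eta_a}a_*a^*$ of \remref{rem:C=<A,Q>} by the exact functor $\pi_\sB(-)\iota_\sB$ then produces a canonical equivalence $\phi_{q_0}\simeq\pi_\sB\,d_*d^!\,\iota_\sB$, under which $\mrm{can}\colon\phi_{q_0}\to\id$ corresponds to $\pi_\sB(\varepsilon_d)\iota_\sB$.

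Next I would evaluate the right-hand side. Plugging $X'=0$, $\theta=0$ into the pullback formula for $d^!$ in \remref{rem:d_*} collapses the defining square, giving $d^!(0,Y,0)\simeq\Fib\bigl(f_*(Y)\xrightarrow{f_*\eta_q}f_*q_*q^*(Y)\bigr)\simeq f_*\phi_q(Y)$, with the top projection $d^!(0,Y,0)\to f_*(Y)$ identified with $f_*(\mrm{can})$ for the fibre inclusion $\mrm{can}\colon\phi_q\to\id$. Since $d_*$ sends $X\in\sA$ to $(p^*X,f^*X,\theta_\Delta)$ and $\pi_\sB$ extracts the $\sB$-component, applying $d_*$ and then $\pi_\sB$ yields $\phi_{q_0}\simeq\pi_\sB\,d_*(f_*\phi_q)\simeq f^*f_*\phi_q$, the sought isomorphism.

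The delicate point, and the one I expect to be the main obstacle, is the compatibility square for $\mrm{can}$. Under the identification above $\mrm{can}\colon\phi_{q_0}\to\id$ is the $\sB$-component of the counit $\varepsilon_d(0,Y,0)\colon d_*d^!(0,Y,0)\to(0,Y,0)$, so I would pin this component down via the adjunction $d_*\dashv d^!$: the equivalence $\Maps_\sC(d_*Z,(X',Y,\theta))\simeq\Maps_\sA(Z,d^!(X',Y,\theta))$ is realised by precisely the pullback defining $d^!$, and under it the $\sB$-component of a morphism corresponds to composing the top projection to $f_*(Y)$ with $\varepsilon_f$. Evaluating on $\id_{d^!(0,Y,0)}$ shows the $\sB$-component of $\varepsilon_d(0,Y,0)$ equals $\varepsilon_f\circ f^*(\text{top projection})=\varepsilon_f\circ f^*f_*(\mrm{can})$, which is exactly the composite $f^*f_*\phi_q\xrightarrow{f^*f_*\mrm{can}}f^*f_*\xrightarrow{\varepsilon_f}\id$. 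Hence the square commutes. Everything apart from this component-wise tracking of $\varepsilon_d$ against $\varepsilon_f$ is a formal manipulation of adjoints in the two semi-orthogonal decompositions.
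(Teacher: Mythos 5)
Your proof is correct and follows essentially the same route as the paper's: the paper also whiskers the cofibre sequence $d_*d^! \to \id \to a_*a^*$ of \remref{rem:C=<A,Q>} with $(0,\id,0)$ and the projection back to $\sB$ to obtain $f^*d^!(0,\id,0) \to \id \xrightarrow{\eta_{q_0}} q_{0,*}q_0^*$, and then applies the pullback description of $d^!$ from \remref{rem:d_*}. The only difference is that you explicitly track the $\sB$-component of the counit $\varepsilon_d$ through the adjunction $d_* \dashv d^!$ to verify the commuting square, a step the paper leaves implicit.
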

  \begin{proof}
    Precomposing the cofibre sequence of \remref{rem:C=<A,Q>} with the inclusion $(0, \id, 0) : \sB \to \sC$ and projecting the result back to $\sB$ yields the cofibre sequence
    \begin{equation*}
      f^* d^!(0, \id, 0) \to \id \xrightarrow{\eta_{q_0}} q_{0,*} q_0^*
    \end{equation*}
    of natural transformations $\sB \to \sB$.
    Now apply the description of $d^!$ given in \remref{rem:d_*}.
  \end{proof}

  \begin{lem}\label{lem:base change Delta_0}
    Let $\Delta$ be a commutative square of \pstabs and colimit-preserving functors as above.
    If $\Delta$ is precartesian, then the exchange transformation for $\Delta_0$ induces an isomorphism
    \[
      f^*p_*p^* \xrightarrow{\Ex_{\Delta_0} p^*} q_{0,*} g_0^* p^* \simeq q_{0,*} q_0^* f^*.
    \]
    In particular, if $\Delta$ is a Milnor square, then $\Delta_0$ satisfies base change.
  \end{lem}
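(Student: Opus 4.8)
The plan is to reduce the assertion to an equivalence between the fibres of two unit maps and then to identify those fibres. Since the target $q_{0,*}g_0^*p^*$ is identified with $q_{0,*}q_0^*f^*$ via the commutativity of $\Delta_0$, it suffices to show that $\Ex_{\Delta_0}p^* : f^*p_*p^* \to q_{0,*}q_0^*f^*$ is an equivalence. I would do this by exhibiting both the source and the target as the cofibre of one and the same map, and then recognising the exchange transformation as the induced map on cofibres.

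First I would analyse the target. By \lemref{lem:phi_q0} the fibre $\phi_{q_0}$ of $\eta_{q_0}$ is canonically $f^*f_*\phi_q$, and the canonical map $\phi_{q_0}\to\id$ factors as $f^*f_*\phi_q \xrightarrow{f^*f_*\mathrm{can}} f^*f_* \xrightarrow{\varepsilon_f} \id$. Precomposing the fibre sequence $\phi_{q_0}\to\id\xrightarrow{\eta_{q_0}}q_{0,*}q_0^*$ with $f^*$ thus presents $q_{0,*}q_0^*f^* = \Cofib(f^*f_*\phi_q f^*\to f^*)$, the map being $\varepsilon_f f^*\circ(f^*f_*\mathrm{can}\,f^*)$. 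Next I would analyse the source using precartesianness: since \eqref{eq:shfoin} is cartesian, taking fibres of its vertical arrows identifies the fibre $\phi_p$ of $\eta_p$ with $f_*\phi_q f^*$, compatibly with the inclusions into $\id$ and $f_*f^*$ (the latter via $\eta_f\circ\mathrm{can}_p = f_*\mathrm{can}\,f^*$). Applying $f^*$ presents $f^*p_*p^* = \Cofib(f^*f_*\phi_q f^*\to f^*)$ with structure map $f^*\mathrm{can}_p$. The triangle identity $\varepsilon_f f^*\circ f^*\eta_f=\id$ then shows this map agrees with the one above, so the two cofibres are canonically equivalent.

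It remains to check that this canonical equivalence \emph{is} $\Ex_{\Delta_0}p^*$. The formal identity $\Ex_{\Delta_0}p^*\circ f^*\eta_p = \eta_{q_0}f^*$ exhibits $\Ex_{\Delta_0}p^*$ as a map of the two fibre sequences that is the identity on the middle term $f^*$; its restriction to fibres is a map $f^*\phi_p\to\phi_{q_0}f^*$ which I must identify with the equivalence just built. The cleanest route is at the level of $\sC$: using $d^!(0,\id,0)\simeq f_*\phi_q$ (\remref{rem:d_*}), the identity $Rd_* = f^*$ for the projection $R:\sC\to\sB$, $(X',Y,\theta)\mapsto Y$, and the semiorthogonal cofibre sequence $d_*d^!\to\id\to a_*a^*$ of \remref{rem:C=<A,Q>} (available because $\Delta$ is precartesian, so $d_*$ is fully faithful and $d^!d_*\simeq\id$), both fibres become literally $f^*\,d^!(0,\id,0)\,f^*$ and the induced map is the identity. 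I expect this to be the main obstacle: compatibility with the canonical maps to $f^*$ does \emph{not} by itself determine the fibre map, since those maps are not monomorphisms, so one genuinely has to trace $\Ex_{\Delta_0}$ through the Verdier quotient (via $p_*\simeq d^!(\id,0,0)$, $p^*\simeq Ld_*$, and the reflection unit computing $\eta_p$) to force the fibre map to be the identity. Granting this, $\Ex_{\Delta_0}p^*$ is an equivalence.

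Finally, for the ``in particular'': if $\Delta$ is a Milnor square then $p^*$ and, by \lemref{lem:q_0 and g_0 generation}, $q_0^*$ are compact, so $p_*$ and $q_{0,*}$ preserve filtered colimits. Being also exact, $f^*p_*$ and $q_{0,*}g_0^*$ preserve all colimits, hence so does $\Fib(\Ex_{\Delta_0})$; as it vanishes on the essential image of $p^*$, which generates $\sA'$ under colimits, $\Ex_{\Delta_0}$ is an equivalence and $\Delta_0$ satisfies base change.
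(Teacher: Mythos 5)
Your proposal is correct and follows essentially the same route as the paper's own proof: both arguments identify the fibre of $\eta_p$ (after applying $f^*$) and the fibre of $\eta_{q_0}$ (after precomposing with $f^*$) with $f^*f_*\phi_q f^*$, using precartesianness of \eqref{eq:shfoin} for the former and \lemref{lem:phi_q0} for the latter, match the structure maps to $f^*$ via the triangle identity $\varepsilon_f f^*\circ f^*\eta_f=\id$, and deduce the ``in particular'' from compactness of $p^*$, $q_0^*$ and colimit-generation by $p^*$. The coherence point you single out --- that one must identify the \emph{canonically induced} map on fibres of \eqref{eq:ldsjf} with this equivalence, not merely exhibit some equivalence compatible with the maps to $f^*$ --- is precisely the step the paper elides when it asserts that its three-row diagram is obtained by ``taking fibres horizontally,'' and your sketched resolution inside $\sC$ (via $d^!(0,\id,0)\simeq f_*\phi_q$ and the colocalization sequence of \remref{rem:C=<A,Q>}) is the same mechanism underlying the paper's \lemref{lem:phi_q0}, so your treatment is if anything more scrupulous than the paper's.
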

  \begin{proof}
    Note that the second claim follows from the first in view of the fact that $p^*$ generates $\sA'$ under colimits and $p^*$ and $q_0^*$ are compact (\lemref{lem:q_0 and g_0 generation}).
    For the main claim, by construction of the exchange transformation, we have a commutative square
    \begin{equation}\label{eq:ldsjf}
      \begin{tikzcd}
        f^* \ar{r}{\eta_p}\ar[equals]{d}
          & f^* p_* p^* \ar{d}
        \\
        f^* \ar{r}{\eta_{q_0}}
          & q_{0,*} q_0^* f^*.
      \end{tikzcd}
    \end{equation}
    Taking fibres horizontally, we obtain by \lemref{lem:phi_q0} the following commutative diagram:
    \[
      \begin{tikzcd}
        f^*\phi_p \ar{r}{\mrm{can}}\ar{d}
          & f^* \ar{r}{\eta_p}\ar{d}{f^*\eta_f}
          & f^*p_*p^* \ar{dd}
        \\
        f^*f_*\phi_q f^* \ar{r}{\mrm{can}}\ar[equals]{d}
          & f^*f_*f^* \ar{d}{\varepsilon_f f^*}
          &
        \\
        \phi_{q_0} f^* \ar{r}{\mrm{can}}
          & f^* \ar{r}{\eta_{q_0}}
          & q_{0,*}q_0^*f^*
      \end{tikzcd}
    \]
    The upper vertical left-hand arrow is invertible as soon as $\Delta$ is precartesian, i.e., when the right-hand square below is cartesian:
    \[
      \begin{tikzcd}
        \phi_p \ar{d}\ar{r}{\mrm{can}}
          & \id \ar{d}{\eta_f}\ar{r}{\eta_p}
          & p_*p^* \ar{d}{\eta_g}
        \\
        f_* \phi_q f^* \ar{r}{\mrm{can}}
          & f_*f^* \ar{r}{\eta_q}
          & f_*q_*q^*f^* \simeq p_*g_*g^*p^*
      \end{tikzcd}
    \]
    Thus in that case we find that \eqref{eq:ldsjf} is cartesian.
  \end{proof}

  \begin{proof}[Proof of \thmref{thm:odot}]
    The first claim follows from \lemref{lem:base change Delta_0}.
    For the second, assume that $\Delta$ satisfies base change.
    Since $\Delta$ is the adjustment of $\Delta_0$ by $b^*$, and since $b^*$ is compact by \lemref{lem:q_0 and g_0 generation}, this implies by \corref{cor:base change adjustment} that $b^*$ is fully faithful.
    By \lemref{lem:q_0 and g_0 generation}, $b^*$ generates $\sB'$ under colimits.
    Since $\sA' \odot_{\sA}^{\sB'} \sB$ is cocomplete, it now follows that $b^*$ is an equivalence.
  \end{proof}

\subsection{Categorical Milnor excision}

  Let $E$ be a functor on the \inftyCat of small stable \inftyCats with values in a stable \inftyCat.
  Recall that $E$ is called a \emph{localizing invariant} if it sends exact sequences to exact triangles.
  See e.g. \cite{BlumbergGepnerTabuada}, except that we do not assume that $E$ commutes with filtered colimits.

  \begin{exam}
    The main example we have in mind is nonconnective algebraic K-theory, as defined in \cite[Sect.~9]{BlumbergGepnerTabuada} or via the generalized Bass--Thomason--Trobaugh construction as in \cite[Thm.~C]{CisinskiKhanKH}.
    We denote it simply by $\K$.
  \end{exam}

  \begin{notation}
    Let $E$ be a localizing invariant.
    For any compactly generated stable \inftyCat $\sC$, we set for convenience
    \[ E(\sC) := E(\sC^\omega) \]
    where $\sC^\omega$ denotes the full subcategory of compact objects.
  \end{notation}

  \begin{thm}\label{thm:excision}
    Let $E$ be a localizing invariant.
    Let $\Delta$ be a Milnor square of compactly generated stable \inftyCats of the form
    \begin{equation*}
      \begin{tikzcd}
        \sA \arrow[r,"f^*"]\arrow[d,"p^*"] & \sB\arrow[d,"q^*"] \\
        \sA' \arrow[r, "g^*"]              & \sB'.
      \end{tikzcd}
    \end{equation*}
    If $\Delta$ satisfies base change, then the induced square $E(\Delta)$
    \[
      \begin{tikzcd}
        \E(\sA) \ar{r}{f^*}\ar{d}{p^*}
          & \E(\sB) \ar{d}{q^*}
        \\
        \E(\sA') \ar{r}{g^*}
          & \E(\sB')
      \end{tikzcd}
    \]
    is cartesian.
  \end{thm}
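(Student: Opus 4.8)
The plan is to reduce to the square $\Delta_0$ produced by the $\odot$-construction and then read off cartesianness from the two semi-orthogonal decompositions of the lax pullback $\sC$. Since $\Delta$ is a Milnor square satisfying base change, \thmref{thm:odot} shows that the comparison functor $b^* : \sA'\odot_\sA^{\sB'}\sB \to \sB'$ is an equivalence, hence $\Delta \simeq \Delta_0$ as squares (where $\Delta_0$ is \eqref{eq:Delta_0}). Applying $E$, it therefore suffices to prove that $E(\Delta_0)$ is cartesian. The point of working with $\Delta_0$ is that its bottom-right corner is the Verdier quotient $\sA'\odot_\sA^{\sB'}\sB = \sC/d_*(\sA)$, which slots directly into the two decompositions of $\sC$ recorded in Remarks \ref{rem:C=<B,A'>} and \ref{rem:C=<A,Q>}.

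Next I would pass to compact objects and invoke the localizing property of $E$. The compactness clauses of \lemref{lem:q_0 and g_0 generation}, together with the compactness of $d_* = (p^*,f^*)$ (part of the Milnor hypotheses), guarantee that both semi-orthogonal decompositions $\sC = \langle\sB,\sA'\rangle$ and $\sC = \langle\sA, \sA'\odot_\sA^{\sB'}\sB\rangle$ restrict to split Verdier sequences of compact objects. Since $E$ is localizing, the first yields a splitting $E(\sC)\simeq E(\sA')\oplus E(\sB)$ with summand inclusions $E(i_{\sA'})$ and $E(i_\sB)$ attached to the structural functors $i_{\sA'}=(\id,0,0)$ and $i_\sB=(0,\id,0)$, and the second yields a cofibre sequence
\[ E(\sA) \xrightarrow{E(d_*)} E(\sC) \xrightarrow{E(a^*)} E(\sA'\odot_\sA^{\sB'}\sB), \]
so that $E(\sA)\simeq \mathrm{fib}(E(a^*))$.

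It then remains to identify the maps under the splitting $E(\sC)\simeq E(\sA')\oplus E(\sB)$. On one side, $d_*=(p^*,f^*)$ (\remref{rem:d_*}) gives $E(d_*)=(E(p^*),E(f^*))$. On the other, the relations $a^*i_{\sA'}=g_0^*$ and $a^*i_\sB = \Omega\,q_0^*$ of \remref{rem:a^* splitting} show that the two components of $E(a^*)$ are $E(g_0^*)$ and $E(\Omega)\circ E(q_0^*)$. Here the desuspension in the definition of $q_0^*$ is not a nuisance but supplies exactly the sign one needs: additivity of $E$ (applied to the cofibre sequence of exact functors $\id\to 0\to\Sigma$) forces $E(\Sigma)\simeq -\id$, so the $\sB$-component of $E(a^*)$ is $-E(q_0^*)$. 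Consequently the fibre of a map $E(\sA')\oplus E(\sB)\to E(\sA'\odot_\sA^{\sB'}\sB)$ with components $(E(g_0^*),-E(q_0^*))$ is precisely the pullback $E(\sA')\times_{E(\sA'\odot_\sA^{\sB'}\sB)} E(\sB)$ formed along $E(g_0^*)$ and $E(q_0^*)$. Combining this with $E(\sA)\simeq\mathrm{fib}(E(a^*))$ and the identification $E(d_*)=(E(p^*),E(f^*))$ of the projections, we conclude that $E(\Delta_0)$, and hence $E(\Delta)$, is cartesian.

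I expect the genuinely delicate point to be the bookkeeping in the last two steps rather than any new idea: checking that both semi-orthogonal decompositions descend to compact objects (so that $E$ may be applied termwise), and then matching the structure maps of the resulting pullback with $E(p^*), E(f^*), E(g_0^*), E(q_0^*)$ on the nose, including the sign produced by the suspension in $q_0^*$. The compactness is exactly what \lemref{lem:q_0 and g_0 generation} and the Milnor hypotheses were set up to provide, so the remaining work is careful tracking of functors and signs; this is the categorical counterpart of the corresponding verification in \cite{LandTamme}.
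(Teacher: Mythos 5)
Your proposal is correct and takes essentially the same route as the paper: reduce to $\Delta_0$ via \thmref{thm:odot}, then use the two semi-orthogonal decompositions of $\sC$ (Remarks \ref{rem:C=<B,A'>} and \ref{rem:C=<A,Q>}) to produce the split cofibre sequence $E(\sA) \to E(\sA')\oplus E(\sB) \to E(\sA'\odot_{\sA}^{\sB'}\sB)$, which is exactly \propref{prop:E(Delta_0)}. Your explicit derivation of the sign via $E(\Sigma)\simeq -\id$ from additivity is precisely what the paper leaves implicit in its appeal to \remref{rem:a^* splitting}.
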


  \thmref{thm:excision} immediately follows, in view of \thmref{thm:odot}, from the following statement:

  \begin{prop}\label{prop:E(Delta_0)}
    Let the notation be as in \thmref{thm:excision}.
    Then the induced square $E(\Delta_0)$
    \[
      \begin{tikzcd}
        \E(\sA) \ar{r}{f^*}\ar{d}{p^*}
          & \E(\sB) \ar{d}{q_0^*}
        \\
        \E(\sA') \ar{r}{g_0^*}
          & \E(\sA' \odot_{\sA}^{\sB'} \sB')
      \end{tikzcd}
    \]
    is cartesian.
  \end{prop}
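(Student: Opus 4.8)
The plan is to read off the cartesianness of $E(\Delta_0)$ from the two semi-orthogonal decompositions of the lax pullback $\sC$ recorded in \remref{rem:C=<B,A'>} and \remref{rem:C=<A,Q>}, combined with the additivity of the localizing invariant $E$; this is the categorical version of the Land--Tamme argument. Write $\sQ := \sA' \odot_{\sA}^{\sB'} \sB$ for brevity. First I would note that, since $\Delta$ is a Milnor square of compactly generated categories, every functor in sight is compact: the functors $f^*,p^*,q^*,g^*$ by hypothesis, and $a^*, g_0^*, q_0^*, c^*, b^*$ by \lemref{lem:q_0 and g_0 generation}. Hence both decompositions of $\sC$ restrict to exact sequences of the subcategories of compact objects, and (using that localizing invariants are invariant under idempotent completion and send exact sequences to fibre sequences) we obtain two outputs: the decomposition $\sC = \langle \sB, \sA'\rangle$ of \remref{rem:C=<B,A'>} gives a splitting $E(\sC) \simeq E(\sB) \oplus E(\sA')$ whose summand inclusions are $E$ applied to $(0,\id,0):\sB\to\sC$ and $(\id,0,0):\sA'\to\sC$, while the decomposition $\sC = \langle \sA, \sQ\rangle$ of \remref{rem:C=<A,Q>} gives a cofibre sequence
\[ E(\sA) \xrightarrow{E(d_*)} E(\sC) \xrightarrow{E(a^*)} E(\sQ). \]

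Next I would compute $E(d_*)$ in the coordinates of the first decomposition. The cofibre sequence of functors $\sA\to\sC$ displayed just before \constrref{constr:odot}, namely $(0,f^*(-),0) \to d_* \to (p^*(-),0,0)$, exhibits $d_*$ as an extension of $(\id,0,0)\circ p^*$ by $(0,\id,0)\circ f^*$. Since $E$ is additive, the additivity theorem identifies $E(d_*)$ as the map $E(\sA) \to E(\sB)\oplus E(\sA')$ with components $E(f^*)$ and $E(p^*)$. Feeding this into the cofibre sequence above yields
\[ E(\sQ) \simeq \Cofib\big(E(\sA) \xrightarrow{(E(f^*),\,E(p^*))} E(\sB)\oplus E(\sA')\big). \]

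It then remains to identify the two components of the cofibre map $E(a^*):E(\sB)\oplus E(\sA') \to E(\sQ)$ with the functors appearing in $\Delta_0$. By definition $g_0^* = a^*\circ(\id,0,0)$, so the $E(\sA')$-component is exactly $E(g_0^*)$. For the $E(\sB)$-component, recall from \constrref{constr:odot} (cf. \remref{rem:a^* splitting}) that $q_0^* = \Sigma\, a^*\circ(0,\id,0)$, equivalently $a^*\circ(0,\id,0) \simeq \Omega\, q_0^*$; since the suspension endofunctor of $\sQ$ induces $-\id$ on $E(\sQ)$ (from the cofibre sequence $\id\to 0\to\Sigma$ of endofunctors), the $E(\sB)$-component equals $-E(q_0^*)$. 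Thus the cofibre sequence reads
\[ E(\sA) \xrightarrow{(E(f^*),\,E(p^*))} E(\sB)\oplus E(\sA') \xrightarrow{(-E(q_0^*),\,E(g_0^*))} E(\sQ), \]
which is precisely the statement that $E(\Delta_0)$ is cocartesian; as $E$ takes values in a stable category this is equivalent to $E(\Delta_0)$ being cartesian, proving the proposition.

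The hard part will not be any genuinely new geometric or homotopical input but rather the bookkeeping that makes the argument legitimate on compact objects: one must verify that \emph{both} semi-orthogonal decompositions of $\sC$ descend to exact sequences of the compact subcategories (this is exactly where the compactness clauses in the definition of a Milnor square and \lemref{lem:q_0 and g_0 generation} are invoked, together with invariance of $E$ under idempotent completion), and that the additivity theorem indeed applies to the cofibre sequence of functors defining $d_*$. The only other point demanding genuine care is the sign produced by the suspension built into the definition of $q_0^*$; tracking it correctly is what makes the final three-term sequence align with the square $\Delta_0$ rather than with a spurious variant.
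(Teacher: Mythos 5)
Your proposal is correct and is essentially the paper's own argument: both use the two semi-orthogonal decompositions $\sC = \langle \sB, \sA'\rangle$ and $\sC = \langle \sA, \sA'\odot_{\sA}^{\sB'}\sB\rangle$ to split $E(\sC) \simeq E(\sA')\oplus E(\sB)$, identify the components of $E(d_*)$ and $E(a^*)$ (including the sign coming from the suspension in the definition of $q_0^*$), and read off the cartesian square from the resulting cofibre sequence $E(\sA) \to E(\sA')\oplus E(\sB) \to E(\sA'\odot_{\sA}^{\sB'}\sB)$. The only cosmetic difference is that you compute the components of $E(d_*)$ via additivity applied to the cofibre sequence defining $d_*$, whereas the paper reads them off directly by composing with the projections defining the splitting.
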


  \begin{proof}
    First note that the Verdier localization sequence (see \remref{rem:C=<A,Q>})
    \[
      \sA \xrightarrow{d_*} \sC \xrightarrow{a^*} \sA' \odot_{\sA}^{\sB'} \sB
    \]
    induces a cofibre sequence
    \[
      \E(\sA) \xrightarrow{d_*} \E(\sC) \xrightarrow{a^*} \E(\sA' \odot_{\sA}^{\sB'} \sB).
    \]
    Similarly, $\E$ sends the Verdier localization sequence $\sB \to \sC \to \sA'$ (\remref{rem:C=<B,A'>}) to a cofibre sequence.
    Moreover, both functors in this sequence have \emph{compact} right adjoints which hence yield a splitting
    \[
      \E(\sC) \simeq \E(\sA') \oplus \E(\sB)
    \]
    given by the projections $\sC \to \sA'$ and $\sC \to \sB$.
    Under this isomorphism, the map $d_* : \E(\sA) \to \E(\sC)$ is induced by the maps $p^* : \E(\sA) \to \E(\sA')$ and $f^* : \E(\sA) \to \E(\sB)$, and the map $a^* : \E(\sC) \to \E(\sA' \odot_{\sA}^{\sB'} \sB)$ splits thanks to \remref{rem:a^* splitting} as
    \[ a^* : \E(\sC) \simeq \E(\sA') \oplus \E(\sB) \xrightarrow{g_0^*-q_0^*} \E(\sA' \odot_{\sA}^{\sB'} \sB). \]
    Thus we end up with a cofibre sequence of the form
    \[
      \E(\sA)
        \xrightarrow{(p^*,f^*)} \E(\sA') \oplus \E(\sB)
        \xrightarrow{g_0^* - q_0^*} \E(\sA' \odot_{\sA}^{\sB'} \sB).
    \]
    Recall that this is equivalent to the assertion that $E(\Delta_0)$ is cartesian.
  \end{proof}

\subsection{Categorical pro-Milnor excision}

  We now turn our attention to the behaviour of localizing invariants with respect to pro-Milnor squares (\defref{defn:pro Milnor}).
  Under some strong assumptions, we will be able to show a pro-variant of \thmref{thm:excision} (see \thmref{thm:pro excision}).

  We begin with some preliminary considerations on isomorphisms of pro-systems in $\Pro(\Presc)$.

  \begin{defn}
    Let $\{f_n^* : \sC_n \to \sD_n \}_n$ be a cofiltered system in $\Presc$.
    We say that $\{f_n^*\}_n$ is a \emph{pro-equivalence} if it induces an isomorphism in $\Pro(\Presc)$.
  \end{defn}

  While a general criterion for pro-equivalences seems out of reach, we will show that there is a large enough supply for our purposes.
  First note the following class of examples:

  \begin{exam}\label{exam:pro-equiv monogenic}
    Let $\{\phi_n : A_n \to B_n\}_n$ be a cofiltered system of homomorphisms of connective bounded above $\sE_1$-rings.
    If $\{\phi_n\}_n$ induces an isomorphism of underlying pro-spaces, then it follows from \cite[Lem.~2.28]{LandTamme} that it induces an isomorphism of pro-objects in the \inftyCat of $\sE_1$-rings.
    In particular, by functoriality of the construction $R \mapsto \LMod_R$ (as a functor from the \inftyCat of $\sE_1$-rings to $\Presc$), we deduce that the extension of scalars functors \[ \{\phi_n^* : \LMod_{A_n} \to \LMod_{B_n}\}_n \] define a pro-equivalence.
  \end{exam}

  \begin{exam}\label{exam:proequiv lim}
    Since passage to underlying pro-objects preserves finite limits and colimits, it follows that pro-equivalences are closed under finite (co)limits.
  \end{exam}

  We also have closure under certain colocalizations (and a dual statement for localizations that we leave to the reader to formulate):

  \begin{lem}\label{lem:proequiv coloc}\leavevmode
    Suppose given a cofiltered system of commutative squares in $\Presc$
    \[ \begin{tikzcd}
      \sC'_n \ar{r}{i_{n,*}}\ar[swap]{d}{g_n^*}
        & \sC_n \ar{d}{f_n^*}
      \\
      \sD'_n \ar{r}{i_{n,*}}
        & \sD_n
    \end{tikzcd} \]
    where $i_{n,*}$ is fully faithful for every $n$, with right adjoint $i_n^!$.
    Assume that for every $n$, the square is horizontally right-adjointable: that is, the base change transformation
    \[ g_n^* i_n^! \to i_n^! f_n^* \]
    is invertible.
    If $f^*$ is a pro-equivalence, then so is $g^*$.
  \end{lem}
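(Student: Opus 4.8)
The plan is to write down an explicit pro-inverse to $g := \{g_n^*\}$, assembled from a pro-inverse of $f := \{f_n^*\}$ together with the adjunction data $\{i_{n,*}\}$ and $\{i_n^!\}$. Write $\sC, \sC', \sD, \sD'$ for the four pro-objects in $\Pro(\Presc)$, and let
$\iota_\sC := \{i_{n,*}\} : \sC' \to \sC$, $\iota_\sD := \{i_{n,*}\} : \sD' \to \sD$, $r_\sC := \{i_n^!\} : \sC \to \sC'$ and $r_\sD := \{i_n^!\} : \sD \to \sD'$ denote the induced morphisms. These satisfy four identities, each holding levelwise and hence in $\Pro(\Presc)$: namely $r_\sC \iota_\sC \simeq \id_{\sC'}$ and $r_\sD \iota_\sD \simeq \id_{\sD'}$ (full faithfulness of $i_{n,*}$, i.e.\ invertibility of the unit $\id \to i_n^! i_{n,*}$); $f \iota_\sC \simeq \iota_\sD\, g$ (commutativity of the given square, $f_n^* i_{n,*} \simeq i_{n,*} g_n^*$); and $g\, r_\sC \simeq r_\sD\, f$ (the horizontal right-adjointability hypothesis $g_n^* i_n^! \isoto i_n^! f_n^*$).

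Since $f$ is a pro-equivalence it admits an inverse $f^{-1} : \sD \to \sC$ in $\Pro(\Presc)$. Set $h := r_\sC \circ f^{-1} \circ \iota_\sD : \sD' \to \sC'$. Composing in $\Pro(\Presc)$ and feeding in the identities above, I compute
\[ h g \simeq r_\sC\, f^{-1} (\iota_\sD\, g) \simeq r_\sC\, f^{-1} (f \iota_\sC) \simeq r_\sC \iota_\sC \simeq \id_{\sC'} \]
and
\[ g h \simeq (g\, r_\sC)\, f^{-1} \iota_\sD \simeq (r_\sD\, f)\, f^{-1} \iota_\sD \simeq r_\sD \iota_\sD \simeq \id_{\sD'}, \]
so $g$ is an isomorphism in $\Pro(\Presc)$, i.e.\ a pro-equivalence. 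Conceptually, the four identities exhibit $g$ as a retract of $f$ in the arrow $\infty$-category $\Fun(\Delta^1, \Pro(\Presc))$, and isomorphisms are stable under retracts; the displayed computation is just the splitting of that retract. (This is the analogue for colocalizations of the argument the text leaves to the reader for localizations.)

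The one point that requires care — and the main obstacle — is that $r_\sC, r_\sD$ genuinely are morphisms in $\Pro(\Presc)$, i.e.\ that each $i_n^!$ is a \emph{compact} colimit-preserving functor; otherwise the formal manipulation above does not take place in $\Pro(\Presc)$ at all. Colimit-preservation is automatic: as a right adjoint between stable $\infty$-categories, $i_n^!$ is exact, and it preserves filtered colimits because $i_{n,*}$ is compact (compactness of $i_{n,*}$ is precisely the assertion that its right adjoint $i_n^!$ preserves filtered colimits); an exact functor that preserves filtered colimits preserves all colimits. Compactness of $i_n^!$ is the subtler part: here one uses that the coreflection $i_{n,*} i_n^!$ is a colimit-preserving idempotent, so that $i_{n,*}$ exhibits $\sC'_n$ as a smashing colocalization of $\sC_n$, whence the quotient functor $i_n^!$ is again compact. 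This is the only place where the hypothesis ``in $\Presc$'' (rather than merely in $\Pres$) enters, and checking it is where the real content lies — everything else is the formal adjunction bookkeeping above.
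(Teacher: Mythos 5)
Your construction of the candidate inverse is exactly the one in the paper: the paper also represents the inverse of $f^*$ by levelwise compact functors $v_n^*$ and forms $u_n^* = i_n^!\,v_n^*\,i_{n',*}$, and your four identities and two cancellations are the same ones performed there. The gap is at precisely the point you flag as the real content, and your resolution of it is wrong: it is \emph{not} true that the coreflection onto a smashing colocalization is compact. Take $\sC_n = \D(\bZ)$ and let $\sC'_n \sub \D(\bZ)$ be the localizing subcategory generated by $\bZ/p$, with $i_*$ the inclusion. Its right adjoint is $i^! \simeq \Sigma^{-1}\bZ/p^\infty \otimes (-)$, which preserves all colimits; so $i_*$ is compact and $i_*i^!$ is a colimit-preserving idempotent, i.e.\ this is a smashing colocalization. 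Yet $i^!$ is not compact: the right adjoint of $i^!$ is derived $p$-completion, $N \mapsto \uHom(\Sigma^{-1}\bZ/p^\infty, N)$, and it fails to preserve the filtered colimit $\colim_n \bZ/p^n \simeq \bZ/p^\infty$, since each $\bZ/p^n$ is $p$-complete while $(\bZ/p^\infty)^\wedge_p \simeq \Sigma\bZ_p \not\simeq \bZ/p^\infty$. (In smashing terms, compactness of $i^!$ would require the idempotent object $\Sigma^{-1}\bZ/p^\infty$ to be a compact object of $\D(\bZ)$, which it is not.) Consequently your $r_\sC$ and $r_\sD$ are in general only morphisms in $\Pro(\Pres)$, not $\Pro(\Presc)$, and the retract computation takes place in the wrong category.

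The repair --- and this is how the paper argues --- is to prove compactness not of $i_n^!$ itself but of the composite $u_n^* = i_n^!\,v_n^*\,i_{n',*}$, \emph{after} the cancellation identities are in place: one has $u_n^*\,g_{n'}^* \simeq \tr^*$, and $\tr^*$ and $g_{n'}^*$ are compact, so for any filtered diagram $Y_\alpha$ the canonical map $\colim_\alpha u_{n,*}(Y_\alpha) \to u_{n,*}(\colim_\alpha Y_\alpha)$ becomes invertible after applying $g_{n',*}$; if $g_{n',*}$ is conservative, equivalently $g_{n'}^*$ generates its codomain under colimits (\lemref{lem:generate}), this yields compactness of $u_n^*$. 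Note that this conservativity is genuinely extra input: the paper's proof invokes it (``since $g_{n'}^*$ generates under colimits'') even though it is not displayed in the statement of the lemma, and it does hold in the one place the lemma is used (\lemref{lem:proequiv criterion}, where the $g_n^*$ of the present lemma are the functors $f_n^*$ there, assumed to generate under colimits). So the compactness cannot be extracted from the colocalization structure alone, as your smashing argument attempts; it has to come from the interplay with $g^*$.
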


  \begin{proof}
    By assumption, there exists a morphism $v^* : \{\sD_n\}_n \to \{\sC_n\}_n$ in $\Pro(\Presc)$ which is inverse to $f^* = \{f_n^*\}_n$.
    Let $u^* : \{\sD'_n\}_n \to \{\sC'_n\}_n$ be the morphism in $\Pro(\Pres)$ defined as the composite $u^* = i^! v^* i_*$.
    More explicitly, we may represent $v^*$ by the following data:
    \begin{enumerate}
      \item[($\ast$)]
      For every index $n$, there exists an index $n' > n$, a compact colimit-preserving functor $v^*_{n} : \sD_{n'} \to \sC_{n}$, isomorphisms
      \[
        f_{n}^* v^*_{n} \simeq \tr^* : \sD_{n'} \to \sD_{n},
        \qquad 
        v^*_{n} f_{n'}^* \simeq \tr^* : \sC_{n'} \to \sC_{n},
      \]
      and a homotopy coherent system of compatibilities between this data, where we write $\tr^*$ for the transition functors.
    \end{enumerate}
    In terms of such a choice, the morphism $u^*$ is represented by the colimit-preserving functors
    \[
      u^*_{n} : \sD'_{n'}
      \xrightarrow{i_*} \sD_{n'}
      \xrightarrow{v^*_{n}} \sC_{n}
      \xrightarrow{i^!} \sC'_{n}
    \]
    where we omit some decorations for simplicity.
    Note that we have canonical isomorphisms of functors
    \begin{align*}
      g_{n}^* u^*_{n}
      \simeq g_{n}^* i^! v^*_{n} i_*
      \simeq i^! f_{n}^* v^*_{n} i_*
      \simeq i^! \tr^* i_*
      \simeq i^! i_* \tr^*
      \simeq \tr^*\\
      u^*_{n} g_{n'}^*
      \simeq i^! v^*_{n} i_* g_{n'}^*
      \simeq i^! v^*_{n} f_{n'}^* i_*
      \simeq i^! \tr^* i_*
      \simeq i^! i_* \tr^*
      \simeq \tr^*,
    \end{align*}
    where we have used the assumptions that $i_*$ and $i^!$ commute with the vertical arrows.
    Note moreover that $u^*_{n}$ is compact: since $g_{n'}^*$ generates under colimits, this follows from the fact that $u^*_{n} g_{n'}^* \simeq \tr^*$ is compact.
    Thus the morphism $u^*$ is contained in the subcategory $\Pro(\Presc)$ and defines an inverse to $g^* = \{g_n^*\}_n$ in the latter.
  \end{proof}

  In the \emph{projectively} generated case (see \cite[Def.~5.5.8.23]{HTT}), and when the mapping pro-spaces are uniformly bounded, we can prove the following criterion for pro-equivalence.

  \begin{lem}\label{lem:proequiv criterion}
    Let $\{\sC_n\}_n$ and $\{\sD_n\}_n$ be cofiltered systems in $\Presc$ and let $\{f_n^* : \sC_n \to \sD_n \}_n$ be a cofiltered system of colimit-preserving compact projective\footnote{%
      A functor $f^* : \sC \to \sD$ is called compact projective if its right adjoint preserves sifted colimits.
    } functors which generate their codomains under colimits.
    Consider the following conditions:
    \begin{thmlist}
      \item\label{item:aisjdfnq}
      For every $n$ there exists a small set of compact projective generators $\{C_{n}^\alpha\}_\alpha$ of $\sC_n$ such that every transition functor $\sC_n \to \sC_m$ sends $C_n^\alpha \mapsto C_m^\alpha$ for every $\alpha$ and every $m<n$.

      \item
      For every pair of indices $\alpha$ and $\beta$, the map
      \[ \big\{ \Maps_{\sC_n}(C_n^\alpha, C_n^\beta) \big\}_{n} \to \big\{ \Maps_{\sD_n}(f_n^*(C_n^\alpha), f_n^*(C_n^\beta)) \big\}_{n} \]
      is an isomorphism of pro-spaces.

      \item
      There exists an integer $c\ge0$ such that the pro-spaces
      \[ \{\Maps_{\sC_n}(C_n^\alpha, C_n^\beta)\}_n \]
      are $c$-truncated for all $\alpha$ and $\beta$.
    \end{thmlist}
    Then $\{f_n^*\}_n$ induces a pro-equivalence.
  \end{lem}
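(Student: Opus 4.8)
The plan is to trade the categorical statement for a ``many-objects ring-theoretic'' one via Morita theory, and then invoke the functoriality of module categories. First I would recognize each $\sC_n$ as a module category. As the $\{C_n^\alpha\}_\alpha$ are compact projective generators, the (many-object) Schwede--Shipley recognition theorem (see \cite[Chap.~7]{HA-20170918}) gives an equivalence $\sC_n\simeq\Mod_{\sR_n}$, where $\sR_n$ is the full spectral subcategory of $\sC_n$ on the $C_n^\alpha$, with hom-spectra $\sR_n(\alpha,\beta)=\mathrm{Map}_{\sC_n}(C_n^\alpha,C_n^\beta)$. Since $f_n^*$ is compact projective, its right adjoint $f_{n,*}$ preserves sifted colimits, and being exact it also preserves finite colimits, hence all colimits; it follows that each $f_n^*(C_n^\alpha)$ is again a compact projective generator of $\sD_n$ and that $\sD_n\simeq\Mod_{\sS_n}$ with $\sS_n(\alpha,\beta)=\mathrm{Map}_{\sD_n}(f_n^*C_n^\alpha,f_n^*C_n^\beta)$. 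Under these identifications $f_n^*$ becomes extension of scalars $\theta_{n,!}$ along the map of spectral categories $\theta_n\colon\sR_n\to\sS_n$ induced by $f_n^*$ on hom-spectra. Condition \ref{item:aisjdfnq} ensures the transition functors preserve the chosen generators, so the whole pro-system lives over a fixed object set and packages into a cofiltered system $\{\theta_n\}_n$ of maps of spectral categories.

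Granting a coherent such recognition, the conclusion becomes formal. The assignment $\sR\mapsto\Mod_{\sR}$, $\theta\mapsto\theta_!$, is a functor valued in $\Presc$: extension of scalars preserves colimits, and it is compact because its right adjoint (restriction) preserves all colimits. Any functor induces one on pro-categories that preserves isomorphisms, and the image of $\{\theta_n\}_n$ under $\Mod_{(-)}$ is exactly $\{\theta_{n,!}\}_n=\{f_n^*\}_n$. Hence it suffices to prove that $\{\theta_n\}_n$ is an isomorphism in the pro-category of spectral categories, i.e. a pro-equivalence on each hom-spectrum. This is the many-objects counterpart of the functoriality of $R\mapsto\LMod_R$ exploited in \examref{exam:pro-equiv monogenic}.

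It remains to promote conditions (ii) and (iii) to a pro-equivalence of spectral categories. Condition (ii) says that $\{\Maps_{\sC_n}(C_n^\alpha,C_n^\beta)\}_n\to\{\Maps_{\sD_n}(f_n^*C_n^\alpha,f_n^*C_n^\beta)\}_n$ is an isomorphism of pro-spaces, i.e. an isomorphism on the underlying pro-spaces of the hom-spectra $\{\sR_n(\alpha,\beta)\}_n\to\{\sS_n(\alpha,\beta)\}_n$. Since the $C_n^\alpha$ are compact projective, these hom-spectra are connective, so $\Omega^\infty$ detects all of their homotopy and condition (ii) already forces an isomorphism on every homotopy pro-group. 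Condition (iii) bounds the pro-spaces above by $c$, so the hom-pro-spectra are concentrated in the finite window $[0,c]$. A map of uniformly bounded pro-spectra that is an isomorphism on all homotopy pro-groups is a pro-equivalence, by climbing a Postnikov tower of finite length and controlling the successive $k$-invariants and $\lim^1$-terms; carrying the composition along, this is precisely the many-objects generalization of \cite[Lem.~2.28]{LandTamme}, and shows $\{\theta_n\}_n$ to be a pro-equivalence.

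I expect two steps to carry the weight. The main obstacle is the bounded, many-objects version of \cite[Lem.~2.28]{LandTamme}: one must reconstruct the coherent multiplicative (composition) data of the spectral categories from their uniformly bounded truncations, and it is precisely the uniform two-sided bound supplied by connectivity and condition (iii) that makes the obstruction-theoretic induction terminate --- without such a bound the implication fails, which is why (iii) cannot be dropped. The second, more bookkeeping-intensive obstacle is to make the recognition $\sC_n\simeq\Mod_{\sR_n}$ natural in the cofiltered variable: one must verify that the compact colimit-preserving transition functors, which preserve generators by \ref{item:aisjdfnq}, are genuinely extension-of-scalars functors, so that $\{\sC_n\}_n\simeq\{\Mod_{\sR_n}\}_n$ and $\{f_n^*\}_n\simeq\{\theta_{n,!}\}_n$ coherently as morphisms of pro-objects. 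If desired, the closure properties of pro-equivalences (\examref{exam:proequiv lim}, \lemref{lem:proequiv coloc}) can be used to assemble the bounded hom-spectra from their truncation layers and thereby organize the Postnikov step.
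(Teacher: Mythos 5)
Your proposal shares its skeleton with the paper's proof — recognize each $\sC_n$ via projective generation as modules over the endomorphism data of the generators, reduce to a pro-isomorphism of that endomorphism data, and promote an underlying pro-isomorphism to a structured one using the uniform bound $c$ — but it diverges at the decisive point, and that is exactly where the gap sits. The step you yourself flag as ``the main obstacle,'' a bounded many-objects version of \cite[Lem.~2.28]{LandTamme}, is not a bookkeeping step: it is the entire mathematical content of the lemma, and your proposal does not prove it, only sketches why it ought to hold. The paper's proof is engineered precisely so that no such many-object statement is ever needed: it sums the generators into a single object $C_n = \bigoplus_\alpha C_n^\alpha$ with connective endomorphism algebra $A_n$, embeds $\sC_n \simeq \sP_\Sigma(\sA_n)$ as a \emph{coreflective subcategory} of $\LMod^\cn_{A_n} \simeq \sP_\Sigma(\sA_n^+)$ (these genuinely differ when the generator set is infinite, since objects of $\sP_\Sigma$ respect only finite sums), quotes the already-available one-object statement (\examref{exam:pro-equiv monogenic}) for $\{A_n\}\to\{B_n\}$, and then descends the resulting pro-equivalence of module categories back to $\{\sC_n\}\to\{\sD_n\}$ by the colocalization criterion \lemref{lem:proequiv coloc}. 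So the two auxiliary devices you don't use (direct-sum collapse, colocalization descent) are not incidental; they are the paper's substitute for the lemma you would have to prove.

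Beyond being unproven, your key step hides a real subtlety when the generator set $S$ is infinite: condition (ii) is a pro-isomorphism for each pair $(\alpha,\beta)$ \emph{separately}, whereas any version of the Land--Tamme rigidity argument for spectral categories needs a pro-isomorphism of the underlying $S\times S$-\emph{graded} pro-object. Componentwise pro-isomorphisms do not assemble into a graded one in general, because the re-indexing witnessing each componentwise isomorphism can be unbounded in $(\alpha,\beta)$ (one can write down systems, each component of which is pro-zero, whose assembly is not pro-zero). So the inference ``condition (ii) gives the hypothesis of the many-object lemma'' fails as stated without an additional uniformity argument. (In fairness, the paper's own passage from condition (ii) to the hypothesis of \examref{exam:pro-equiv monogenic} for the summed generator faces the same assembly issue; but the paper reduces everything else to quotable results, whereas your route leaves both the assembly and the structured rigidity lemma to be established.) Two smaller imprecisions: the $\sC_n$ here are prestable, never stable, so the recognition should be with \emph{connective} $\sR_n$-modules rather than $\Mod_{\sR_n}$; and the right adjoints $f_{n,*}$ are not ``exact'' functors preserving all colimits in this setting --- compact projectivity of $f_n^*(C_n^\alpha)$ follows simply from the adjunction $\Maps(f_n^*C_n^\alpha,-)\simeq\Maps(C_n^\alpha,f_{n,*}(-))$ and preservation of sifted colimits by $f_{n,*}$. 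If you do prove the graded, uniformly bounded many-object rigidity lemma, your argument goes through and is arguably more direct than the paper's; as written, however, it defers rather than completes the hard part.
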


  \begin{proof}
    For every $n$, consider the full subcategory $\sA_n \sub \sC_n$ spanned by finite direct sums of the objects $C_n^\alpha$ (as $\alpha$ varies).
    Then $\sA_n$ is an additive \inftyCat for which the inclusion $\sA_n \sub \sC_n$ extends to an equivalence $\sP_\Sigma(\sA_n) \simeq \sC_n$ by \cite[Prop.~5.5.8.25]{HTT}.
    Let $C_n \in \sC_n$ be the direct sum of the objects $C_n^\alpha$ (as $\alpha$ varies), and let $\sA^+_n \sub \sC_n$ be the full subcategory generated by $C_n$ under finite direct sums and direct summands.
    By \cite[Ex.~C.1.5.11]{SAG-20180204} this is an idempotent-complete additive \inftyCat equipped with an equivalence $\sP_\Sigma(\sA^+_n) \simeq \LMod^\cn_{A_n}$, where $A_n$ is the connective endomorphism algebra of $C_n$ (as in \cite[Rem.~C.2.1.9]{SAG-20180204}).
    Then we have inclusions $\sA_n \sub \sA^+_n$ which are closed under finite direct sums and hence give rise by \cite[Prop.~5.5.8.15]{HTT} to fully faithful colimit-preserving functors
    \begin{equation}\label{eq:lnxoaq}
      i_* : \sC_n \simeq \sP_\Sigma(\sA_n) \to \sP_\Sigma(\sA_n^+)
    \end{equation}
    whose right adjoints $i^!$ are restriction along $\sA_n \sub \sA_n^+$.
    Similarly let $D_n^\alpha \in \sD_n$ be the images of $C_n^\alpha$, $\sB_n \sub \sD_n$ the additive subcategory they generate, $D_n \in \sD_n$ their direct sum over $\alpha$, and $B_n$ the connective endomorphism algebra of $D_n$.
    Since the functors $f_n^*$ are compact projective and generate under colimits, the objects $D_n^\alpha$ form a small set of compact projective generators of $\sD_n$.
    Thus we similarly get functors $i_* : \sD_n \to \LMod^\cn_{B_n}$ fitting into commutative squares
    \[ \begin{tikzcd}
      \sC_n \ar{r}{i_*}\ar{d}{f_n^*}
        & \LMod^\cn_{A_n} \ar{d}{\phi_n^*}
      \\
      \sD_n \ar{r}{i_*}
        & \LMod^\cn_{B_n},
    \end{tikzcd} \]
    where $\phi_n^*$ is extension of scalars along the homomorphism $\phi_n : A_n \to B_n$ induced by $f_n^*$.
    Since $\{A_n\}_n$ and $\{B_n\}_n$ are bounded above by assumption, we see by \examref{exam:pro-equiv monogenic} that the functors $\phi_n^*$ induce a pro-equivalence.
    To conclude it remains to check the criteria of \lemref{lem:proequiv coloc} for the above square.

    The functors \eqref{eq:lnxoaq} preserve compact projective objects by construction, so we see that $i_*$ is compact projective.
    It remains to show that $i^!$ commutes with the vertical arrows.
    Since all functors in the square commute with sifted colimits, it will suffice to evaluate on the object $A_n \in \LMod^\cn_{A_n}$.
    By construction, the horizontal arrows send
    \begin{align*}
      A_n &\mapsto C_n,\\
      B_n &\mapsto D_n,
    \end{align*}
    whence the claim.
    Thus the conditions of \lemref{lem:proequiv coloc} are satisfied, and $\{f_n^*\}_n$ is a pro-equivalence.
  \end{proof}

  Under the conditions of \lemref{lem:proequiv criterion}, the \inftyCats $\sC_n$ and $\sD_n$ will never be stable, but only prestable.
  Nevertheless, by stabilization, we may deduce a following variant of \lemref{lem:proequiv criterion} for certain presentable stable \inftyCats.

  \begin{defn}\label{defn:weighted pstab}
    Let $\{\sC_i\}_{i\in\sI}$ be a diagram of \pstabs and compact colimit-preserving functors indexed by a small \inftyCat $\sI$.
    Suppose given full subcategories $(\sC_i)_{\ge 0} \sub \sC_i$ for every $i\in\sI$, closed under colimits and extensions, and collections $\{C^\alpha_i\}_{\alpha\in S}$ of compact projective objects of $(\sC_i)_{\ge0}$ indexed by some small set $S$.
    We will say that the collections $\{C^\alpha_i\}_{\alpha}$ form a set of \emph{projective generators} for $\{\sC_i\}_i$ if the following conditions hold:
    \begin{defnlist}
      \item
      For every $i \in \sI$, the objects $\{C^\alpha_i\}_{\alpha\in S}$ generate $(\sC_i)_{\ge0}$ under colimits and extensions, and the objects $\{\Sigma^{-n}(C^\alpha_i)\}_{n\ge0, \alpha}$ generate $\sC_i$ under colimits.
      
      \item
      For every morphism $i \to j$ in $\sI$, the induced functor $u_{i,j}^* : \sC_i \to \sC_j$ sends $C^\alpha_i \mapsto C^\alpha_j$ for every $\alpha \in S$, and its right adjoint $u_{i,j,*}$ sends $(\sC_j)_{\ge0}$ to $(\sC_i)_{\ge 0}$.
    \end{defnlist}
  \end{defn}

  \begin{rem}\label{rem:iohnpa}
    For $\sI = \Delta^0$, \defref{defn:weighted pstab} is essentially a presentable version of the notion of weighted \inftyCat as discussed in \ssecref{ssec:weight}.
    For $\sI = \Delta^1$, it corresponds to a weight-exact functor which generates under colimits.
  \end{rem}

  \begin{rem}\label{rem:ipajinp}
    Using \cite[Prop.~1.4.4.11, Lem.~7.2.2.6]{HA-20170918} we can recast \defref{defn:weighted pstab} in terms of t-structures: there are t-structures on the $\sC_i$---accessible, right-complete, and compatible with filtered colimits---with connective parts $(\sC_i)_{\ge0}$ generated under colimits and extensions by $\{C_i^\alpha\}_{\alpha}$, such that the functors $u_{i,j}^*$ and their right adjoints are right t-exact.
  \end{rem}

  \begin{rem}
    In case $\sI$ has an initial object $0$, we will only specify the generators $C^\alpha_0 \in \sC_0$, so that $C^\alpha_i \in \sC_i$ are implicitly defined as the images by the functor $\sC_0 \to \sC_i$ for every $i\in\sI$.
    If we have a cofiltered system of $\sI$-indexed diagrams $\{\sC_{n,i}\}_{n\in\Lambda,i\in\sI}$, where $\Lambda$ is cofiltered, then projective generators of $\{\sC_{n,i}\}_{n,i}$ (as a $\Lambda\times\sI$-indexed diagram) will be similarly specified by a small set $S$ and a collection of objects $\{C_{n,0}^\alpha\}_{\alpha\in S}$.
  \end{rem}

  We can now reformulate \lemref{lem:proequiv criterion} as follows:

  \begin{cor}\label{cor:proequiv criterion stable}
    Suppose given a cofiltered system $\{f_n^* : \sC_n \to \sD_n\}_n$ of \pstabs and compact colimit-preserving functors.
    Suppose there exists a small set $S$ and objects $\{C_n^\alpha\}_{\alpha\in S}$ of $\sC_n$ for every $n$, which projectively generate $\{f_n^*\}_n$ as a cofiltered system of $\Delta^1$-indexed diagrams.
    Then $\{f_n^*\}_n$ is a pro-equivalence in case the following conditions hold:
    \begin{thmlist}
      \item
      For every pair $\alpha$, $\beta$, the map
      \[ \big\{ \Maps_{\sC_n}(C_n^\alpha, C_n^\beta) \big\}_{n} \to \big\{ \Maps_{\sD_n}(f_n^*(C_n^\alpha), f_n^*(C_n^\beta)) \big\}_{n} \]
      is an isomorphism of pro-spaces.

      \item
      There exists an integer $c\ge0$ such that the mapping pro-spaces
      \[ \big\{ \Maps_{\sC_n}(C_n^\alpha, C_n^\beta) \big\}_{n} \]
      are $c$-truncated for all $\alpha$, $\beta$.
    \end{thmlist}
  \end{cor}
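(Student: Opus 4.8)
The plan is to reduce to the prestable statement \lemref{lem:proequiv criterion}, applied to the connective parts supplied by the projective-generation data, and then to recover the stable system by inverting the suspension.

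First I would unpack the hypotheses via \remref{rem:ipajinp}: the projective generators $\{C_n^\alpha\}_\alpha$ equip each $\sC_n$ and $\sD_n$ with an accessible, right-complete t-structure, compatible with filtered colimits, whose connective part $(\sC_n)_{\ge 0}$ (resp. $(\sD_n)_{\ge 0}$) is generated under colimits and extensions by $\{C_n^\alpha\}_\alpha$ (resp. by their images $\{f_n^*(C_n^\alpha)\}_\alpha$), and for which the transition functors and the $f_n^*$ are right t-exact. Restricting to connective parts yields cofiltered systems $\{(\sC_n)_{\ge 0}\}$, $\{(\sD_n)_{\ge 0}\}$ in $\Presc$: the transition functors restrict and remain compact, since their right adjoints are $\tau_{\ge 0}$ composed with the original filtered-colimit-preserving right adjoints, and $\tau_{\ge 0}$ preserves filtered colimits. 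The restricted $f_n^*|_{(\sC_n)_{\ge 0}}$ are colimit-preserving and generate $(\sD_n)_{\ge 0}$ under colimits (their image contains the generators), and I would argue they are compact projective because they carry the compact projective generators $C_n^\alpha$ to compact projective objects, hence carry compact projectives to compact projectives.

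Next I would apply \lemref{lem:proequiv criterion} to this connective system. Its hypotheses translate directly: condition (i) of that lemma is exactly \defref{defn:weighted pstab}(ii); and since $(\sC_n)_{\ge 0}$ is a full subcategory of $\sC_n$, the mapping pro-spaces $\{\Maps_{(\sC_n)_{\ge 0}}(C_n^\alpha, C_n^\beta)\}_n$ coincide with $\{\Maps_{\sC_n}(C_n^\alpha, C_n^\beta)\}_n$, so conditions (i) and (ii) of the present corollary furnish conditions (ii) and (iii) of the lemma. This gives that $\{f_n^*|_{(\sC_n)_{\ge 0}}\}_n$ is a pro-equivalence, i.e. an isomorphism in $\Pro(\Presc)$.

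Finally I would stabilize. Because each t-structure is right-complete and compatible with filtered colimits, $\Sigma$ is a compact colimit-preserving endofunctor of $(\sC_n)_{\ge 0}$ (its right adjoint $\tau_{\ge 0}\Omega$ preserves filtered colimits), and the canonical functor $\colim\big((\sC_n)_{\ge 0} \xrightarrow{\Sigma} (\sC_n)_{\ge 0} \xrightarrow{\Sigma} \cdots\big) \to \sC_n$ is an equivalence in $\Presc$, and likewise for $\sD_n$. As any colimit-preserving functor commutes with $\Sigma$, this presentation exhibits stabilization $\mathrm{Sp}(-) = \colim(- \xrightarrow{\Sigma} \cdots)$ as a functor from Grothendieck prestable categories and compact colimit-preserving functors into $\Presc$, which carries the connective system back to $\{\sC_n\}$, $\{\sD_n\}$ and $\{f_n^*|_{(\sC_n)_{\ge 0}}\}$ back to $\{f_n^*\}$. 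Since a functor preserves isomorphisms, applying the induced map $\Pro(\Presc) \to \Pro(\Presc)$ to the pro-equivalence of the previous step would show that $\{f_n^*\}_n$ is a pro-equivalence. The hard part will be this last step: one must verify that $\mathrm{Sp}(-)$ genuinely lands in $\Presc$ — in particular that stabilizing a compact colimit-preserving functor again yields a compact one — and that the homotopy-coherent transition data witnessing the pro-equivalence on connective parts survives stabilization. Both ultimately rest on the filtered-colimit compatibility of the t-structures (which keeps $\tau_{\ge 0}$, $\Omega$, and the $f_{n,*}$ filtered-colimit-preserving), but assembling the coherences is delicate; alternatively one may sidestep the functoriality by stabilizing the explicit inverse produced in the proof of \lemref{lem:proequiv coloc} and checking by hand that the stabilized functors are compact, colimit-preserving, and inverse to $\{f_n^*\}$ in $\Pro(\Presc)$.
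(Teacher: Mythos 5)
Your proposal is correct and is essentially the paper's own argument: the corollary is presented there as a reformulation of \lemref{lem:proequiv criterion}, deduced ``by stabilization,'' which is precisely your reduction to connective parts, application of the prestable criterion, and the identification $\sC_n \simeq \Spt((\sC_n)_{\ge 0})$ coming from the right-complete, filtered-colimit-compatible t-structures of \remref{rem:ipahnpa}\remref{rem:ipajinp}. The functoriality and coherence worry you flag in the final step is settled by the equivalence between Grothendieck prestable \inftyCats and presentable stable \inftyCats equipped with such t-structures (with right t-exact colimit-preserving functors as morphisms), under which the whole cofiltered system of connective parts corresponds to the original stable system, so the pro-inverse supplied by \lemref{lem:proequiv criterion} transports automatically.
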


  We now prove our categorical pro-excision statement.

  \begin{thm}\label{thm:pro excision}
    Let $E$ be a localizing invariant.
    Let $\Delta$ be a pro-Milnor square of \pstabs which can be represented by a cofiltered system $\{\Delta_n\}_n$ of levelwise Milnor squares
    \[ \begin{tikzcd}
      \sA_n \ar{r}{f^*_n}\ar{d}{p^*_n}
        & \sB_n \ar{d}{q^*_n}
      \\
      \sA'_n \ar{r}{g^*_n}
        & \sB'_n
    \end{tikzcd} \]
    satisfying the following conditions:
    \begin{thmlist}
      \item
      There exists a small set $S$ and objects $\{A_n^\alpha\}_{\alpha\in S}$ of $\sA_n$ for every $n$, which projectively generate $\{\Delta_n\}_n$ as a cofiltered system of $(\Delta^1\times\Delta^1)$-indexed diagrams.

      \item
      There is an integer $c\ge0$ such that for all $\alpha$, $\beta$, the mapping pro-space
      \[ \big\{ \Maps_{\sB'_n}(g_n^*p_n^*(A_n^\alpha), g_n^*p_n^*(A_n^\beta)) \big\}_{n} \]
      is $c$-truncated.
    \end{thmlist}
    If $\Delta$ satisfies pro-base change, then the induced square of pro-objects
    \[
      \begin{tikzcd}
        \{\E(\sA_n)\}_n \ar{r}{f_n^*}\ar{d}{p_n^*}
          & \{\E(\sB_n)\}_n \ar{d}{q_n^*}
        \\
        \{\E(\sA'_n)\}_n \ar{r}{g_n^*}
          & \{\E(\sB'_n)\}_n
      \end{tikzcd}
    \]
    is cartesian.
  \end{thm}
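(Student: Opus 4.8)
The plan is to reduce the pro-statement to the levelwise $\odot$-construction and then apply the pro-equivalence criterion \corref{cor:proequiv criterion stable}. Applying \constrref{constr:odot} levelwise to $\{\Delta_n\}_n$ produces squares $\Delta_{0,n}$
\[
  \begin{tikzcd}
    \sA_n \arrow[r,"f_n^*"]\arrow[d,"p_n^*"] & \sB_n \arrow[d,"q_{0,n}^*"] \\
    \sA'_n \arrow[r,"g_{0,n}^*"] & \sA'_n \odot_{\sA_n}^{\sB'_n} \sB_n
  \end{tikzcd}
\]
together with functors $b_n^* : \sA'_n\odot_{\sA_n}^{\sB'_n}\sB_n \to \sB'_n$ realizing each $\Delta_n$ as the adjustment $(\Delta_{0,n})_{b_n}$; in particular $b_n^*g_{0,n}^* \simeq g_n^*$ and $b_n^*q_{0,n}^* \simeq q_n^*$. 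By \propref{prop:E(Delta_0)} each $E(\Delta_{0,n})$ is cartesian, and since passage to pro-objects preserves finite limits the pro-square $\{E(\Delta_{0,n})\}_n$ is cartesian. Moreover $\{E(b_n^*)\}_n$ defines a morphism of pro-squares $\{E(\Delta_{0,n})\}_n \to \{E(\Delta_n)\}_n$ which is the identity on the three corners $\E(\sA_n)$, $\E(\sA'_n)$, $\E(\sB_n)$. Hence it suffices to prove that $\{b_n^*\}_n$ is a pro-equivalence: then $\{E(b_n^*)\}_n$ is an isomorphism of pro-spectra, this morphism of pro-squares is an isomorphism, and cartesianness transfers to $\{E(\Delta_n)\}_n$, which is the assertion.

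To show $\{b_n^*\}_n$ is a pro-equivalence I would verify the hypotheses of \corref{cor:proequiv criterion stable} for the cofiltered system of $\Delta^1$-diagrams $\{b_n^*\}_n$. The natural projective generators are the images $C_n^\alpha := g_{0,n}^* p_n^*(A_n^\alpha) \simeq q_{0,n}^* f_n^*(A_n^\alpha)$ of the given generators, for which $b_n^*(C_n^\alpha) = g_n^* p_n^*(A_n^\alpha)$ are the prescribed generators of $\sB'_n$. The functors $b_n^*$ are compact and generate under colimits by \lemref{lem:q_0 and g_0 generation}(iii),(vi), and $q_{0,n}^*, g_{0,n}^*$ are compact by \lemref{lem:q_0 and g_0 generation}(vii), so each $C_n^\alpha$ is compact. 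Equipping $\sA'_n\odot_{\sA_n}^{\sB'_n}\sB_n$ with the t-structure whose connective part is generated under colimits and extensions by the $C_n^\alpha$, checking that these objects are compact \emph{projective}, and checking that $b_{n,*}$ is right t-exact, so that the data of \defref{defn:weighted pstab} is satisfied, is the most bookkeeping-heavy part; I expect this verification — that the projective-generation data of $\{\Delta_n\}_n$ is inherited by $\{b_n^*\}_n$ through the $\odot$-construction — to be the main obstacle, although it is dictated by the functoriality of \constrref{constr:odot} together with the t-exactness of the functors involved.

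The conceptual core is condition (i) of \corref{cor:proequiv criterion stable}, that
\[
  \bigl\{ \Maps_{\sA'_n\odot_{\sA_n}^{\sB'_n}\sB_n}(C_n^\alpha, C_n^\beta) \bigr\}_n
  \too
  \bigl\{ \Maps_{\sB'_n}(b_n^*(C_n^\alpha), b_n^*(C_n^\beta)) \bigr\}_n
\]
is a pro-isomorphism, and here pro-base change enters. Writing $\psi_{b_n}$ for the fibre of the unit $\eta_{b_n} : \id \to b_{n,*}b_n^*$, the right-hand term is $\Maps(C_n^\alpha, b_{n,*}b_n^*(C_n^\beta))$, so the fibre of the comparison map is $\Maps_{\sA'_n\odot_{\sA_n}^{\sB'_n}\sB_n}(C_n^\alpha, \psi_{b_n}(C_n^\beta))$. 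Using $C_n^\alpha \simeq q_{0,n}^* f_n^*(A_n^\alpha)$ and the adjunction $q_{0,n}^* \dashv q_{0,n,*}$, this fibre is identified with $\Maps_{\sB_n}(f_n^*(A_n^\alpha),\, q_{0,n,*}\psi_{b_n}(C_n^\beta))$. On the other hand \lemref{lem:base change adjustment} factors $\Ex_{\Delta_n}$ through $\Ex_{\Delta_{0,n}}$ followed by the map induced by $\eta_{b_n}$; since $\Delta_{0,n}$ satisfies base change (\thmref{thm:odot}\ref{item:nalnfiowa}, via \lemref{lem:base change Delta_0}) the top map is invertible, so pro-base change for $\Delta$ — which renders the exchange transformation a pro-isomorphism on the system $\{\Delta_n\}_n$ as well, the two representing systems defining the same pro-object — forces the pro-system $\{q_{0,n,*}\psi_{b_n}g_{0,n}^*\}_n$ to be pro-zero. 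Evaluating at $p_n^*(A_n^\beta)$ and using $g_{0,n}^* p_n^*(A_n^\beta) = C_n^\beta$ gives that $\{q_{0,n,*}\psi_{b_n}(C_n^\beta)\}_n$ is pro-zero, whence the fibre above is pro-zero and condition (i) holds.

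Finally, condition (ii) of \corref{cor:proequiv criterion stable} is exactly hypothesis (ii) of the theorem transported across the pro-isomorphism just established: the pro-space $\{\Maps_{\sB'_n}(g_n^* p_n^*(A_n^\alpha), g_n^* p_n^*(A_n^\beta))\}_n$ is $c$-truncated by assumption, and pro-isomorphic pro-spaces have the same truncatedness, so $\{\Maps(C_n^\alpha, C_n^\beta)\}_n$ is $c$-truncated as well. Thus \corref{cor:proequiv criterion stable} applies, $\{b_n^*\}_n$ is a pro-equivalence, and as explained this completes the proof.
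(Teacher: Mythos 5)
Your proposal is correct and takes essentially the same route as the paper's own proof: apply the $\odot$-construction levelwise, reduce to showing $\{b_n^*\}_n$ is a pro-equivalence via \corref{cor:proequiv criterion stable}, and verify the mapping-space condition by combining \lemref{lem:base change adjustment}, the levelwise invertibility of $\Ex_{\Delta_{n,0}}$ from \lemref{lem:base change Delta_0}, and the fact that pro-base change (being a property of the pro-object, independent of the representing system) makes $\{\Ex_{\Delta_n}\}_n$ a pro-isomorphism after evaluation on generators. The bookkeeping step you defer --- that the projective-generation data is inherited by the $\odot$-construction, with the $C_n^\alpha$ compact projective and $b_{n,*}$ right t-exact --- is precisely the paper's \lemref{lem:odot t-structure}, whose proof again rests on \lemref{lem:base change Delta_0} (to identify $q_{0,n,*}q_{0,n}^*f_n^*$ with the right t-exact functor $f_n^*p_{n,*}p_n^*$), so your outline matches the paper's argument in all essentials.
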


  We will need the following lemma.

  \begin{lem}\label{lem:odot t-structure}
    Suppose given a precartesian square of \pstabs and compact colimit-preserving functors
    \[ \begin{tikzcd}
      \sA \ar{r}{f^*}\ar{d}{p^*}
        & \sB \ar{d}{q^*}
      \\
      \sA' \ar{r}{g^*}
        & \sB'.
    \end{tikzcd} \]
    Assume that the square is projectively generated by a collection $\{A^\alpha\}_\alpha$ of objects of $\sA$.
    Then the functors $q_0^* : \sB \to \sA'\odot_{\sA}^{\sB'} \sB$ and $b^* : \sA'\odot_{\sA}^{\sB'} \sB \to \sB'$ are both projectively generated (as $\Delta^1$-indexed diagrams).
    In particular, the collection $\{q_0^*f^*(A^\alpha)\}_\alpha$ forms a set of projective generators of the $\odot$-construction $\sA'\odot_{\sA}^{\sB'} \sB$.
  \end{lem}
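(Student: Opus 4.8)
Write $\sQ := \sA'\odot_\sA^{\sB'}\sB$. By hypothesis the square is projectively generated by a family $\{A^\alpha\}_\alpha$ in $\sA$, so that $\{p^*(A^\alpha)\}$, $\{f^*(A^\alpha)\}$ and $\{q^*f^*(A^\alpha)\simeq g^*p^*(A^\alpha)\}$ are the projective generators of $\sA'$, $\sB$, $\sB'$, and all four functors $f^*,g^*,p^*,q^*$ together with their right adjoints are right t-exact for the associated t-structures (\remref{rem:ipajinp}). The plan is to equip $\sQ$ with a connective structure generated by the objects $q_0^*f^*(A^\alpha)$ and then to verify the two clauses of \defref{defn:weighted pstab} for the edges $q_0^*\colon\sB\to\sQ$ and $b^*\colon\sQ\to\sB'$. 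I would define $\sQ_{\ge0}\sub\sQ$ to be the smallest full subcategory closed under colimits and extensions that contains the objects $q_0^*f^*(A^\alpha)$. As $f^*(A^\alpha)$ is compact in $\sB$ and $q_0^*$ is a compact functor (\lemref{lem:q_0 and g_0 generation}), each $q_0^*f^*(A^\alpha)$ is compact in $\sQ$, so by \cite[Prop.~1.4.4.11, Lem.~7.2.2.6]{HA-20170918} this $\sQ_{\ge0}$ is the connective part of an accessible, right-complete t-structure compatible with filtered colimits.

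The observation that makes the argument go is that a \emph{compact} colimit-preserving functor of \pstabs has a right adjoint that is exact and preserves filtered colimits, and therefore---since every small colimit is built from finite and filtered colimits---preserves \emph{all} small colimits; such a colimit-preserving exact functor is then right t-exact as soon as it sends the connective generators of its source into the connective part of its target. Applying this, $q_0^*$ is right t-exact by construction (it is colimit-preserving and exact and maps $f^*(A^\alpha)$ to $q_0^*f^*(A^\alpha)\in\sQ_{\ge0}$), while $b^*$ carries the generator $q_0^*f^*(A^\alpha)$ to $b^*q_0^*f^*(A^\alpha)\simeq q^*f^*(A^\alpha)$, i.e. to the prescribed generator of $\sB'$. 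Moreover the right adjoints $q_{0,*}$ and $b_*$ are themselves colimit-preserving, so that checking their right t-exactness reduces to a computation on generators.

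For $b_*$ I would use the identity $b_*\simeq q_0^*\circ q_*$, which follows from $c_*\simeq a_*b_*$, the formula $c_*(Y')\simeq(0,\Sigma q_*(Y'),0)$ recorded in the proof of \lemref{lem:q_0 and g_0 generation}, and $a^*a_*\simeq\id$; since $q_*$ and $q_0^*$ are right t-exact, so is $b_*$, giving $b_*((\sB')_{\ge0})\sub\sQ_{\ge0}$. For $q_{0,*}$ it suffices to show $q_{0,*}q_0^*f^*(A^\alpha)\in\sB_{\ge0}$, and here I would invoke the cofibre sequence $f^*f_*\phi_q\to\id\to q_{0,*}q_0^*$ from \lemref{lem:phi_q0}: for connective $Y$ the object $\phi_q(Y)=\Fib(Y\to q_*q^*Y)$ lies in $\sB_{\ge-1}$, hence so does $f^*f_*\phi_q(Y)$ by right t-exactness of $f_*$ and $f^*$, and therefore $q_{0,*}q_0^*f^*(A^\alpha)$, being the cofibre of a map from a $(-1)$-connective object to the connective object $f^*(A^\alpha)$, is connective. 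As $q_{0,*}$ preserves colimits and extensions, this yields $q_{0,*}(\sQ_{\ge0})\sub\sB_{\ge0}$.

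These t-exactness statements close the argument. Compact projectivity of $q_0^*f^*(A^\alpha)$ in $\sQ_{\ge0}$ follows from the adjunction isomorphism $\Maps_\sQ(q_0^*f^*(A^\alpha),-)\simeq\Maps_\sB(f^*(A^\alpha),q_{0,*}(-))$ together with the facts that $q_{0,*}$ preserves sifted colimits and sends $\sQ_{\ge0}$ into $\sB_{\ge0}$ and that $f^*(A^\alpha)$ is compact projective in $\sB_{\ge0}$. That $\{\Sigma^{-n}q_0^*f^*(A^\alpha)\}_{n\ge0,\alpha}$ generate $\sQ$ under colimits I would read off from the cofibre sequence $\Omega q_0^*(Y)\to a^*(X',Y,\theta)\to g_0^*(X')$ of \remref{rem:a^* splitting}, the essential surjectivity of $a^*$, and the facts that $g_0^*(X')$ and $q_0^*(Y)$ are colimits of (de)suspensions of the generators. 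Assembling these points verifies \defref{defn:weighted pstab} for $\sQ$ and for the edges $q_0^*$ and $b^*$. I expect the main obstacle to be the control of the right adjoints $q_{0,*}$ and $b_*$: the fully faithful right adjoint $a_*\colon\sQ\to\sC$ of the Verdier localization is not a priori colimit-preserving or t-exact, so the whole argument hinges on the observation that compactness forces these right adjoints to preserve all colimits, thereby reducing each t-exactness claim to the explicit generator computations supplied by \lemref{lem:phi_q0} and the formula for $c_*$.
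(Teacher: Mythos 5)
Your overall architecture coincides with the paper's: define $\sQ_{\ge0}\sub\sQ:=\sA'\odot_{\sA}^{\sB'}\sB$ as generated under colimits and extensions by the objects $q_0^*f^*(A^\alpha)$, then verify right t-exactness of $q_0^*$, $q_{0,*}$, $b^*$ and $b_*$, using throughout that compact colimit-preserving functors of \pstabs have colimit-preserving right adjoints. Your handling of $q_0^*$ and $b^*$ matches the paper, and your argument for $q_{0,*}$ --- via the cofibre sequence $f^*f_*\phi_q \to \id \to q_{0,*}q_0^*$ of \lemref{lem:phi_q0} and the estimate $\phi_q(Y)\in\sB_{\ge-1}$ for connective $Y$ --- is correct and is a genuinely different route from the paper's, which instead uses precartesianness through \lemref{lem:base change Delta_0} to identify $q_{0,*}q_0^*f^*\simeq f^*p_*p^*$. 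The compact-projectivity and generation checks at the end are also fine.

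The step for $b_*$, however, contains a genuine error: the identity $b_*\simeq q_0^*\circ q_*$ is false. It is derived from the formula $c_*(Y')\simeq(0,\Sigma q_*(Y'),0)$ --- which, admittedly, is asserted in passing in the paper's proof of \lemref{lem:q_0 and g_0 generation} --- but that formula cannot be taken at face value: already in the degenerate case $\sB=0$ one has $\sC\simeq\sA'$ and $c^*\simeq g^*$, hence $c_*\simeq g_*$, whereas the formula would give $c_*=0$. (Computing mapping spaces in $\sC$ directly, the right adjoint $c_*$ has a generally nonvanishing $\sA'$-component, essentially $g_*\Fib\bigl(\varepsilon_q\colon q^*q_*\Sigma Y'\to\Sigma Y'\bigr)$; this vanishes when $q_*$ is fully faithful, which is why the formula looks plausible.) Moreover the conclusion $b_*\simeq q_0^*q_*$ is irreparable: for any Milnor square satisfying base change, \thmref{thm:odot} makes $b^*$ an equivalence, so your identity would force $q^*q_*\simeq b^*q_0^*q_*\simeq b^*b_*\simeq\id$, i.e. $q_*$ fully faithful. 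This fails for ordinary Tor-independent Milnor squares satisfying all hypotheses of the present lemma: take $A=\bZ\times\bZ$, $A/I=\bZ$, $B=\bZ\times\bZ[\epsilon]/(\epsilon^2)$, $B/J=\bZ[\epsilon]/(\epsilon^2)$; then $q_*$ is restriction of scalars along $\bZ\to\bZ[\epsilon]/(\epsilon^2)$, which is not fully faithful. The repair is exactly the paper's argument, which avoids $c_*$ altogether: from $b^*q_0^*\simeq q^*$ one gets the correct identity $q_{0,*}b_*\simeq q_*$; since $q_{0,*}$ is conservative (because $q_0^*$ generates under colimits) and t-exact (left t-exact as the right adjoint of the right t-exact $q_0^*$, and right t-exact by your previous step), and a conservative t-exact functor detects connectivity, right t-exactness of $q_*$ forces $b_*((\sB')_{\ge0})\sub\sQ_{\ge0}$. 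With this substitution the rest of your proof goes through.
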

  \begin{proof}
    We consider the t-structures defined in \remref{rem:ipajinp}.
    To simplify the notation set $\sQ := \sA' \odot_{\sA}^{\sB'} \sB$.
    By \cite[Prop.~1.4.4.11]{HA-20170918} there is a t-structure whose connective part $\sQ_{\ge0}$ is the cocomplete stable subcategory generated by $q_{0}^*(\sB_{\ge0})$.

    It follows from the assumption that each of the functors in the given square generates under colimits, and is compact projective (the latter by \cite[Lem.~7.2.2.6]{HA-20170918}, since it has right t-exact right adjoint by assumption).
    Hence by \lemref{lem:q_0 and g_0 generation} the functors $q_0^*$ and $b^*$ also generate under colimits and are compact.
    It remains to show that are right t-exact, and have right t-exact right adjoints.
    The functor $q_0^*$ is right t-exact by construction.
    Let us show $q_{0,*}$ is right t-exact.
    Since $q_0^*f^*$ generates under colimits and is right t-exact, it will suffice to show that $q_{0,*}q_{0}^*f^*$ is right t-exact.
    Since $\Delta$ is precartesian, \lemref{lem:base change Delta_0} implies that the latter functor is identified with the right t-exact functor $f^* p_{*} p^*$.

    Since $q_{0}^*$ generates under colimits and $b^*q_{0}^* \simeq q^*$ is right t-exact, it follows that $b^* : \sQ \to \sB'$ is also right t-exact.
    Finally, using the fact that $q_{0,*}$ is conservative and t-exact, it follows from the fact that $q_{0,*}b_* \simeq q_*$ is right t-exact that $b_*$ itself is right t-exact.
  \end{proof}

  The following key lemma shows that any pro-Milnor square as in \thmref{thm:pro excision} can be levelwise represented by squares coming from the $\odot$-construction.

  \begin{lem}\label{lem:levelwise Milnor}
    Let $\Delta$ be a pro-Milnor square as in \thmref{thm:pro excision}.
    If $\Delta$ satisfies pro-base change, then there exists an isomorphism
    \[ \{\Delta_{n,0}\}_n \to \{\Delta_n\}_n \]
    of commutative squares in $\Pro(\Presc)$, where every $\Delta_{n,0}$ is obtained from $\Delta_{n}$ via the $\odot$-construction.
  \end{lem}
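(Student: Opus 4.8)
The plan is to show that the comparison map is carried by a single functor on the fourth corner, and then to prove that this family of functors is a pro-equivalence by means of \corref{cor:proequiv criterion stable}. First I would record the shape of the map. By \constrref{constr:odot}, each $\Delta_n$ is the adjustment of $\Delta_{n,0}$ along the canonical functor $b_n^* : \sA'_n \odot_{\sA_n}^{\sB'_n} \sB_n \to \sB'_n$; concretely, the morphism $\Delta_{n,0} \to \Delta_n$ is the identity on the corners $\sA_n$, $\sA'_n$, $\sB_n$ and is $b_n^*$ on the fourth corner. By \lemref{lem:q_0 and g_0 generation}, each $b_n^*$ is compact and generates $\sB'_n$ under colimits, and by functoriality of the $\odot$-construction in $\Delta_n$ the collection $\{b_n^*\}_n$ assembles into a morphism in $\Pro(\Presc)$. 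Hence the asserted isomorphism of squares holds if and only if $\{b_n^*\}_n$ is a pro-equivalence.

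Next I would set up \corref{cor:proequiv criterion stable} for $\{b_n^*\}_n$. Since each $\Delta_n$ is precartesian and projectively generated by $\{A_n^\alpha\}_\alpha$, \lemref{lem:odot t-structure} shows that $\{q_{n,0}^* f_n^*(A_n^\alpha)\}_\alpha$ is a set of projective generators of $\sA'_n \odot_{\sA_n}^{\sB'_n} \sB_n$ and that $b_n^*$ is projectively generated as a $\Delta^1$-diagram. As $b_n^* q_{n,0}^* f_n^* \simeq q_n^* f_n^* \simeq g_n^* p_n^*$, the functor $b_n^*$ carries these generators to the projective generators $g_n^* p_n^*(A_n^\alpha)$ of $\sB'_n$. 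Thus the set $S$ and the objects $\{q_{n,0}^* f_n^*(A_n^\alpha)\}$ exhibit $\{b_n^*\}_n$ as a projectively generated cofiltered system of $\Delta^1$-diagrams, as required, and it remains only to verify the two mapping-space conditions.

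To compute the relevant mapping pro-spaces I would combine adjunction with \lemref{lem:base change Delta_0}: since $\Delta_n$ is precartesian there is a natural isomorphism $q_{n,0,*} q_{n,0}^* f_n^* \simeq f_n^* p_{n,*} p_n^*$, whence
\[
  \Maps_{\sQ_n}\bigl(q_{n,0}^* f_n^*(A_n^\alpha),\, q_{n,0}^* f_n^*(A_n^\beta)\bigr)
    \simeq \Maps_{\sB_n}\bigl(f_n^*(A_n^\alpha),\, f_n^* p_{n,*} p_n^*(A_n^\beta)\bigr),
\]
writing $\sQ_n = \sA'_n \odot_{\sA_n}^{\sB'_n} \sB_n$, while on the target side adjunction gives
\[
  \Maps_{\sB'_n}\bigl(g_n^* p_n^*(A_n^\alpha),\, g_n^* p_n^*(A_n^\beta)\bigr)
    \simeq \Maps_{\sB_n}\bigl(f_n^*(A_n^\alpha),\, q_{n,*} q_n^* f_n^*(A_n^\beta)\bigr).
\]
By \lemref{lem:base change adjustment} applied to the adjustment of $\Delta_{n,0}$ by $b_n^*$, together with \lemref{lem:phi_q0}, the map induced by $b_n^*$ between these two spectra is obtained by applying $\Maps_{\sB_n}(f_n^*(A_n^\alpha), -)$ to the exchange transformation $\Ex_{\Delta_n} p_n^* : f_n^* p_{n,*} p_n^*(A_n^\beta) \to q_{n,*} q_n^* f_n^*(A_n^\beta)$.

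Finally I would feed in pro-base change. The hypothesis that $\Delta$ satisfies pro-base change (\defref{defn:pro Milnor}) supplies a second, base-change, representation of $\Delta$, and I would transfer invertibility of the exchange transformation along the pro-isomorphism identifying it with the fixed Milnor representation to conclude that $\{\Ex_{\Delta_n} p_n^*(A_n^\beta)\}_n$ is a pro-equivalence in $\{\sB_n\}_n$; applying $\Maps_{\sB_n}(f_n^*(A_n^\alpha), -)$ then yields condition (i) of \corref{cor:proequiv criterion stable}. Condition (ii) is then immediate, since by (i) the source mapping pro-spaces are isomorphic to the target ones, which are $c$-truncated by hypothesis (ii) of \thmref{thm:pro excision}. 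This transfer of base change between two different representations is exactly the hard part: the exchange transformation is not functorial for arbitrary morphisms of squares in $\Presc$, and one cannot in general refine the two representations to a common one that is simultaneously Milnor and base-change. It is precisely here that the boundedness hypothesis is indispensable, as it is what makes \corref{cor:proequiv criterion stable} applicable and allows one to certify that $\{b_n^*\}_n$ is a pro-equivalence directly, without such a common refinement.
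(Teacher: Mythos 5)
Your proposal is correct and follows essentially the same route as the paper's proof: reduce to showing $\{b_n^*\}_n$ is a pro-equivalence, invoke \lemref{lem:odot t-structure} to get projective generation, verify condition (i) of \corref{cor:proequiv criterion stable} by combining \lemref{lem:base change adjustment}, the invertibility of $\Ex_{\Delta_{n,0}}p_n^*$ from \lemref{lem:base change Delta_0}, and a transfer of invertibility of $\{\Ex_{\Delta_n}\}_n$ along a (re-indexed, levelwise) isomorphism with a base-change representation of $\Delta$. The only differences are presentational — you make explicit the adjunction identifications of the mapping pro-spaces and the deduction of the truncatedness condition (ii) from (i), both of which the paper leaves implicit.
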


  \begin{proof}
    Note that by \thmref{thm:odot}\ref{item:nalnfiowa}, the induced square $\Delta_{n,0}$
    \[ \begin{tikzcd}
      \sA_n \ar{r}\ar{d}
        & \sB_n \ar{d}
      \\
      \sA'_n \ar{r}
        & \sA'_n \odot_{\sA_n}^{\sB'_n} \sB_n
    \end{tikzcd} \]
    satisfies base change for every $n$.
    To simplify the notation, set $\sQ_n := \sA'_n \odot_{\sA_n}^{\sB'_n} \sB_n$.
    It remains only to show that the functors
    \[b_{n}^* : \sQ_n \to \sB'_n\]
    define a pro-equivalence as $n$ varies.
    Since $q_n^* \simeq b_n^* q_{n,0}^*$ generates $\sB'_n$ under colimits, so does $b_n^*$.
    By \lemref{lem:odot t-structure}, the functor $b_n^*$ is projectively generated for every $n$ (by the objects $B_n^\alpha = f_n^*(A_n^\alpha)$).
    Thus in order to apply \corref{cor:proequiv criterion stable} it will suffice to show that $b_n^*$ induce isomorphisms on mapping pro-spaces.
    For this we need a pro-version of \corref{cor:base change adjustment}.
    By \lemref{lem:base change adjustment} we have for every $n$ a commutative square of natural transformations
    \[ \begin{tikzcd}[matrix xscale=2]
      f_n^*p_{n,*} \ar{r}{\Ex_{\Delta_{n,0}}}\ar[equals]{d}
        & q_{n,0,*}g_{n,0}^* \ar{d}{\eta_{b_n}}
      \\
      f_n^*p_{n,*} \ar[swap]{r}{\Ex_{\Delta_{n}}}
        & q_{n,*}g_n^*.
    \end{tikzcd} \]
    By \lemref{lem:base change Delta_0}, the upper arrow is invertible for every $n$.
    Evaluating at the object $A_n'^\alpha := p_n^*(A_n^\alpha) \in \sA'_n$ for any $\alpha$, applying $\Maps_{\sB_n}(B_n^\beta, -)$ for any $\beta$, and passing to pro-systems yields a commutative diagram of pro-spaces
    \[ \begin{tikzcd}[matrix xscale=1.5]
      \{\Maps_{\sB_n}(B_n^\beta, f_n^*p_{n,*}(A_n'^\alpha))\}_n \ar{r}{\Ex_{\Delta_{n,0}}}\ar[equals]{d}
      & \{\Maps_{\sB_n}(B_n^\beta, q_{n,0,*}g_{n,0}^*(A_n'^\alpha))\}_n \ar{d}
      \\
      \{\Maps_{\sB_n}(B_n^\beta, f_n^*p_{n,*}(A_n'^\alpha))\}_n \ar[swap]{r}{\Ex_{\Delta_{n}}}
      & \{\Maps_{\sB_n}(B_n^\beta, q_{n,*}g_n^*(A_n'^\alpha))\}_n
    \end{tikzcd} \]
    where the upper arrow is invertible in $\Pro(\Spc)$.
    The claim is that the right-hand vertical arrow is invertible in $\Pro(\Spc)$.
    Since $\Delta$ satisfies pro-base change, it can be represented by some cofiltered system $\{\Delta'_n\}_n$ for which $\Delta'_n$ satisfies base change for every $n$.
    Choose an isomorphism $\{\Delta_n\}_n \simeq \{\Delta'_n\}_n$ (of squares in $\Pro(\Presc)$); re-indexing if necessary, we may assume that it is induced by levelwise morphisms $\Delta_n \to \Delta'_n$.
    By functoriality, such an isomorphism gives rise to an identification between the lower arrow in the diagram above and the analogous construction $\{\Ex_{\Delta'_n}\}_n$ for the squares $\Delta'_n$.
    Since $\Ex_{\Delta'_n}$ is invertible for every $n$, we conclude that the lower arrow above is also invertible.
  \end{proof}
  
  \begin{proof}[Proof of \thmref{thm:pro excision}]
    Combine Lemma~\ref{lem:levelwise Milnor} and \propref{prop:E(Delta_0)}.
  \end{proof}


\section{Localizing invariants of algebraic stacks}
\label{sec:stacky_applications}

  \begin{defn}
    Let $\sX$ be a derived algebraic stack which is perfect in the sense of \ssecref{ssec:compgen}.
    For instance, suppose $\sX$ is \emph{ANS} (affine diagonal and nice stabilizers, see \defref{defn:ANS} and \thmref{thm:ANS perfect}).
    Then we set
    \[ E(\sX) := E(\Qcoh(\sX)) \simeq E(\Perf(\sX)) \]
    for any localizing invariant $E$.
    For example, we have the nonconnective algebraic K-theory spectrum $\K(\sX)$.
  \end{defn}

  \begin{rem}
    Let $\sX$ and $\sY$ be perfect derived algebraic stacks.
    Then for any morphism $f : \sX \to \sY$, the functor $f^* : \Qcoh(\sY) \to \Qcoh(\sX)$ preserves perfect complexes and is therefore compact.
    In particular, there is an induced map
    \[ f^* : E(\sY) \to E(\sX) \]
    for any localizing invariant $E$.
  \end{rem}

\subsection{Étale excision}

  See \ssecref{ssec:compgen} for the notion of perfectness of stacks.

  \begin{thm}\label{thm:K excision}
    Let $f : \sX' \to \sX$ be a representable morphism of \emph{perfect} derived algebraic stacks.
    Suppose there exists a closed immersion $i : \sZ \to \sX$ with quasi-compact open complement such that $f$ is an isomorphism infinitely near $\sZ$, i.e., the induced morphism $\sX'^\wedge_{f^{-1}(\sZ)} \to \sX^\wedge_{\sZ}$ is invertible.
    Then for every localizing invariant $E$, the induced square
    \[ \begin{tikzcd}
      E(\sX) \ar{r}{j^*}\ar{d}{f^*}
        & E(\sX\setminus\sZ) \ar{d}
      \\
      E(\sX') \ar{r}
        & E(\sX'\setminus f^{-1}(\sZ))
    \end{tikzcd} \]
    is cartesian.
  \end{thm}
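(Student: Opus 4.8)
The plan is to reduce to the categorical excision statement \thmref{thm:Qcoh excision} already established, but to feed it into a criterion of Tamme rather than into \thmref{thm:excision}. The point is that the induced square of quasi-coherent sheaves is \emph{not} in general a Milnor square: for a representable but non-affine $f$ the functor $f^* : \Qcoh(\sX) \to \Qcoh(\sX')$ need not generate its codomain under colimits, so \thmref{thm:excision} does not apply. What saves us is that the \emph{horizontal} functors are localizations.

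Concretely, I would first apply \thmref{thm:Qcoh excision}, which under exactly these hypotheses yields a square of compactly generated stable $\infty$-categories
\[
  \begin{tikzcd}
    \Qcoh(\sX) \ar{r}{j^*}\ar{d}{f^*}
      & \Qcoh(\sX\setminus\sZ) \ar{d}
    \\
    \Qcoh(\sX') \ar{r}
      & \Qcoh(\sX'\setminus f^{-1}(\sZ))
  \end{tikzcd}
\]
that is cartesian in $\Presc$ and whose four functors are compact (the pullbacks along $f$ by \remref{rem:f^* compact}, the two restrictions along the open immersions because the open complements are quasi-compact). The extra input is that $j^*$ and its base change are Verdier localizations: as pullbacks along quasi-compact open immersions their right adjoints $j_*$ and $j'_*$ are fully faithful. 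Transposing the square so that $j^*$ becomes the left-hand vertical functor, this fully faithful $j_*$ is exactly the adjoint required by Tamme's criterion \cite[Thm.~18]{Tamme_2018}, which states that a localizing invariant carries a cartesian square of compactly generated stable $\infty$-categories with one right adjoint fully faithful to a cartesian square. Applying it gives the result.

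It is worth spelling out why this criterion holds, since that isolates the only genuine issue. The two rows are Verdier localization sequences whose kernels are the subcategories $\Qcoh_\sZ(\sX)$ and $\Qcoh_{f^{-1}(\sZ)}(\sX')$ of sheaves supported on the closed substacks; the cartesian description $\Qcoh(\sX) \simeq \Qcoh(\sX\setminus\sZ)\times_{\Qcoh(\sX'\setminus f^{-1}(\sZ))}\Qcoh(\sX')$ forces $f^*$ to restrict to an equivalence between these kernels (an object killed by $j^*$ corresponds to a triple whose $\sX\setminus\sZ$-component vanishes, hence to an object of $\Qcoh(\sX')$ killed by the lower restriction). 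Applying the localizing invariant $E$ then produces a morphism of cofiber sequences that is an equivalence on the fibers, and a square in a stable $\infty$-category is cartesian precisely when the induced map on horizontal fibers is invertible. The main obstacle is therefore the verification that these are bona fide localization sequences to which $E$ may be applied, i.e.\ that the supported subcategories are compactly generated by objects that are compact in the ambient category; this is where perfectness of $\sX$ and $\sX'$ enters, guaranteeing that $\Qcoh_\sZ(\sX)$ is generated by perfect complexes supported on $\sZ$.
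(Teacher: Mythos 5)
Your proposal is correct and follows essentially the same route as the paper: the paper's proof likewise applies \cite[Thm.~18]{Tamme_2018} to the cartesian square of \thmref{thm:Qcoh excision}, justifying fully faithfulness of $j'_*$ via the base change formula for the self-intersection $\sU' \fibprod_{\sX'} \sU' \simeq \sU'$. One small imprecision worth noting: Tamme's criterion requires the fully faithful right adjoint for a functor \emph{into the terminal corner} of the square (after your transposition this is $j'^*$, not $j^*$) — harmless here, since both restriction functors along the open immersions are localizations, as you observe, but the hypothesis should be attached to the correct arrow.
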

  \begin{proof}
    Apply the criterion of \cite[Thm.~18]{Tamme_2018} (cf. \cite[Cor.~13]{HoyoisEfimov}) to the cartesian square in \thmref{thm:Qcoh excision}.
    Let $j': \sU' \to \sX'$ denote the open immersion complementary to $f^{-1}(\sZ)$.
    Fully faithfulness of the functor $j'_* : \Qcoh(\sU') \to \Qcoh(\sX')$, i.e., invertibility of the unit $\id \to j'^* j'_*$, follows from the base change formula for the self-intersection $\sU' \fibprod^\bR_{\sX'} \sU' \simeq \sU'$.
  \end{proof}

  Specializing to étale neighbourhoods gives:

  \begin{cor}[Étale excision]\label{cor:K Nis}
    Suppose given a Nisnevich square of derived algebraic stacks of the form
    \[ \begin{tikzcd}
      \sU' \ar{r}{j'}\ar{d}{g}
        & \sX' \ar{d}{f}
      \\
      \sU \ar{r}{j}
        & \sX
    \end{tikzcd} \]
    where $\sX$ and $\sX'$ are \emph{perfect}.
    Then for every localizing invariant $E$, the induced square
    \[ \begin{tikzcd}
      E(\sX) \ar{r}{j^*}\ar{d}{f^*}
        & E(\sU) \ar{d}
      \\
      E(\sX') \ar{r}
        & E(\sU')
    \end{tikzcd} \]
    is cartesian.
  \end{cor}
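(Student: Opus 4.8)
The plan is to deduce the corollary directly from \thmref{thm:K excision} by checking that a Nisnevich square supplies precisely the hypotheses required there. First I would unpack the data using \defref{defn:Nis}: the quasi-compact open immersion $j : \sU \to \sX$ is by definition complementary to a closed immersion $i : \sZ \to \sX$, so that $\sU = \sX \setminus \sZ$; moreover $f$ is representable (indeed étale and of finite presentation) and the square is cartesian, whence $\sU' = \sX' \setminus f^{-1}(\sZ)$. Together with the hypothesis that $\sX$ and $\sX'$ are perfect, this verifies every standing assumption of \thmref{thm:K excision} except the condition that $f$ be an isomorphism infinitely near $\sZ$.

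The one genuinely geometric input is therefore to check that the induced morphism of formal completions $\sX'^\wedge_{f^{-1}(\sZ)} \to \sX^\wedge_\sZ$ is invertible. Here I would use that $f$ is étale, hence formally étale: the unique infinitesimal lifting property shows that for each $n$ the $n$-th infinitesimal neighbourhood of $f^{-1}(\sZ)$ in $\sX'$ maps isomorphically onto the $n$-th infinitesimal neighbourhood of $\sZ$ in $\sX$, the base case $n=0$ being the isomorphism $f^{-1}(\sZ) \isoto \sZ$ built into the Nisnevich square. Passing to the colimit over $n$ (as ind-algebraic stacks) then yields the desired equivalence of formal completions. I expect this to be the main point to pin down with care, since one is working with formal completions of stacks rather than of schemes; however, exactly this fact already underlies the proof of \corref{cor:Qcoh Nis}, so it may simply be cited from there (or established once and for all in the material on formal stacks).

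With the ``isomorphism infinitely near $\sZ$'' condition in hand, \thmref{thm:K excision} applies verbatim to $f : \sX' \to \sX$ and $i : \sZ \to \sX$, producing a cartesian square whose corners are $E(\sX)$, $E(\sX \setminus \sZ)$, $E(\sX')$ and $E(\sX' \setminus f^{-1}(\sZ))$. The identifications $\sX \setminus \sZ = \sU$ and $\sX' \setminus f^{-1}(\sZ) = \sU'$ recorded above match these corners with those in the statement, completing the argument.
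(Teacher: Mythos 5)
Your proposal is correct and matches the paper's approach: the paper offers no separate proof, presenting the corollary as a direct specialization of \thmref{thm:K excision}, and your argument fills in exactly that specialization. In particular, your identification of the one nontrivial point—that an étale $f$ restricting to an isomorphism $f^{-1}(\sZ) \isoto \sZ$ induces an equivalence of formal completions (by formal étaleness, or equivalently because each closed thickening in the ind-presentation of \remref{rem:formal colim} pulls back to an étale morphism that is an isomorphism on reductions)—is precisely the fact the paper leaves implicit here and in \corref{cor:Qcoh Nis}.
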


  \begin{rem}
    Note that for affine Nisnevich squares (i.e., squares as above where $f$ is affine), \corref{cor:K Nis} can be deduced alternatively from \corref{cor:Qcoh Nis} and \thmref{thm:excision}.
    This weaker statement would in fact suffice for our purposes in this paper.
  \end{rem}

  \begin{rem}
    \corref{cor:K Nis} also holds for squares as above where $f$ is not assumed representable.
    This can also be deduced from \cite[Thm.~18]{Tamme_2018}, e.g. using Nisnevich descent for the presheaf of \inftyCats $\sX \mapsto \Qcoh(\sX)$ and \cite[Thm.~2.2.7]{KhanSAG}.
  \end{rem}

  \begin{rem}
    By \cite[Thm.~2.2.7]{KhanSAG}, \corref{cor:K Nis} implies that the presheaf of spectra $\sX \mapsto E(\sX)$ satisfies descent for the Grothendieck topology generated by Nisnevich squares.
    The discussion of \cite[Prop.~2.8]{hoyoiskrishna} goes through \emph{mutatis mutandis} in the derived setting to show that this topology is generated by a stacky variant of the usual notion of Nisnevich covers.
  \end{rem}

\subsection{Formal Milnor and finite excision}

  \begin{thm}\label{thm:K formal excision}
    Suppose given a square of \evcoconn derived algebraic stacks
    \[
      \begin{tikzcd}
        \sZ' \ar{d}{g}\ar{r}{i'}
        & \sX' \ar{d}{f}
        \\
        \sZ \ar{r}{i}
        & \sX.
      \end{tikzcd}
    \]
    with $\sX$ noetherian ANS.
    Assume that $i$ is a closed immersion, $f$ is affine, the square is cartesian on classical truncations, and the formally completed square \eqref{eq:formal cdh square} is cocartesian.
    Then for any localizing invariant $E$, the induced square
      \begin{equation*}
        \begin{tikzcd}
          \{E(\sX)\}\ar{r}\ar{d}
            & \form{E}(\sX^\wedge_{\sZ})\ar{d}
          \\
          \{E(\sX')\}\ar{r}
            & \form{E}(\sX'^\wedge_{\sZ'})
        \end{tikzcd}
      \end{equation*}
    is pro-cartesian.
  \end{thm}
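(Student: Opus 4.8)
The plan is to reduce the statement to the categorical pro-Milnor excision theorem (\thmref{thm:pro excision}), applied to the square of derived \inftyCats of quasi-coherent sheaves produced by \thmref{thm:Qcohform Ferrand}. Indeed, under the present hypotheses (bounded noetherian, $i$ a closed immersion, $f$ affine, cartesian on classical truncations, and the formally completed square \eqref{eq:formal cdh square} cocartesian), \thmref{thm:Qcohform Ferrand} already shows that
\[
  \begin{tikzcd}
    \{\Qcoh(\sX)\} \ar{r}{\hat{i}^*}\ar{d}{f^*}
      & \Qcohform(\sX^\wedge_\sZ) \ar{d}{(f^\wedge)^*}
    \\
    \{\Qcoh(\sX')\} \ar{r}{\hat{i'}^*}
      & \Qcohform(\sX'^\wedge_{\sZ'})
  \end{tikzcd}
\]
is a pro-Milnor square satisfying pro-base change. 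Applying $E$ levelwise gives exactly the square in the statement, since by definition $\form{E}(\sX^\wedge_\sZ) = \{E(\widetilde{\sZ})\}$ runs over the thickenings representing the completion (and likewise for $\sX'$), and $E(\Qcoh(\widetilde{\sZ})) = E(\widetilde{\sZ})$. Thus the only remaining task is to verify the projective generation and boundedness hypotheses of \thmref{thm:pro excision}. Here the ANS condition enters: $\sX$ is perfect (\thmref{thm:ANS perfect}), and since $f$ is representable affine, $\sX'$ (and the classical/derived thickenings $\widetilde{\sZ}$, $\widetilde{\sZ'}$) are again ANS, hence perfect, so all invariants in sight are defined.

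For projective generation I would \emph{not} use the standard t-structure, which need not admit compact projective generators (e.g.\ on a projective scheme). Instead, fix a small set $S$ of compact generators $\{G^\alpha\}_{\alpha\in S}$ of $\Qcoh(\sX)$ and equip each corner with the associated \emph{tilting} t-structure: let $\Qcoh(\sX)_{\ge0}$ be generated under colimits and extensions by $\{G^\alpha\}$, and take the images $f^*G^\alpha$, $\hat{i}^*G^\alpha$, $\hat{i'}^*f^*G^\alpha$ as the generators at the other vertices, defining their connective parts the same way (a legitimate accessible t-structure by \remref{rem:ipajinp}). Each $G^\alpha$ is compact projective for its connective part (a summand of a free module of rank one under the tilting equivalence $\Qcoh(\sX) \simeq \LMod_{\uEnd(\oplus_\alpha G^\alpha)}$), and every functor in the diagram sends generators to generators by construction, hence is right t-exact. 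The decisive point is that for tilting t-structures \emph{every} right adjoint is automatically right t-exact: e.g.\ $\Maps_{\sX}(G^\alpha, f_*Y) \simeq \Maps_{\sX'}(f^*G^\alpha, Y)$ by adjunction, so $f_*$ preserves connectivity, and similarly the transition functors of $\Qcohform$ (pullbacks along thickenings, with t-exact closed-immersion pushforwards as right adjoints) preserve connective parts. Together with \remref{rem:f^* compact} this verifies \defref{defn:weighted pstab} for the full $\Lambda\times(\Delta^1\times\Delta^1)$-indexed diagram.

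The boundedness hypothesis is where niceness does the real work, and I expect it to be the main obstacle. The mapping pro-spaces to control are $\{\Maps_{\widetilde{\sZ'}_n}(\hat{i'}^*f^*G^\alpha, \hat{i'}^*f^*G^\beta)\}_n$, which by adjunction are computed on $\sX'$ as $\{\Maps_{\sX'}(f^*G^\alpha, \hat{i'}_{n,*}\hat{i'}^*_n f^*G^\beta)\}_n$, the pro-system $\{\hat{i'}_{n,*}\hat{i'}^*_n f^*G^\beta\}_n$ computing the completion of the perfect complex $f^*G^\beta$. Since $f^*G^\beta$ is perfect, its completion is its tensor product with the completed structure sheaf, so the internal mapping object $\uHom(f^*G^\alpha, \hat{i'}_{n,*}\hat{i'}^*_n f^*G^\beta)$ is identified with the perfect complex $\uHom(f^*G^\alpha, f^*G^\beta)$ tensored with $\sO_{\widetilde{\sZ'}_n}$. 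This internal Hom has finite Tor-amplitude, so its tensor products with the uniformly bounded structure sheaves of the thickenings have homology concentrated in a single range of degrees independent of $n$. Taking global sections adds at most the cohomological dimension of $\sX'$, which is finite because a noetherian ANS stack is cohomologically affine over a stratification of finite length (its nice stabilizers are linearly reductive), so $\Gamma$ has bounded cohomological amplitude. Hence all of these mapping pro-spaces are uniformly $c$-truncated for one integer $c$, giving hypothesis (ii). Feeding projective generation and boundedness into \thmref{thm:pro excision}, together with the pro-base change from \thmref{thm:Qcohform Ferrand}, shows that the induced square of pro-spectra $E(\Delta)$ is cartesian, which is the assertion.
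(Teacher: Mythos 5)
Your high-level plan (feed the square produced by \thmref{thm:Qcohform Ferrand} into \thmref{thm:pro excision}) is indeed the paper's plan, but you have dropped the step that makes the hypotheses of \thmref{thm:pro excision} checkable, and your replacement for that step is incorrect. The paper does \emph{not} verify projective generation on $\sX$ itself: it first uses étale excision (\corref{cor:K Nis}), the local structure theorem (\thmref{thm:ANS local structure}) and the scallop decomposition (\propref{prop:afsoub01}) to reduce to $\sX = [X/G]$ with $X$ affine and $G$ nice and embeddable, and only then invokes \propref{prop:X/G perf}, where projectivity of the generators $p^*(\sE)$ comes from t-exactness of $p_*$, of $G$-invariants (linear reductivity) and of $\Gamma(S,-)$ with $S$ affine.

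The gap in your substitute is this: an arbitrary set of compact generators $\{G^\alpha\}$ of $\Qcoh(\sX)$ is \emph{not} compact projective in the t-structure it generates. Projectivity of $G^\alpha$ in $\Qcoh(\sX)_{\ge 0}$ forces the mapping spectra $\sMaps(G^\alpha,G^\beta)$ to be connective: if $\pi_{-1}\sMaps(G^\alpha,G^\beta) = \on{Ext}^1(G^\alpha,G^\beta) \neq 0$, take the bar construction $B_\bullet$ with $B_n = (G^\beta)^{\oplus n}$; this is a simplicial object of $\Qcoh(\sX)_{\ge 0}$ with $|B_\bullet| \simeq \Sigma G^\beta$, and the comparison map $|\Maps(G^\alpha,B_\bullet)| \to \Maps(G^\alpha,\Sigma G^\beta)$ is not an equivalence, since the source is the connected space $B\Maps(G^\alpha,G^\beta)$ while the target has $\pi_0 \simeq \on{Ext}^1(G^\alpha,G^\beta)$. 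In other words, your ``tilting equivalence'' $\Qcoh(\sX) \simeq \LMod_{\uEnd(\oplus_\alpha G^\alpha)}$ only yields projective generation when that endomorphism ring is \emph{connective} --- this is exactly the content of \cite[Lem.~7.2.2.6]{HA-20170918} cited in \remref{rem:ipajinp}, and exactly what fails in general. Nor can this be fixed by a cleverer choice of generators: an elliptic curve $E$ is a noetherian ANS stack, and by Serre duality ($\omega_E \simeq \sO_E$) every nonzero perfect complex $G$ on $E$ has $\on{Ext}^1(G,G)$ dual to $\Hom(G,G) \ni \id_G$, hence nonzero; so \emph{no} generating collection on $E$ is projective for your construction. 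The same defect breaks your claim that right adjoints are ``automatically right t-exact'' by adjunction: connectivity of $\sMaps(G^\alpha,M)$ for all $\alpha$ characterizes membership of $M$ in $\Qcoh(\sX)_{\ge 0}$ only when the $G^\alpha$ are projective, which is the point at issue. So the Nisnevich-local reduction to $[X/G]$ is not an optional convenience but the crux of the proof; once it is performed, your boundedness discussion also collapses to the paper's short verification, since over such a quotient the relevant functors are t-exact and the thickenings have uniformly bounded structure sheaves.
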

  \begin{proof}
    By \corref{cor:K Nis}, \thmref{thm:ANS local structure}, \propref{prop:afsoub01} and \thmref{thm:ANS perfect}, we may reduce to the case $\sX = [X/G]$, where $G$ is an embeddable nice group scheme over an affine scheme $S$, and $X$ is an affine derived $S$-scheme with $G$-action.
    Now we may apply the criterion of \thmref{thm:pro excision} to the square
    \begin{equation*}
      \begin{tikzcd}
        \{\Qcoh(\sX)\}\ar{r}\ar{d}
          & \Qcohform(\sX^\wedge_{\sZ})\ar{d}
        \\
        \{\Qcoh(\sX')\}\ar{r}
          & \Qcohform(\sX'^\wedge_{\sZ'})
      \end{tikzcd}
    \end{equation*}
    which is pro-Milnor and satisfies pro-base change by \thmref{thm:Qcohform Ferrand}.
    The assumptions of \thmref{thm:pro excision} are verified by \remref{rem:f^* compact} and \propref{prop:X/G perf}.
  \end{proof}

  \begin{cor}[Formal Milnor excision]\label{cor:K formal Milnor excision}
    Suppose given a Milnor square of noetherian algebraic stacks of the form \eqref{eq:Milnor square of stacks}.
    Then for any localizing invariant $E$, the induced square
    \begin{equation*}
      \begin{tikzcd}
        \{E(\sX)\} \ar{r}{\hat{i}^*}\ar{d}{f^*}
          & \form{E}(\sX^\wedge_{\sZ}) \ar{d}{(f^\wedge)^*}
        \\
        \{E(\sX')\} \ar{r}{\hat{i'}^*}
          & \form{E}(\sX'^\wedge_{\sZ'}).
      \end{tikzcd}
    \end{equation*}
    is pro-cartesian.
  \end{cor}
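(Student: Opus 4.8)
The plan is to deduce the corollary directly from \thmref{thm:K formal excision} by checking that every Milnor square of noetherian ANS algebraic stacks satisfies that theorem's hypotheses. First I would read off, from the definition of a Milnor square \eqref{eq:Milnor square of stacks}, three of the four required conditions: $i$ is a closed immersion with quasi-compact open complement, the morphism $f$ is affine, and the square is cartesian. Since all four stacks in \eqref{eq:Milnor square of stacks} are classical, being cartesian is in particular being cartesian on classical truncations in the sense of \defref{defn:cdh square}\ref{item:paofpoql}.

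The one remaining hypothesis of \thmref{thm:K formal excision} is that the formally completed square \eqref{eq:formal cdh square} be cocartesian. This is exactly the assertion of \lemref{lem:Milnor gives formal Milnor square}, which applies verbatim here. Having verified all the hypotheses, I would then simply invoke \thmref{thm:K formal excision} to conclude that the induced square of pro-spectra is pro-cartesian.

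I do not expect any genuine obstacle, since the argument is a clean specialization and the substantive work lives upstream. The sole nonformal input---cocartesianness of the formal completion---is dispatched by \lemref{lem:Milnor gives formal Milnor square}, whose proof reduces to the affine case, where it amounts to checking that $A \to A'$ carries $(f_1,\dots,f_m)$ isomorphically onto $(f_1,\dots,f_m)A'$ at each power; this is the cartesianness of the Milnor square evaluated on global sections. The heavy categorical machinery---the $\odot$-construction and the pro-equivalence criterion \corref{cor:proequiv criterion stable}---is already packaged inside \thmref{thm:K formal excision} through \thmref{thm:Qcohform Ferrand} and \thmref{thm:pro excision}, so there is nothing further to reopen. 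The only point requiring a word of care is that the stacks be \emph{perfect} (ANS suffices, by \thmref{thm:ANS perfect}) so that $E(-)$ is defined on each corner and the theorem's reduction to a quotient $[X/G]$ by an embeddable nice group scheme goes through.
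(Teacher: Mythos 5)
Your proposal is correct and is essentially the paper's own (implicit) derivation: the corollary is obtained by specializing \thmref{thm:K formal excision}, with the only nontrivial hypothesis---cocartesianness of the formally completed square---supplied by \lemref{lem:Milnor gives formal Milnor square}, exactly as you describe. Your closing remark about the ANS hypothesis is also well taken, since the reduction to quotients $[X/G]$ inside \thmref{thm:K formal excision} genuinely uses ANS (via \thmref{thm:ANS local structure}), not merely perfectness.
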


  \begin{cor}[Formal finite excision]\label{cor:K formal finite excision}
    Suppose given a finite cdh square of \evcoconn derived stacks
    \begin{equation*}
      \begin{tikzcd}
        \sZ' \ar{r}\ar{d}
          & \sX' \ar{d}{f}
        \\
        \sZ \ar{r}{i}
          & \sX
      \end{tikzcd}
    \end{equation*}
    with $\sX$ noetherian ANS.
    Then for any localizing invariant $E$, the induced square
      \begin{equation*}
        \begin{tikzcd}
          \{E(\sX)\}\ar{r}\ar{d}
            & \form{E}(\sX^\wedge_{\sZ})\ar{d}
          \\
          \{E(\sX')\}\ar{r}
            & \form{E}(\sX'^\wedge_{\sZ'})
        \end{tikzcd}
      \end{equation*}
    is pro-cartesian.
  \end{cor}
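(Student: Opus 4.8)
The plan is to deduce this directly from \thmref{thm:K formal excision} by checking that a finite cdh square satisfies all of that theorem's hypotheses. The ambient stack $\sX$ is already assumed noetherian ANS and all stacks are \evcoconn, so the only work is to verify the structural conditions on the square itself that \thmref{thm:K formal excision} demands: namely that $i$ is a closed immersion, that $f$ is affine, that the square is cartesian on classical truncations, and finally that the formally completed square \eqref{eq:formal cdh square} is cocartesian.

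First I would unwind \defref{defn:cdh square}: a finite cdh square is a proper cdh square in which $f$ is finite. By definition, $i$ is a closed immersion with quasi-compact open complement, and the square is cartesian on classical truncations (condition \ref{item:paofpoql}). Moreover, a finite morphism is in particular affine, so $f$ is affine. This supplies all but one of the hypotheses of \thmref{thm:K formal excision}. The remaining point---cocartesianness of the formally completed square---is precisely the content of \lemref{lem:finite cdh gives formal Milnor square}, which applies because the stacks are \evcoconn and noetherian. With all hypotheses in place, \thmref{thm:K formal excision} applies verbatim and yields that the induced square of pro-spectra is pro-cartesian.

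I expect no substantive obstacle: the corollary is a straightforward specialization, and the one genuinely non-formal input---that passing to formal completions turns a finite cdh square into a cocartesian square of ind-stacks---has already been isolated and established in \lemref{lem:finite cdh gives formal Milnor square} by a pro-Artin--Rees argument. Thus the proof reduces to matching the definitions against the hypotheses of \thmref{thm:K formal excision} and invoking that theorem, exactly as \corref{cor:K formal Milnor excision} is obtained from it in the Milnor case via \lemref{lem:Milnor gives formal Milnor square}.
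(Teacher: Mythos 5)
Your proposal is correct and is exactly the paper's intended derivation: the paper states this corollary without proof as an immediate specialization of \thmref{thm:K formal excision}, with the definition of a finite cdh square supplying the closed-immersion, affineness (finite $\Rightarrow$ affine), and classical-cartesianness hypotheses, and \lemref{lem:finite cdh gives formal Milnor square} supplying cocartesianness of the formally completed square. The only point worth making explicit is that noetherianness of $\sX'$, $\sZ$, $\sZ'$ (needed to invoke that lemma) follows from noetherianness of $\sX$ since $f$ is finite and $i$, $i'$ are closed immersions.
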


  \begin{cor}[Formal nil-excision]\label{cor:K formal nil excision}
    Let $\sX$ be a \evcoconn noetherian ANS derived stack with classical truncation $\sX_\cl$, and let $\sZ \hook \sX$ be a closed immersion with $0$-truncated quasi-compact open complement.
    Then for any localizing invariant $E$, the induced square
      \begin{equation*}
        \begin{tikzcd}
          \{E(\sX)\}\ar{r}\ar{d}
            & \form{E}(\sX^\wedge_\sZ)\ar{d}
          \\
          \{E(\sX_\cl)\}\ar{r}
            & \form{E}((\sX_\cl)^\wedge_{\sZ_\cl})
        \end{tikzcd}
      \end{equation*}
    is pro-cartesian.
  \end{cor}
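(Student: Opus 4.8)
The plan is to recognize this square as a direct instance of \thmref{thm:K formal excision}. Concretely, I would apply that theorem to the commutative square of \evcoconn derived algebraic stacks
\[
  \begin{tikzcd}
    \sZ_\cl \ar{r}{i'}\ar{d}{g}
      & \sX_\cl \ar{d}{f}
    \\
    \sZ \ar{r}{i}
      & \sX,
  \end{tikzcd}
\]
where $f : \sX_\cl \to \sX$ and $g : \sZ_\cl \to \sZ$ are the inclusions of the classical truncations and $i' : \sZ_\cl \to \sX_\cl$ is the induced closed immersion of classical stacks. Passing to continuous K-theory (or any localizing invariant) via \thmref{thm:K formal excision} produces exactly the square in the statement, with $\sX' = \sX_\cl$ and $\sZ' = \sZ_\cl$, so it suffices to verify that this square of stacks meets the hypotheses of that theorem.

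The verification is essentially bookkeeping. The stack $\sX$ is noetherian ANS by assumption and $i$ is a closed immersion. The classical truncation inclusion $f : \sX_\cl \to \sX$ is itself a closed immersion, hence affine; moreover $\sX_\cl$, $\sZ$ and $\sZ_\cl$ are again ANS, being closed substacks or classical truncations of $\sX$ (affine diagonal is inherited, and the stabilizer groups—and thus their niceness—are insensitive to passing to the classical truncation), so all the terms $E(\sX_\cl)$, $\form{E}(\sX^\wedge_\sZ)$ and $\form{E}((\sX_\cl)^\wedge_{\sZ_\cl})$ are defined. Finally, the square is cartesian on classical truncations in the sense of \defref{defn:cdh square}: since $\sZ_\cl$ is already classical and $(\sZ \fibprod_\sX \sX_\cl)_\cl \simeq \sZ_\cl \fibprod_{\sX_\cl} \sX_\cl \simeq \sZ_\cl$, the comparison map $\sZ_\cl \to (\sZ \fibprod_\sX \sX_\cl)_\cl$ is an isomorphism.

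The one substantive input is the cocartesianness of the formally completed square
\[
  \begin{tikzcd}
    (\sX_\cl)^\wedge_{\sZ_\cl} \ar[hookrightarrow]{r}\ar{d}
      & \sX_\cl \ar{d}
    \\
    \sX^{\wedge}_{\sZ} \ar[hookrightarrow]{r}
      & \sX,
  \end{tikzcd}
\]
but this is precisely \corref{cor:nil formal Milnor square}, whose hypotheses (a closed immersion with $0$-truncated quasi-compact open complement) are identical to ours. I expect this to be the only real obstacle, and it is exactly where the $0$-truncatedness of $\sX \setminus \sZ$ is used decisively: it is what guarantees that the formal completion absorbs the derived thickening, so that the completed square becomes cocartesian. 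Once cocartesianness is supplied, \thmref{thm:K formal excision} applies verbatim and gives the conclusion. In this sense the present corollary is a direct specialization of \thmref{thm:K formal excision}, entirely parallel to the way Corollaries~\ref{cor:K formal Milnor excision} and \ref{cor:K formal finite excision} are deduced from it, with \corref{cor:nil formal Milnor square} playing the role that \lemref{lem:Milnor gives formal Milnor square} and \lemref{lem:finite cdh gives formal Milnor square} play there.
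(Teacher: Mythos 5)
Your proposal is correct and is exactly the paper's (implicit) argument: the corollary is stated without proof precisely because it follows from \thmref{thm:K formal excision} applied to the truncation square $\sX_\cl \to \sX$, with \corref{cor:nil formal Milnor square} supplying the cocartesianness of the formally completed square, in the same way that Corollaries~\ref{cor:K formal Milnor excision} and \ref{cor:K formal finite excision} rest on Lemmas~\ref{lem:Milnor gives formal Milnor square} and \ref{lem:finite cdh gives formal Milnor square}. Your verification of the remaining hypotheses (affineness of the truncation inclusion, cartesianness on classical truncations, and the role of the $0$-truncated complement) matches the intended reading.
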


\subsection{Derived blow-ups}

  Given a \emph{quasi-smooth} closed immersion of derived algebraic stacks, one can form the derived blow-up $\tsX$ in the sense of \cite[4.1.6]{khan2018virtual}, which fits in a commutative square
  \[ \begin{tikzcd}
    \sD \ar{r}{i_\sD}\ar{d}{q}
      & \tsX \ar{d}{p}
    \\
    \sZ \ar{r}{i}
      & \sX,
  \end{tikzcd} \]
  where $q$ is the projection of the projectivized normal bundle $\sE = \P(\sN_{\sZ/\sX})$ and $i_\sD$ is a \emph{virtual Cartier divisor}, i.e., a quasi-smooth closed immersion of virtual codimension $1$.
  See \cite{khan2018virtual} for background on quasi-smoothness and virtual Cartier divisors.
  This square is not homotopy cartesian, but it is an abstract blow-up square in the sense of \defref{defn:cdh square}.
  The following result was proven in \cite[Thm.~A]{khan2018algebraic}:

  \begin{thm}\label{thm:derived blow-up}
    Let $i : \sZ \to \sX$ be a quasi-smooth closed immersion of derived algebraic stacks.
    Then for every localizing invariant $E$, there is a cartesian square
    \[ \begin{tikzcd}
      E(\sX) \ar{r}{i^*}\ar{d}{p^*}
        & E(\sZ) \ar{d}{q^*}
      \\
      E(\tsX) \ar{r}
        & E(\sD).
    \end{tikzcd} \]
  \end{thm}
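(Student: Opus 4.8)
The plan is to deduce cartesianness from additivity of $E$ applied to two semiorthogonal decompositions: the Beilinson decomposition of the projective bundle $\sD = \P(\sN_{\sZ/\sX})$ and the decomposition of the derived blow-up $\tsX$ from \cite{khan2018virtual}. Write $n$ for the virtual codimension of $i$, so that $\sN_{\sZ/\sX}$ is locally free of rank $n$ and $q \colon \sD \to \sZ$ is a $\P^{n-1}$-bundle. First I would invoke the projective bundle formula: the decomposition $\Perf(\sD) = \langle q^*\Perf(\sZ),\, q^*\Perf(\sZ)\otimes\sO_\sD(1),\, \dots,\, q^*\Perf(\sZ)\otimes\sO_\sD(n-1)\rangle$, together with the fact that every localizing invariant sends the underlying (split) Verdier localization sequences to split cofibre sequences, yields $E(\sD) \simeq \bigoplus_{j=0}^{n-1} E(\sZ)$, the $j$-th summand being the image of $q^*$ twisted by $\sO_\sD(j)$. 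Next I would use the blow-up decomposition $\Perf(\tsX) = \langle p^*\Perf(\sX),\, \Phi_1\Perf(\sZ),\, \dots,\, \Phi_{n-1}\Perf(\sZ)\rangle$, where $\Phi_j = (i_\sD)_*\big(q^*(-)\otimes\sO_\sD(j)\big)$; additivity then gives $E(\tsX) \simeq E(\sX) \oplus \bigoplus_{j=1}^{n-1} E(\sZ)$, with the summand $E(\sX)$ hit by $p^*$.

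With these computations in hand, cartesianness of the square is equivalent (by stability) to the induced map on vertical cofibres $\Cofib(p^*) \to \Cofib(q^*)$, coming from the horizontal functors $i^*$ and $i_\sD^*$, being an equivalence. Both cofibres are abstractly $\bigoplus_{j=1}^{n-1} E(\sZ)$: on the left, $\Cofib(p^*)$ picks out the complementary summands $\Phi_1,\dots,\Phi_{n-1}$, and on the right, $\Cofib(q^*)$ is $E(\sD)$ modulo the $\sO_\sD(0)$-summand, i.e.\ the summands indexed by $1,\dots,n-1$. To identify the map I would use functoriality, $i_\sD^* p^* \simeq q^* i^*$ (from $p\circ i_\sD = i\circ q$), to control the $E(\sX)$-summand, and the derived self-intersection formula for the virtual Cartier divisor $i_\sD$ to compute $i_\sD^*\Phi_j$. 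The latter exhibits $i_\sD^*(i_\sD)_*$ as an extension of the identity by the functor $(-)\otimes\sN_{\sD/\tsX}^\vee[1]$, so that $i_\sD^*\Phi_j$ meets only the summands indexed by $j$ and $j\pm 1$, with the diagonal contribution an equivalence.

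The main obstacle is precisely this last piece of bookkeeping: checking that, under the chosen generators and twists, the matrix of $\Cofib(p^*)\to\Cofib(q^*)$ is triangular with invertible diagonal entries, hence invertible. This requires matching the indexing conventions of the two decompositions and controlling the off-diagonal terms produced by the self-intersection formula; it is the one step where the geometry of the exceptional divisor genuinely enters, everything prior being formal consequences of additivity. As a fallback route I would keep in reserve a reduction to the universal model in which $\sX$ is the total space of a vector bundle over $\sZ$ and $i$ is its zero section, using that the derived blow-up commutes with derived base change along the deformation to the normal cone; in that model all four corners are projective bundles over $\sZ$ and the comparison reduces to an explicit calculation with the twisting sheaves.
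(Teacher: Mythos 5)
The paper gives no proof of this statement: \thmref{thm:derived blow-up} is quoted verbatim from \cite[Thm.~A]{khan2018algebraic}, so the only proof to compare against is the one in that reference. Your sketch reconstructs essentially that argument — the semiorthogonal decomposition of $\Perf(\tsX)$ into one copy of $\Perf(\sX)$ and $n-1$ copies of $\Perf(\sZ)$ (note this decomposition is proved in \cite{khan2018algebraic} itself, not in \cite{khan2018virtual}, which only constructs $\tsX$), the Beilinson decomposition of $\Perf(\sD)$, additivity of localizing invariants applied to these split Verdier sequences, and invertibility of the induced matrix on vertical cofibres. The bookkeeping issue you flag is real under your convention but disappears with the standard choice of twists: taking $\Phi_j = (i_\sD)_*\bigl(q^*(-)\otimes\sO_\sD(-j)\bigr)$ for $j=1,\dots,n-1$ and Beilinson generators $\sO_\sD, \sO_\sD(-1),\dots,\sO_\sD(-(n-1))$, the self-intersection triangle (using $\sN_{\sD/\tsX}^\vee \simeq \sO_\sD(1)$) sends the $j$-th summand into the summands indexed $j$ and $j-1$ only, so no out-of-range twist $\sO_\sD(n)$ ever appears; after killing the $j=0$ row, the comparison matrix is unitriangular with identity diagonal, hence invertible. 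With that adjustment (or via your fallback of deforming to the zero section of $\sN_{\sZ/\sX}$, which is also how the reference reduces to a universal case), your proposal is a correct proof along the same route as the cited source.
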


  In this subsection, we derive a formal version of this statement.
  This is a special case of formal proper excision.

  \begin{prop}\label{prop:formal derived blow-up excision}
    Let the notation be as above and assume that $\sX$ is a noetherian ANS stack.
    Then the square
    \[ \begin{tikzcd}
      \{E(\sX)\} \ar{r}\ar{d}
        & \form{E}(\sX^\wedge_\sZ) \ar{d}
      \\
      \{E(\tsX)\} \ar{r}
        & \form{E}(\tsX^\wedge_\sD)
    \end{tikzcd} \]
    is pro-cartesian.
  \end{prop}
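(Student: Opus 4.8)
The plan is to deduce the formal statement from the non-formal derived blow-up theorem (\thmref{thm:derived blow-up}) by base-changing the blow-up square along the infinitesimal thickenings of $\sZ$ and organizing the resulting cartesian squares into a commutative cube. First I would record that, writing $\sZ_n$ for the $n$-th infinitesimal thickening of $\sZ$ in $\sX$, the two continuous K-theories are represented by the cofiltered systems
\[ \form{E}(\sX^\wedge_\sZ) \simeq \{E(\sZ_n)\}_n, \qquad \form{E}(\tsX^\wedge_\sD) \simeq \{E(\tsX_n)\}_n, \]
where $\tsX_n := \tsX \fibprod_\sX \sZ_n$; the second identification uses that formal completion commutes with base change (\ssecref{ssec:formal}), so that $\tsX^\wedge_\sD = \tsX^\wedge_{p^{-1}(\sZ)} \simeq \tsX \fibprod_\sX \sX^\wedge_\sZ$. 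Every stack occurring is a closed substack or a quasi-smooth blow-up of the noetherian ANS stack $\sX$, hence again noetherian ANS and in particular perfect (\thmref{thm:ANS perfect}), so $E$ is defined on each.

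Next I would base-change the blow-up square along the closed immersion $\sZ_n \hook \sX$ for each $n$. Since quasi-smooth closed immersions and derived blow-ups are stable under derived base change (\cite{khan2018virtual}), the induced morphism $\sZ'_n := \sZ \fibprod_\sX \sZ_n \hook \sZ_n$ is again a quasi-smooth closed immersion whose derived blow-up is $\tsX_n$, with exceptional divisor $\sD'_n := \sD \fibprod_\sX \sZ_n$. Applying \thmref{thm:derived blow-up} to $\sZ'_n \hook \sZ_n$ thus produces a cartesian square. Assembling the blow-up square for $(\sX,\sZ)$ (the \emph{back} face, cartesian by \thmref{thm:derived blow-up}) and the one for $(\sZ_n,\sZ'_n)$ (the \emph{front} face, cartesian) into a cube whose depth arrows are the base-change functors, one checks directly that the \emph{left} face is exactly the square of the Proposition at level $n$, while the \emph{right} face is the base-change square
\[
  \begin{tikzcd}[matrix scale=0.7]
    E(\sZ) \ar{r}\ar{d}{q^*}
      & E(\sZ'_n) \ar{d}
    \\
    E(\sD) \ar{r}
      & E(\sD'_n).
  \end{tikzcd}
\]
Because the back and front faces are cartesian, the total fibre of the cube vanishes; computing that same total fibre in the left--right direction shows that the left and right faces have equivalent total fibres. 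Passing to pro-objects (which preserves finite limits), it follows that the square of the Proposition is pro-cartesian if and only if the system of right-hand faces is pro-cartesian.

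It then remains to analyse the right-hand face. Here I would invoke the projective bundle formula for $E$ (a formal consequence of \thmref{thm:derived blow-up}, see \cite{khan2018algebraic}): compatibly with the base change $\sZ'_n \to \sZ$, it splits $E(\sD) \simeq \bigoplus_{i=0}^{c-1} E(\sZ)$ and $E(\sD'_n) \simeq \bigoplus_{i=0}^{c-1} E(\sZ'_n)$, with $c$ the virtual codimension, under which $q^*$ is the inclusion of the $i=0$ summand. A short computation then identifies the total cofibre of the right-hand face with $\bigoplus_{i=1}^{c-1}\Coker\!\big(E(\sZ) \to E(\sZ'_n)\big)$, so that the system of right-hand faces is pro-cartesian precisely when the projection $\sZ'_n \to \sZ$ induces a pro-equivalence $\{E(\sZ)\} \to \{E(\sZ'_n)\}_n$ (the case $c\le 1$ being trivial, as then the blow-up is an isomorphism).

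This final pro-equivalence is the heart of the matter and the step I expect to be the main obstacle. The point is that, although $E$ is far from nil-invariant, the pro-system of structure sheaves $\{\sO_{\sZ'_n}\}_n = \{\sO_\sZ \otimes_{\sO_\sX} \sO_{\sZ_n}\}_n$ is itself pro-equivalent to the constant system $\{\sO_\sZ\}$: its $\pi_0$ is constantly $\sO_\sZ$ (since the relevant ideals are nested), while its higher homotopy sheaves compute the relative Tor sheaves $\{\Tor^{\sO_\sX}_j(\sO_\sZ,\sO_{\sZ_n})\}_n$, which are pro-zero for $j>0$ by the Artin--Rees lemma in the noetherian bounded setting. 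To make this precise on stacks I would reduce, exactly as in the proof of \thmref{thm:K formal excision} (via \corref{cor:K Nis} and \thmref{thm:ANS local structure}), to the local model $\sX = [X/G]$ with $X$ affine and $G$ nice and embeddable, where the statement becomes the pro-completion computation of \lemref{lem:finite cdh gives formal Milnor square} for a $G$-equivariant Koszul-regular ideal. The resulting pro-equivalence of bounded connective structure sheaves then upgrades to a pro-equivalence of $E$ by functoriality, as in \examref{exam:pro-equiv monogenic}. Combining this with the cube reduction completes the proof.
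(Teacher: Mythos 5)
Your cube reduction is sound and genuinely different from the paper's argument: derived blow-ups commute with derived base change, so base-changing the blow-up square along the thickenings $\sZ_n$ does yield derived blow-up squares, both the back and front faces become cartesian under $E$ by \thmref{thm:derived blow-up}, and the projective bundle formula (in its base-change-compatible form) correctly reduces the Proposition to the single assertion that the projections $\sZ'_n = \sZ \fibprod_\sX \sZ_n \to \sZ$ induce a pro-equivalence $\{E(\sZ)\} \to \{E(\sZ'_n)\}_n$. (One repair: when $\sX$ is derived, classical infinitesimal thickenings are \emph{not} cofinal in $\sX^\wedge_\sZ$ --- a derived thickening with nontrivial $\pi_1$ admits no closed immersion into a classical one --- so the whole cube must be run after the reduction to the local model $[X/G]$, using the Koszul thickenings $\sZ(n)$ of \remref{rem:pojanqeff}.)

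The genuine gap is in the last step, which you yourself flag as the heart of the matter: neither result you cite establishes the pro-equivalence. \examref{exam:pro-equiv monogenic} only covers module categories over $\sE_1$-rings; in the stacky local model you need \corref{cor:proequiv criterion stable} together with the projective generators of \propref{prop:X/G perf} and t-exactness of $G$-invariants, i.e.\ essentially a rerun of the proof of \thmref{thm:K formal excision}. More seriously, \lemref{lem:finite cdh gives formal Milnor square} does not contain the computation you need: that lemma identifies $\{\pi_k(A\modmod(f_1^n,\ldots,f_m^n))\}_n$ with $\{\pi_k(A)/(f_1^n,\ldots,f_m^n)\}_n$ to prove cocartesianness of a completed finite cdh square, whereas you need pro-vanishing of the derived self-intersection groups $\{\pi_j(\sO_\sZ \otimes^\bL_{\sO_\sX}\sO_{\sZ(n)})\}_n$ for $j>0$. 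These are not Tor's of $\pi_0$'s once $\sZ$ is derived, and even classically they are nonzero levelwise (e.g.\ $\Tor_1^A(A/f, A/f^n)\cong A/f$ for $f$ a nonzerodivisor); only the transition maps kill them, and proving this is a separate Artin--Rees argument, in the spirit of Morrow's pro-Tor results cited in the introduction, which must be written out rather than cited. This is exactly the difficulty the paper's proof is engineered to sidestep: instead of a pro-nil-invariance lemma for the centre, it blows up the thickened centres $\sZ(n)$, uses the finite morphisms $\tsX \to \tsX(n)$, and invokes formal finite excision (\corref{cor:K formal finite excision}), which already packages the Artin--Rees input once and for all. So either supply the pro-Tor computation plus its equivariant upgrade, or reroute your endgame through finite excision as the paper does.
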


  \begin{rem}
      Note that if $\sX$ is ANS, then so is the derived blow-up $\tsX$ by \lemref{lem:rep over ANS}.
  \end{rem}

  \begin{rem}\label{rem:pojanqeff}
    If $\sX$ has the resolution property, the quasi-smooth closed immersion $i : \sZ \to \sX$ can be realized as the derived zero locus of some section $s$ of a vector bundle $\sV$ on $\sX$ (\remref{rem:qs res prop}).
    For every integer $n$, let $\sZ(n) \to \sX$ denote the derived zero locus of $s^{\otimes n}$.
    By \lemref{lem:pjpfmq}, the formal completion of $\sX$ can then be represented by \[ \sX^\wedge \simeq \{ \sZ(n) \}_n. \]
    Similarly, it follows from \remref{rem:formal base change} that the formal completion of the derived blow-up can be represented by \[\tsX^\wedge \simeq \{\sZ'(n)\}_n,\] where $\sZ'(n)$ denotes the derived base change $\sZ(n) \fibprod_\sX \tsX$.
  \end{rem}

  \begin{constr}\label{constr:alnlvlad}
    Assume that $\sX$ has the resolution property and let $\sZ(n)$ be as in \remref{rem:pojanqeff}.
    The derived blow-up square defining $\tsX$ is the derived base change of the classical blow-up square
    \[ \begin{tikzcd}
      \sD_{\sV} \ar{r}\ar{d}
          & \widetilde{\sV} \ar{d}
        \\
        \sX \ar{r}{0}
          & \sV
    \end{tikzcd} \]
    of the zero section $0 : \sX \to \sV$.
    Let $\sX(n)$ denote the $n$th infinitesimal thickening of the latter. Write $\sD_{\sV}(n)$ for the (classical) fibre product of $\sX(n)$ and
    $\widetilde{\sV}$ over $\sV$  (which is an effective Cartier divisor in $\widetilde{\sV}$)
    and let $\sZ'_{\sV}(n)$ denote the derived fiber product.
    The two commutative squares
    \begin{equation}\label{eq:V squares}
      \begin{tikzcd}
        \sD_{\sV}(n) \ar{r}\ar{d}
          & \widetilde{\sV} \ar{d}
        \\
        \sX(n) \ar{r}
          & \sV,
      \end{tikzcd}
      \qquad
      \begin{tikzcd}
        \sZ'_{\sV}(n) \ar{r}\ar{d}
          & \widetilde{\sV} \ar{d}
        \\
        \sX(n) \ar{r}
          & \sV
      \end{tikzcd}
    \end{equation}
    define by derived base change to $\sX$ the squares
    \begin{equation}\label{eq:X squares}
      \begin{tikzcd}
        \sD(n) \ar{r}\ar{d}
          & \tsX \ar{d}
        \\
        \sZ(n) \ar{r}
          & \sX,
      \end{tikzcd}
      \qquad
      \begin{tikzcd}
        \sZ'(n) \ar{r}\ar{d}
          & \tsX \ar{d}
        \\
        \sZ(n) \ar{r}
          & \sX.
      \end{tikzcd}
    \end{equation}
    The right-hand square is homotopy cartesian.
    The left-hand square is a derived blow-up square for $n=1$, and otherwise is a thickening of the latter.%
    \footnote{In fact, $\sD(n)$ can be described as the $n$-fold sum $n\sD$ of the virtual Cartier divisor $\sD$, but we do not need this here.}
  \end{constr}

  The ind-stack $\{\sD(n)\}$ gives another presentation of $\tsX^\wedge$:

  \begin{lem}\label{lem:divisor presentation}
    If $\sX$ is a noetherian algebraic stack, then the morphisms $\sD(n) \to \sZ'(n)$ induce an isomorphism
    \[ \{\sD(n)\}_n \to \{\sZ'(n)\}_n \simeq \tsX^\wedge \]
    of ind-stacks over $\tsX$.
  \end{lem}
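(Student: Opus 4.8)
The plan is to reduce the assertion, by derived base change, to a pro-vanishing statement for Tor-sheaves on the \emph{classical} blow-up $\widetilde{\sV}$, where invertibility of the pulled-back ideal makes everything transparent. First I would reduce to the case $\sX = \Spec(A)$ affine: the derived blow-up, the thickenings $\sZ(n)$, $\sX(n)$, $\sD_\sV(n)$, $\sZ'_\sV(n)$, and the associated ind-stacks are all compatible with smooth base change, and an isomorphism of ind-(algebraic stacks) may be tested smooth-locally, so using the resolution property we may assume $\sV$ is a trivial bundle, $s$ corresponds to $f_1,\dots,f_r\in\pi_0(A)$ cutting out $\sZ$, and $J$ denotes the ideal of the zero section $\sX\hookrightarrow\sV$. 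Now recall from \constrref{constr:alnlvlad} that the $\sX$-squares, and in particular the maps $\sD(n)\to\sZ'(n)$ of the lemma, are obtained by derived base change along $s:\sX\to\sV$ from the $\sV$-data $\sD_\sV(n)\hookrightarrow\sZ'_\sV(n)$ over $\widetilde{\sV}$. Since derived base change, extended to ind-objects, is a functor and hence preserves isomorphisms of ind-objects, it suffices to prove that $\{\sD_\sV(n)\}_n\to\{\sZ'_\sV(n)\}_n$ is an isomorphism of ind-stacks over $\widetilde{\sV}$. This is the advantageous formulation, because $\widetilde{\sV}$, $\sV$ and the thickening $\sX(n)$ (cut out by $J^n$) are all classical.

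The $\sV$-world comparison reduces to pro-vanishing of Tor. Both $\sD_\sV(n)$ and $\sZ'_\sV(n)$ are closed substacks of $\widetilde{\sV}$ with the same support, so the map of ind-stacks is an isomorphism if and only if the induced map of pro-systems of structure sheaves $\{\sO_{\sZ'_\sV(n)}\}_n\to\{\sO_{\sD_\sV(n)}\}_n$ is a pro-isomorphism of connective $\sO_{\widetilde{\sV}}$-algebras, which may be checked on homotopy sheaves. By the universal property of the blow-up, $J\sO_{\widetilde{\sV}}$ is invertible, so
\[ \pi_0\sO_{\sZ'_\sV(n)} = \sO_{\widetilde{\sV}}/J^n\sO_{\widetilde{\sV}} = \sO_{\widetilde{\sV}}/(J\sO_{\widetilde{\sV}})^n = \sO_{\sD_\sV(n)}, \]
the structure sheaf of the Cartier divisor $n\sD_\sV$; thus the map is an isomorphism on $\pi_0$ on the nose, and since $\sD_\sV(n)$ is classical, what remains is exactly the claim that the pro-system $\{\Tor_k^{\sO_\sV}(\sO_{\widetilde{\sV}},\sO_\sV/J^n)\}_n = \{\pi_k\sO_{\sZ'_\sV(n)}\}_n$ is pro-zero for every $k>0$.

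The main obstacle is this pro-Tor-vanishing. The computation is Zariski-local on $\widetilde{\sV}$, so I would pass to an affine chart on which $J\sO_{\widetilde{\sV}}$ is generated by a single non-zero-divisor $t$, so that $J^n\sO_{\widetilde{\sV}}=(t^n)$ is free of rank one. Derived-tensoring the short exact sequence $0\to J^n\to\sO_\sV\to\sO_\sV/J^n\to 0$ with $\sO_{\widetilde{\sV}}$ and taking the long exact sequence identifies the higher Tor-sheaves with the kernels measuring the failure of $\sO_{\widetilde{\sV}}\otimes_{\sO_\sV}J^n\to(t^n)$ to be an isomorphism. These kernels are nonzero at each finite level but become cofinally trivial as $n$ grows: this is an Artin--Rees phenomenon, and I would run the same pro-vanishing mechanism used elsewhere in the paper (compare the role of \cite[Lem.~8.4.4.5]{SAG-20180204} and \cite[Lem.~4.10]{Kerz_2017} in the proof of \lemref{lem:finite cdh gives formal Milnor square}), together with the noetherian hypothesis, to conclude that $\{\Tor_k^{\sO_\sV}(\sO_{\widetilde{\sV}},\sO_\sV/J^n)\}_n$ is pro-zero. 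Combined with \remref{rem:pojanqeff}, which identifies $\{\sZ'(n)\}_n$ with $\tsX^\wedge$, this yields the asserted isomorphism $\{\sD(n)\}_n\to\{\sZ'(n)\}_n\simeq\tsX^\wedge$.

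Finally, I would record a shortcut that the present argument deliberately avoids: granting the footnoted identification of $\sD(n)$ with the $n$-fold sum $n\sD$ of the virtual Cartier divisor $\sD$, the lemma is immediate from \lemref{lem:pjpfmq} applied to $\tsX$ and the line bundle cutting out $\sD$, since that lemma presents the formal completion $\tsX^\wedge_\sD$ as the ind-stack $\{n\sD\}_n$. The direct route through the blow-up $\widetilde{\sV}$ has the merit of not requiring this description of the thickenings $\sD(n)$.
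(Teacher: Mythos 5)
Your proposal opens with the same move as the paper --- reduce by derived base change to comparing $\{\sD_{\sV}(n)\}_n$ with $\{\sZ'_{\sV}(n)\}_n$ over $\widetilde{\sV}$ --- but the two proofs then part ways. The paper finishes purely formally: since $\sD_{\sV}(n) \to \sZ'_{\sV}(n)$ exhibits its source as the classical truncation of its target, the ind-system $\{\sD_{\sV}(n)\}_n$ is the \emph{classical} formal completion of the classical noetherian stack $\widetilde{\sV}$ along $\sD_\sV$, while $\{\sZ'_{\sV}(n)\}_n$ is its \emph{derived} formal completion (being the derived base change of $\sV^\wedge_\sX \simeq \{\sX(n)\}_n$, cf.\ \remref{rem:formal base change}); the two coincide by \remref{rem:formal completion classical}, i.e.\ by \cite[Cor.~2.1.5]{HLP-h}, and no computation on the blow-up is needed. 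You instead unwind that citation into the concrete assertion that $\{\Tor_k^{\sO_\sV}(\sO_{\widetilde{\sV}}, \sO_\sV/J^n)\}_n$ is pro-zero for $k>0$; that assertion is true, your levelwise $\pi_0$-identification via invertibility of $J\sO_{\widetilde{\sV}}$ is correct, and your route can be completed, so this is a genuinely different, more computational proof: it makes visible the Artin--Rees input that the paper outsources, at the cost of an affine reduction and a local analysis that the paper's formal argument never needs. Your closing observation --- that the footnoted identification $\sD(n)=n\sD$ together with \lemref{lem:pjpfmq} gives the lemma immediately --- is also sound, and is in fact the variant closest in spirit to what the paper actually does (both recognize the two ind-systems as two presentations of one formal completion).

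One step of your computational route does not work as written and needs repair. The long exact sequence for $0 \to J^n \to \sO_\sV \to \sO_\sV/J^n \to 0$ identifies only $\Tor_1$ with the kernel of $\sO_{\widetilde{\sV}}\otimes_{\sO_\sV}J^n \to (t^n)$; for $k\ge 2$ it gives $\Tor_k^{\sO_\sV}(\sO_{\widetilde{\sV}},\sO_\sV/J^n) \simeq \Tor_{k-1}^{\sO_\sV}(\sO_{\widetilde{\sV}},J^n)$, which is not a kernel and requires further resolving. More seriously, ``Artin--Rees'' cannot be invoked in its standard Tor form (pro-vanishing of $\Tor_{>0}(M,R/J^n)$ for \emph{finitely generated} modules $M$ over a noetherian ring $R$), because $S=\sO_{\widetilde{\sV}}$ is a finitely generated $R$-\emph{algebra} but not a finite $R$-module. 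The correct way to run the mechanism you cite is in two stages: first apply \cite[Lem.~8.4.4.5]{SAG-20180204} over the noetherian ring $R$ to replace $\{R/J^n\}_n$ by the Koszul pro-system $\{R\modmod(f_1^n,\ldots,f_r^n)\}_n$ (these have uniformly bounded amplitude, so a pro-isomorphism on homotopy groups upgrades to a pro-isomorphism of pro-objects); then apply $S\otimes^{\mathbf{L}}_{R}(-)$, which preserves pro-isomorphisms and carries the Koszul system to $\{S\modmod(f_1^n,\ldots,f_r^n)\}_n$; finally apply the same lemma a second time, now over the noetherian ring $S$, to identify the latter with $\{S/(f_1^n,\ldots,f_r^n)S\}_n \simeq \{S/(JS)^n\}_n = \{S/(t^n)\}_n$. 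Composing yields the pro-isomorphism $\{S\otimes^{\mathbf{L}}_R R/J^n\}_n \simeq \{S/(t^n)\}_n$, hence the pro-vanishing of the higher Tor sheaves; with this substituted for the kernel analysis, your argument is complete.
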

  \begin{proof}
    By derived base change, it will suffice to show the claim for the analogous constructions over $\widetilde{\sV}$.
    Note that both squares in \eqref{eq:V squares} consist of classical stacks, with the sole exception of $\sZ'_\sV(n)$.
    In fact, the morphism $\sD_\sV(n) \to \sZ'_\sV(n)$ exhibits the domain as the classical truncation of the codomain.
    Thus the morphism \[\{\sD_\sV(n)\}_n \to \{\sZ'_\sV(n)\}_n\] over $\widetilde{\sV}$ can be identified with the morphism from the classical formal completion of $\widetilde{\sV}$ (in the classical stack $\sD_\sV$) to the formal completion in the sense of \defref{defn:formal completion}, and is invertible by \remref{rem:formal completion classical}.
  \end{proof}

  \begin{notation}
    To simplify the notation, we will compress commutative squares of ind-stacks of the form
    \[
      \begin{tikzcd}
        \sZ' \ar{r}\ar{d}
          & \sX' \ar{d}
        \\
        \sZ \ar{r}
          & \sX,
      \end{tikzcd}
    \]
    where the horizontal arrows can be represented by levelwise closed immersions, into morphisms of pairs \[ (\sX', \sZ') \to (\sX, \sZ). \]
    We will say that such a morphism is an \emph{$E$-equivalence} if it induces an isomorphism
    \[ \form{E}(\sX, \sZ) \to \form{E}(\sX', \sZ'), \]
    where $\form{E}(\sX, \sZ)$ is defined as the fibre of $\form{E}(\sX) \to \form{E}(\sZ)$.
  \end{notation}

  To show \propref{prop:formal derived blow-up excision}, we will need to compare the pairs
  \[ \{(\tsX, \sZ'(n))\}_n \quad\mrm{and}\quad \{(\sX, \sZ(n))\}_n, \]
  up to $E$-equivalence.
  By \thmref{thm:derived blow-up}, we have $E$-equivalence of the pair $\{(\sX, \sZ(n))\}_n$ with its derived blow up $\{(\tsX(n), \sD(n,n))\}_n$, which is $E$-equivalent to the pair $\{(\tsX(n), \sZ'(n,n))\}_n$ by \lemref{lem:divisor presentation}, where $\sZ'(n,n))$ is the derived pullback of $\sZ(n)$ in $\tsX(n)$.
  The goal is therefore to understand the relation between $\{(\tsX(n), \sZ'(n,n))\}_n$ and $\{(\tsX, \sZ'(n))\}_n$.
  For this purpose it will be convenient to introduce the following bi-indexed construction.

  \begin{constr}\label{constr:biindex}
    For every pair of natural numbers $n$ and $k$, consider the following two commutative squares:
    \begin{equation}\label{eq:V bi squares}
      \begin{tikzcd}
        \sD_{\sV}(n,k) \ar{r}\ar{d}
          & \widetilde{\sV}(k) \ar{d}
        \\
        \sX(n) \ar{r}
          & \sV,
      \end{tikzcd}
      \qquad
      \begin{tikzcd}
        \sZ'_{\sV}(n,k) \ar{r}\ar{d}
          & \widetilde{\sV}(k) \ar{d}
        \\
        \sX(n) \ar{r}
          & \sV
      \end{tikzcd}
    \end{equation}
    Here $\widetilde{\sV}(k)$ is the blow-up of $\sV$ centred in $\sX(k)$.
    The left-hand square is classically cartesian and the right-hand square is homotopy cartesian.
    As above, all stacks are underived except for $\sZ'_\sV(n,k)$.

    Now by derived base change to $\sX$ we get the squares
    \begin{equation}\label{eq:X bi squares}
      \begin{tikzcd}
        \sD(n,k) \ar{r}\ar{d}
          & \tsX(k) \ar{d}
        \\
        \sZ(n) \ar{r}
          & \sX,
      \end{tikzcd}
      \qquad
      \begin{tikzcd}
        \sZ'(n,k) \ar{r}\ar{d}
          & \tsX(k) \ar{d}
        \\
        \sZ(n) \ar{r}
          & \sX.
      \end{tikzcd}
    \end{equation}
    Note that we have
    \[ \sD(n, 1) = \sD(n), \quad \sZ'(n, 1) = \sZ'(n) \]
    for every $n$.
    We regard $\sD(n,k)$ and $\sZ'(n,k)$ as ind-stacks indexed by the poset of pairs $(n,k) \in \bN \times \bN$, where $(n,k) \le (n', k')$ iff $n \le n'$
    and $k \le k'$.
  \end{constr}

  \begin{proof}[Proof of \propref{prop:formal derived blow-up excision}]
    By \corref{cor:K Nis}, \thmref{thm:ANS local structure}, \propref{prop:afsoub01} and \thmref{thm:ANS perfect}, we may reduce to the case $\sX = [X/G]$, where $G$ is an embeddable nice group scheme over an affine scheme $S$, and $X$ is an affine $S$-scheme with $G$-action.
    Then by \propref{prop:X/G res prop}, $\sX$ has the resolution property so we are in the situation of \remref{rem:pojanqeff} and \constrref{constr:alnlvlad}.

    We need to show that the morphism of pairs
    \begin{equation}\label{eq:pzxockp}
      \{(\tsX, \sZ'(n))\}_n \to \{(\sX, \sZ(n))\}_n
    \end{equation}
    is an $E$-equivalence.
    Passing to the bi-indexed system constructed in \constrref{constr:biindex},
    note that for $k=n$, the square
    \[
      \begin{tikzcd}
        \sD(n,n) \ar{r}\ar{d}
          & \tsX(n) \ar{d}
        \\
        \sZ(n) \ar{r}
          & \sX
      \end{tikzcd}
    \]
    is a derived blow-up square and therefore by \thmref{thm:derived blow-up},
    \[
    \{(\tsX(n), \sD(n,n))\}_n \to  \{(\sX, \sZ(n))\}_n
    \]
    induces a levelwise $E$-equivalence.
    For each $k$, \lemref{lem:divisor presentation} gives an identification of ind-stacks
     \[ \{\sD(n,k)\}_n \to \{\sZ'(n,k)\}_n, \]
    which induces an equivalence
    \[ \{\sD(n,n)\}_n \to \{\sZ'(n,n)\}_n. \]
    Therefore we have an $E$-equivalence
    \begin{equation} \label{eq:absdp}
    \{(\tsX(n), \sZ'(n,n))\}_n \simeq \{(\tsX(n), \sD(n,n))\}_n \to  \{(\sX, \sZ(n))\}_n.
    \end{equation}

    The inclusion of posets $\Delta_{\bN} \subseteq \bN \times \bN \supseteq \bN \times \{1\}$, induces morphisms
    \[\{(\tsX(m), \sZ'(m,m))\}_m \to \{(\tsX(k), \sZ'(n,k))\}_{n,k} \leftarrow \{(\tsX(n), \sZ'(n,1))\}_n \simeq \{(\tsX(n), \sZ'(n))\}_n,\]
    where the left arrow is an equivalence by cofinality and we shall see that the right arrow is an equivalence by finite formal excision.
     As one can see by base change from $\sV$, there is for every $k$ a \emph{finite} morphism
    \[ \tsX \to \tsX(k) \]
    which fits in a commutative diagram
    \[ \begin{tikzcd}
      \sZ'(n) \ar{r}\ar{d}
        & \tsX \ar{d}
      \\
      \sZ'(n, k) \ar{r}\ar{d}
        & \tsX(k) \ar{d}
      \\
      \sZ(n) \ar{r}
        & \sX
    \end{tikzcd} \]
    where the squares are homotopy cartesian for each $n,k$.
    This provides a factorization
    \begin{equation}\label{eq:zlxchoash}
      \{(\tsX, \sZ'(n))\}_{n} \to \{(\tsX(n), \sZ'(n,n))\}_{n} \to \{(\sX, \sZ(n))\}_{n},
    \end{equation}
   where the first arrow is an $E$-equivalence by finite excision (\corref{cor:K formal finite excision}) and the second arrow is an $E$-equivalence by \eqref{eq:absdp}.
  \end{proof}

\subsection{Formal proper excision}

  The following is the main result of this section.
  It generalizes \corref{cor:K formal finite excision} and \propref{prop:formal derived blow-up excision} to arbitrary proper cdh squares.
  The proof is essentially the same as that in \cite[5.3.4]{khan2018algebraic}.

  \begin{thm}\label{thm:K formal proper excision}
    Suppose given a proper cdh square of algebraic stacks
    \begin{equation*}
      \begin{tikzcd}
        \sZ' \ar{r}\ar{d}
          & \sX' \ar{d}{f}
        \\
        \sZ \ar{r}{i}
          & \sX
      \end{tikzcd}
    \end{equation*}
    with $\sX$ noetherian and ANS.
    Then for any localizing invariant $E$, the induced square
      \begin{equation*}
        \begin{tikzcd}
          \{E(\sX)\}\ar{r}\ar{d}
            & \form{E}(\sX^\wedge_{\sZ})\ar{d}
          \\
          \{E(\sX')\}\ar{r}
            & \form{E}(\sX'^\wedge_{\sZ'})
        \end{tikzcd}
      \end{equation*}
    is pro-cartesian.
  \end{thm}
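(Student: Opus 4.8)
The plan is to follow the strategy of \cite[5.3.4]{khan2018algebraic} (itself a stacky refinement of \cite{Kerz_2017}): after reducing to a convenient local model and reformulating the statement in the language of pairs and $E$-equivalences introduced above, I would deduce the general proper cdh case from the finite case (\corref{cor:K formal finite excision}) and the blow-up case, by a flatification argument combined with noetherian induction on the support $\sZ$. First I would carry out the same reduction as in the proofs of \thmref{thm:K formal excision} and \propref{prop:formal derived blow-up excision}: using \'etale excision (\corref{cor:K Nis}), the local structure theorem for ANS stacks (\thmref{thm:ANS local structure}, \propref{prop:afsoub01}, \thmref{thm:ANS perfect}) and the fact that proper representable morphisms base-change along Nisnevich covers, one reduces to the case $\sX = [X/G]$ with $G$ an embeddable nice group scheme over an affine scheme and $X$ affine; in particular $\sX$ then has the resolution property. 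Throughout the argument the blow-ups that occur are representable and proper, and since $\sX$ is ANS their sources remain ANS by \lemref{lem:rep over ANS} (here niceness is essential, as it is preserved under blow-ups and so guarantees that all the excision inputs remain applicable at each stage). In the pair notation of the previous subsection, the theorem asserts precisely that the morphism of pairs $(\sX', \sZ') \to (\sX, \sZ)$ is an $E$-equivalence, so it suffices to produce such $E$-equivalences.

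Before the main reduction I would record that blow-up excision holds for blow-ups along \emph{arbitrary} centres: given a closed immersion $\sW \hook \sX$ with classical blow-up $b \colon \tsX \to \sX$ and exceptional divisor $\sD$, the pair morphism $(\tsX, \sD) \to (\sX, \sW)$ is an $E$-equivalence. This follows by realizing $\sW$, using the resolution property, as the classical vanishing locus of a section of a vector bundle, comparing the classical blow-up with the derived blow-up of the corresponding quasi-smooth derived centre by a finite morphism, and combining \propref{prop:formal derived blow-up excision} with finite and nil-excision (Corollaries~\ref{cor:K formal finite excision} and \ref{cor:K formal nil excision}).

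The heart of the argument is then the reduction of an arbitrary proper cdh square to blow-ups. By Rydh's stacky form of the Raynaud--Gruson flatification theorem (as used in \cite{hoyoiskrishna} and \cite[5.3.4]{khan2018algebraic}), there is a blow-up $b \colon \tsX \to \sX$ centred in a closed $\sW \sub \sZ$ such that the strict transform of $\sX'$ maps isomorphically onto $\tsX$; the strict transform $\tsX' \to \sX'$ is itself the blow-up of $\sX'$ along $f^{-1}(\sW) \sub \sZ'$. This yields a commutative square of pairs whose left vertical arrow $\tsX' \isoto \tsX$ is an isomorphism, hence an $E$-equivalence, and whose horizontal arrows are the two blow-ups $b$ and $b'$. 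A two-out-of-three argument among $E$-equivalences then reduces the $E$-equivalence property of $f$ to that of the two blow-up morphisms of pairs $(\tsX, b^{-1}(\sZ)) \to (\sX, \sZ)$ and $(\tsX', b'^{-1}(\sZ')) \to (\sX', \sZ')$ --- that is, to the blow-up case, but with the support enlarged from the blow-up centre to all of $\sZ$, resp.\ $\sZ'$.

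The main obstacle, and the step I would spend the most care on, is precisely this enlargement of support. I would run a noetherian induction on $\dim \sZ$: blow-up excision (established above) handles the support equal to the centre $\sW$, and over the open locus where the blow-up is an isomorphism the difference between the $\sW$-supported and $\sZ$-supported relative terms is matched tautologically; a localization argument in the support variable then expresses the discrepancy in terms of proper cdh squares whose base has dimension strictly smaller than $\dim \sZ$, to which the inductive hypothesis applies. Making this localization and the accompanying d\'evissage precise at the level of the pro-systems $\form{E}(-,-)$ of formal completions --- so that all the fibre sequences are compatible with passage to pro-objects and with the identifications coming from finite and blow-up excision --- is the technical crux; the remaining bookkeeping (cofinality of the various ind-presentations, and compatibility of strict transforms with formal completion) is routine given the results of Section~\ref{sec:qcoh}.
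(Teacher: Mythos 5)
Your proposal has the same skeleton as the paper's proof: reduction to the local model $[X/G]$, the blow-up case via \propref{prop:formal derived blow-up excision} combined with finite excision (\corref{cor:K formal finite excision}), and Rydh flatification to reduce a general proper cdh square to blow-ups. Your flatification step is a legitimate variant: the paper applies the factorization form of Rydh's theorem twice (producing $f'$ with $ff'$ a sequence of admissible blow-ups, then $f''$ with $f'f''$ one as well) and concludes by the two-out-of-six property of equivalences, whereas you use the strict-transform form together with two-out-of-three; the latter is essentially Kerz--Strunk--Tamme's original scheme-level argument and is fine, \emph{provided} one first replaces $\sX$ by the schematic closure of $\sX\setminus\sZ$ (justified by finite excision). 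You never state this reduction, and without schematic density of $\sX\setminus\sZ$ the strict transform of $\sX'$ need not map isomorphically onto $\tsX$, so it is not optional.

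The genuine gap is in the step you yourself single out as the crux: the enlargement of support from the blow-up centre $\sW$ to $\sZ$. Your induction on $\dim\sZ$ does not descend. When one compares the $\sZ$-supported and $\sW$-supported squares for a blow-up $b \colon \tsX \to \sX$ with centre $\sW \subseteq \sZ$, the discrepancy is (after localization and a cofinality argument) controlled by the proper cdh squares $b^{-1}(\sZ(m)) \to \sZ(m)$ with support $\sW$, where $\sZ(m)$ runs over the infinitesimal thickenings of $\sZ$ in $\sX$. The ambient stacks $\sZ(m)$ have dimension \emph{exactly} $\dim\sZ$, and the support $\sW$ may also have dimension $\dim\sZ$; so your assertion that the discrepancy consists of squares ``whose base has dimension strictly smaller than $\dim\sZ$'' is not correct, and the inductive hypothesis never applies to them. (An induction on the dimension of the \emph{ambient} stack would close, since $\dim\sZ < \dim\sX$ after the density reduction, but it would silently restrict the theorem to finite-dimensional stacks, an assumption the statement does not make.)

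The argument that is actually needed --- the one the paper imports from \cite[Claim~5.3]{Kerz_2017} together with \corref{cor:K formal finite excision} --- requires no induction at all: for each $m$, the map $b^{-1}(\sZ(m)) \to \sZ(m)$ factors through the strict transform $\on{Bl}_{\sW\cap\sZ(m)}(\sZ(m)) \hookrightarrow b^{-1}(\sZ(m))$. The inclusion of the strict transform into the total transform is a closed immersion which is an isomorphism away from the preimage of $\sW$, i.e.\ it sits in a finite cdh square, handled by finite excision; and the composite $\on{Bl}_{\sW\cap\sZ(m)}(\sZ(m)) \to \sZ(m)$ is a blow-up whose centre and support coincide set-theoretically, handled by the blow-up case already established (applied to the noetherian ANS stack $\sZ(m)$). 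Two-out-of-three then disposes of the discrepancy squares, and hence of the support enlargement. Alternatively, your induction could be repaired as a noetherian induction on the support viewed as a \emph{closed subset} (the supports strictly decrease at every stage at which centre and support differ set-theoretically, and the recursion stops when they agree); but as written, with dimension as the induction variable, the argument fails at exactly the point you identified as the technical heart of the proof.
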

  \begin{proof}
    Let us first demonstrate the claim in the case where $f$ is the projection of the blow-up $\sX' = \on{Bl}_{\sZ}\sX$.
    By Nisnevich descent (\corref{cor:K Nis}), \thmref{thm:ANS local structure} and \propref{prop:afsoub01}, we may assume that $\sX = [X/G]$ where $G$ is an embeddable nice group scheme over an affine scheme $S$, and $X$ is an affine derived $S$-scheme with $G$-action.
    Since $\sX$ has the resolution property (\propref{prop:X/G res prop}), $i : \sZ \to \sX$ is the classical truncation of a quasi-smooth closed immersion $\widetilde{\sZ} \to \sX$ (\constrref{constr:ohuhoj}).
    Let $\widetilde{\sX} \to \sX$ denote the derived blow-up of the latter.
    Since the formal completion $\sX^\wedge$ only depends on the classical truncation $\widetilde{\sZ}_\cl \simeq \sZ$, the square in question factors as in the diagram below:
    \[ \begin{tikzcd}
      \{E(\sX)\} \ar{r}\ar{d}
        & \form{E}(\sX^\wedge) \ar{d}
      \\
      \{E(\widetilde{\sX})\} \ar{r}\ar{d}
        & \form{E}(\widetilde{\sX}^\wedge) \ar{d}
      \\
      \{E(\sX')\} \ar{r}
        & \form{E}(\sX'^\wedge).
    \end{tikzcd} \]
    The upper square is cartesian by \propref{prop:formal derived blow-up excision}.
    Since $\sX' \to \tsX$ is a closed immersion which is an isomorphism away from $\sZ$, the lower square is also cartesian by \corref{cor:K formal finite excision}.
    This concludes the proof in the case of a blow-up square.

    Now consider the case of an arbitrary proper morphism $f$.
    By \corref{cor:K formal finite excision} it is safe to replace $\sX$ by the schematic closure of $\sX\setminus\sZ$ and thereby assume that $\sX\setminus\sZ$ is schematically dense in $\sX$ and $\sX'\setminus\sZ'$ is schematically dense in $\sX'$.
    In this case Rydh's stacky generalization of Raynaud--Gruson (see \cite{rydhflatification}, \cite[Cor.~2.4]{hoyoiskrishna}) to $f : \sX' \to \sX$ yields a proper morphism $f' : \sX'' \to \sX'$ such that $f \circ f' : \sX'' \to \sX$ is a sequence of $(\sX\setminus\sZ)$-admissible blow-ups.
    In particular, $f'$ sits in a proper cdh square $Q'$ over the original square $Q$.
    Applying the construction again to $f'$ yields a third proper cdh square $Q''$ over $Q'$.
    Then it suffices to show the claim for the two squares $Q' \circ Q$ and $Q'' \circ Q'$, so we have reduced to the case where $f$ is a sequence of $(\sX\setminus\sZ)$-admissible blow-ups.
    By induction, we may as well assume it is a $(\sX\setminus\sZ)$-admissible blow-up.
    Using \corref{cor:K formal finite excision} and the same argument as in \cite[Claim~5.3]{Kerz_2017}, one finally reduces to the case of a blow-up considered above.
  \end{proof}

  \begin{rem}\label{rem:ashfu01bp}
    \thmref{thm:K formal proper excision} also holds ``with supports'' in any closed substack $\sY \sub \sX$.
    That is, one can replace $E(\sX)$ with $E(\sX~\mrm{on}~\sY)$, $\form{E}(\sX^\wedge_\sZ)$ with $\form{E}(\sX^\wedge_\sZ~\mrm{on}~\sY^\wedge_{\sY\cap\sZ})$, etc.
    Here $E(\sX~\mrm{on}~\sY)$ is $E$ applied to the kernel of the restriction $\Perf(\sX) \to \Perf(\sX\setminus\sY)$ as in \cite{ThomasonTrobaugh}.
    Note that since $E$ is localizing, there are exact triangles
    \[
      E(\sX~\mrm{on}~\sY) \to E(\sX) \to E(\sX\setminus\sY).
    \]
    Therefore, the ``with supports'' variant of \thmref{thm:K formal proper excision} follows immediately from the ``without supports'' one.
  \end{rem}


\section{Negative K-theory}
\label{sec:weightstructures}

\ssec{Nil-invariance of negative K-groups}
\label{ssec:weight}

  In this subsection we prove a nil-invariance result for sufficiently negative K-groups of ANS stacks (\corref{nilinvariance_general}).

  We make use of the formalism of weight structures of Bondarko.
  Let $\sC$ be an additive \inftyCat which is projectively generated in the sense of \cite[Def.~5.5.8.23]{HTT}.
  Then the full subcategory $\sA$ of compact projective objects is an idempotent-complete additive \inftyCat for which the inclusion $\sA \subset \sC$ extends to an equivalence $\sP_\Sigma(\sA) \simeq \sC$ by \cite[Prop.~5.5.8.25]{HTT}.
  By \cite[Prop.~C.1.5.7]{SAG-20180204}, $\sC$ is prestable and thus embeds fully faithfully into its stabilization $\Spt(\sC)$.
  In this situation, the full subcategory $\sD = \Spt(\sC)^\omega$ of compact objects admits a \emph{weight structure} in the sense of \cite{BondarkoWeights}.
  This weight structure is bounded, its heart $\sD^{w=0}$ is $\sA$, 
  and the subcategory $\sD^{w\ge 0} \sub \sD$ of connective objects is $\sC^\omega$.
  Moreover, every bounded weight structure arises in this manner: in fact, the \inftyCat of weighted \inftyCats and weight-exact functors is equivalent to the \inftyCat of idempotent-complete additive \inftyCats by \cite[Prop.~3.3]{Vovastheoremoftheheart} (see also \cite[Props.~3.1.4, 3.1.5]{SosniloRegular}).
  We refer the reader to \cite[1.3]{Vovastheoremoftheheart}, \cite[3.1]{SosniloRegular}, or \cite{ElmantoSosnilo} for an $\infty$-categorical account of the theory of weight structures, originally developed in \cite{BondarkoWeights}.

  \begin{exam}\label{exam:panoujnq}
    Let $A$ be a connective $\sE_\infty$-ring.
    Then the \inftyCat $\Mod_A^\cn$ of connective $A$-modules is projectively generated by $A$.
    Thus there exists a canonical weight structure on the stable \inftyCat $\Perf_A$ whose heart is the full subcategory of finite projective $A$-modules.
  \end{exam}

  The next example, a slight generalization of \cite[Theorem~3.4.3]{SosniloRegular}, will play in an important role in what follows.

  \begin{exam} \label{exam:rfmalrcdsc}
    Let $R$ be a commutative ring, $G$ an embeddable linearly reductive group scheme over $R$, and $A$ a derived commutative ring over $R$.
    Then by \propref{prop:X/G perf}, the \inftyCat $\Qcoh([\Spec(A)/G])_{\ge 0}$ of connective $G$-equivariant $A$-modules is projectively generated and there exists a canonical weight structure on $\Qcoh([\Spec(A)/G])$ whose heart is the full subcategory spanned by objects of the form $p^*(\sE)$, where $p : [\Spec(A)/G] \to BG$ is the projection and $\sE \in \Qcoh(BG)$ is a finite projective $G$-equivariant $R$-module.
  \end{exam}

  \begin{prop}\label{nilinvariance}
    Let $R$ be a commutative ring and $G$ an embeddable linearly reductive group scheme over $R$.
    For any $G$-equivariant nilpotent extension $A \to B$ of connective $\sE_{\infty}$-algebras over $R$ with $G$-actions 
    (i.e., a $\pi_0$-surjection with nilpotent kernel), the induced map
    $$ \K_{-n}([\Spec(A)/G]) \to \K_{-n}([\Spec(B)/G]) $$
    is an isomorphism for every $n\ge 0$.
  \end{prop}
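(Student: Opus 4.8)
The plan is to reduce nil-invariance of the negative K-groups to nil-invariance of $\K_0$ by Bass--Thomason--Trobaugh delooping, and then to compute $\K_0$ through the weight structure of \examref{exam:rfmalrcdsc}. Write $\sX_A = [\Spec(A)/G]$ and $\sX_B = [\Spec(B)/G]$, and let $\pi^* \colon \Perf(\sX_A) \to \Perf(\sX_B)$ be base change along $A \to B$, so that the map in question is $\K_{-n}(\pi^*)$. Since $A \to B$ is a $\pi_0$-surjection with nilpotent kernel, so is each induced map $A[t_1^{\pm1},\dots,t_a^{\pm1},s_1,\dots,s_b] \to B[t_1^{\pm1},\dots,t_a^{\pm1},s_1,\dots,s_b]$, and each of these is again a connective derived commutative $R$-algebra with $G$-action (trivial on the new variables). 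Hence \examref{exam:rfmalrcdsc} and \propref{prop:X/G perf} apply to all of them, equipping each $\Perf$ with a bounded weight structure whose heart is the additive category of the objects $p^*(\sE)$, for $\sE$ a finite projective $G$-equivariant $R$-module, and along which $\pi^*$ is weight-exact (it sends $p^*(\sE) \mapsto p^*(\sE)$).

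First I would carry out the Bass reduction. Since nonconnective K-theory is defined by the generalized Bass--Thomason--Trobaugh construction (\cite{CisinskiKhanKH}), writing $\sC[t] := \sC \otimes \Perf(\bZ[t])$ and so on, one has $\K_{-1}(\sC) \cong \Coker\big(\K_0(\sC[t]) \oplus \K_0(\sC[t^{-1}]) \to \K_0(\sC[t,t^{-1}])\big)$ and, inductively, $\K_{-n}(\sC)$ is a finite iterated cokernel assembled---naturally in $\sC$---out of the Grothendieck groups $\K_0$ of the Laurent and polynomial extensions of $\sC$. For $\sC = \Perf(\sX_A)$ these extensions are exactly the $\Perf$ of the quotient stacks attached to the rings $A[t^{\pm1},\dots,s,\dots]$ above (using that these are perfect stacks, \propref{prop:X/G perf}, so that $\otimes$ of their $\Perf$ computes $\Perf$ of the product). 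As the formation of (iterated) cokernels sends morphisms that are isomorphisms on all the $\K_0$-terms to isomorphisms, it suffices to show that for every nilpotent extension $A' \to B'$ of the above form the map $\K_0(\sX_{A'}) \to \K_0(\sX_{B'})$ is an isomorphism; this also subsumes the case $n=0$.

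It then remains to prove nil-invariance of $\K_0$. By the theorem of the heart for bounded weight structures (\cite{Vovastheoremoftheheart}), $\K_0(\sX_{A'})$ is identified with the Grothendieck group of the additive heart $\sH_{A'}$---the free abelian group on isomorphism classes of its objects modulo direct-sum relations---and $\K_0(\pi^*)$ is induced by the weight-exact functor $\sH_{A'} \to \sH_{B'}$. Because $\sE$ is projective in the category of $G$-equivariant $R$-modules (linear reductivity of $G$) and finite projective over $R$, the abelian group $\Hom_{\sH_{A'}}(p^*(\sE), p^*(\sF))$ is canonically $\Hom_G(\sE, \sF \otimes_R \pi_0(A'))$, so the additive category $\sH_{A'}$ depends only on $\pi_0(A')$. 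This reduces the claim to an ordinary nilpotent extension $\pi_0(A') \to \pi_0(B')$ of $G$-equivariant rings, where each object of the heart is a retract of some $p^*(\sE)$ and is thus classified by an idempotent in $\End_{\sH_{A'}}(p^*(\sE))$. Lifting idempotents along the surjection $\End_{\sH_{A'}}(p^*(\sE)) \to \End_{\sH_{B'}}(p^*(\sE))$, whose kernel is $\Hom_G(\sE, \sE \otimes_R I)$ for the nilpotent ideal $I = \ker\big(\pi_0(A') \to \pi_0(B')\big)$, gives a bijection on isomorphism classes of heart objects, hence the desired isomorphism $\K_0(\sX_{A'}) \cong \K_0(\sX_{B'})$.

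The main obstacle is this final $G$-equivariant lifting step, together with the identification of the weight-heart and its $\pi_0$-locality that makes it possible: everything hinges on \examref{exam:rfmalrcdsc}, where linear reductivity of $G$ is used to guarantee that the heart is the additive category of the $p^*(\sE)$, so that the deformation problem splits into the unchanging representation-theoretic datum $\sE$ over $BG$ and a classically nilpotent, hence unobstructed, idempotent-lifting problem. By contrast the Bass--Thomason--Trobaugh reduction and the theorem of the heart are applied as black boxes, and the stability of nilpotence under polynomial and Laurent extensions is immediate; I would expect the remaining difficulty to lie entirely in making the heart computation and the lifting precise and uniform in the auxiliary variables.
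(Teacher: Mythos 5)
Your argument is correct, but it reaches the conclusion by a genuinely different route than the paper. The paper's proof is two sentences: \examref{exam:rfmalrcdsc} together with \lemref{lem:Vect nil-invariance} identifies the homotopy category of the weight-heart of $\Perf([\Spec(A)/G])$ with the category of $G$-equivariant vector bundles on $\Spec(\pi_0(A))$, so that along $A \to B$ the heart data changes only by a classical nilpotent extension of $G$-equivariant rings, and the statement for all $\K_{-n}$, $n \ge 0$, is then obtained by invoking Sosnilo's theorem of the heart in negative K-theory \cite[Thm.~4.3]{Vovastheoremoftheheart} as a black box. You avoid that theorem entirely, using only its elementary $\K_0$-case (Bondarko's identification of $\K_0$ of a bounded weight structure with the Grothendieck group of the heart): you deloop via the Bass--Thomason--Trobaugh fundamental theorem, observing that the Laurent/polynomial extensions stay within the class of quotient stacks $[\Spec(A'')/G]$ with $A'' \to B''$ a nilpotent extension, and then prove the $\K_0$-statement by hand through idempotent lifting along the surjections $\Hom_G(\sE, \sF \otimes_R \pi_0(A')) \to \Hom_G(\sE, \sF \otimes_R \pi_0(B'))$, whose kernels $\Hom_G(\sE, \sF \otimes_R I)$ are nilpotent ideals thanks to linear reductivity. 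In effect you reprove, in exactly the generality needed, the content of the theorem the paper quotes; this makes your argument more elementary and self-contained, at the cost of carrying the auxiliary variables through the weight-structure setup (which you handle correctly via perfectness of the stacks and naturality of the Bass sequences). Two small points should be made explicit in a final write-up: first, ``bijection on isomorphism classes'' requires injectivity as well as essential surjectivity --- lift an isomorphism between images of idempotents downstairs using fullness of the heart functor, and note that the lift is invertible because it is invertible modulo a nilpotent ideal; second, to pass from that bijection to an isomorphism of $\K_0$'s, use that the bijection is induced by an additive functor and is therefore an isomorphism of $\oplus$-monoids, hence of Grothendieck groups.
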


  \begin{proof}
   \examref{exam:rfmalrcdsc} together with \lemref{lem:Vect nil-invariance} shows that the morphism $A \to B$ induces an equivalence of the homotopy categories of the heart of the weight structures on the \inftyCats $\Qcoh([\Spec(A)/G])$ and $\Qcoh([\Spec(B)/G])$.
   The required isomorphism on negative K-groups therefore follows from \cite[Theorem~4.3]{Vovastheoremoftheheart}.
  \end{proof}

  This further implies the following nil-invariance statement for a large class of stacks, which will be crucial for the proof of Weibel's conjecture.

  \begin{cor}\label{nilinvariance_general}
    Let $\sX$ be a ANS derived stack of Nisnevich cohomological dimension $d$.
    Then for any surjective closed immersion $i : \sX' \to \sX$, the map
      $$i^* : \K_{-n}(\sX) \to \K_{-n}(\sX')$$
    is an isomorphism for every $n > d$.
  \end{cor}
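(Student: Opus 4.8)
The plan is to control the fibre of $i^*$ by combining Nisnevich descent with the affine-local nil-invariance of \propref{nilinvariance}. Write $F := \Fib\big(i^* : \K(\sX) \to \K(\sX')\big)$. By the long exact sequence of homotopy groups it suffices to prove that $\pi_m(F) = 0$ for all $m \le -d-1$: indeed, for $n > d$ the degree $m = -n$ satisfies both $m \le -d-1$ and $m-1 \le -d-1$, so the vanishing of $\pi_m(F)$ and $\pi_{m-1}(F)$ forces $i^* : \K_{-n}(\sX) \to \K_{-n}(\sX')$ to be an isomorphism.

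The first step is to promote $F$ to a sheaf on the small Nisnevich site of $\sX$. For a Nisnevich-local $\sU$ over $\sX$, set $\sU' := \sU \fibprod_\sX \sX'$ (again a surjective closed immersion into $\sU$) and
\[ \sF(\sU) := \Fib\big(\K(\sU) \to \K(\sU')\big). \]
Since $\sU \mapsto \K(\sU)$ and $\sU \mapsto \K(\sU')$ both satisfy Nisnevich descent (\corref{cor:K Nis} and the following remark, applied over $\sX$ and over $\sX'$), the fibrewise presheaf $\sF$ is a Nisnevich sheaf of spectra.

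The second step is local connectivity. By the ANS local structure theorem (\thmref{thm:ANS local structure}, \propref{prop:afsoub01}), $\sX$ admits a Nisnevich cover by stacks of the form $[\Spec(A)/G]$ with $G$ an embeddable nice, hence linearly reductive, group scheme. On such a chart $\sU' \simeq [\Spec(B)/G]$ for a $G$-equivariant nilpotent extension $A \to B$, and \propref{nilinvariance} gives that $\K_{-n}(\sU) \to \K_{-n}(\sU')$ is an isomorphism for all $n \ge 0$. By the long exact sequence this means $\pi_m(\sF(\sU)) = 0$ for $m \le -1$, so $\sF$ is connective: its Nisnevich homotopy sheaves $\pi_m^{\Nis}(\sF)$ vanish for $m < 0$.

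Finally I would feed this into the descent spectral sequence. Because $\sX$ has Nisnevich cohomological dimension $d$, the conditionally convergent spectral sequence
\[ E_2^{s,t} = H^s_{\Nis}\big(\sX, \pi_t^{\Nis}(\sF)\big) \Longrightarrow \pi_{t-s}(F) \]
has $E_2^{s,t} = 0$ unless $t \ge 0$ and $0 \le s \le d$; finiteness of $d$ also guarantees convergence, since the Postnikov tower of $\sF$ then converges and descent upgrades to hyperdescent. Every surviving term contributes to $\pi_{t-s}(F)$ with $t-s \ge -d$, whence $\pi_m(F) = 0$ for $m \le -d-1$, as required. The main obstacle is precisely this last point: checking that the local $(-1)$-connectivity of $\sF$ together with cohomological dimension $d$ yields the global bound $\pi_m(\sF(\sX)) = 0$ for $m < -d$, i.e.\ that descent, hypercompleteness in finite cohomological dimension, and the vanishing range of $E_2$ interact as claimed. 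A secondary point is to confirm that the surjective closed immersion restricts on each chart to a genuine nilpotent extension of $\sE_\infty$-algebras so that \propref{nilinvariance} applies.
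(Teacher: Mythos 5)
Your proposal is correct and follows essentially the same route as the paper: form the fibre $\sF$ of $\K(-) \to \K(-\fibprod_\sX \sX')$ as a Nisnevich sheaf via \corref{cor:K Nis}, use \thmref{thm:ANS local structure}, \propref{prop:afsoub01} and \propref{nilinvariance} to see that $\sF$ has connective stalks on quotient charts $[\Spec(A)/G]$, and conclude via the descent spectral sequence and the bound $\on{cd}_{\Nis}(\sX) \le d$. The convergence point you flag is handled in the paper by \propref{coh_dimension} (bounded cd-structures), and your reduction to $\pi_m(\sF(\sX))=0$ for $m \le -d-1$ matches the paper's argument exactly.
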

  \begin{proof}
    By \corref{cor:K Nis}, we may regard $\K(-)$ and $\K(- \times_{\sX} \sX')$ as Nisnevich sheaves of spectra on the small \'{e}tale site of $\sX$.
    Then the fibre $\sF$ of the morphism $\K(-) \to \K(- \times_\sX \sX')$ is also a Nisnevich sheaf.
    The claim is that $\pi_{-n}(\sF(\sX)) = 0$ for $n>d$.
    Considering the descent spectral sequence
    $$\on{H}^p_{\Nis}(\sX, \pi_q^{\Nis}(\sF)) \Rightarrow \pi_{q-p}(\sF(\sX)),$$
    it will suffice to show that the left-hand side is trivial for $q<0$ and for $p>d$.
    By \thmref{thm:ANS local structure} and \propref{prop:afsoub01}, there exists an embeddable linearly reductive group scheme $G$ over an affine scheme $S$, and affine derived schemes $U_i$ over $S$ with $G$-action, together with étale morphisms $[U_i/G] \to \sX$ which generate a Nisnevich covering.
    By \propref{nilinvariance}, the spectrum $\sF([U_i/G])$ is connective for all $i$.
    Hence the homotopy sheaves $\pi^{\Nis}_q(\sF)$ vanish for $q<0$.
  \end{proof}

\ssec{Killing by blow-ups}

  The killing lemma, proven in \cite[Prop.~5]{KerzStrunkKH}, says that for any negative K-theory class one can find a suitable (sequence of) blow-ups along which the inverse image vanishes.
  It was generalized to stacks, using Rydh's generalization of Raynaud--Gruson flatification, in \cite[Prop.~7.3]{hoyoiskrishna}.
  The following is a ``with supports'' variant of the killing lemma, which we will require for our proof of the Weibel conjecture.

  \begin{prop}\label{prop:fklkdajjk}
    Let $f: \sX \to \sY$ be a smooth morphism of finite type where $\sY$ is a reduced noetherian ANS stack and $\sX$ satisfies the resolution property.
    Let $\sZ \sub \sX$ be a closed substack. 
    Then for any integer $i > 0$ and any class $\alpha \in \K_{-i}(\sX ~\text{on}~ \sZ)$, there exists a sequence of blow-ups $q : \sY' \to \sY$ with nowhere dense centres such that $q_\sX^*(\alpha) = 0$ in $\K_{-i}(\sX' ~\text{on}~ \sZ')$, where $\sX' := \sX \fibprod_\sY \sY'$, $\sZ' := \sZ \fibprod_\sX \sX'$, and $q_\sX: \sX' \to \sX$ is the projection.
  \end{prop}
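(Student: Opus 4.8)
The plan is to adapt the killing lemma of Kerz--Strunk \cite[Prop.~5]{KerzStrunkKH} and its stacky refinement \cite[Prop.~7.3]{hoyoiskrishna} to the setting with supports, the new input being a dévissage that makes the support bookkeeping transparent. Throughout I would organize the argument by the Cartan comparison between the $\K$-theory of perfect complexes and the $\K$-theory $\mrm{G}$ of coherent sheaves, reducing the killing of $\alpha$ to producing, after a blow-up of $\sY$, a \emph{perfect} representative of a coherent obstruction.

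First I would pin down the supported class. The Cartan map $\K(\sX~\text{on}~\sZ) \to \mrm{G}(\sX~\text{on}~\sZ)$ has a target whose negative homotopy groups vanish: by dévissage $\mrm{G}(\sX~\text{on}~\sZ) \simeq \mrm{G}(\sZ)$, and $\mrm{G}_{-i}(\sZ) = 0$ for all $i>0$ since $\Coh(\sZ)$ is a noetherian abelian category and the negative $\K$-theory of such a category vanishes. Writing $\Phi$ for the fibre of the Cartan map, the long exact sequence shows that for $i>0$ the class $\alpha \in \K_{-i}(\sX~\text{on}~\sZ)$ is the image of a class in $\pi_{-i}(\Phi)$; that is, $\alpha$ is measured entirely by the failure of a bounded coherent complex on $\sX$, acyclic away from $\sZ$, to be perfect. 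Concretely, exactly as in \cite[Prop.~7.3]{hoyoiskrishna}, it suffices to kill the class of a coherent sheaf $\mathcal{F}$ on $\sX$ whose support lies in $\sZ$, and it is enough to make this representative perfect after pullback.

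Next I would flatify $\mathcal{F}$ relative to $\sY$. Since $\sY$ is reduced and noetherian, generic flatness shows that $\mathcal{F}$ is flat over a dense open substack of $\sY$, so its non-flat locus is nowhere dense. Rydh's stacky generalization of the Raynaud--Gruson flatification theorem (\cite{rydhflatification}, \cite[Cor.~2.4]{hoyoiskrishna}) then furnishes a $\sY$-admissible blow-up $q : \sY' \to \sY$, i.e. a sequence of blow-ups with nowhere dense centres, such that the strict transform of $\mathcal{F}$ along $q_\sX : \sX' := \sX \fibprod_\sY \sY' \to \sX$ becomes flat over $\sY'$. Because $f$ is smooth, the base change $\sX' \to \sY'$ is smooth of finite type, so its fibres are regular of finite dimension; a coherent sheaf that is flat over $\sY'$ and fibrewise perfect is then perfect on $\sX'$ by the fibrewise criterion for perfectness. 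Hence the flatified strict transform is a perfect complex on $\sX'$, still supported on $\sZ' := \sZ \fibprod_\sX \sX'$. If $\alpha$ is represented by a bounded complex rather than a single sheaf, one flatifies its coherence sheaves one at a time, producing the sequence of blow-ups with nowhere dense centres allowed by the statement.

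Finally I would conclude that $q_\sX^*(\alpha) = 0$. The blow-up $\sY'$ is again reduced, noetherian and ANS (niceness and affine diagonal being preserved under blow-ups, as in the discussion surrounding \thmref{thm:K formal proper excision}), and one checks that $\sX'$ retains the resolution property, so the formalism of the first step applies verbatim over $\sY'$. Since the representative of $q_\sX^*(\alpha)$ has become perfect, it extends across the exceptional locus and trivialises the Bass cocycle defining the negative class, forcing $q_\sX^*(\alpha)$ to vanish in $\K_{-i}(\sX'~\text{on}~\sZ')$, exactly as in \cite[Prop.~5]{KerzStrunkKH}. The main obstacle is the combination of the first and last steps made precise \emph{with supports}: namely, that a negative $\K$-class with supports is genuinely detected by the non-perfectness of a single coherent representative supported on $\sZ$, that its flatified strict transform represents the pullback class, and that perfectness of this representative forces the supported negative class to vanish. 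By contrast, the remaining points---nowhere-density of the centres from generic flatness, perfectness from smoothness via the fibrewise criterion, and stability of the hypotheses under blow-up---are comparatively routine and follow the arguments of \cite{Kerz_2017} and \cite{khan2018algebraic}.
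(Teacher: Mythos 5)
Your first step contains the essential gap. From the vanishing of negative G-theory you correctly conclude that $\alpha$ lifts to $\pi_{-i}(\Phi)$, where $\Phi$ is the fibre of the Cartan map; but a class in a \emph{negative} homotopy group of $\Phi$ has no cycle-level description, so the assertion that ``$\alpha$ is measured by the failure of a bounded coherent complex on $\sX$, acyclic away from $\sZ$, to be perfect'' --- and hence that it suffices to flatify a single coherent sheaf on $\sX$ supported on $\sZ$ and make its strict transform perfect --- does not follow. (It also misreports \cite[Prop.~7.3]{hoyoiskrishna}, which does not argue this way.) Negative K-classes acquire geometric representatives only through a delooping, and that is exactly what the paper's proof uses: iterating the Bass fundamental theorem with supports gives a surjection
\[
\Coker\bigl(\K_0(\sX\times\A^i~\text{on}~\sZ\times\A^i)\xrightarrow{j^*}\K_0(\sX\times\G_m^i~\text{on}~\sZ\times\G_m^i)\bigr)\twoheadrightarrow \K_{-i}(\sX~\text{on}~\sZ),
\]
so one represents $\alpha$ by a \emph{perfect} complex $\sF$ on $\sX\times\G_m^i$ supported on $\sZ\times\G_m^i$, extends it (via \lemref{lem:lsdkdfj}, the resolution property, and a truncation) to a coherent complex on $\sX\times\A^i$ supported on $\sZ\times\A^i$ whose terms are locally free except the top one, and it is \emph{this} extension, living on $\sX\times\A^i$ rather than on $\sX$, that gets flatified over $\sY$. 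After Rydh flatification the strict transform is a perfect complex on $\sX'\times\A^i$ restricting to $q_\sX^*\sF$ on $\sX'\times\G_m^i$, so $q_\sX^*[\sF]$ lies in the image of $j^*$ and therefore dies in $\K_{-i}(\sX'~\text{on}~\sZ')$.

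Your closing appeal to ``trivialising the Bass cocycle'' is precisely where this delooping would have to enter, but by then the argument is circular: the only mechanism by which perfectness of a representative forces vanishing of a negative class is the Bass cokernel presentation that your first step was meant to replace. Note also that in your version the flatified object lives on $\sX$ and is supported on $\sZ$, so the $\A^i/\G_m^i$ geometry plays no role, and the ``with supports'' bookkeeping you claim to make transparent --- extending a supported perfect complex on $\sX\times\G_m^i$ to a supported coherent complex on $\sX\times\A^i$ across the open immersion --- never actually occurs. The outer ingredients you list (nowhere dense centres via generic flatness over the reduced base $\sY$, Rydh's stacky flatification, flatness over $\sY'$ plus smoothness of $\sX'\to\sY'$ giving finite Tor-amplitude, stability of the hypotheses under blow-up) do match the paper's proof; what is missing is the Bass-delooping reduction that makes them bear on a negative K-group at all.
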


  The proof will use the following standard lemma.
  For a noetherian algebraic stack $\sX$, let $\Coh(\sX)$ be the derived \inftyCat of coherent complexes on $\sX$ (see e.g. \cite[Defn.~1.5]{asfnjw} where it is denoted $\bD_{\mrm{coh}}(\sX)$).
  Given a closed substack $\sZ$, $\Coh(\sX~\mrm{on}~\sZ)$ denotes the full subcategory spanned by complexes supported set-theoretically on $\abs{\sZ}$.

  \begin{lem} \label{lem:lsdkdfj}
    Let $\sX$ be a noetherian algebraic stack and $\sZ$ a closed substack.
    Then for any open immersion $j: \sU \to \sX$, the restriction functor
    $$j^* :\Coh(\sX~\mrm{on}~\sZ) \to \Coh(\sU~\mrm{on}~\sZ \cap \sU)$$
    is essentially surjective.
  \end{lem}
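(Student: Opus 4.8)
The plan is to deduce the statement from the unconstrained extension property for coherent complexes across an open immersion of noetherian algebraic stacks, namely that $k^* : \Coh(\sW) \to \Coh(\sW \cap \sU)$ is essentially surjective for any noetherian algebraic stack $\sW$ and quasi-compact open $\sU$. To pass from this to the with-supports version, I would first use that $\sG \in \Coh(\sU~\mrm{on}~\sZ \cap \sU)$ has finitely many nonzero cohomology sheaves, each coherent and annihilated (by coherence and quasi-compactness of $\sU$) by a fixed power $\sI^m$ of a coherent ideal $\sI \subseteq \sO_\sU$ cutting out $\sZ \cap \sU$. For $m$ large enough the finitely many cohomology sheaves together with the finitely many $k$-invariants binding them are all defined over $\sO_\sU/\sI^m$, so $\sG$ is the pushforward of a complex $\sG_0 \in \Coh(\sZ^{(m)} \cap \sU)$ from the $m$-th thickening of $\sZ \cap \sU$ in $\sU$ (a standard consequence of noetherianity, since complexes with $\sZ$-supported cohomology form the union over thickenings of the images of $\Coh(\sZ^{(m)})$). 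Extending $\sG_0$ along the open immersion $\sZ^{(m)} \cap \sU \hook \sZ^{(m)}$ and pushing the result forward along the closed immersion $i_m : \sZ^{(m)} \hook \sX$ (an exact, coherence-preserving functor whose image lies in $\Coh(\sX~\mrm{on}~\sZ)$) then produces the desired extension. This reduces everything to the unconstrained statement for the noetherian stack $\sZ^{(m)}$.

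On the hearts the unconstrained statement is the classical extension of coherent sheaves. Given a coherent sheaf $\sG_0 \in \Coh(\sW \cap \sU)^\heart$, its pushforward $k_* \sG_0$ is quasi-coherent on $\sW$ (as $k$ is quasi-compact and quasi-separated) and satisfies $k^* k_* \sG_0 \simeq \sG_0$. Since $\sW$ is noetherian, $\Qcoh(\sW)^\heart$ is locally noetherian, so $k_* \sG_0$ is the filtered union of its coherent subsheaves $\{\sF_\alpha\}_\alpha$; restricting gives $\sG_0 \simeq \colim_\alpha k^* \sF_\alpha$. As $\sG_0$ is a noetherian object, this union stabilizes and $k^* \sF_\alpha \simeq \sG_0$ for some $\alpha$, exhibiting the coherent extension $\sF_\alpha$.

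For a bounded complex $\sG_0 \in \Coh(\sW \cap \sU)$ I would build the extension from the truncation $\sH := \tau^{\le b}(k_* \sG_0) \in \D(\sW)$, where $[a,b]$ is the cohomological range of $\sG_0$; since $k^*$ is t-exact and $k^* k_* \simeq \id$, one has $k^* \sH \simeq \sG_0$. It then remains to approximate $\sH$, which has quasi-coherent but possibly non-coherent cohomology, by a coherent complex $\sF \to \sH$ inducing an isomorphism over $\sW \cap \sU$. This is carried out by descending induction along the Postnikov tower of $\sH$: at each stage one replaces the quasi-coherent cohomology sheaf $H^i(\sH)$ by a coherent subsheaf agreeing with $H^i(\sG_0)$ over $\sW \cap \sU$ (exactly as in the heart case) and lifts the corresponding $k$-invariant. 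The main obstacle is precisely this lifting: the $k$-invariants of $\sH$ are classes in $\on{Ext}$-groups over $\sW$, and one must enlarge the chosen coherent subsheaves so as to receive them. Here noetherianity is decisive: the bounded coherent complex assembled so far is pseudo-coherent, so $\Hom_{\D(\sW)}(-, \sM)$ out of it commutes with the filtered colimit $\sM = \colim_\alpha \sF_\alpha$ presenting each cohomology sheaf of $\sH$ as a union of coherent subsheaves, whence every $k$-invariant factors through some $\sF_\alpha$ after enlarging $\alpha$. This is the coherent-approximation input underlying the localization theorem for $\Coh$ on noetherian algebraic stacks; the only genuinely stacky point is local noetherianity of $\Qcoh(\sW)^\heart$, which holds for any noetherian algebraic stack. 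Completing this induction yields the coherent extension and hence the lemma.
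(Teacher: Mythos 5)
Your overall strategy is sound and every statement you invoke is true, but your route is genuinely different from, and substantially heavier than, the paper's. The paper runs the d\'evissage in the opposite order: since $j^*$ is t-exact and both t-structures are bounded, it reduces at once to essential surjectivity on the hearts, where the problem becomes extending a coherent \emph{sheaf} $\sF$ on $\sU$ supported on $\sZ\cap\sU$. At the level of sheaves the passage to a thickening is trivial ($\sF$ is killed by a power of the ideal, hence literally \emph{is} the pushforward of a coherent sheaf on a thickening of $\sZ\cap\sU$), so the paper only needs the classical extension of coherent sheaves on a noetherian algebraic stack across an open immersion (cited from Laumon--Moret-Bailly, Cor.~15.5), followed by the same pushforward/base-change step that closes your argument. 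You instead pass to a thickening at the level of \emph{complexes} first, and then prove the unconstrained extension theorem for coherent complexes on the noetherian stack $\sZ^{(m)}$ by Postnikov induction. What your route buys: your induction makes explicit the $k$-invariant-lifting mechanism (coherent approximation, plus commutation of maps out of a bounded coherent complex with uniformly bounded-below filtered colimits) that any rigorous passage from hearts to bounded complexes requires, and which the paper's one-sentence reduction leaves implicit; your heart case is also correct and is essentially a proof of the LMB result the paper cites. What it costs is the subject of the next paragraph.

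The genuine weak point is your first reduction. The claim that $\sG \in \Coh(\sU~\mrm{on}~\sZ\cap\sU)$ is the pushforward of a complex $\sG_0 \in \Coh(\sZ^{(m)}\cap\sU)$ is justified only by the parenthetical ``complexes with $\sZ$-supported cohomology form the union over thickenings of the images of $\Coh(\sZ^{(m)})$'' --- which is precisely the claim being made, so the justification is circular. Killing the cohomology sheaves by $\sI^m$ is easy; the real content is that the $k$-invariants, which are Ext-classes computed on $\sU$, descend to a finite thickening, i.e.\ that Ext groups between pushforwards from thickenings are computed as the colimit, over larger thickenings, of Ext groups on those thickenings. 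This is a true theorem (one route: identify $\Coh(\sU~\mrm{on}~\sZ\cap\sU)$ with the bounded derived category of the abelian category of supported coherent sheaves via the Serre-subcategory criterion, whose hypothesis is exactly an Artin--Rees statement, and then identify that abelian category with the filtered union of the hearts of the $\Coh(\sZ^{(m)}\cap\sU)$), but it is of comparable difficulty to the lemma itself and cannot be waved through as ``standard,'' especially for algebraic stacks where it is not readily citable. Either prove it, or restructure so as not to need it: run your Postnikov induction with supports directly on $\sU$, extending each supported cohomology sheaf by the paper's heart argument (pushforward from a thickening of $\sZ\cap\sU$, extend over the corresponding thickening of $\sZ$, push forward along the closed immersion) and observing that the coherent subsheaves appearing in your approximation step are subsheaves of supported sheaves, hence remain supported on $\sZ$. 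That variant eliminates the thickening claim at the complex level entirely and is, in effect, the fully spelled-out version of the paper's proof.
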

  \begin{proof}
    Since $j^*$ is t-exact and the t-structures are bounded, it is enough to show that it induces an essentially surjective functor on the hearts.
    That is, it is enough to show that every coherent sheaf $\sF$ on $\sU$ supported on $\sZ\cap\sU$ extends to a coherent sheaf $\widetilde{\sF}$ on $\sX$ supported on $\sZ$. 
    Let $i : \sZ \to \sX$ and $i_\sU : \sZ\cap\sU \to \sU$ denote the inclusions.
    We may write $\sF \simeq i_{\sU,*} (\sG)$ for some coherent sheaf $\sG$ on $\sZ\cap\sU$.
    Now $\sG$ can be extended to a coherent sheaf $\widetilde{\sG}$ on $\sZ$ (see e.g. \cite[Cor.~15.5]{laumon2018champs}).
    By the base change formula, $i_*(\widetilde{\sG})|_\sU \simeq i_{\sU,*}(\widetilde{\sG}|_{\sZ\cap\sU}) \simeq \sF$.
    That is, $\widetilde{\sF} = i_*(\widetilde{\sG})$ is an extension of $\sF$ as desired.
  \end{proof}

  \begin{proof}[Proof of \propref{prop:fklkdajjk}]
    By inductive application of the Bass fundamental theorem (see e.g. \cite[Thm.~2.21]{asfnjw}, which holds with supports just as in \cite[Thm.~7.5]{ThomasonTrobaugh}), there is a canonical surjection
    $$
    \Coker (\K_0 (\sX \times \A^i ~\text{on}~ \sZ \times \A^i) \xrightarrow{j^*}  \K_0 (\sX \times \G_m^i ~\text{on}~ \sZ \times \G_m^i)) \to \K_{-i}(\sX ~\text{on}~ \sZ).
    $$
    Therefore it suffices to show that for any $\alpha \in \K_0 (\sX \times \G_m^i ~\text{on}~ \sZ \times \G_m^i)$, there exists a sequence of blow-ups $q: \sY' \to \sY$ with nowhere dense centers such that the image of $\alpha$ in $\K_0 (\sX' \times \G_m^i ~\text{on}~ \sZ' \times \G_m^i)$ lifts to a class in $\K_0 (\sX \times \A^i ~\text{on}~ \sZ \times \A^i)$.
    By definition, we may write $\alpha = [\sF]$ where $\sF$ is a perfect complex on $\sX \times \G_m^i$ supported on $\sZ \times \G_m^i$.

    By \lemref{lem:lsdkdfj}, we can extend $\sF$ to some $\sE \in \Coh(\sX \times \A^i~\mrm{on}~\sZ \times \A^i)$.
    Since $\sX \times \A^i$ has the resolution property (as it is affine over $\sX$), we may assume that $\sE$ is represented by a chain complex $\sE_\bullet$ of finite locally free sheaves with $\sE_n = 0$ for $n\ll 0$. 
    Since its restriction $\sF \simeq j^*(\sE)$ is perfect, say of Tor-amplitude $\le a$, we may replace $\sE$ by its truncation $\tau_{\le a}(\sE)$ (which still restricts to $\sF$) so that it may be represented by a chain complex $\sE_\bullet$ with the following properties:
    \begin{enumerate}
      \item $\sE_n = 0$ for $n\ll 0$ or $n>a$;
      \item $\sE_n$ is finite locally free for all $n<a$;
      \item $\sE_a$ is coherent and $j^*(\sE_a)$ is finite locally free.
    \end{enumerate}
    
    By Rydh's stacky generalization of Raynaud--Gruson (see \cite[Thm.~4.2]{rydhflatification}), we can argue as in the proof of \cite[Prop.~7.3]{hoyoiskrishna} to produce a sequence of blow-ups $q: \sY' \to \sY$ such that the strict transform $\sE'_a$ of $\sE_a$ on $\sX' \times \A^i$ is flat over $\sX'$.
    For every $n$, the strict transform $\sE'_n$ of $\sE_n$ on $\sX' \times \A^i$ is the cokernel of the inclusion $\sH^0_{\sD\fibprod_\sY\sX\times\A^i}(q_\sX^*(\sE_n)) \hook q_\sX^*(\sE_n)$, where $\sD \sub \sY'\times\A^i$ is the exceptional divisor.
    Thus we may regard $\sE'_\bullet$ as a chain complex with the following properties:

    \begin{enumerate}[label={(\alph*)}]
      \item
      $\sE'_a$ is of finite tor-amplitude on $\sX' \times \A^i$, since it is flat over $\sY'$ (see e.g. \cite[Lem.~7.2]{hoyoiskrishna}).

      \item
      For every $n<a$ we have $\sE'_n = q_\sX^*(\sE_n)$, since $\sE_n$ is already flat over $\sX' \times \A^i$.

      \item
      For every $n$, we have $\sE'_n|_{\sX'\times\G_m^i} = q_\sX^*(\sF_n)$, because $\sF_n$ is already flat over $\sX'$.
    \end{enumerate}

    Since each term of $\sE'_\bullet$ is of finite Tor-amplitude on $\sX'\times\A^i$, $\sE'_\bullet$ represents a perfect complex $\sE' \in \Perf(\sX'\times\A^i)$.
    Since the chain complex $q_\sX^* (\sE_\bullet)$ has homology supported on $\sZ' \times \A^i$, the same holds for its quotient $\sE'_\bullet$, hence in particular $\sE' \in \Perf(\sX'\times\A^i~\mrm{on}~\sZ' \times \A^i)$.
    Finally, since $\sE'|_{\sX'\times\G_m^i} \simeq q_\sX^*(\sF)$ in $\Perf(\sX'\times\G_m^i~\mrm{on}~\sZ' \times \G_m^i)$, we find that the class $[\sE']\in\K_0 (\sX' \times \A^i ~\mrm{on}~ \sZ' \times \A^i)$ lifts $q_\sX^*(\alpha)$ as claimed.
  \end{proof} 

\ssec{Weibel's conjecture (I)}

  \begin{thm}\label{weibel}
    Let $\sX$ be a noetherian ANS stack of fppf-covering dimension $d$ (see \defref{defn:dim}).
    Then the negative K-groups $\K_{-i}(\sX~\mrm{on}~\sY)$ vanish for all $i>d$.
  \end{thm}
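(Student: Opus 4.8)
The plan is to argue by induction on the covering dimension $d$, adapting the strategy of Kerz--Strunk--Tamme to the pro-cdh descent available here. The base case $d<0$ is vacuous, since then $\sX=\varnothing$; so I would assume the statement for all noetherian ANS stacks of covering dimension $<d$ and all closed substacks, and prove it in dimension $d$. Two preliminary reductions are needed. First, using the nil-invariance of negative K-groups (\corref{nilinvariance_general}) together with the localization triangles $\K(\sX\,\mrm{on}\,\sY)\to\K(\sX)\to\K(\sX\setminus\sY)$, I would reduce to the case where $\sX$ is \emph{reduced}; this is what makes ``nowhere dense'' equivalent to ``strictly lower-dimensional''. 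Second, via the local structure theorem (\thmref{thm:ANS local structure}, \propref{prop:afsoub01}) and Nisnevich descent (\corref{cor:K Nis}), I would arrange to work on quotient charts $\sX=[\Spec(A)/G]$ with $G$ nice and embeddable, which carry the resolution property (\propref{prop:X/G res prop}) required by the killing lemma.

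Fix $i>d$ and a class $\alpha\in\K_{-i}(\sX\,\mrm{on}\,\sY)$ on such a stack. The killing lemma with supports (\propref{prop:fklkdajjk}, applied to $f=\id_\sX$) produces a sequence of blow-ups $q:\sX'\to\sX$ with nowhere dense centres such that $q^*(\alpha)=0$ in $\K_{-i}(\sX'\,\mrm{on}\,\sY')$. It therefore suffices to prove that for every blow-up of a reduced noetherian ANS stack along a nowhere dense centre the pullback $q^*:\K_{-i}(\sX\,\mrm{on}\,\sY)\to\K_{-i}(\sX'\,\mrm{on}\,\sY')$ is injective when $i>d$; composing over the steps of the sequence (each intermediate stack is again reduced noetherian ANS of dimension $\le d$) then forces $\alpha=0$. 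For a single such blow-up, with centre $\sW$ and exceptional divisor $\sW'$ of covering dimension $\le d-1$, the associated abstract blow-up square is a proper cdh square, so formal proper excision with supports (\thmref{thm:K formal proper excision} together with \remref{rem:ashfu01bp}) shows that the induced square of pro-spectra is pro-cartesian, yielding a Mayer--Vietoris long exact sequence of pro-abelian groups.

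To exploit this, observe that by nil-invariance (\corref{nilinvariance_general}, promoted to the relative setting via the localization triangles) the pro-systems $\form{\K}_{-j}(\sX^\wedge_\sW\,\mrm{on}\,\cdots)$ and $\form{\K}_{-j}((\sX')^\wedge_{\sW'}\,\mrm{on}\,\cdots)$ are constant, equal to $\K_{-j}(\sW\,\mrm{on}\,\cdots)$ and $\K_{-j}(\sW'\,\mrm{on}\,\cdots)$ respectively, as soon as $j$ exceeds $\dim\sW,\dim\sW'\le d-1$. Since $i>d$ forces both $i$ and $i-1$ to exceed $d-1$, the inductive hypothesis makes all three of the groups
\[ \form{\K}_{-i}(\sX^\wedge_\sW\,\mrm{on}\,\cdots),\quad \form{\K}_{-i}((\sX')^\wedge_{\sW'}\,\mrm{on}\,\cdots),\quad \form{\K}_{-i+1}((\sX')^\wedge_{\sW'}\,\mrm{on}\,\cdots) \]
vanish. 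The relevant segment of the Mayer--Vietoris sequence then reduces to $0\to\K_{-i}(\sX\,\mrm{on}\,\sY)\to\K_{-i}(\sX'\,\mrm{on}\,\sY')\to 0$, which gives the required injectivity of $q^*$ and hence $\alpha=0$.

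The main obstacle is the second reduction, namely the passage from the resolution-property charts back to the original ANS stack $\sX$. Were $\sX$ itself to possess the resolution property, the blow-up argument above would run globally and cleanly; in general it holds only Nisnevich-locally. For homotopy-invariant K-theory this is bypassed by genuine cdh descent (blow-ups being cdh covers), but here only the pro-cdh descent of \thmref{thm:K formal proper excision} is available, so the globalization must be effected through Nisnevich descent (\corref{cor:K Nis}) and the covering-dimension bound in the descent spectral sequence $\on{H}^p_{\Nis}(\sX,\pi^\Nis_q)\Rightarrow\pi_{q-p}$. The delicate point is controlling the Nisnevich cohomology of the negative K-sheaves $\pi^\Nis_{-j}$ in the intermediate range $1\le j\le d$, which need not vanish on singular charts; it is precisely here that the interplay between the inductive vanishing and the covering-dimension hypothesis must be orchestrated. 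A secondary, purely bookkeeping, difficulty is to ensure that the degrees in which the nil-invariance isomorphisms and the inductive vanishing apply line up exactly, so that every pro-term entering the Mayer--Vietoris sequence is genuinely zero.
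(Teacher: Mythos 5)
Your first half --- the case where $\sX$ itself has the resolution property --- is essentially the paper's own preliminary lemma, and it is correct: apply the killing lemma \propref{prop:fklkdajjk} (with $f=\id$), view the resulting sequence of blow-ups as a proper cdh square, and use formal proper excision with supports (\thmref{thm:K formal proper excision}, \remref{rem:ashfu01bp}) together with the induction on $d$ applied to the infinitesimal thickenings of the centre to get injectivity of $q^*$. (Your detour through nil-invariance to make the formal pro-groups constant is unnecessary --- the inductive vanishing applies levelwise to each thickening $\sZ(n)$, which is a noetherian ANS stack of covering dimension $<d$ --- but it is not wrong.)

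The gap is the globalization, which you yourself name as ``the main obstacle'' but do not carry out, and the route you gesture at cannot work as stated. The descent spectral sequence $\on{H}^p_{\Nis}(\sX,\pi_q^{\Nis}(\K)) \Rightarrow \K_{q-p}(\sX)$ suffices only when $d=0$: there $\on{cd}_{\Nis}(\sX)=0$ kills all terms with $p>0$, and the chart-level lemma kills the sheaves $\pi_q^{\Nis}(\K)$ for $q<0$. For $d\ge 1$ and $d<i\le 2d$, the terms with $p=q+i$ and $-d\le q\le d-i$ satisfy $p\le d$ and involve sheaves that need not vanish on singular charts, so neither vanishing criterion applies; this is exactly the range you flag, and no reorganization of those same two inputs closes it. What the paper does for $d\ge1$ is different in kind: it inducts along the filtration $\initial=\sU_0\hook\cdots\hook\sU_n=\sX$ from \thmref{thm:ANS local structure}; given $\gamma\in\K_{-i}(\sU_j)$ (with $\sU_j$ reduced, by nil-invariance), the Nisnevich Mayer--Vietoris sequence and the vanishing of $\K_{-i}(\sU_{j-1})\oplus\K_{-i}(\sV_j)$ express $\gamma=\partial(\alpha)$ for some $\alpha\in\K_{-i+1}(\sW_j)$ on the chart $\sW_j$; then the killing lemma is applied in its full relative form to the \emph{étale morphism} $\sW_j\to\sU_j$, producing a sequence of blow-ups $f:\sU'_j\to\sU_j$ of the base (which need not have the resolution property) such that $f_W^*(\alpha)=0$; naturality of the boundary map gives $f^*(\gamma)=\partial(f_W^*(\alpha))=0$, and finally formal proper excision plus the induction on $d$ give injectivity of $f^*$ in degree $-i$, so $\gamma=0$. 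You only ever invoke \propref{prop:fklkdajjk} with $f=\id$, which presupposes the resolution property on the stack being blown up; the smooth-morphism form of the killing lemma, applied across the chart map with blow-up centres downstairs on $\sU_j$, is precisely the missing mechanism that effects the globalization.
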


  \begin{lem}
    Let $\sX$ be a reduced noetherian ANS stack with the resolution property of fppf-covering dimension $d$ (see \defref{defn:dim}).
    Then for any closed substack $\sY \sub \sX$, the negative K-groups $\K_{-i}(\sX~\mrm{on}~\sY)$ vanish for all $i>d$.
  \end{lem}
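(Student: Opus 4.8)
The plan is to prove the lemma by strong induction on the dimension $d$, the engine being the killing lemma (\propref{prop:fklkdajjk}), formal proper excision (\thmref{thm:K formal proper excision}, in its ``with supports'' form from \remref{rem:ashfu01bp}), and nil-invariance (\corref{nilinvariance_general}). So I assume the statement for all reduced noetherian ANS stacks with the resolution property of dimension $<d$, fix such an $\sX$ of dimension $d$ with a closed substack $\sY$, and take a class $\alpha \in \K_{-i}(\sX~\mrm{on}~\sY)$ with $i>d$; the goal is $\alpha=0$. First I apply the killing lemma to the identity $\id : \sX \to \sX$ (a smooth morphism of finite type whose source $\sX$ has the resolution property) with support $\sZ = \sY$. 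This produces a sequence of blow-ups $q : \sW \to \sX$ with nowhere dense centres such that $q^*(\alpha) = 0$ in $\K_{-i}(\sW~\mrm{on}~\sY_\sW)$, where $\sY_\sW := \sY \fibprod_\sX \sW$. Let $\sC \sub \sX$ be the (nowhere dense) closed substack over which $q$ fails to be an isomorphism and set $\sC' := \sC \fibprod_\sX \sW$; then $q$ is proper and representable, is an isomorphism over $\sX\setminus\sC$, and
\[
  \begin{tikzcd}
    \sC' \ar{r}\ar{d} & \sW \ar{d}{q}\\
    \sC \ar{r} & \sX
  \end{tikzcd}
\]
is a proper cdh square. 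Since $\sX$ is reduced of dimension $d$ and $\sC$, $\sC'$ are nowhere dense in $\sX$, $\sW$ respectively, both $\sC_\red$ and $\sC'_\red$ have dimension $\le d-1$; note also that $\sW$, being a tower of blow-ups of the ANS stack $\sX$ with the resolution property, is again ANS with the resolution property (cf. the remarks around \propref{prop:formal derived blow-up excision}), so $\sC'_\red$ is a reduced ANS stack with the resolution property.

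Next I apply formal proper excision with supports in $\sY$ to this square, obtaining a pro-cartesian square of pro-spectra
\[
  \begin{tikzcd}
    \{\K(\sX~\mrm{on}~\sY)\} \ar{r}\ar{d} & \form{\K}(\sX^\wedge_\sC~\mrm{on}~\sY^\wedge_{\sY\cap\sC}) \ar{d}\\
    \{\K(\sW~\mrm{on}~\sY_\sW)\} \ar{r} & \form{\K}(\sW^\wedge_{\sC'}~\mrm{on}~\sY_\sW^\wedge).
  \end{tikzcd}
\]
The two right-hand terms are the continuous K-theories of formal completions along $\sC$ and $\sC'$, i.e.\ the pro-systems $\{\K(\sC_n~\mrm{on}~\cdots)\}_n$ and $\{\K(\sC'_n~\mrm{on}~\cdots)\}_n$ of infinitesimal thickenings. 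Each thickening is ANS of dimension $\le d-1$, so by nil-invariance (\corref{nilinvariance_general}) its negative K-groups with supports agree with those of the reduction $\sC_\red$ (resp.\ $\sC'_\red$) in degrees $<-(d-1)$; and these reductions are reduced ANS stacks with the resolution property of dimension $\le d-1$, so the inductive hypothesis forces the groups to vanish. Uniformly in $n$, this shows $\pi_{-j}$ of both right-hand pro-spectra vanishes for every $j\ge d$.

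To conclude I take vertical fibres. The pro-cartesian property gives a pro-equivalence between the constant pro-system on $P := \on{fib}\bigl(\K(\sX~\mrm{on}~\sY)\to\K(\sW~\mrm{on}~\sY_\sW)\bigr)$ and the fibre $Q$ of the right-hand vertical map. From the long exact sequence of $Q$, using the vanishing just established in the two relevant degrees $-i$ and $-(i-1)$ (both $\ge d$, since $i>d$), I get $\pi_{-i}(Q)=0$, hence $\pi_{-i}(\{P\})=0$. As $\{P\}$ is a constant pro-system, a constant pro-abelian group is zero only if the group is, so the honest group $\pi_{-i}(P)$ vanishes; by the long exact sequence of $P$ this means $q^* : \K_{-i}(\sX~\mrm{on}~\sY) \to \K_{-i}(\sW~\mrm{on}~\sY_\sW)$ is injective. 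Since $q^*(\alpha)=0$, we obtain $\alpha=0$, as desired. The induction is anchored at $d<0$ (empty $\sX$), where the statement is vacuous, while for $d=0$ the centre $\sC$ is necessarily empty, so $q$ is an isomorphism and $\alpha=q^*\alpha=0$ directly.

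The hard part, and the reason the paper's pro-categorical apparatus is needed, is the uniform control of the formal completions $\form{\K}(\sX^\wedge_\sC~\mrm{on}~\cdots)$ across the thickening index: one must have formal proper excision as an \emph{honest} pro-cartesian square (\thmref{thm:K formal proper excision}), so that taking fibres and homotopy groups is legitimate in $\Pro(\Spt)$, \emph{and} nil-invariance (\corref{nilinvariance_general}) to collapse each pro-system of thickenings onto its reduction in the negative range, so that the inductive hypothesis on $\sC_\red$, $\sC'_\red$ actually applies. The remaining delicate point is the dimension bookkeeping—verifying $\dim\sC_\red,\dim\sC'_\red\le d-1$ for the centres and exceptional loci produced by the killing lemma, together with the compatibility of fppf-covering dimension with the hypotheses of nil-invariance—which I would dispatch using the standard properties of nowhere dense centres under (towers of) blow-ups and the comparison of dimensions recorded in \ssecref{ssec:dim}.
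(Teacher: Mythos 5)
Your argument reproduces the paper's proof of this lemma in its global structure: induction on $d$, the killing lemma \propref{prop:fklkdajjk} applied to $\id_\sX$ with supports in $\sY$, the resulting proper cdh square for the sequence of blow-ups, formal proper excision with supports (\thmref{thm:K formal proper excision}, \remref{rem:ashfu01bp}, \remref{rem:formal completion classical}), vanishing of the two completion terms, and the resulting injectivity of $q^*$. The setup, the dimension bookkeeping for the centres, and the passage from pro-vanishing to honest vanishing are all fine. The one place where you deviate from the paper is also the one place where there is a genuine gap: your treatment of the infinitesimal thickenings $\sC(n)$, $\sC'(n)$, which (as you correctly note, and the paper glosses over) are \emph{not} reduced, so the inductive hypothesis as you have set it up does not apply to them directly. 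Your proposed bridge is that, by \corref{nilinvariance_general}, the negative K-groups \emph{with supports} of $\sC'(n)$ agree with those of $\sC'_{\red}$ in all degrees $< -(d-1)$. But \corref{nilinvariance_general} is a statement \emph{without} supports: it gives $\K_{-m}(\sC'(n)) \simeq \K_{-m}(\sC'_{\red})$ for $m$ strictly greater than the Nisnevich cohomological dimension, i.e.\ for $m \ge d$ here. To promote this to a comparison of the groups $\K_{-m}(-~\mrm{on}~W)$ one must compare the two localization sequences and apply the five lemma, and that argument consumes the without-supports isomorphism \emph{one degree higher}, on both the stack and its open complement $\sC'(n)\setminus W$; concretely, the irreducible obstruction is the surjectivity of $\K_{-m+1}(\sC'(n)) \to \K_{-m+1}(\sC'(n)\setminus W)$, which the corollary controls only when $m-1 \ge d$. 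So the with-supports comparison is available only for $m \ge d+1$, whereas your argument needs it at $m = d$: in the critical case $i = d+1$ of the lemma, the term $\{\K_{-i+1}(\sC'(n)~\mrm{on}~\cdots)\}_n$ sits exactly in degree $-d$. At that boundary degree the identification with $\K_{-d}(\sC'_{\red}~\mrm{on}~\cdots)$ does not follow from anything you have cited, so the vanishing of the right-hand completion terms, and hence the injectivity of $q^*$, is not established precisely in the hardest case of the statement.

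For comparison, the paper's proof avoids this off-by-one by feeding the thickenings $\sZ(n)$, $\sZ'(n)$ \emph{directly} into the inductive vanishing statement, levelwise in the pro-system: a stack of fppf-covering dimension $\le d-1$ goes into a vanishing statement valid in degrees $\ge d$, with no degree lost to any nil-invariance bridge. (The paper asserts that the thickenings are reduced, which is a slip for $n \ge 2$; what that step actually requires is that the lower-dimensional, with-supports vanishing be available for \emph{non-reduced} stacks, e.g.\ by running the induction jointly with the unrestricted statement of \thmref{weibel}, whose hypotheses include neither reducedness nor the resolution property.) Your decision to induct only on the statement for reduced stacks and then to strip nilpotents from the thickenings is what creates the problem. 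The repair is not a sharper nil-invariance statement but a stronger inductive hypothesis: arrange the induction so that the statement applied to $\sC(n)$, $\sC'(n)$ holds for non-reduced noetherian ANS stacks (they are ANS with the resolution property, of dimension $\le d-1$, by \lemref{lem:rep over ANS} and \lemref{lem:pisnfipj}), reserving reducedness only for the stack to which the killing lemma is applied.
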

  \begin{proof}
    We argue by induction on $d$.
    For any element $\gamma \in \K_{-i}(\sX~\mrm{on}~\sY)$, there exists by \propref{prop:fklkdajjk} a sequence of blow-ups $f: \sX' \to \sX$ with nowhere dense centers such that $f^*(\gamma) = 0$ in $\K_{-i}(\sX'~\mrm{on}f^{-1}(\sY))$.
    This fits in a proper cdh square
    $$
    \begin{tikzcd}
    \sZ' \arrow[r]\arrow[d]& \sX'\arrow[d, "f"]\\
    \sZ\arrow[r]& \sX
    \end{tikzcd}
    $$
    where $\sZ \sub \sX$ is any nowhere dense closed substack for which $f$ is an isomorphism over $\sX\setminus\sZ$.
    Let $\sZ(n)$ and $\sZ'(n)$ denote the $n$th infinitesimal thickenings of $\sZ$ and $\sZ'$, respectively.
    By \thmref{thm:K formal proper excision} (and \remref{rem:ashfu01bp}) and Remark~\ref{rem:formal completion classical}, we get a long exact sequence
    $$\cdots \to \{\K_{-i+1}(\sZ'(n))~\mrm{on}~f^{-1}(\sY)\}_n \to \K_{-i}(\sX~\mrm{on}~\sY) \to \{\K_{-i}(\sZ(n))~\mrm{on}~\sY\}_n \oplus \K_{-i}(\sX'~\mrm{on}~f^{-1}(\sY)) \to \cdots$$
    of pro-abelian groups.
    Now $\sZ(n)$ and $\sZ'(n)$ are reduced noetherian ANS stacks satisfying the resolution property (\lemref{lem:rep over ANS} and \lemref{lem:pisnfipj}) 
    and of fppf-covering dimension $<d$, so $\{\K_{-i+1}(\sZ'(n))~\mrm{on}~f^{-1}(\sY)\}_n$ and $\{\K_{-i}(\sZ(n))~\mrm{on}~\sY\}_n$ both vanish by the induction hypothesis.
    It follows that $f^*: \K_{-i}(\sX) \to \K_{-i}(\sX')$ is injective and hence that $\gamma = 0$.
  \end{proof}

  \begin{proof}[Proof of \thmref{weibel}]
    We again argue by induction on $d$.
    Suppose $d=0$.
    By \corref{cor:K Nis} and \thmref{thm:ANS perfect}, there is a convergent Nisnevich-descent spectral sequence:
    $$\on{H}^p_{\Nis}(\sX,\pi_q^{\Nis}(\K)) \Rightarrow \K_{q-p}(\sX),$$
    where $\pi_q^{\Nis}(\K)$ denotes the Nisnevich sheaf associated with $\K_q$.
    It follows from \propref{coh_dimension} and the previous case that $\on{H}^p_{\Nis}(\sX,\pi_q^{\Nis}(\K))$ vanishes for
    all $q<0$ and $p>0$ and therefore also $\K_{-i}(\sX) = 0$ for $i > 0$.

    Now suppose $d \ge 1$.
    By \thmref{thm:ANS local structure} and \propref{prop:X/G res prop}, there is a finite sequence of open immersions
     $\initial = \sU_0
      \hook \sU_1
      \hook \cdots
      \hook \sU_n = \sX$, and Nisnevich squares
      \[
      \begin{tikzcd}
        \sW_j \ar{r}\ar{d}
          & \sV_j \ar{d}
        \\
        \sU_{j-1} \ar{r}
          & \sU_j,
      \end{tikzcd}
    \]
    where each $\sW_j$ and $\sV_j$ satisfy the resolution property and have fppf-covering dimension $\le d$.
    We prove by induction on $j$ that for $i > d$, $\K_{-i}(\sU_j)$ vanishes.
    For $j = 0$, this follows from the previous case.
    By \propref{coh_dimension} and Corollary~\ref{nilinvariance_general}, we may assume that $\sU_j$ is reduced.
    Choose $\gamma \in \K_{-i}(\sU_j)$.
    By induction on $j$, and the previous case
    for stacks with resolution property, the groups
    $\K_{-l}(\sW_j)$, $\K_{-l}(\sV_j)$ and $\K_{-l}(\sU_{j-1})$
    vanish for all $l > d$.
    By Nisnevich descent (\corref{cor:K Nis}),
    we have a long exact sequence:
    $$
    \cdots \to \K_{-i+1}(\sW_j) \xrightarrow{\partial} \K_{-i}(\sU_j) \to \K_{-i}(\sU_{j-1}) \oplus \K_{-i}(\sV_j) \to \cdots.
    $$
    By induction hypothesis on $j$, we deduce that $\gamma = \partial(\alpha)$ for some $\alpha \in \K_{-i+1}(\sW_j)$.
    By applying the killing lemma (\propref{prop:fklkdajjk}) to the \'{e}tale morphism $\sW_j \to \sU_j$,
    we can find a sequence of blow-ups $f: \sU'_j \to \sU_j$ with nowhere dense centers
    such that for the induced map $f_W: \sW'_j := \sU'_j \times_{\sU_j} \sW_j \to \sW_j$, $f_W^*(\alpha) = 0$ in $\K_{-i+1}(\sW'_j)$.
    Since $\sU_j'$ is again a noetherian ANS stack (\lemref{lem:rep over ANS}), we conclude that $f^*(\gamma) = f^*(\partial(\alpha)) = \partial(f_W^*(\alpha)) = 0$.
    Now as in the first case, by using \thmref{thm:K formal proper excision} and the induction hypothesis on $d$, we conclude that $\gamma = 0$.
  \end{proof}

\ssec{Weibel's conjecture (II)}

  \begin{thm}\label{thm:weibel2}
    Let $\sX$ be a noetherian ANS stack of smooth-covering dimension $d$ (see \defref{defn:dim}). Then for any vector bundle $\pi: \sE \to \sX$, the map
    $$ \pi^*: \K_{-i}(\sX) \to \K_{-i}(\sE)$$
    is an isomorphism for all $i \ge d$.
  \end{thm}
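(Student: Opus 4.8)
The plan is to convert the statement into the vanishing of a single localizing invariant in a range of negative degrees, and then to prove that vanishing by the blow-up induction already used for \thmref{weibel}. Fix the vector bundle $\pi\colon\sE\to\sX$. For an $\sX$-stack $\sX'$ write $\sE'=\sE\fibprod_\sX\sX'$ and set $\mathcal G(\sX')=\Cofib\bigl(\K(\sX')\xrightarrow{\pi^*}\K(\sE')\bigr)$. Since pulling back $\sE$ preserves perfect complexes, localization sequences and proper cdh squares, $\sX'\mapsto\mathcal G(\sX')$ is a localizing invariant on the category of $\sX$-stacks; moreover the zero section splits $\pi^*$, whence $\K(\sE')\simeq\K(\sX')\oplus\mathcal G(\sX')$ naturally in $\sX'$. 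Consequently the theorem is \emph{equivalent} to the assertion that $\pi_{-i}\mathcal G(\sX)=0$ for all $i\ge d$. The advantage is that every tool of Sects.~\ref{sec:stacky_applications}--\ref{sec:weightstructures} applies to $\mathcal G$ in place of $\K$: Nisnevich descent (\corref{cor:K Nis}) and formal proper excision with supports (\thmref{thm:K formal proper excision}, \remref{rem:ashfu01bp}).

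The homotopy-invariant part furnishes the base of the induction. Because homotopy $\K$-theory $\KH$ is invariant under vector bundles, the cofibre $\Cofib(\KH(\sX)\to\KH(\sE))$ vanishes, so by comparison along the fibre sequence $\mathcal F\to\K\to\KH$ with $\mathcal F=\Fib(\K\to\KH)$ we obtain $\mathcal G(\sX)\simeq\Cofib\bigl(\mathcal F(\sX)\to\mathcal F(\sE)\bigr)$. In particular $\mathcal G(\sX)=0$ whenever $\sX$ is regular, and \corref{nilinvariance_general} lets us replace $\sX$ by its reduction in the range $i\ge d$. Combined with the Nisnevich-descent spectral sequence, whose vertical range is bounded by the smooth-covering dimension $d$ through \propref{coh_dimension}, and the smooth-local models $\sX=[U/G]$ supplied by \thmref{thm:ANS local structure} and \propref{prop:afsoub01}, this disposes of the base case exactly as the $d=0$ step does in the proof of \thmref{weibel}.

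I would then induct on $d$, repeating the structure of the proof of \thmref{weibel} verbatim but for $\mathcal G$. Given $\gamma\in\pi_{-i}\mathcal G(\sX)$ with $i\ge d\ge 1$, regard it via the splitting as a class in $\K_{-i}(\sE)$ and apply the killing lemma \propref{prop:fklkdajjk} to the \emph{smooth} projection $\pi\colon\sE\to\sX$ (taking the total support $\sZ=\sE$): there is a sequence of blow-ups $q\colon\sX'\to\sX$ with nowhere-dense centres so that $\gamma$ dies after pullback to $\sE'=\sE\fibprod_\sX\sX'$. Choosing a nowhere-dense closed $\sZ\subseteq\sX$ over which $q$ is an isomorphism produces a proper cdh square over $\sX$; applying formal proper excision (\thmref{thm:K formal proper excision}, with supports) to $\mathcal G$ yields a long exact sequence of pro-abelian groups in which the terms other than $\pi_{-i}\mathcal G(\sX)$ and $\pi_{-i}\mathcal G(\sX')$ are built from the infinitesimal thickenings $\sZ(n)$ and $\sZ'(n)$. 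These centres have smooth-covering dimension $<d$, so (after reduction to their reductions via \corref{nilinvariance_general}) the induction hypothesis makes their $\mathcal G$-groups vanish in all degrees $\le -d$. Hence $q^*$ is injective on $\pi_{-i}\mathcal G$ and $\gamma=0$.

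The decisive point, and what keeps the argument from being circular, is the dimension bookkeeping. The blow-ups delivered by \propref{prop:fklkdajjk} live on the \emph{base} $\sX$, so the inductive drop is in $d$ while the bundle $\sE$ is merely transported by pullback; this is precisely what yields the sharp bound $i\ge d$ rather than the total-space bound $i\ge d+\rk(\sE)$ that a naïve application of \thmref{weibel} to $\sE$ would give, and it relies essentially on the vector-bundle invariance of $\KH$ used in the base case. The main work is therefore to confirm that \propref{prop:fklkdajjk} and \thmref{thm:K formal proper excision} genuinely apply to the auxiliary invariant $\mathcal G$ — that the Bass fundamental theorem and the Raynaud--Gruson/Rydh flatification steps internal to the proof of \propref{prop:fklkdajjk} survive the passage from $\K$ to $\mathcal G$ — and to arrange that the resolution-property and reducedness hypotheses required by \propref{prop:fklkdajjk} and \corref{nilinvariance_general} hold at each stage (via \lemref{lem:rep over ANS} and the resolution-property lemmas), exactly as in the proof of \thmref{weibel}.
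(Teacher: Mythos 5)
Your overall architecture is the paper's: it forms the same cofibre invariant (your $\mathcal G$; denoted $\sF$ in the paper) as a Nisnevich sheaf on the small étale site of $\sX$, uses the zero-section splitting to justify feeding it into \propref{prop:fklkdajjk} and \thmref{thm:K formal proper excision}, and runs the same induction on $d$, applying the killing lemma to the smooth projection so that the blow-ups live on the base, then concluding by formal proper excision and the induction hypothesis on the thickened centres, with the resolution-property/reducedness bookkeeping handled by the two-case scheme (resolution property, then Nisnevich induction with supports) that you defer to. The one place where you genuinely diverge is the base case, and that is where there is a gap.

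You assert that ``homotopy K-theory $\KH$ is invariant under vector bundles'' and use this for an arbitrary noetherian ANS stack $\sX$ to obtain the natural equivalence $\mathcal G(\sX)\simeq\Cofib\bigl(\Fib(\K\to\KH)(\sX)\to\Fib(\K\to\KH)(\sE)\bigr)$. For schemes this invariance is formal (Zariski-local triviality of bundles plus $\A^1$-invariance and descent), but for stacks it is not: a vector bundle on a local model $[U/G]$ is a $G$-equivariant bundle on affine $U$ and is not Nisnevich-locally trivial, so vector-bundle invariance of $\KH$ is a substantive theorem. In the generality you invoke it, it is precisely the $\KH$-analogue of the theorem you are proving --- Hoyois--Krishna's result --- which, as the paper's Related Work section points out, covers a \emph{smaller} class of stacks than the ANS stacks treated here; so as written the step is either unavailable in the literature or circular. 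You would moreover need $\K\simeq\KH$ on regular ANS stacks, a second unproved input. The paper avoids $\KH$ entirely: for $d=0$ and $\sX$ reduced, the $0$-dimensional smooth cover is reduced, hence regular, hence $\sX$ and $\sE$ are regular; since perfectness is fppf-local, every cohomologically bounded pseudocoherent complex on $\sX$ and on $\sE$ is perfect, so $\K\simeq\on{G}$, and one concludes by homotopy invariance of G-theory (\cite[Thm.~3.5]{asfnjw}). Substituting this argument repairs your base case --- indeed all your $\KH$ appeal really requires is the regular local models, where it unwinds to exactly this G-theory statement.

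A secondary imprecision: to reduce to $\sX$ reduced you cannot simply quote \corref{nilinvariance_general} for $\sX$ and $\sE$ separately. For $\sE$ it only gives isomorphisms in degrees below the Nisnevich cohomological dimension of $\sE$, which can be as large as $d+\rk(\sE)$, and even for $\sX$ it gives the range $i>d$ rather than $i\ge d$. The paper instead proves nil-invariance of the cofibre directly: \propref{nilinvariance} gives nil-invariance in \emph{all} degrees $i\ge0$ on the affine local models $[U/G]$ (applied to both the base and the total space, which is again such a quotient), and then the descent spectral sequence over the site of $\sX$ --- whose cohomological dimension is $d$ by \propref{coh_dimension} --- yields nil-invariance of $\mathcal G_{-i}$ for all $i\ge d$.
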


  \begin{proof}
    The zero section induces a retraction of $\pi^*: \K(\sX) \to \K(\sE)$, so it is enough to show that $\pi^*$ is surjective.
    If $\sF$ denotes the cofiber of the morphism $\pi^*: \K(-) \to \K(- \times_{\sX} \sE)$ on the small \'{e}tale site of $\sX$, then it suffices to show that $\sF_{-i}(\sX)$ vanishes for all $i \ge d$.
    By \propref{nilinvariance}, $\sF_{-i}$ is nil-invariant for all $i \ge 0$ for any $[X/G]$, where $X$ is an affine scheme and
    $G$ is an embeddable linearly reductive group scheme.
    Therefore by \corref{cor:K Nis}, \thmref{thm:ANS local structure} and \propref{prop:afsoub01}, $\sF_{-i}$ is nil-invariant for all $i \ge d$ (arguing as in the proof of \corref{nilinvariance_general}).
    Thus we can assume that $\sX$ is reduced.
    
    Suppose $d=0$.
    Then there exists a smooth surjection $u: X \to \sX$ by a $0$-dimensional noetherian scheme $X$.
    Since $\sX$ is reduced, $X$ is also reduced and hence regular.
    Since perfectness is fppf-local, we find that every cohomologically bounded pseudocoherent complex on $\sX$ is perfect (see e.g. the proof of \cite[Lem.~5.6]{hoyoiskrishna}), and the same for $\sE$.
    Thus the map $\pi^* : \K(\sX) \to \K(\sE)$ is identified with the inverse image $\pi^* : \on{G}(\sX) \to \on{G}(\sE)$, which is invertible by \cite[Thm.~3.5]{asfnjw}.
    Now assume $d > 0$ and the statement is known for smooth-covering dimension $< d$.
  
  \emph{Case 1: $\sX$ has the resolution property.}
    If $\sX$ has the resolution property, then by \lemref{lem:rep over ANS}, \thmref{thm:ANS perfect} and \lemref{lem:pisnfipj}, $\sE$ is a perfect stack with the resolution property.
    By \propref{prop:fklkdajjk}, for any $\gamma \in \K_{-d}(\sE)$, there exists a sequence of blow-ups $f: \sX' \to \sX$ with nowhere dense centers such that $\gamma$ goes to $0$ in $\K_{-d}(\sX' \times_{\sX} \sE)$.
    Choose $\sZ \sub \sX$ a nowhere dense closed substack such that $f$ is an isomorphism over $\sX\setminus\sZ$
    and let $\sZ' = \sZ \times_{\sX} \sX'$. 
    The $n$th infinitesimal thickenings $\sZ(n)$ satisfy the induction hypothesis by Lemmas~\ref{lem:rep over ANS} and \ref{lem:pisnfipj}.
    Combining formal proper excision (\thmref{thm:K formal proper excision}) with Remark~\ref{rem:formal completion classical}, the isomorphism $\K_{-d}(\sE \times_{\sX}\sZ(n)) \simeq \K_{-d}(\sZ(n)) \simeq 0$ (by induction hypothesis), and \thmref{weibel}, we get a commutative diagram with exact rows:
    \[
      \begin{tikzcd}
        \{\K_{-d+1}(\sZ'(n))\}_n \ar{r} \ar{d}{\pi_{Z'}^*} & \K_{-d}(\sX) \ar{r} \ar{d}{\pi^*} &  \K_{-d}(\sX') 
        \ar{d}{\pi_{Z}^*}\\
        \{\K_{-d+1}(\sE \times_{\sX} \sZ'(n))\}_n \ar{r} & \K_{-d}(\sE) \ar{r} & \K_{-d}(\sE \times_{\sX} \sX').
      \end{tikzcd}
    \]
    Since $\pi_{Z'}^*$ is also an isomorphism by induction hypothesis, this implies that $\gamma$ is in the image of $\pi^*$.

  \emph{Case 2: $\sX$ is arbitrary.}
    In general, there exists by \thmref{thm:ANS local structure} and \propref{prop:X/G res prop} a finite sequence of open immersions
     $\initial = \sU_0
      \hook \sU_1
      \hook \cdots
      \hook \sU_n = \sX$ and Nisnevich squares
    \[
      \begin{tikzcd}
        \sW_j \ar{r}\ar{d}
          & \sV_j \ar{d}{p} 
        \\
        \sU_{j-1} \ar{r}
          & \sU_j,
      \end{tikzcd}
    \]
    where each $\sV_j$ satisfies the resolution property.
    We prove by induction on $j$ that for $i \ge d$, $\sF_{-i}(\sU_j)$ vanishes.
    For $j = 0$, this follows from Case 1.
    Assuming the groups
    $\sF_{-l}(\sU_{j-1})$
    vanish for all $l \ge d$, we will show that $\sF_{-i}(\sU_j)$ also vanishes.
    
    Choose $\gamma \in \sF_{-i}(\sU_j)$ for some $i \ge d$.
    We will show that $\gamma = 0$.
    By localization for $\sF$, we get an exact sequence of homotopy groups 
    \[
      \sF_{-i}(\sU_j ~\text{on}~ \sZ_j) \rightarrow \sF_{-i}(\sU_j) \to \sF_{-i}(\sU_{j-1}),
    \]
    where $\sZ_j$ denotes the (reduced) complement of the open substack $\sU_{j-1} \subseteq \sU_j$.
    Since $\sF_{-i}(\sU_{j-1})$ vanishes by induction hypothesis, $\gamma$ lifts to a class $\sF_{-i}(\sU_j ~\text{on}~ \sZ_j)$ (which we also denote $\gamma$).
    Let $\widetilde{\gamma}$ denote its image by $p^*: \sF_{-i}(\sU_j ~\text{on}~ \sZ_j) \to \sF_{-i}(\sV_j ~\text{on}~ p^{-1}(\sZ_j))$.
    
    By applying \propref{prop:fklkdajjk}\footnote{%
      To be precise, we use the statement for $\sF$ in place of $\K$, which holds since there is a natural splitting $\K(- \fibprod_\sX \sE) \simeq \K(-) \oplus \sF(-)$.
    } to $\widetilde{\gamma}$ and the smooth morphism $\sV_j \to \sU_j$, we can find a sequence of blow-ups $q: \sU'_j \to \sU_j$ with nowhere dense centres such that $\widetilde{\gamma}$ vanishes after inverse image along the base change $q_\sV : \sV'_j := \sU'_j \times_{\sU_j} \sV_j \to \sV_j$.
    The cartesian square
    \[\begin{tikzcd}
      \sV'_j \ar{r}{q_\sV}\ar{d}
      & \sV_j \ar{d}{p}
      \\
      \sU'_j \ar{r}{q}
        & \sU_j
    \end{tikzcd}\]
    gives rise to a commutative square
    \[\begin{tikzcd}
      \sF_{-i}(\sU_j ~\mrm{on}~ \sZ_j) \ar{r}\ar{d}{q^*}
      & \sF_{-i}(\sV_j ~\mrm{on}~ \sZ_j) \ar{d}{q_\sV^*}
      \\
      \sF_{-i}(\sU'_j ~\mrm{on}~ \sZ_j) \ar{r}
      & \sF_{-i}(\sV'_j ~\mrm{on}~ \sZ_j),
    \end{tikzcd}\]
    where the horizontal arrows are invertible by excision (and we implicitly base change $\sZ_j$ where necessary).
    Therefore, we get that $q^*(\gamma)$ vanishes in $\sF_{-i}(\sU'_j ~\mrm{on}~ \sZ_j)$ and in particular in $\sF_{-i}(\sU'_j)$.

    By construction, $q$ is an isomorphism over $\sU_j \setminus \sD$ for some nowhere dense closed substack $\sD \sub \sU_j$.
    Let $\sD' = \sD \fibprod_{\sU_j} \sU_j'$. 
    Using \thmref{thm:K formal proper excision}, we have an exact sequence of pro-abelian groups
    \[
      \{\sF_{-i+1}(\sD'(n))\}_n
      \to \sF_{-i}(\sU_j)
      \to \sF_{-i}(\sU'_j) \oplus \{\sF_{-i}(\sD(n))\}_n,
    \]
    where $\sF_{-i+1}(\sD'(n))$ and $\sF_{-i}(\sD(n))$ vanish
    as they satisfy the induction hypothesis on $d$ (by Lemmas~\ref{lem:rep over ANS} and \ref{lem:pisnfipj}).
    But since $q^*(\gamma)$ vanishes in $\sF_{-i}(\sU_j')$, we have $\gamma = 0$ in $\sF_{-i}(\sU_j)$ as desired.
  \end{proof}

  \begin{rem}
    The argument in the proof of \thmref{thm:weibel2} can also be used to generalize \thmref{weibel} to any stack $\sX$ that is \emph{smooth} and affine over a noetherian ANS stack of fppf-covering dimension $d$.
  \end{rem}

\appendix

\section{Algebraic stacks}
\label{sec:stacks}

\ssec{ANS stacks}
\label{ssec:ANS}

  \begin{defn}\label{defn:group schemes}
    Let $G$ be an affine fppf group scheme over an affine scheme $S$.
    \begin{defnlist}
      \item
      We say that $G$ is \emph{linearly reductive} if direct image along the morphism $BG \to S$ is t-exact (i.e., it is cohomologically affine).

      \item
      We say that $G$ is \emph{nice} if it is an extension of a finite étale group scheme, of order prime to the characteristics of $S$, by a group scheme of multiplicative type.

      \item
      We say that $G$ is \emph{embeddable} if it is a closed subgroup of $\mrm{GL}_{S}(\sE)$ for some finite locally free sheaf $\sE$ on $S$.
    \end{defnlist}
    Nice group schemes are linearly reductive by \cite[Rem.~2.2]{AlperHallRydhLocal}.
  \end{defn}

  \begin{defn}\label{defn:ANS}
    A derived algebraic stack $\sX$ is called \emph{ANS} if it has affine diagonal and nice stabilizers.
  \end{defn}

  \begin{exam}
    In characteristic zero any reductive group $G$ (such as $\mrm{GL}_{n,S}$) is linearly reductive.
    In characteristic $p>0$, any linearly reductive group is nice \cite[Thm.~18.9]{AlperHallRydhLocal}.
  \end{exam}

  \begin{exam}\label{apdsfni}
    Let $G$ be a finite étale group scheme over a field $k$.
    If $G$ has order prime to the characteristic of $k$, then $G$ is nice and embeddable.
    It follows that any separated Deligne--Mumford stack over $k$ is ANS as long as it is tame (i.e., has all stabilizers of order prime to the characteristic).
  \end{exam}

  \begin{exam}
    Any algebraic stack with affine diagonal that is tame in the sense of \cite[Def.~3.1]{AbramovichOlssonVistoli} is ANS.
    This generalizes \examref{apdsfni}.
  \end{exam}

  \begin{exam}
    Tori are embeddable group schemes of multiplicative type (hence nice).
    Thus if $T$ is a torus over an affine scheme $S$ acting on an algebraic space $X$ over $S$ with affine diagonal, then the quotient $[X/T]$ is ANS.
    (However, it is typically not tame in the sense of \cite{AbramovichOlssonVistoli}.)
  \end{exam}

  \begin{lem}\label{lem:rep over ANS}
    Let $\sX$ be an ANS derived stack.
    Let $f : \sX' \to \sX$ be a representable morphism with affine diagonal.
    Then $\sX'$ is ANS.
  \end{lem}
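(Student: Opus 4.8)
The plan is to check the two conditions in the definition of ANS (\defref{defn:ANS}) for $\sX'$ separately: that $\sX'$ has affine diagonal, and that its geometric stabilizers are nice. The first is purely formal, relying only on the stability of affine morphisms under base change and composition; the second uses representability of $f$ to realize each stabilizer of $\sX'$ as a subgroup scheme of a stabilizer of $\sX$, and then invokes the fact that niceness is inherited by subgroup schemes.

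For the affine diagonal, I would factor the absolute diagonal of $\sX'$ as
\[ \sX' \xrightarrow{\Delta_f} \sX' \times_\sX \sX' \longrightarrow \sX' \times \sX', \]
where $\Delta_f$ is the relative diagonal of $f$ and the second arrow is the base change of the absolute diagonal $\Delta_\sX \colon \sX \to \sX \times \sX$ along the projection $\sX' \times \sX' \to \sX \times \sX$ (the relevant square being cartesian). By hypothesis $\Delta_f$ is affine — this is exactly what it means for $f$ to have affine diagonal — and $\Delta_\sX$ is affine because $\sX$ is ANS; as affine morphisms are closed under base change and composition, the diagonal of $\sX'$ is affine. This argument is insensitive to the derived structure.

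For the stabilizers, I would fix a geometric point $\bar x' \colon \Spec K \to \sX'$ with image $\bar x = f \circ \bar x'$ (such a point is classical, so the stabilizer groups coincide with those of the classical truncations). Since $f$ is representable, the induced homomorphism of automorphism group schemes $\Aut_{\sX'}(\bar x') \to \Aut_\sX(\bar x)$ is a monomorphism. Write $G = \Aut_\sX(\bar x)$, which is nice by hypothesis and so fits into an extension
\[ 1 \to M \to G \to F \to 1 \]
with $M$ of multiplicative type and $F$ finite étale of order prime to $\mathrm{char}(K)$, and write $H = \Aut_{\sX'}(\bar x')$. I would then set $H_M = H \times_G M$ and observe that $H_M \to M$ is a monomorphism (being a base change of $H \to G$), so that $H_M$ is of multiplicative type, while $H/H_M$ embeds into $F$ and is therefore finite étale of order prime to $\mathrm{char}(K)$. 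Consequently $H$ sits in an extension of the same shape, i.e.\ it is nice.

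The only nonformal input, and the step I expect to be the main obstacle, is the assertion that niceness passes to subgroup schemes. This rests on two standard structural facts: that a subgroup scheme of a group of multiplicative type is again of multiplicative type (giving that $H_M$ is of multiplicative type), and that a subgroup scheme of a finite étale group of order prime to the characteristic is of the same type. I would draw these from the structure theory of nice group schemes in \cite{AlperHallRydhLocal} (together with SGA3 for groups of multiplicative type); the remainder of the argument is formal diagram-chasing.
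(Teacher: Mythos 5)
Your proof is correct and takes essentially the same route as the paper, whose entire proof is the one-line observation that representability of $f$ makes the stabilizers of $\sX'$ subgroups of those of $\sX$ (leaving both the affine-diagonal factorization and the fact that niceness passes to subgroup schemes implicit). You have simply supplied the details the paper omits, and both of your structural inputs (subgroups of multiplicative-type groups are of multiplicative type; subgroups of finite \'etale groups of order prime to the characteristic are of the same type) are correct over a field.
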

  \begin{proof}
    Since $f$ is representable, the stabilizers of $\sX'$ are subgroups of those of $\sX$.
  \end{proof}

  The following is the main result of \cite{ahhr} in the classical case.
  The generalization to derived stacks is immediate.

  \begin{thm}[Alper--Hall--Halpern-Leistner--Rydh]\label{thm:ANS local structure}
    Let $\sX$ be an ANS derived stack.
    Then there exists a finite sequence of open immersions
    \[
      \initial = \sU_0
      \hook \sU_1
      \hook \cdots
      \hook \sU_n = \sX,
    \]
    an embeddable nice group scheme $G$ over an affine scheme $S$,
    and Nisnevich squares
    \[
      \begin{tikzcd}
        \sW_i \ar{r}\ar{d}
          & \sV_i \ar{d}
        \\
        \sU_{i-1} \ar{r}
          & \sU_i
      \end{tikzcd}
    \]
    where $\sV_i$ is étale and affine over $\sU_i$ and quasi-affine over $BG$.
  \end{thm}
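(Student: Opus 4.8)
The classical case is exactly the main structure theorem of \cite{ahhr} (resting on the Luna-type slice theorem of \cite{AlperHallRydhLocal}), so the whole content here is to promote that diagram from the classical truncation $\sX_\cl$ to the derived stack $\sX$. The plan is therefore to apply the cited classical result to $\sX_\cl$ and then lift the entire diagram uniquely along the closed immersion $\sX_\cl \hook \sX$. The key mechanism is that this immersion is a nil-immersion (more precisely a derived thickening: an isomorphism on $\pi_0$ of the structure sheaves), and so induces an equivalence of small étale sites by topological invariance \cite{SAG-20180204} (see also \cite{khan2018virtual}).

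First I would record that under the equivalence $\sV \mapsto \sV_\cl$ between representable étale derived stacks over $\sX$ and representable étale stacks over $\sX_\cl$, the relevant structure is preserved in both directions: open immersions correspond to open immersions; affine morphisms correspond to affine morphisms (affineness of a representable étale map being detected on classical truncations); and cartesian squares correspond to cartesian squares. In particular a Nisnevich square over $\sX_\cl$ lifts uniquely to one over $\sX$, since the requirement that $f$ be an isomorphism over the open complement only involves classical truncations. Applying \cite{ahhr} to $\sX_\cl$ I would thus obtain a chain $\initial = \sU_0^0 \hook \cdots \hook \sU_n^0 = \sX_\cl$, an embeddable nice group scheme $G$ over an affine scheme $S$, charts $\sV_i^0$ étale and affine over $\sU_i^0$ and quasi-affine over $BG$, together with the associated Nisnevich squares; I would then let $\sU_i \hook \sX$ be the unique open substacks with $(\sU_i)_\cl = \sU_i^0$ and $\sV_i \to \sU_i$ the unique étale lifts. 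The Nisnevich squares lift as above, while $G$, $S$ and $BG$ are classical and untouched, so niceness and embeddability are automatic.

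The one genuinely non-formal point---and the reason the passage to the derived setting is ``immediate'' rather than vacuous---is to check that each lift $\sV_i$ remains quasi-affine over $BG$. This amounts to lifting the structure map $\sV_i^0 \to BG$, equivalently the $G$-torsor $\sV_i^0 \fibprod_{BG} S \to \sV_i^0$, along the derived thickening $(\sV_i)_\cl \hook \sV_i$. Here I would use that $G$ is nice: it is an extension of a finite étale group scheme of order prime to the characteristics by a group of multiplicative type, so that $G$-torsors are controlled by the small étale site of $\sV_i$ together with its units and Picard group. All three are invariant under the thickening $(\sV_i)_\cl \hook \sV_i$ (units and line bundles are detected on $\pi_0$ and on classical truncations, and the étale site is topologically invariant), so the torsor---and with it the map to $BG$---lifts uniquely; quasi-affineness of its total space over the affine $S$ is once more detected on classical truncations. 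This exhibits $\sV_i$ as a quotient that is quasi-affine over $BG$ and completes the lift of the diagram. As a cross-check, one may alternatively observe that the Luna-slice argument of \cite{ahhr,AlperHallRydhLocal} goes through verbatim in the derived setting, the derived approximation and slice constructions producing the required charts directly.
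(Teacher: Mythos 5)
Your overall skeleton --- invoke the classical statement for $\sX_\cl$ and lift the whole diagram along $\sX_\cl\hook\sX$ via topological invariance of the étale site --- is the same as in \cite[Thm.~2.12]{sixstack}, to which the paper defers for details, and the bookkeeping that open immersions, affineness of representable étale maps, and Nisnevich squares are all detected on classical truncations is fine. The gap is precisely at the step you yourself flag as non-formal, and the justification you give for it is false. You assert that the $G$-torsor on $(\sV_i)_\cl$ lifts uniquely along the derived thickening because ``units and line bundles are detected on $\pi_0$ and on classical truncations.'' In derived algebraic geometry neither claim is true: the space of units of a derived ring $A$ has $\pi_j\simeq\pi_j(A)$ for $j\ge 1$, and $\on{Pic}$ is not invariant under derived thickenings. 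Quantitatively, since $\bbL_{BG}$ is the coadjoint bundle placed in cohomological degree $1$, lifting the classifying map $(\sV_i)_\cl\to BG$ through the square-zero extension glued from $\pi_n(\sO_{\sV_i})[n]$ meets an obstruction in $H^{n+2}\big((\sV_i)_\cl,\,\mathfrak{g}_P\otimes\pi_n\sO_{\sV_i}\big)$, where $\mathfrak{g}_P$ is the adjoint bundle of the torsor, and even when lifts exist they form a torsor under $H^{n+1}$ of the same sheaf. The structure theory of nice groups that you invoke does not kill these groups: the finite étale quotient of $G$ does lift uniquely, but the multiplicative-type part can be a torus, torsors under $\G_m$ are line bundles, and your charts are only known to be \emph{quasi-affine} over $BG$; quasi-affine stacks have abundant higher coherent cohomology (already $H^2(\bA^3\setminus\{0\},\sO)\neq 0$), so neither existence nor uniqueness of the lift follows. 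Your closing remark that the slice arguments of \cite{ahhr} ``go through verbatim'' in the derived setting is an assertion, not an argument, and cannot substitute for this step.

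What is missing is that niceness has to enter through \emph{linear reductivity} rather than through the extension structure of $G$. The classical theorem can be arranged to produce charts that are \emph{affine} over $BG$, i.e. $\sV_i^0\simeq[\Spec(A_0)/G]$ (this is what \cite[Thm.~6.3]{ahhr} combined with \cite[Prop.~2.9]{hoyoiskrishna} yields; compare \propref{prop:afsoub01}, where the charts are quotients of affine schemes). Such a chart is cohomologically affine: pushforward along $\sV_i^0\to BG$ is t-exact because that map is affine, pushforward along $BG\to S$ is t-exact because $G$ is linearly reductive, and $S$ is affine. Consequently all the obstruction and ambiguity groups above vanish, the torsor deforms uniquely up to isomorphism through the whole Postnikov tower of $\sV_i$, and the lifted total space is affine because its classical truncation is; the conclusion ``quasi-affine over $BG$'' then holds a fortiori. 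This cohomological-affineness mechanism is exactly the one underlying the paper's \lemref{lem:Vect nil-invariance}, and with it your lifting strategy becomes correct; without it, the key step fails.
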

  \begin{proof}
    This follows by combining \cite[Thm.~6.3]{ahhr}\footnote{%
      See \cite[Cor.~17.3]{AlperHallRydhLocal} and \cite[Thm.~3.2]{AbramovichOlssonVistoli} for documented special cases of this result.
    } with \cite[Prop.~2.9]{hoyoiskrishna}.
    See \cite[Thm.~2.12]{sixstack} for details.
  \end{proof}

  \begin{prop}\label{prop:afsoub01}
    Let $\sX = [X/G]$ be the quotient of a quasi-compact separated derived algebraic space $X$ with action of a nice group scheme $G$ over an affine scheme $S$. 
    Then $\sX$ admits a scallop decomposition of the form $(\sU_i, \sV_i, u_i)_i$, where $\sV_i$ is of the form $[V_i/G]$ for some affine derived schemes $V_i$ over $S$ with $G$-action, and $u_i$ is an affine morphism for each $i$.
  \end{prop}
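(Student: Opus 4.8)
The plan is to build the scallop decomposition one chart at a time, keeping the ambient group $G$ fixed throughout and realising each chart as a $G$-equivariant affine étale neighbourhood. First I would record that $\sX = [X/G]$ is ANS: its diagonal is affine because $X$ is separated and $G$ has affine diagonal, and its stabilizers are closed subgroup schemes of the nice group $G$, hence again nice (subgroups of multiplicative type are of multiplicative type, and subgroups of a finite étale group of order prime to the characteristics inherit that property). In particular, at every point the stabilizer $G_x \subset G$ is a nice, so linearly reductive, group scheme, and the Alper--Hall--Rydh étale slice theorem (\cite[Thm.~6.3]{ahhr}, in its derived incarnation) applies at that point to produce an étale morphism $[\Spec(A)/G_x] \to \sX$ in its image, where $\Spec(A)$ is an affine derived scheme with $G_x$-action.

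The key step is then to rewrite each such slice chart, which a priori only records the stabilizer $G_x$, in the form $[V/G]$ with $V$ affine over $S$. Forming the associated bundle $V := G \times^{G_x} \Spec(A)$ furnishes a canonical identification $[V/G] \simeq [\Spec(A)/G_x]$, so it remains to check that $V$ is an affine derived scheme over $S$. Since $V \to G/G_x$ is affine (its fibre being $\Spec(A)$), this reduces to the affineness of the homogeneous space $G/G_x$ over $S$. This is exactly where niceness of $G$ enters: by \defref{defn:group schemes} a nice group is an extension of a finite étale group scheme of order prime to the characteristics of $S$ by a group scheme of multiplicative type, and for each of these two classes the quotient by a (necessarily nice) closed subgroup is affine; an extension argument then gives that $G/G_x$ is affine over $S$. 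Consequently each slice chart becomes a chart $\sV = [V/G]$ that is affine over $BG$. Moreover the resulting morphism $u : \sV \to \sX$ is affine: working over $BG$, the stack $\sV$ is affine over $BG$ while $\sX \to BG$ is representable by $X \to S$ and hence separated, so the graph of $u$ is a closed immersion into the affine-over-$\sX$ stack $\sV \times_{BG} \sX$, whence $u$ is affine.

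Finally I would assemble these charts into a scallop decomposition in the sense of \cite[Def.~2.7]{sixstack}. By quasi-compactness of $X$, finitely many of the charts $[V_\alpha/G] \to \sX$ have images covering $\sX$. Ordering them and letting $\sU_i$ be the open substack covered by the images of the first $i$ charts, one arranges---after the standard refinement of \cite[Prop.~2.9]{hoyoiskrishna}, shrinking each chart so that it restricts to an isomorphism over the locus $\sU_i \setminus \sU_{i-1}$ added at the $i$-th stage---that each step sits in a Nisnevich square with étale affine chart $u_i : \sV_i \to \sU_i$. This produces the desired decomposition $(\sU_i, \sV_i, u_i)_i$ with $\sV_i = [V_i/G]$, $V_i$ affine over $S$, and $u_i$ affine, complementing \thmref{thm:ANS local structure} by preserving the ambient group $G$.

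The main obstacle is the promotion of slice charts from the stabilizer $G_x$ to the full group $G$ with affine total space, which hinges entirely on the affineness of $G/G_x$; it is precisely this point---and not mere linear reductivity---that forces the niceness hypothesis, in keeping with the remark in the introduction that niceness is what matters here. The remaining difficulties, namely verifying the Nisnevich-square and affineness conditions during the assembly, are routine and identical to the non-derived case, since all the constructions involved are insensitive to the derived structure.
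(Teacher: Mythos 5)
You take a genuinely different route from the paper, and its two halves fare very differently. The paper's proof has two steps: first, generalized Sumihiro (\cite[Thm.~2.14]{sixstack}) produces a \emph{single} affine Nisnevich cover $u : \sV = [V/G] \twoheadrightarrow \sX$ with $V$ affine over $S$; second, a $G$-equivariant version of the Raynaud--Gruson/Lurie fibre-power argument (\cite[Lem.~5.7.5]{RaynaudGruson}, \cite[Prop.~3.2.2.4]{SAG-20180204}) extracts the scallop decomposition: $\sU^i$ is the locus where the fibre of $u$ has at least $i$ geometric points, and the charts are $\sV_i = [(V^i\setminus\Delta^i)/G\times\Sigma_i] \simeq [W_i/G]$ with $W_i = (V^i\setminus\Delta^i)/\Sigma_i$ affine. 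Your first half (reduce to ANS, apply the slice theorem, induce up to $G$ via $G\times^{G_x}\Spec(A)$ using affineness of $G/G_x$, affineness of $u$ by the graph argument via separatedness of $X$) is a plausible alternative to generalized Sumihiro, although the affineness of $G/G_x$ and the applicability of \cite[Thm.~6.3]{ahhr} at arbitrary points of a non-noetherian derived stack both need genuine proofs rather than asides.

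The real gap is the assembly step, which you declare routine but which is where the content of the proposition lives. First, your slice charts are étale morphisms that split (induce isomorphisms of residual gerbes) only at the finitely many points where the slice theorem was invoked; the family need not be completely decomposed, so \cite[Prop.~2.9]{hoyoiskrishna} does not apply, and no ordering of the charts produces strata $\sU_i\setminus\sU_{i-1}$ over which some chart restricts to an isomorphism. Shrinking cannot create such isomorphisms: already for schemes, the connected étale double cover $\G_m \to \G_m$, $z\mapsto z^2$ (over $\bC$, say), splits at every closed point yet admits no section over any nonempty open. What replaces all of this in the paper is precisely the fibre-power/symmetric-group construction, which manufactures charts that \emph{are} isomorphisms over the strata; nothing in your proposal plays this role. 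Second, even granting the assembly, ``shrinking a chart'' $[V/G]$ means passing to a $G$-invariant open $V'\subseteq V$, which is only quasi-affine; this loses exactly the property at stake ($V_i$ affine) and would at best reprove \thmref{thm:ANS local structure} (charts quasi-affine over $BG$), which the paper already has and which is insufficient for the projective-generation input (\propref{prop:X/G perf}) that this proposition is designed to supply. The paper's construction retains affineness because $V^i\setminus\Delta^i$ is open \emph{and closed} in the affine scheme $V^i$ (étaleness plus separatedness of $u$), hence affine, and its quotient by the free $\Sigma_i$-action is again affine. The quickest repair of your argument: take the disjoint union of your finitely many charts, which is a single affine étale surjection $[V/G]\twoheadrightarrow\sX$ with $V$ affine over $S$, and then run the paper's equivariant fibre-power argument verbatim.
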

  \begin{proof}
    By generalized Sumihiro (see \cite[Theorem~2.14]{sixstack}), $\sX$ admits an affine Nisnevich cover $u: \sV \twoheadrightarrow \sX$ where $\sV$ is of the form $[V/G]$ with $V$ an affine scheme over $S$ with $G$-action.
    The desired scallop decomposition is obtained by a $G$-equivariant version of the construction in the proof of \cite[Lem.~5.7.5]{RaynaudGruson} or \cite[Prop.~3.2.2.4]{SAG-20180204}, which goes through \emph{mutatis mutandis}:

    For every $i\ge 0$, define $\sU^i \sub \sX$ as the substack of points where the fibre of $u$ has $\ge i$ geometric points.
    We have $\sU^1 = \sX$ (since $u$ is surjective) and $\sU^{n+1} = \initial$ for some large enough $n$ (since $\sX$ is quasi-compact).
    This gives a finite filtration of $\sX$ by quasi-compact opens $\sU_i := \sU^{n+1-i}$.

    Consider the fibre powers $V^i$ of $V$ over $X$ and $\sV^i = [V^i/G]$ of $\sV$ over $\sX$, respectively.
    Since $u$ is affine, so is each $V^i$.
    Since $V \to X$ is affine and étale, the ``big diagonal'' $\Delta^i \sub V^i$ is an open and closed subscheme.
    The permutation action of the symmetric group $\Sigma_i$ on $V^i$ is free away from $\Delta^i$, and commutes with the factorwise $G$-action on $V^i$.
    Thus we can write
    \[ \sV_i := [(V^i \setminus \Delta^i) / G \times \Sigma_i] \simeq [W_i/G], \]
    where $W_i = [(V^i \setminus \Delta^i)/ \Sigma_i]$, as a quotient of an affine scheme by a free action of a finite group, is an affine scheme.
    Now one checks, exactly as in \cite[Prop.~3.2.2.4]{SAG-20180204}, that the canonical morphisms $\sV_i \to \sX$ factor through affine étale morphisms $u_i : \sV_i \to \sU_i$, and that the resulting construction $(\sU_i, \sV_i, u_i)_i$ is indeed a scallop decomposition.
  \end{proof}

\ssec{The resolution property}

  \begin{defn}
    Let $\sX$ be a derived algebraic stack.
    We say that $\sX$ has the \emph{resolution property} if for every discrete coherent sheaf $\sF$ of finite type on $\sX$, there exists a finite locally free sheaf $\sE$ and a surjection $\sE \twoheadrightarrow \sF$.
  \end{defn}

  The following construction is one of the pleasant consequences of the resolution property.

  \begin{constr}\label{constr:ohuhoj}
    Let $i : \sZ \to \sX$ be a closed immersion of derived stacks.
    If $i$ is almost of finite presentation (e.g. $\sX$ is noetherian), then the ideal $\sI \sub \pi_0(\sO_\sX)$ defining $\sZ_\cl$ in $\sX_\cl$ is of finite type.
    Thus if $\sX$ admits the resolution property, there exists a surjection $\sE \twoheadrightarrow \sI$ from a finite locally free sheaf $\sE$ on $\sX$.
    The induced morphism $s: \sE \to \sI \to \sO_\sX$ can be viewed as a section of the vector bundle
    \[ \bV_\sX(\sE) = \Spec_\sX(\Sym_{\sO_\sX}(\sE)), \]
    and its derived zero locus defines a quasi-smooth closed immersion $\widetilde{i} : \widetilde{\sZ} \to \sX$ whose $0$-truncation is $i : \sZ \to \sX$ and which fits in a homotopy cartesian square
    \[ \begin{tikzcd}
      \widetilde{\sZ} \ar{r}{\widetilde{i}}\ar{d}
        & \sX \ar{d}{s}
      \\
      \sX \ar{r}{0}
        & \bV_\sX(\sE).
    \end{tikzcd} \]
    Repeating this construction with the section $s^{\otimes n}$, for any $n> 0$, gives a tower of infinitesimal thickenings
    \[
      \sZ
      \hook \widetilde{\sZ} = \widetilde{\sZ}(1)
      \hook \widetilde{\sZ}(2)
      \hook \cdots . \]
  \end{constr}

  \begin{rem}\label{rem:qs res prop}
    If $\sX$ is a derived stack with the resolution property, then any quasi-smooth closed immersion $i : \sZ \to \sX$ fits in a homotopy cartesian square
    \[ \begin{tikzcd}
      \sZ \ar{r}{i}\ar{d}
        & \sX \ar{d}{s}
      \\
      \sX \ar{r}{0}
        & \bV_\sX(\sE)
    \end{tikzcd} \]
    where $\sE$ is a finite locally free sheaf on $\sX$.
    This follows by a variant of the proof of \cite[Prop.~2.3.8]{khan2018virtual}.
  \end{rem}

  We discuss some examples of derived stacks with the resolution property.
  First, recall that in the classical setting, the property is stable under affine morphisms:

  \begin{lem}\label{lem:pisnfipj}
    Let $f : \sX \to \sY$ be a quasi-affine morphism of (classical) algebraic stacks.
    If $\sY$ has the resolution property, then so does $\sX$.
  \end{lem}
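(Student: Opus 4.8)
The plan is to factor the quasi-affine morphism $f$ and pass the resolution property separately along its two constituent classes of morphisms. Recall that $f$ factors canonically as
$\sX \xrightarrow{j} \bar{\sX} \xrightarrow{g} \sY$, where $\bar{\sX} = \Spec_{\sY}(f_*\sO_\sX)$, the morphism $g$ is affine, and $j$ is a quasi-compact open immersion (this is essentially the definition of quasi-affineness). Since all our algebraic stacks are quasi-compact and quasi-separated, $\bar{\sX}$ is again qcqs. Thus it suffices to show that the resolution property is inherited along affine morphisms and along quasi-compact open immersions; composing the two statements then yields the lemma.

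First I would treat the affine case: assume $g : \bar{\sX} \to \sY$ is affine and $\sY$ has the resolution property. Given a finite-type coherent sheaf $\sF$ on $\bar{\sX}$, the pushforward $g_*\sF$ is quasi-coherent on $\sY$, and since $g$ is affine the counit $g^*g_*\sF \to \sF$ is an epimorphism; this can be checked after smooth base change to an affine atlas of $\sY$, where it reduces to the surjection $A \otimes_B M \to M$. Writing $g_*\sF$ as the filtered colimit $\colim_\alpha \sG_\alpha$ of its finite-type quasi-coherent subsheaves and using that $\sF$ is of finite type together with the quasi-compactness of $\bar{\sX}$, the composite $g^*\sG_\alpha \to g^*g_*\sF \to \sF$ is already an epimorphism for some index $\alpha$. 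Applying the resolution property of $\sY$ to $\sG_\alpha$ produces a surjection $\sE \twoheadrightarrow \sG_\alpha$ with $\sE$ finite locally free; then $g^*\sE$ is finite locally free on $\bar{\sX}$, and the composite $g^*\sE \twoheadrightarrow g^*\sG_\alpha \twoheadrightarrow \sF$ exhibits the resolution property for $\bar{\sX}$.

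Next I would treat the open-immersion case: assume $j : \sX \hook \bar{\sX}$ is a quasi-compact open immersion and $\bar{\sX}$ has the resolution property. Given a finite-type coherent sheaf $\sF$ on $\sX$, I would extend it to a finite-type quasi-coherent sheaf $\widetilde{\sF}$ on $\bar{\sX}$ with $j^*\widetilde{\sF} \simeq \sF$. The resolution property of $\bar{\sX}$ then supplies a surjection $\sE \twoheadrightarrow \widetilde{\sF}$ with $\sE$ finite locally free, and restricting along $j$ gives a surjection $j^*\sE \twoheadrightarrow \sF$ with $j^*\sE$ finite locally free on $\sX$, as required.

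The main obstacle is the single nontrivial input to the open-immersion step, namely the existence of a finite-type (quasi-)coherent extension of $\sF$ across the quasi-compact open immersion $j$. This is exactly where quasi-compactness and quasi-separatedness are used in an essential way: it is the stacky analogue of the extension of finite-type sheaves along quasi-compact open immersions of qcqs schemes, and follows from the noetherian/qcqs approximation theory for quasi-coherent sheaves on algebraic stacks (e.g.\ Rydh's approximation results). It becomes elementary when $\bar{\sX}$ is noetherian, which is the situation in all of our applications. The remaining verifications — surjectivity of the affine counit and the finite-type colimit argument — are routine and may be checked on a smooth atlas of the base.
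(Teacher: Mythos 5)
Your proof is correct, but it is genuinely different from what the paper does: the paper offers no argument at all and simply cites \cite[Lem.~7.1]{Hall_2017} and \cite[Prop.~1.8(v)]{Gross_2017}. What you have written out is essentially the standard argument underlying those citations: the canonical factorization $\sX \xrightarrow{j} \Spec_\sY(f_*\sO_\sX) \xrightarrow{g} \sY$ of a quasi-affine morphism into a quasi-compact open immersion followed by an affine morphism, the affine case via surjectivity of the counit $g^*g_*\sF \to \sF$ together with approximation by finite-type subsheaves, and the open-immersion case via extension of finite-type quasi-coherent sheaves across $j$. One calibration of where the real content sits: you present the extension lemma as the \emph{sole} nontrivial input and call the filtered-colimit step in the affine case ``routine, checkable on a smooth atlas,'' but the statement that a quasi-coherent sheaf on a quasi-compact quasi-separated algebraic stack is the filtered union of its finite-type quasi-coherent subsheaves is itself part of the same approximation theory (Rydh's completeness/noetherian approximation results) in the non-noetherian setting; it cannot be checked smooth-locally, since finite-type subsheaves produced over an atlas need not descend to the stack. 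So both halves of your argument rest on the same input, not just the second. This is harmless here: the paper's standing conventions make all stacks quasi-compact and quasi-separated, its applications of the lemma are to noetherian stacks, and in the noetherian case both facts are classical (cf. \cite[15.4, 15.5]{laumon2018champs}), so your proof is complete once those references are invoked for both steps. The trade-off between the two approaches is the usual one: the paper's citation is shorter and delegates the non-noetherian subtleties to the literature, while your argument makes visible exactly which permanence properties (affine morphisms, quasi-compact open immersions) and which approximation inputs are being used.
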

  \begin{proof}
    See \cite[Lem.~7.1]{Hall_2017} or \cite[Prop.~1.8(v)]{Gross_2017}.
  \end{proof}

  The classifying stack $BG$ has the resolution property for embeddable linearly reductive group schemes $G$, so one finds that affine quotient stacks $[X/G]$ also admit the resolution property (see \cite[Rmk.~2.5]{AlperHallRydhLocal}).
  We prove the following derived generalization of this statement:

  \begin{prop}\label{prop:X/G res prop}
    Let $S$ be an affine scheme, $G$ an embeddable linearly reductive group scheme over $S$, and $X$ a derived affine $S$-scheme with $G$-action.
    Then the derived stack $\sX = [X/G]$ admits the resolution property.
  \end{prop}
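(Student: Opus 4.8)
The plan is to reduce the entire statement to the classical quotient stack $\sX_\cl = [X_\cl/G]$, for which the resolution property is exactly the cited classical fact (as $G$ is embeddable linearly reductive and $X_\cl$ is affine). Writing $i : \sX_\cl \hook \sX$ for the inclusion of the classical truncation (and noting that the formation of quotient stacks commutes with classical truncation, so that $\sX_\cl = [X_\cl/G]$), there are two things to transport across $i$: discrete coherent sheaves, and finite locally free sheaves. First I would observe that any discrete coherent sheaf $\sF$ of finite type lies in the heart $\Qcoh(\sX)^\heart \simeq \Qcoh(\sX_\cl)^\heart$, hence is of the form $\sF = i_*\sF_0$ for a coherent sheaf $\sF_0$ of finite type on $\sX_\cl$. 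Applying the classical resolution property to $\sX_\cl$, I choose a finite locally free sheaf $\sE_0$ on $\sX_\cl$ together with a surjection $\sE_0 \twoheadrightarrow \sF_0$.

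It then remains to lift $\sE_0$ to a vector bundle on $\sX$, which hinges on the claim that $i^*$ induces an equivalence $\mathrm{Vect}(\sX) \xrightarrow{\sim} \mathrm{Vect}(\sX_\cl)$ on the groupoids of finite locally free sheaves. Granting this, pick $\sE$ with $i^*\sE \simeq \sE_0$. By the adjunction $(i^*, i_*)$ for the closed immersion $i$, there is a natural identification $\Maps_\sX(\sE, i_*\sF_0) \simeq \Maps_{\sX_\cl}(\sE_0, \sF_0)$, so the surjection $\sE_0 \twoheadrightarrow \sF_0$ corresponds to a morphism $\sE \to \sF = i_*\sF_0$ in $\Qcoh(\sX)$. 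Since $\sF$ is discrete, this morphism is a surjection of coherent sheaves if and only if it is so after applying $\pi_0$, i.e.\ if and only if $\sE_0 \to \sF_0$ is a surjection on $\sX_\cl$; and this holds by construction. This produces the desired resolution $\sE \twoheadrightarrow \sF$.

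The main obstacle is therefore the claim $\mathrm{Vect}(\sX) \simeq \mathrm{Vect}(\sX_\cl)$, i.e.\ the insensitivity of vector bundles to the derived (and to the non-reduced) structure, done $G$-equivariantly. The cleanest way to handle this is by smooth descent: the projection $q : X \to \sX$ is a $G$-torsor, hence a smooth surjection, and since $G$ is affine over the affine scheme $S$, the $n$th term $X \times_S G^{\times n}$ of its \Cech{} nerve is a derived affine scheme. Vector bundles satisfy flat descent, so $\mathrm{Vect}(\sX) \simeq \lim_{\bDelta} \mathrm{Vect}(X \times_S G^{\times n})$, and likewise $\mathrm{Vect}(\sX_\cl) \simeq \lim_{\bDelta} \mathrm{Vect}\big((X \times_S G^{\times n})_\cl\big)$, the latter nerve being the levelwise classical truncation of the former. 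It then suffices to know the affine, non-equivariant statement: for any connective $\sE_\infty$-ring $R$, base change along the truncation $R \to \pi_0(R)$ induces an equivalence on groupoids of finite projective modules (finite projectives lift uniquely, e.g.\ by lifting idempotents along $R \to \pi_0(R)$; see \cite{SAG-20180204}), so that $\mathrm{Vect}(\Spec R) \simeq \mathrm{Vect}(\Spec \pi_0 R)$. Applying this termwise and passing to the limit over $\bDelta$ yields the claim, and with it the proposition. I expect the only point requiring care to be the verification that the descent and truncation operations are compatible levelwise; everything else is formal once the affine comparison is in place.
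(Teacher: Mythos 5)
Your global strategy --- restrict a discrete coherent sheaf to $\sX_\cl$, resolve it there by the classical result, lift the resolving bundle back to $\sX$, and recover the morphism via the $(i^*,i_*)$ adjunction plus a $\pi_0$-surjectivity check --- is exactly the paper's reduction, where \lemref{lem:Vect nil-invariance} plays the role of your key claim. The gap is in your proof of that key claim. First, your affine input is false as stated: for a connective $\sE_\infty$-ring $R$, restriction along $R \to \pi_0(R)$ does \emph{not} induce an equivalence of $\infty$-groupoids of finite projective modules. It is bijective on isomorphism classes and on automorphism \emph{groups}, but the source is not $1$-truncated: the automorphism space of $R^n$ is $\GL_n(R)$, whose higher homotopy groups $\pi_i \simeq M_n(\pi_i(R))$, $i\ge 1$, are killed by the map to the discrete group $\GL_n(\pi_0(R))$. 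So what holds levelwise is only an equivalence after $\tau_{\le 1}$ (equivalently, of homotopy categories). Second, and fatally, such truncated equivalences do not pass through the totalization $\lim_{\bDelta}$: truncation does not commute with limits, and the higher homotopy you discard levelwise contributes to $\pi_0$ and $\pi_1$ of the limit via the descent spectral sequence. Concretely, promoting descent data on the truncated \Cech{}ech nerve to descent data on the derived one meets obstructions living in precisely those higher homotopy groups, so neither the equivalence nor even the essential surjectivity of $i^*$ on vector bundles (which is all you actually need) follows from the levelwise statement.

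The tell-tale symptom is that your argument never uses linear reductivity of $G$: if the ``pass to the limit over $\bDelta$'' step were formal, the proposition would hold for every affine fppf group scheme, which it does not --- the lifting problems above are obstructed by higher cohomology on $BG$ with coefficients in the endomorphism sheaf of the bundle, and killing these obstructions is exactly what linear reductivity is for. This is where the paper's proof of \lemref{lem:Vect nil-invariance} spends its effort: mapping spectra on $\sX$ are computed by pushing forward along $\sX \to BG \to S$, and each of these functors is t-exact ($\sX \to BG$ is affine, $G$ is linearly reductive, $S$ is affine), which yields the truncation statement for mapping spaces directly on the stack, with no interchange of limit and truncation; essential surjectivity is then obtained by lifting bundles from $BG$ (which has the resolution property) and splitting a homotopy idempotent by a telescope argument. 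Any repair of your descent approach must reintroduce linear reductivity to prove the vanishing of the obstruction groups, and the cleanest way to do that is the paper's t-exactness argument.
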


  The proof of \propref{prop:X/G res prop} will require the following lemma.
  We write $\D_{\mrm{lfr}}(\sY)$ for the full subcategory of $\Qcoh(\sY)$ spanned by the finite locally free sheaves, for any derived algebraic stack $\sY$.

  \begin{lem}\label{lem:Vect nil-invariance}
    Let the notation be as in \propref{prop:X/G res prop}.
    For any integer $n\ge 0$, let $i : \tau_{\le n}(\sX) \to \sX$ be the inclusion of the $n$-truncation.
    Then we have:
    \begin{thmlist}
      \item\label{item:ouqlnou}
      For any finite locally free $\sE \in \D_{\mrm{lfr}}(\sX)$ and any $\sF \in \Qcoh(\sX)$, the canonical map of $n$-truncated spaces
      \begin{equation*}
        \tau_{\le n}\Maps(\sE, \sF)
        \to \Maps(\tau_{\le n}(\sE), \tau_{\le n}(\sF))
      \end{equation*}
      is invertible.

      \item\label{item:iahfnsn}
      The induced functor of $(n+1)$-categories
      \[ i^* : \tau_{\le n+1}\D_{\mrm{lfr}}(\sX) \to \D_{\mrm{lfr}}(\tau_{\le n}(\sX)) \]
      is an equivalence.
    \end{thmlist}
    In particular, the functor of ordinary categories
    \[ \on{h}\D_{\mrm{lfr}}(\sX) \to \D_{\mrm{lfr}}(\sX_\cl) \]
    is an equivalence (where $\on{h}$ denotes the homotopy category).
  \end{lem}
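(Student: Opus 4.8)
The plan is to reduce both statements to two inputs: first, that a finite locally free sheaf $\sE$ is dualizable, so that mapping spaces out of it are computed by global sections of the internal hom $\sE^\vee \otimes (-)$; and second, that global sections on $\sX = [X/G]$ are $t$-exact. The latter holds because $\Gamma(\sX,-)$ factors as the pushforward along $[X/G] \to BG$ (which is affine, since $X$ is affine over $S$, hence $t$-exact), followed by the pushforward along $BG \to S$ (which is $t$-exact exactly by linear reductivity of $G$, see \defref{defn:group schemes}), followed by global sections over the affine scheme $S$. All truncations below are Postnikov truncations in the $t$-structure on $\Qcoh(\sX)$, and one checks that on a flat sheaf such as $\sE'$ the Postnikov truncation $\tau_{\le n}\sE'$ coincides with the base change $i^*\sE'$, so that the two readings of the statement agree and feed correctly into (ii).

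For \eqref{item:ouqlnou}, I would first note that the target is a genuine $n$-truncated space: $\sE$ is connective so $\tau_{\le n}\sE$ is connective, while $\tau_{\le n}\sF$ is $n$-truncated, and the orthogonality between connective and $(-1)$-truncated objects forces $\pi_j \Maps(\tau_{\le n}\sE, \tau_{\le n}\sF) = 0$ for $j>n$. To prove invertibility I would factor the canonical map through $\Maps(\sE, \tau_{\le n}\sF)$. Applying $\Maps(-, \tau_{\le n}\sF)$ to the cofibre sequence $\tau_{\ge n+1}\sE \to \sE \to \tau_{\le n}\sE$ produces a fibre term $\Maps(\tau_{\ge n+1}\sE, \tau_{\le n}\sF)$ which is contractible by the same orthogonality (an $(n+1)$-connective source into an $n$-truncated target), whence $\Maps(\tau_{\le n}\sE, \tau_{\le n}\sF) \simeq \Maps(\sE, \tau_{\le n}\sF)$. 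On the other hand dualizability of $\sE$ gives $\Maps(\sE, \tau_{\le n}\sF) \simeq \Omega^\infty \Gamma(\sX, \sE^\vee \otimes \tau_{\le n}\sF)$, and since $\sE^\vee$ is finite locally free the functor $\sE^\vee \otimes (-)$ is $t$-exact and commutes with $\tau_{\le n}$, so this equals $\Omega^\infty \Gamma(\sX, \tau_{\le n}(\sE^\vee \otimes \sF))$; pulling $\tau_{\le n}$ outside $\Gamma$ by $t$-exactness and using $\Omega^\infty \tau_{\le n} \simeq \tau_{\le n} \Omega^\infty$ for $n \ge 0$ identifies it with $\tau_{\le n}\Maps(\sE, \sF)$.

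For \eqref{item:iahfnsn}, full faithfulness is immediate from \eqref{item:ouqlnou}: the mapping spaces in $\tau_{\le n+1}\D_{\mrm{lfr}}(\sX)$ are the truncations $\tau_{\le n}\Maps_{\Qcoh(\sX)}(\sE, \sE')$, and for finite locally free $\sE'$ the identification $\tau_{\le n}\sE' \simeq i^*\sE'$ together with full faithfulness of $i_*$ matches the right-hand side of \eqref{item:ouqlnou} with $\Maps_{\Qcoh(\tau_{\le n}\sX)}(i^*\sE, i^*\sE')$. For essential surjectivity I would show that $i^*$ is an equivalence on cores, i.e. that the groupoid of finite locally free sheaves is insensitive to Postnikov truncation. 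Writing $\sX \simeq \lim_m \tau_{\le m}\sX$ and $\Qcoh(\sX) \simeq \lim_m \Qcoh(\tau_{\le m}\sX)$, it suffices that each transition $\tau_{\le m+1}\sX \to \tau_{\le m}\sX$ induces an equivalence on groupoids of vector bundles, which is the statement that vector bundles lift uniquely along square-zero extensions; for the affine model $X = \Spec(A)$ this is the fact that finite projective modules depend only on $\pi_0$, with the equivariance and the descent to the quotient stack controlled once more by $t$-exactness of $\Gamma$. Finally the ``in particular'' clause is the case $n=0$: then $\tau_{\le 1}\D_{\mrm{lfr}}(\sX)$ is the homotopy category $\on{h}\D_{\mrm{lfr}}(\sX)$ and $\tau_{\le 0}\sX = \sX_\cl$, so $i^*$ restricts to the asserted equivalence of ordinary categories.

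The main obstacle is the essential surjectivity in \eqref{item:iahfnsn}, namely the deformation-theoretic input that $G$-equivariant finite locally free sheaves lift uniquely along the truncation maps; everything else is formal manipulation with the $t$-structure once linear reductivity makes $\Gamma$ $t$-exact. The one point requiring care is the bookkeeping of truncation conventions: verifying that the target of the map in \eqref{item:ouqlnou} is genuinely $n$-truncated, and that Postnikov truncation agrees with $i^*$ on flat sheaves so that \eqref{item:ouqlnou} yields full faithfulness at the correct categorical level in \eqref{item:iahfnsn}.
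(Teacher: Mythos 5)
Your part (i) is correct, and it reaches the paper's conclusion by a slightly more self-contained route: both arguments turn on $t$-exactness of $\Gamma(\sX,-)$ via the factorization $[X/G]\to BG\to S$ (affine morphism, linear reductivity, affine base), but where the paper reduces an internal-Hom statement by fpqc descent to the well-known affine case, you argue directly with dualizability of $\sE$ and $t$-structure orthogonality. One correction in part (ii): your full-faithfulness step invokes ``full faithfulness of $i_*$'', which is false for a derived thickening (the counit $i^*i_*\to\id$ has nontrivial Tor corrections). What you actually need is only the adjunction $\Maps_{\Qcoh(\tau_{\le n}\sX)}(i^*\sE,i^*\sE')\simeq\Maps_{\Qcoh(\sX)}(\sE,i_*i^*\sE')$ together with the flatness identification $i_*i^*\sE'\simeq\sE'\otimes_{\sO_\sX}\tau_{\le n}\sO_\sX\simeq\tau_{\le n}\sE'$; your own chain of equivalences already supplies this, so the slip is repairable.

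The genuine gap is essential surjectivity in (ii), which is the real content of the lemma (it is what propagates the resolution property from $\sX_\cl$ to $\sX$ in \propref{prop:X/G res prop}), and which you explicitly leave as ``the main obstacle''. Your sketch does not close it, for two reasons. First, the asserted equivalence $\Qcoh(\sX)\simeq\lim_m\Qcoh(\tau_{\le m}\sX)$ is not true in general: Postnikov descent can fail for unbounded-below modules already over an affine base, and $X$ is not assumed bounded here; you would need to formulate and prove the statement for the groupoids of finite locally free sheaves (or connective almost perfect objects), which itself requires an argument. Second, and more seriously, the assertion that $G$-equivariant vector bundles lift uniquely along the square-zero extensions $\tau_{\le m}\sX\hookrightarrow\tau_{\le m+1}\sX$ \emph{is} the theorem, and ``controlled once more by $t$-exactness of $\Gamma$'' is not an argument for it: the affine fact that finite projective modules depend only on $\pi_0$ does not formally imply its equivariant counterpart. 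To complete your route one must run obstruction theory for modules along these extensions (whose kernel is $\pi_{m+1}(\sO_\sX)[m+1]$), identify the obstruction, deformation and automorphism groups as $\pi_{-k}$, $k\ge0$, of $\Gamma\bigl(\tau_{\le m}\sX,\uHom(\sE_m,\sE_m)\otimes\pi_{m+1}(\sO_\sX)\bigr)[m+1]$, and observe that these vanish because that spectrum is concentrated in degree $m+1$ --- which uses $t$-exactness of $\Gamma$ in an essential, not cosmetic, way; note also that each individual stage is then only highly connected (by part (i), the map on automorphism spaces is an $m$-truncation), not an equivalence of groupoids, so the passage to the limit still has to be made.

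For comparison, the paper avoids towers and deformation theory entirely: it reduces to $n=0$, uses the resolution property of $BG$ to present any vector bundle on $\sX_\cl$ as a quotient of $g^*(\sF)$ with $\sF$ a vector bundle on $BG$; linear reductivity makes the bundle projective (\cite[Lem.~2.17]{Hoy17}), so the surjection splits, yielding an idempotent $e_0$; part (i) lifts $e_0$ to an endomorphism $e$ of $f^*(\sF)$ over $\sX$; and $e$ is split by the telescopes $\colim(f^*(\sF)\xrightarrow{e}f^*(\sF)\to\cdots)$, local freeness being checked by fpqc descent to the affine case. To have a complete proof, either carry out the obstruction-theoretic computation above or adopt this idempotent-lifting argument.
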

  \begin{proof}
    Note that the map in the first claim is induced by the morphism in $\Qcoh(\sX)$
    \begin{equation}\label{eq:oaisflas}
      \tau_{\le n}\uHom_{\sO_\sX}(\sE, \sF)
      \to i_*\uHom_{\sO_{\tau_{\le n}(\sX)}}(\tau_{\le n}(\sE), \tau_{\le n}(\sF)),
    \end{equation}
    where $\uHom$ denotes the internal Hom, by applying in succession the functors of direct image along $f : \sX \to BG$ (which is t-exact since $f$ is affine), direct image along $BG \to S$ (which is t-exact since $G$ is linearly reductive), and (derived) global sections (which is t-exact since $S$ is affine).
    Therefore it will suffice to show that \eqref{eq:oaisflas} is invertible.
    By fpqc descent, this can be checked after inverse image along the smooth surjection $X \to \sX$.
    Since $i$ is representable, $i_*$ satisfies base change and we are thus reduced to the affine case, which is well-known (see e.g. \cite[Claim~4.3]{kdescent}).

    Consider now claim~\ref{item:iahfnsn}.
    By \ref{item:ouqlnou} the functor in question is fully faithful (on finite locally frees).
    For essentially surjectivity, we may assume $n=0$ (so that $\tau_{\le n}(\sX) = \sX_\cl$).
    Since $BG$ has the resolution property \cite[Rmk.~2.5]{AlperHallRydhLocal}, \lemref{lem:pisnfipj} implies that for every finite locally free sheaf $\sE \in \Qcoh(\sX_\cl)$, there exists a finite locally free sheaf $\sF \in \Qcoh(BG)$ and a surjection $g^*(\sF) \twoheadrightarrow \sE$ where $g : \sX_\cl \to BG$.
    Certainly $g^* (\sF) \in \Qcoh(\sX_\cl)$ lifts to $f^*(\sF) \in \Qcoh(\sX)$, so we are reduced to show that if $\sE \twoheadrightarrow \sF$ is surjection of locally free sheaves on $\sX_{\cl}$ and $\sE = i^*\widetilde {\sE}$ for some locally free $\widetilde{\sE} \in \Qcoh(\sX)$, then $\sF$ also extends to a locally free sheaf $\widetilde{\sF}$ on $\sX$.
    Since $G$ is linearly reductive, $\sF$ is projective by \cite[Lem.~2.17]{Hoy17} and the surjection $\sE \twoheadrightarrow \sF$ splits.
    The resulting map $e_0 : \sE \twoheadrightarrow \sF \to \sE$ is an idempotent with image $\sF$.
    By claim~\ref{item:ouqlnou}, we can extend $e_0$ to an idempotent endomorphism $e$ of $\widetilde{\sE}$.
    If we set
    \begin{align*}
      \widetilde{\sF} := \colim (\widetilde{\sE} \xrightarrow{e} \widetilde{\sE} \xrightarrow{e} \cdots),\\
      \widetilde{\sF}_1 := \colim(\widetilde{\sE} \xrightarrow{\id-e} \widetilde{\sE} \xrightarrow{\id-e} \cdots),
    \end{align*}
    then the induced morphism $\phi: \widetilde{\sE} \to \widetilde{\sF} \oplus \widetilde{\sF}_1$ is invertible.
    Indeed, by fpqc descent we may pull back to $X$ and thereby assume $\sX$ is affine, in which case it is clear that $\phi$ is an isomorphism on homotopy groups.
    In particular, it follows that $\widetilde{\sF}$ is locally free, and clearly $i^* (\widetilde{\sF}) \simeq \sF$ by construction.
  \end{proof}

  \begin{proof}[Proof of \propref{prop:X/G res prop}]
    By \cite[Rmk.~2.5]{AlperHallRydhLocal}, the classical truncation $[X_\cl/G]$ has the resolution property.
    Hence the claim follows from \lemref{lem:Vect nil-invariance}.
  \end{proof}

\ssec{Compact generation}
\label{ssec:compgen}

  \begin{defn}\label{defn:perfect stack}
    A derived algebraic stack $\sX$ is \emph{perfect} if the stable \inftyCat $\Qcoh(\sX)$ is compactly generated by its full subcategory $\Perf(\sX)$ of perfect complexes.
  \end{defn}

  In this subsection we prove the following result, which is a derived generalization of \cite[Prop.~14.1]{AlperHallRydhLocal}.

  \begin{thm}\label{thm:ANS perfect}
    Let $\sX$ be an ANS derived stack.
    Then $\sX$ is perfect.
  \end{thm}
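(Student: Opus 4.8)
The plan is to reduce to the affine quotient model $[\Spec(A)/G]$ by means of the local structure theorem (\thmref{thm:ANS local structure}) and then to propagate perfectness up the resulting finite filtration by Nisnevich gluing. First I would record that the local models are perfect. Let $G$ be an embeddable nice (hence linearly reductive) group scheme over an affine scheme $S$ and let $A$ be a derived commutative ring with $G$-action. By \examref{exam:rfmalrcdsc} (which rests on \propref{prop:X/G perf}), the connective part $\Qcoh([\Spec(A)/G])_{\ge0}$ is projectively generated by the objects $p^*(\sE)$, where $p : [\Spec(A)/G] \to BG$ is the projection and $\sE$ ranges over the finite projective $G$-representations. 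These generators lie in $\Perf([\Spec(A)/G])$, and their desuspensions $\{\Sigma^{-n}p^*(\sE)\}_{n\ge0}$ generate $\Qcoh([\Spec(A)/G])$ under colimits; hence $[\Spec(A)/G]$ is perfect in the sense of \defref{defn:perfect stack}. Taking $A = \sO_S$ with trivial action shows in particular that $BG$ is perfect.

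Next I would deduce that the building blocks of \thmref{thm:ANS local structure} are perfect. That theorem provides a filtration by quasi-compact open substacks $\initial = \sU_0 \hook \cdots \hook \sU_n = \sX$ together with Nisnevich squares whose upper-right corner $\sV_i$ is quasi-affine over $BG$. For the quasi-affine morphism $f : \sV_i \to BG$, the functor $f^*$ is compact and generates $\Qcoh(\sV_i)$ under colimits by \remref{rem:f^* compact}; since $f^*$ also carries perfect complexes to perfect complexes and $BG$ is perfect, the objects $f^*(P)$ with $P \in \Perf(BG)$ are compact, perfect, and generate. Thus every $\sV_i$ is perfect.

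It remains to globalize, assuming inductively that $\sU_{i-1}$ is perfect. By \corref{cor:Qcoh Nis} the Nisnevich square induces a cartesian square in $\Presc$,
\[
\Qcoh(\sU_i) \simeq \Qcoh(\sU_{i-1}) \fibprod_{\Qcoh(\sW_i)} \Qcoh(\sV_i),
\]
and, writing $\sZ_i$ for the closed complement of $\sU_{i-1}$ in $\sU_i$, the open--closed recollement yields a Verdier localization $\Qcoh_{\sZ_i}(\sU_i) \to \Qcoh(\sU_i) \xrightarrow{j^*} \Qcoh(\sU_{i-1})$ whose kernel is identified by Nisnevich excision with $\Qcoh_{f^{-1}(\sZ_i)}(\sV_i)$. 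I would then invoke the Thomason--Neeman localization machinery: using the resolution property (\propref{prop:X/G res prop}) to extend perfect complexes from the open $\sU_{i-1}$ and from the perfect stack $\sV_i$ across their respective closed boundaries, one produces compact perfect generators of both the kernel and the quotient that lift to compact perfect objects of $\Qcoh(\sU_i)$, and concludes that $\Qcoh(\sU_i)$ is compactly generated by $\Perf(\sU_i)$. Induction then gives perfectness of $\sX = \sU_n$.

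The main obstacle is precisely this last step: compact generation does not follow formally from being a pullback of compactly generated categories. The delicate inputs are the Thomason--Trobaugh-style extension of perfect complexes across the closed boundary --- so that generators of the open piece lift to \emph{compact} objects of the total category --- and the compact generation of the supported subcategory $\Qcoh_{f^{-1}(\sZ_i)}(\sV_i)$ by perfect complexes; both rely on the local structure and the resolution property rather than on formal nonsense.
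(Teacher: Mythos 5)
Your treatment of the local models and the reduction via \thmref{thm:ANS local structure} is sound and agrees with the paper's: \propref{prop:X/G perf} gives perfectness (indeed more) of $[\Spec(A)/G]$, and \remref{rem:f^* compact} together with \lemref{lem:BG perf} handles the charts $\sV_i$ quasi-affine over $BG$. The problem is the globalization step, and there your argument has a genuine gap rather than just a compressed proof. Your induction carries only \emph{perfectness} of $\sU_{i-1}$ as its hypothesis, and the tool you invoke to lift generators across the closed boundary --- ``the resolution property (\propref{prop:X/G res prop})'' --- is not available where you need it: $\sU_{i-1}$ and $\sU_i$ are arbitrary quasi-compact open substacks of an ANS stack, and nothing gives them the resolution property; that proposition applies only to the affine quotient charts. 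Nor can you appeal to Neeman's localization theorem on $\sU_i$ to lift compact objects of the quotient $\Qcoh(\sU_{i-1})$, since that theorem presupposes compact generation of $\Qcoh(\sU_i)$ --- exactly what you are trying to prove --- so the argument as sketched is circular. Finally, your step (2) only establishes plain compact generation of $\Qcoh(\sV_i)$ by pulled-back generators; it does not establish the ``delicate input'' you later need, namely compact generation of the supported category $\Qcoh_{f^{-1}(\sZ_i)}(\sV_i)$ by perfect complexes with support.

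The workable version of your induction routes everything through the chart, and this requires strengthening what you prove about $\sV_i$ from perfectness to \emph{crispness} in the sense of \cite{Hall_2017} (compact generation with supports in every closed substack with quasi-compact complement); this is what \propref{prop:X/G perf} actually records, and it is what the resolution property of $\sV_i$ buys via \cite[Prop.~8.4]{Hall_2017}. Granting that: (a) the kernel $\Qcoh_{\sZ_i}(\sU_i) \simeq \Qcoh_{f^{-1}(\sZ_i)}(\sV_i)$ is compactly generated by perfect complexes supported on $\sZ_i$, perfectness of the transferred objects being checked on the étale cover by $\sU_{i-1}$ and $\sV_i$; and (b) given a perfect generator $P$ of $\Qcoh(\sU_{i-1})$, one extends $P|_{\sW_i} \oplus \Sigma P|_{\sW_i}$ (whose $\K_0$-class vanishes) across $\sV_i \setminus \sW_i$ by Neeman's theorem applied on the \emph{crisp} stack $\sV_i$, and then glues over the Nisnevich square (\corref{cor:Qcoh Nis}) to obtain a perfect complex on $\sU_i$ restricting to $P \oplus \Sigma P$. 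These are precisely the two points you flag as ``the main obstacle'' and leave unproved, and the mechanism you name for them would fail. For comparison, the paper avoids this by-hand dévissage altogether: it observes that the local models are crisp, that \thmref{thm:ANS local structure} supplies an affine étale surjection onto $\sX$ from a finite coproduct of them, and then invokes the derived generalization of \cite[Thm.~C]{Hall_2017}, which says that crispness descends along such surjections --- i.e., the induction you are attempting is carried out there, with supports built into the inductive statement from the start.
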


  \begin{lem}\label{lem:BG perf}
    Let $G$ be an embeddable linearly reductive group scheme over an affine scheme $S$.
    Then the classifying stack $BG$ is perfect.
    Moreover, $\Qcoh(BG)$ is compactly generated by finite representations of $G$ (i.e., finite projectives).
  \end{lem}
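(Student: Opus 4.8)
The plan is to exhibit an explicit set of compact generators, namely the finite projective $G$-representations, and then to identify the compact objects with the perfect complexes. Write $p : BG \to S = \Spec(R)$ for the structure morphism, so that $p_* : \Qcoh(BG) \to \Mod_R$ is the functor of $G$-invariants. By linear reductivity this functor is t-exact, and since invariants on the heart are computed by a finite limit they commute with filtered colimits; together with t-exactness this shows that $p_*$ preserves all filtered colimits. For any finite projective representation $V$ (a finite locally free $\sO_S$-module with $G$-action, in particular a dualizable object of $\Qcoh(BG)$) the adjunction $p^* \dashv p_*$ together with dualizability gives natural equivalences
\[
  \Maps_{\Qcoh(BG)}(V, \sG)
  \simeq \Maps_{\Mod_R}\big(R,\, p_*(V^\vee \otimes \sG)\big),
\]
and the right-hand side preserves filtered colimits in $\sG$ because $V^\vee \otimes -$, $p_*$, and global sections over the affine $S$ all do. Hence every finite projective representation is compact.

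Next I would prove that these objects generate. Since $BG = [S/G]$, the resolution property of $BG$ is available from \cite[Rmk.~2.5]{AlperHallRydhLocal} (or \propref{prop:X/G res prop} with $X = S$): every finite-type discrete sheaf on $BG$ is a quotient of a finite locally free sheaf. Let $\sL \subseteq \Qcoh(BG)$ be the localizing subcategory generated by the finite projective representations, i.e. the smallest cocomplete stable subcategory closed under retracts containing them. Every object of the heart $\Qcoh(BG)^\heart$ is a filtered colimit of its finite-type subrepresentations, so it suffices to place each finite-type representation $M$ in $\sL$. Using the resolution property, $M$ receives a surjection from a finite projective; iterating produces a resolution $P_\bullet \to M$ by finite projectives, and realizing $M$ as the geometric realization of $P_\bullet$—while noting that the kernel of a surjection of finite projectives is the fibre in the stable category and hence again lies in $\sL$—shows $M \in \sL$. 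Finally, the t-structure on $\Qcoh(BG)$ is right-complete and compatible with filtered colimits, so every object is built by colimits from shifts of objects of its heart; therefore $\sL = \Qcoh(BG)$.

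These two steps show that $\Qcoh(BG)$ is compactly generated by the finite projective representations, which is exactly the \emph{moreover} clause. Perfectness then follows formally: the subcategory $\Qcoh(BG)^\omega$ of compact objects is the idempotent-complete thick subcategory generated by the finite projectives. These are perfect complexes, and conversely, using the resolution property, every perfect complex is a retract of a finite complex of finite projectives and so lies in this thick subcategory. Thus $\Qcoh(BG)^\omega = \Perf(BG)$, and $BG$ is perfect.

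I expect the generation step to be the main obstacle, in two related respects. First, over a possibly non-noetherian base $S$ the kernel of a surjection from a finite projective onto a finite-type representation need not itself be finite-type, so one cannot naively iterate the resolution property; here I would invoke embeddability $G \hookrightarrow \mrm{GL}_S(\sE)$ to reduce to the general linear group, whose standard representation is a tensor generator, and build the needed resolutions from tensor powers of $\sE$ and $\sE^\vee$. Second, one must control the convergence of the resulting (a priori unbounded) resolutions so that $M$ is genuinely recovered as a colimit inside $\sL$; this is where the compatibility of the $t$-structure with filtered colimits, and the compactness established in the first paragraph, are used decisively.
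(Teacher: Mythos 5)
The paper's own proof is a two-line citation: since $G$ is affine, $BG$ has affine diagonal; by \propref{prop:X/G res prop} it has the resolution property; and perfectness together with the compact generation statement then follows from \cite[Prop.~8.4]{Hall_2017}. You instead attempt a direct reproof of this special case of Hall--Rydh. Your compactness paragraph is correct, and is essentially the computation the paper itself uses in \propref{prop:X/G perf} to prove the stronger statement that the finite projective representations are compact \emph{projective} generators: $\Maps(p^*(\sE),-) \simeq \Gamma\bigl(S,(p_*\uHom(\sE,-))^G\bigr)$ is a composite of t-exact, filtered-colimit-preserving functors.

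The genuine gap is the final step of your generation argument: ``the t-structure on $\Qcoh(BG)$ is right-complete and compatible with filtered colimits, so every object is built by colimits from shifts of objects of its heart.'' That implication is false as a formal principle. The standard t-structure on the \inftyCat $\Spt$ of spectra is right-complete, left-complete, and compatible with filtered colimits, yet its heart does not generate: the $K(1)$-local sphere is nonzero but receives no nonzero maps from any shift of an Eilenberg--MacLane spectrum, so it lies in the right orthogonal of the heart. The problem is that right-completeness only reduces you to bounded-below objects ($\sG \simeq \colim_n \tau_{\ge -n}\sG$), and a bounded-below object with infinitely many nonzero homotopy sheaves is the \emph{limit} of its Postnikov tower, not a colimit of bounded pieces; localizing subcategories are not closed under limits. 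What rescues the argument for $BG$, and has no analogue in $\Spt$, is precisely that your generators are projective objects of $\Qcoh(BG)_{\ge 0}$: projectivity is what lets you lift maps along Postnikov truncations, resolve an arbitrary connective object (not just a finite-type heart object) by direct sums of finite projective representations, and then argue that the realization of the resolution converges. Even that convergence is not formal: it requires the t-structure to be left-separated (an object with vanishing homotopy sheaves is zero), a property that can fail for $\Qcoh$ of a general stack and which here follows, e.g., from conservativity and t-exactness of restriction along the atlas $S \to BG$. None of this appears in your write-up; it is exactly the content hidden in the paper's citation and carried out in \propref{prop:X/G perf}.

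Two smaller gaps in the same step: the assertion that every discrete representation is the filtered union of its finite-type subrepresentations is a genuine theorem over a non-noetherian base (it uses flatness of $G$ and deserves an argument or a citation), and the repair you propose via embeddability and tensor generators for iterating the resolution property is announced but never carried out. Note that both of these, as well as the convergence issue above, can be bypassed by arguing with the right orthogonal instead of building objects: using t-exactness of $p_*$, an object orthogonal to all finite projectives has all homotopy sheaves orthogonal to them in the heart, and then resolution property plus left-separatedness finish the job --- but this route, too, needs the separatedness input you omit.
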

  \begin{proof}
    Since $G$ is affine, $BG$ has affine diagonal.
    By \propref{prop:X/G res prop}, $BG$ has the resolution property.
    Hence the claim follows from \cite[Prop.~8.4]{Hall_2017}.
  \end{proof}

  The following shows that, for nice enough quotients of affine derived schemes, the derived \inftyCat is not only compactly generated, but projectively generated in the sense of \defref{defn:weighted pstab}.

  \begin{prop}\label{prop:X/G perf}
    Let $G$ be an embeddable linearly reductive group scheme over an affine scheme $S$.
    Then for any affine derived scheme $X$ with $G$-action, the quotient stack $[X/G]$ is perfect and even \emph{crisp} in the sense of \cite{Hall_2017}.
    Moreover, if $p : [X/G] \to BG$ is the projection, then the collection of objects $\{p^*(\sE)\}$ forms a small set of compact projective generators for $\Qcoh([X/G])_{\ge 0}$, as $\sE \in \Qcoh(BG)$ varies over finite locally free $G$-modules on $S$.
  \end{prop}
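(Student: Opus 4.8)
The key observation is that the projection $p : [X/G] \to BG$ is affine: pulling back along the universal $G$-torsor $S \to BG$ recovers the affine morphism $X \to S$, and affineness is fppf-local on the target. I would therefore identify $\Qcoh([X/G])$ with the \inftyCat $\Mod_A(\Qcoh(BG))$ of modules over $A := p_* \sO_{[X/G]}$, a connective commutative algebra object of $\Qcoh(BG)$, under which $p^*$ becomes the free-module functor $\sM \mapsto A \otimes \sM$ and $p_*$ the forgetful functor. The whole strategy is to transport the projective generation from $BG$, which is already understood, to $[X/G]$ along this identification.

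By \lemref{lem:BG perf}, $\Qcoh(BG)$ is compactly generated by the finite locally free $G$-modules $\sE$, and each such $\sE$ is compact \emph{projective} in $\Qcoh(BG)_{\ge 0}$. Indeed $\Maps_{BG}(\sE, -) \simeq \Gamma(BG, \sE^\vee \otimes -)$, and since $G$ is linearly reductive (so that $BG \to S$ has $t$-exact direct image) and $S$ is affine, this functor is $t$-exact and preserves filtered colimits, hence preserves all sifted colimits on the connective part. The forgetful functor $p_* : \Mod_A(\Qcoh(BG)) \to \Qcoh(BG)$ preserves sifted colimits and is conservative, so from the adjunction $\Maps_{[X/G]}(p^*\sE, \sF) \simeq \Maps_{BG}(\sE, p_*\sF)$ I conclude that each $p^*(\sE) = A \otimes \sE$ is compact projective in $\Qcoh([X/G])_{\ge 0}$. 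A standard bar-resolution argument — using conservativity of $p_*$ together with the fact that the $\sE$ generate $\Qcoh(BG)_{\ge 0}$ — then shows that the free modules $\{A \otimes \sE\}$ generate $\Qcoh([X/G])_{\ge 0}$ under colimits, giving the asserted set of compact projective generators.

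Perfectness follows at once: each $p^*(\sE)$ is the pullback of a perfect complex along the representable morphism $p$, hence perfect, and is compact in the stable \inftyCat $\Qcoh([X/G])$ since $p_*$ preserves filtered colimits. As these objects and their shifts generate $\Qcoh([X/G])$, it is compactly generated by perfect complexes, i.e. $[X/G]$ is perfect in the sense of \defref{defn:perfect stack}. For crispness in the sense of \cite{Hall_2017}, I would observe that $[X/G]$ has affine diagonal (both $G$ and $X \to S$ are affine) and is cohomologically affine over the affine $S$: the composite $[X/G] \xrightarrow{p} BG \to S$ has $t$-exact direct image, being the composite of the affine $p$ and the linearly reductive $BG \to S$. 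Together with the compact perfect generators just produced, this verifies Hall's criterion; alternatively, crispness is inherited from $BG$ along the affine morphism $p$.

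The main obstacle is the derived compact-projectivity step: one must check that the finite locally free $\sE$ remain \emph{projective} generators of the entire connective subcategory $\Qcoh([X/G])_{\ge 0}$, and not merely compact generators of the stable category. This is exactly where linear reductivity of $G$ ($t$-exactness of invariants) is indispensable, and where the affine identification $\Qcoh([X/G]) \simeq \Mod_A(\Qcoh(BG))$ does the essential work of reducing the derived statement over $[X/G]$ to the already-resolved case of $BG$.
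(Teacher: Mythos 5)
Your construction of the compact projective generators, and the deduction of perfectness, are correct and in substance coincide with the paper's own argument: the paper proves projectivity of $p^*(\sE)$ by checking that $\sMaps(p^*(\sE),-)$ is t-exact via the factorization $\sMaps(p^*(\sE),-) \simeq \Gamma\big(S,(p_*\uHom(\sE,-))^G\big)$, each factor being t-exact because $p$ is affine, $G$ is linearly reductive, and $S$ is affine; this is exactly your adjunction $\Maps_{[X/G]}(p^*\sE,\sF)\simeq\Maps_{BG}(\sE,p_*\sF)$ with the $BG$-side unwound, and the same reference (\cite[Lem.~7.2.2.6]{HA-20170918}) carries the conclusion. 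Compact generation of $\Qcoh([X/G])$ likewise comes from affineness of $p$ in both treatments, so the monadic packaging $\Qcoh([X/G])\simeq \Mod_A(\Qcoh(BG))$ is a matter of presentation rather than a different proof.

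The genuine gap is crispness. Crispness in the sense of \cite{Hall_2017} is not ``compactly generated, with affine diagonal and cohomologically affine structure morphism''; it is compact generation \emph{with supports}: for every closed subset of $|[X/G]|$ with quasi-compact open complement, the subcategory of complexes supported on it must be generated by compact objects of $\Qcoh([X/G])$ having that support. Your generators $p^*(\sE)$ carry no support information, so they do not verify this condition, and your fallback claim that crispness is inherited from $BG$ along the affine morphism $p$ fails for the same reason: since $|BG|\cong|S|$, the closed subsets of $|[X/G]|$ pulled back from $|BG|$ are only preimages of closed subsets of $S$, whereas crispness requires compact complexes supported on an arbitrary $G$-invariant closed subset of $X$; crispness of $BG$ gives no access to these, and no such ascent result along affine morphisms is available. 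What actually manufactures compact objects with prescribed support is the resolution property (Koszul complexes on sections of vector bundles), and that is how the paper argues: it first proves that $[X/G]$ has the resolution property (\propref{prop:X/G res prop}, a genuinely derived statement resting on \lemref{lem:Vect nil-invariance}), and then invokes \cite[Prop.~8.4]{Hall_2017}, which says that affine diagonal together with the resolution property implies crispness, noting that the relevant part of that proof extends to derived stacks. Nothing in your monadic setup replaces this step, so your proof establishes the proposition except for the word ``crisp''.
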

  \begin{proof}
    Since $p$ is affine, the functor $p^* : \Qcoh(BG) \to \Qcoh([X/G])$ is compact and generates its codomain under colimits (\ref{rem:f^* compact}).
    This already implies that $\Qcoh([X/G])$ is compactly generated by the objects of the stated form.

    We now demonstrate the stronger property of crispness.
    By \propref{prop:X/G res prop}, $[X/G]$ admits the resolution property.
    Since $[X/G]$ has affine diagonal, \cite[Prop.~8.4]{Hall_2017} then implies that $[X/G]$ is crisp.
    Note that \emph{loc. cit.} only discusses classical stacks, but we only need the argument from the last paragraph of the proof, which immediately generalizes to the derived setting.

    Finally let us show that the object $p^*(\sE) \in \Qcoh([X/G])_{\ge 0}$ is projective for every finite representation $\sE \in \Qcoh(BG)$.
    By \cite[Lem.~7.2.2.6]{HA-20170918} it will suffice to show that the functor $\sMaps(p^*(\sE), -) : \Qcoh(BG) \to \Spt$ is t-exact, where $\sMaps(-,-)$ denotes the mapping spectrum functor in the stable \inftyCat $\Qcoh(BG)$.
    We have a canonical isomorphism
    \[ \sMaps(p^*(\sE), -) \simeq \Gamma\left(S, (p_*\uHom(\sE, -))^G\right). \]
    Note that $\uHom(\sE, -)$ is t-exact because $\sE$ is projective in $\Qcoh(S)$, $p_*$ is t-exact since $p$ is affine, the $G$-invariants functor $(-)^G$ is identified with the direct image functor $\Qcoh(BG) \to \Qcoh(S)$ and hence is t-exact because $G$ is linearly reductive, and the (derived) global sections functor $\Gamma(S, -)$ is t-exact since $S$ is affine.
  \end{proof}

  \begin{proof}[Proof of \thmref{thm:ANS perfect}]
    By \thmref{thm:ANS local structure}, we can in particular find an affine étale surjection onto $\sX$ from a finite coproduct of quotient stacks of the form $[X/G]$, where $G$ is a nice group scheme over an affine scheme $S$ and $X$ is an affine derived $S$-scheme with $G$-action.
    It will now suffice to show that the property of crispness can be detected by affine étale surjections.
    Indeed, we note that the proof given in \cite[Thm.~C]{Hall_2017} for the classical case generalizes to our setting, following Example~9.4 of \emph{loc. cit}.
  \end{proof}

\ssec{Dimension of algebraic stacks}
\label{ssec:dim}

Recall several useful notions of dimension from \cite{hoyoiskrishna}.

\begin{defn}\label{defn:dim}
  Let $\sX$ be a noetherian stack.
  \begin{defnlist}
    \item
    The \emph{Krull dimension} $\dim(\sX)$ of $\sX$ is the dimension of the underlying topological space $\abs{\sX}$.

    \item
    The \emph{blow-up dimension} $\bldim(\sX)$ is the supremum over integers $n\ge 0$ for which there exists a sequence of maps
    $$\sX_n \to \cdots \to  \sX_0 = \sX$$
    such that $\sX_i$ is a nonempty nowhere dense closed substack in an iterated blow-up of $\sX_{i-1}$ for all $i>0$.
    If $\sX$ is empty, then $\bldim(\sX) = -1$ by convention.

    \item
    The \emph{fppf-covering dimension} $\covdimfppf(\sX)$ is the minimal integer $-1 \le n \le \infty$ such that there exists an fppf morphism $X \to \sX$ where $X$ is a noetherian scheme of Krull dimension $n$.

    \item
    The \emph{covering dimension} $\covdimsm(\sX)$ (more precisely, \emph{smooth-covering dimension}) is the minimal integer $-1 \le n \le \infty$ such that there exists a smooth surjection $X \to \sX$ where $X$ is a noetherian scheme of Krull dimension $n$.
  \end{defnlist}
\end{defn}

\begin{rem}
  In general, one has the inequalities $\dim(\sX) \le \bldim(\sX) \le \covdimfppf (\sX) \le \covdimsm(\sX)$.
  For quasi-Deligne--Mumford stacks these are all equal to the usual dimension as defined in \cite[Tag~0AFL]{StacksProject}.
  See \cite[Lemma~7.8]{hoyoiskrishna}.
\end{rem}

\begin{exam}
  Let $G$ be an fppf group scheme over an algebraic space $S$.
  If $G$ acts on an algebraic space $X$ over $S$, then the quotient stack $\sX = [X/G]$ is of fppf-covering dimension $\le \dim(X)$.
\end{exam}

In this subsection we will show that the Nisnevich cohomological dimension of an algebraic stack $\sX$, which we denote by $\on{cd}_{\Nis}(\sX)$,
is bounded by its fppf-covering dimension:

\begin{prop}\label{coh_dimension}
  Let $\sX$ be a noetherian stack and let $\sF$ be a sheaf of abelian groups on the Nisnevich site of $\sX$. Then
  $$
  \on{H}^i_{\Nis}(\sX, \sF) = 0
  $$
  for all $i > \covdimfppf (\sX)$.
  In other words, $\on{cd}_{\Nis} (\sX) \le \covdimfppf (\sX)$.
\end{prop}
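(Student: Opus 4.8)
The plan is to compute $\on{H}^*_{\Nis}(\sX,\sF)$ through the topology of the underlying space, using the fppf bound only to control the dimension of that space. Set $n=\covdimfppf(\sX)$ and recall that by the inequalities following \defref{defn:dim} (which encode exactly the fppf cover $X\to\sX$ by a scheme of Krull dimension $n$) one has $\dim\abs{\sX}=\dim(\sX)\le n$. Let $\epsilon\colon \sX_{\Nis}\to\sX_{\mrm{Zar}}$ denote the comparison morphism from the small Nisnevich topos of $\sX$ to the topos of sheaves on the underlying sober space $\abs{\sX}$ (restriction along the inclusion of the small Zariski site). I would deduce the vanishing from the Leray spectral sequence
\[
  E_2^{p,q}=\on{H}^p_{\mrm{Zar}}\bigl(\abs{\sX},R^q\epsilon_*\sF\bigr)\;\Longrightarrow\;\on{H}^{p+q}_{\Nis}(\sX,\sF)
\]
together with two vanishing statements: (a) $E_2^{p,q}=0$ for $p>n$, and (b) $R^q\epsilon_*\sF=0$ for $q>0$.

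For (a), since $\abs{\sX}$ is a noetherian spectral space of Krull dimension $\le n$, Grothendieck's vanishing theorem for sheaves on noetherian spaces gives $\on{cd}_{\mrm{Zar}}(\abs{\sX})\le n$, so $\on{H}^p_{\mrm{Zar}}(\abs{\sX},-)$ vanishes for $p>n$. This is the step where the fppf cover enters, through the bound $\dim\abs{\sX}\le n$.

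For (b), I would identify the stalk of $R^q\epsilon_*\sF$ at a point $x\in\abs{\sX}$ with the $q$-th Nisnevich cohomology of the \emph{Nisnevich-local} stack $\sX^h_x$ at $x$, obtained by henselising the local ring of $\sX$ at $x$ while retaining its residual gerbe. The essential point is that $\sX^h_x$ is a point of the stacky Nisnevich topology: a Nisnevich cover is by definition completely decomposed and an isomorphism on residual gerbes, so over a henselian local model every such cover admits a section through the closed point. Consequently $\on{H}^q_{\Nis}(\sX^h_x,\sF)=0$ for $q>0$, which yields $R^q\epsilon_*\sF=0$ for $q>0$.

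I expect (b) to be the main obstacle. One must make precise what the points of the small Nisnevich topos of a stack are, identify the stalk functors with evaluation on henselian local stacks, and verify Nisnevich-acyclicity of these local models in positive degrees. This is the analogue for stacks of the classical fact that henselian local schemes are Nisnevich points, but here the residual-gerbe condition built into the stacky Nisnevich covers is exactly what keeps the local models acyclic even when the stabilizers are positive-dimensional (so that no spurious group-cohomological contributions appear). Granting (a) and (b), the spectral sequence degenerates to $\on{H}^i_{\Nis}(\sX,\sF)\cong\on{H}^i_{\mrm{Zar}}(\abs{\sX},\epsilon_*\sF)$, which vanishes for $i>n=\covdimfppf(\sX)$, giving $\on{cd}_{\Nis}(\sX)\le\covdimfppf(\sX)$ as claimed.
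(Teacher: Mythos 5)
Your step (a) is fine, but step (b) is false, and the error is a genuine one of principle, not of bookkeeping: you have identified the stalks of $R^q\epsilon_*\sF$ with cohomology of the \emph{wrong} local objects. The stalk of $R^q\epsilon_*\sF$ at $x\in\abs{\sX}$ is by definition the colimit $\colim_{x\in\sU}\on{H}^q_{\Nis}(\sU,\sF)$ over \emph{Zariski} open neighbourhoods $\sU$ of $x$, i.e.\ the Nisnevich cohomology of the Zariski-local (non-henselian) stack at $x$. Henselian local objects are the points of the \emph{Nisnevich} topos: they compute stalks of $\sF$ itself (and would be the relevant local objects for the pushforward from the étale to the Nisnevich topos), but they are not the points of the Zariski topos, and nothing lets you replace the Zariski local ring by its henselization when computing $R^q\epsilon_*$. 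Nisnevich cohomology of a non-henselian local scheme is in general nonzero in positive degrees, so $R^q\epsilon_*\sF\neq 0$ and your spectral sequence does not degenerate.

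Here is a concrete counterexample, already for schemes. Let $X=\Spec(\bZ_{(5)})$, with closed point $x$, $K=\bQ$, henselization $R^h$ and $K^h=\on{Frac}(R^h)\subset\bQ_5$. Let $Y'=\Spec\bigl(\bZ_{(5)}[t]/(t^2+1)\bigr)$, which is finite étale of degree $2$ over $X$ with two closed points, let $Y\subset Y'$ be the complement of one of them, and let $\sF$ be the Nisnevich sheafification of $V\mapsto\bZ[\Hom_X(V,Y)]$. Since $\Spec(R^h)$ and the points $\Spec(K)$, $\Spec(K^h)$ are Nisnevich-acyclic, excision along $x$ gives
\[
\on{H}^1_{\Nis}(X,\sF)\;\simeq\;\Coker\bigl(\bZ[Y(K)]\oplus\bZ[Y(R^h)]\to\bZ[Y(K^h)]\bigr)\;\simeq\;\Coker(\bZ\to\bZ^2)\;\neq\;0,
\]
because $Y(K)=\emptyset$ (no $\sqrt{-1}$ in $\bQ$), $Y(K^h)$ has two elements ($-1$ is a square in $\bQ_5$), and exactly one of them extends to an $R^h$-point landing in $Y$. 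On the other hand $\abs{X}$ is the two-point Sierpi\'{n}ski space, on which global sections is an exact functor, so $\on{H}^i_{\mrm{Zar}}(\abs{X},\sG)=0$ for all $i>0$ and all $\sG$; your degenerate spectral sequence would thus force $\on{H}^1_{\Nis}(X,\sF)=0$, a contradiction. (In this example the only Zariski open containing $x$ is $X$ itself, so the stalk of $R^1\epsilon_*\sF$ at $x$ is literally $\on{H}^1_{\Nis}(X,\sF)\neq 0$.) This is why the bound cannot be obtained through the underlying topological space alone: the paper instead runs Voevodsky's cd-structure machinery, constructing a density structure on stacks over $\sX$ by pulling back the Krishna--{\O}stv{\ae}r density structure along an fppf cover $\sS\to\sX$ by a Deligne--Mumford stack, proving the Nisnevich cd-structure is bounded for it, and invoking the general theorem that a complete, regular, bounded cd-structure has cohomological dimension at most the dimension of the density structure, namely $\dim(\sS)$.
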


Recall that the Nisnevich topology on the category of noetherian algebraic stacks is generated by a cd-structure \cite[Sect.~2C]{hoyoiskrishna}, which is clearly complete and regular.
To establish \propref{coh_dimension} we will show that this cd-structure is bounded with respect to a density structure.

\begin{constr}
  For any noetherian algebraic stack $\sX$, let $\Stk^{\mrm{DM}}_{/\sX}$ be the category of algebraic stacks $\sY$ over $\sX$ for which the structural morphism $\sY \to \sX$ is representable by Deligne--Mumford stacks.
  Choose an fppf covering $\sS \to \sX$ from a Deligne--Mumford stack $\sS$.
  We define a density structure $D^{\sS}_i(-)$ in the sense of \cite[Definition 2.20]{Voe10} on the category $\Stk^{\mrm{DM}}_{/\sX}$.
  For $\sY \in \Stk^{\mrm{DM}}_{/\sX}$, let $q: \sS_{\sY} \to \sY$ be the fppf covering of $\sY$ given by the base change of $\sS \to \sX$ along $\sY$.
  For $i \ge 0$, let $D^{\sS}_i(\sY)$ denote the class
  of open substacks $\sU \hook \sY$ such that the open substack $\sU \times_{\sY} \sS_{\sY} \hook \sS_{\sY}$ defines an element of the class
  $D_i(\sS_{\sY})$, where $D_*(-)$ denotes the density structure on the category of Deligne--Mumford stacks defined in \cite[Definition~4.4]{KrishnaOstvaer}.
  It is easy to check that the classes $D^{\sS}_i(-)$ define a density structure which is locally of finite dimension and the dimension of $\sX$ with respect to the density structure is equal to the Krull dimension $\dim(\sS)$.
\end{constr}

Recall that for a Deligne--Mumford stack $\sX$ and $i \geq 0$, $D_i(\sX)$ is the collection of open substacks $\sU \hook \sX$ such that 
for every irreducible, reduced closed substack $\sZ$ of $\sX$ with $\sZ \times_{\sX} \sU$ the empty stack, 
there exists a sequence $\sZ = \sZ_0 \subsetneq \sZ_1 \subsetneq \cdots \subsetneq \sZ_i$ of irreducible, reduced closed substacks of $\sX$.

\begin{lem}\label{lem:intersecting_substacks}
Let $\sX$ be a Deligne--Mumford stack and $\sU \in D_i(\sX)$.
For any open substack $\sV$ of $\sX$, we have $\sU \cap \sV \in D_i(\sV)$.
\end{lem}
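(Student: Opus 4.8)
The plan is to reduce the statement to a purely topological manipulation of irreducible closed subsets: I would take an arbitrary test object for membership in $D_i(\sV)$, spread it out to its closure in $\sX$ to invoke the hypothesis $\sU \in D_i(\sX)$, and then transport the resulting chain back down to $\sV$ by intersecting with the open $\sV$.

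First I would unwind the definition. An open substack $\sU\cap\sV\hook\sV$ lies in $D_i(\sV)$ precisely when it passes the test against every irreducible reduced closed substack $\sW\subseteq\sV$ with $\sW\cap(\sU\cap\sV)=\emptyset$; since $\sW\subseteq\sV$, this condition is simply $\sW\cap\sU=\emptyset$. Given such a $\sW$, I would pass to its closure $\bar\sW$ in $\sX$, equipped with the reduced structure, which is again irreducible, reduced, and closed. The \emph{first key point} is that $\bar\sW\cap\sU=\emptyset$: from $\sW\cap\sU=\emptyset$ we get $\sW\subseteq\sX\setminus\sU$, and since $\sU$ is open its complement is closed and hence contains the closure $\bar\sW$. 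Thus $\bar\sW$ is an admissible test object for $\sU\in D_i(\sX)$, which produces a chain $\bar\sW=\sZ_0\subsetneq\sZ_1\subsetneq\cdots\subsetneq\sZ_i$ of irreducible reduced closed substacks of $\sX$.

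Next I would intersect this chain with $\sV$, setting $\sW_j:=\sZ_j\cap\sV$; each $\sW_j$ is closed in $\sV$ and reduced. Because $\sZ_0=\bar\sW$ satisfies $\bar\sW\cap\sV=\sW\neq\emptyset$ and $\sZ_0\subseteq\sZ_j$ for all $j$, every $\sW_j$ is a nonempty open substack of the irreducible stack $\sZ_j$, hence irreducible and dense in $\sZ_j$; moreover $\sW_0=\sW$. Finally I would check strictness $\sW_j\subsetneq\sW_{j+1}$: if $\sW_j=\sW_{j+1}$, then taking closures inside $\sZ_{j+1}$ the dense open $\sW_{j+1}=\sZ_{j+1}\cap\sV$ would be contained in the proper closed substack $\sZ_j$, forcing $\sZ_{j+1}\subseteq\sZ_j$ and contradicting $\sZ_j\subsetneq\sZ_{j+1}$. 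This yields a length-$i$ chain $\sW=\sW_0\subsetneq\cdots\subsetneq\sW_i$ of irreducible reduced closed substacks of $\sV$, which is exactly what is needed to conclude $\sU\cap\sV\in D_i(\sV)$.

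The entire argument takes place on the underlying topological space $\abs{\sX}$ and its subspaces, so no obstacle of a geometric or stacky nature arises; one only needs the standard correspondence between irreducible reduced closed substacks and irreducible closed subsets. The single point demanding care is the strictness of the intersected chain, where one must argue via irreducibility of $\sZ_{j+1}$ and density of its nonempty open substacks rather than by a naive set-theoretic inclusion, since an open restriction can collapse distinct closed subsets only when one of them is disjoint from $\sV$—which is ruled out here by the nonemptiness propagated from $\sW$.
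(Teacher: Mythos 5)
Your proof is correct and follows essentially the same route as the paper: the paper reduces to the underlying topological spaces via \cite[Lemma~4.5]{KrishnaOstvaer} and then invokes the argument of \cite[Lemma~2.5]{Voe10-2}, which is exactly the closure--extend--intersect argument you give (pass to the closure $\bar{\sW}$ in $\sX$, apply $\sU \in D_i(\sX)$, intersect the resulting chain with $\sV$, and use irreducibility/density to preserve strictness). The only difference is that you write out in full the argument the paper delegates to a citation.
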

\begin{proof}
By \cite[Lemma~4.5]{KrishnaOstvaer} it suffices to show that $\abs{\sU \cap \sV} \in D_i(\abs{\sV})$.
Now we can use the same argument as in the proof of \cite[Lemma~2.5]{Voe10-2}.
\end{proof}

\begin{prop} \label{prop:cd-bdd}
The Nisnevich cd-structure on the category $\Stk^{\mrm{DM}}_{/\sX}$ is bounded with respect to the density structure $D^{\sS}_*(-)$.
\end{prop}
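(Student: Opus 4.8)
The plan is to verify Voevodsky's boundedness criterion by transporting the whole question, along the fixed fppf cover $\sS \to \sX$, to the case of Deligne--Mumford stacks handled in \cite{KrishnaOstvaer} and ultimately to the topological computation of \cite{Voe10-2}. Recall that the Nisnevich cd-structure is bounded with respect to $D^\sS_*(-)$ as soon as every Nisnevich distinguished square $Q$ in $\Stk^{\mrm{DM}}_{/\sX}$ is \emph{reducing} in the sense of \cite{Voe10}. First I would fix such a square
\[
  \begin{tikzcd}
    \sB \ar{r}\ar{d} & \sY \ar{d}{f} \\
    \sA \ar{r}{j} & \sW
  \end{tikzcd}
\]
(so $j$ is an open immersion, $f$ is representable étale, and $f$ is an isomorphism over the reduced closed complement $\sT$ of $\sA$ in $\sW$, with $\sB = \sA\fibprod_\sW\sY$) together with opens $\sA_0 \in D^\sS_{i+1}(\sA)$ and $\sY_0 \in D^\sS_{i+1}(\sY)$, and aim to produce a subsquare $Q' \to Q$ refining these and with lower-right corner $\sW' \in D^\sS_i(\sW)$.

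Next I would reduce the problem to the underlying topological spaces. Base change along the structural cover sends each corner $\sW$ to its fppf cover $\sS_\sW$ by a Deligne--Mumford stack, with surjection $p : \abs{\sS_\sW} \to \abs{\sW}$ on topological spaces, and $Q_\sS := Q \fibprod_\sX \sS$ is again a Nisnevich distinguished square, since Nisnevich squares are stable under base change. By the definition of $D^\sS_*$ and by \cite[Lem.~4.5]{KrishnaOstvaer}, an open substack $\sU \sub \sW$ lies in $D^\sS_i(\sW)$ if and only if $p^{-1}(\abs{\sU}) \in D_i(\abs{\sS_\sW})$, where $D_*$ is Voevodsky's dimensional density structure. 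Thus $D^\sS_*(\sW)$ is exactly the pullback of a finite-dimensional density structure along the surjection $p$, open substacks of $\sW$ are the same as opens of $\abs{\sW}$, and constructing the reducing subsquare $Q'$ becomes the task of choosing opens of $\abs{\sW}, \abs{\sA}, \abs{\sY}, \abs{\sB}$ whose density is tested after applying $p^{-1}$.

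With this dictionary I would then run Voevodsky's explicit construction for the Nisnevich cd-structure, exactly as in the proof of \lemref{lem:intersecting_substacks}: using that $\abs{\sT}$ is cut out in lower density and that $f$ is an isomorphism over $\sT$, one assembles $\sW'$ as a suitable open neighbourhood built from $\sA_0$, $\sY_0$ and the generic points of $\abs{\sT}$, sets $\sA' = \sA_0 \cap \sW'$, and takes $\sB', \sY'$ to be the induced pullbacks, verifying $\sW' \in D^\sS_i(\sW)$ from the finite-dimensionality of $D^\sS_*$. This is precisely \cite[Lem.~2.5]{Voe10-2} carried out for the $p$-preimages of these opens in $\abs{\sS_\sW}$, with the locality property of \lemref{lem:intersecting_substacks} ensuring that restricting the constructed opens to the corners keeps them in the correct density class and hence that the whole configuration remains a distinguished subsquare.

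The hard part will be the compatibility between the two roles of $\sS$: density is measured upstairs on $\abs{\sS_\sW}$, whereas the opens comprising $Q'$ must be defined over $\sX$, i.e. must be $p$-saturated opens of $\abs{\sW}$. The key point to check is that Voevodsky's recipe can be performed directly on $\abs{\sW}$ with the pulled-back density, equivalently that it produces saturated opens: this holds because the inputs $\sA_0, \sY_0, \sT$ are all $\sX$-defined, the construction only forms intersections, unions and generic-point neighbourhoods of such opens, and the dimension count distinguishing $D_i$ from $D_{i+1}$ is governed by the finite dimension asserted for $D^\sS_*$. Once this saturation and the density bookkeeping are confirmed, every Nisnevich distinguished square is reducing, which is the desired boundedness.
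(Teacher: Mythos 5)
Your high-level strategy --- transporting density questions along $\sS \to \sX$ to the Deligne--Mumford setting of \cite{KrishnaOstvaer} --- is indeed the paper's strategy, but your proof has a genuine gap at the very first step: you have mis-stated the reducing condition. In \cite[Definition 2.21]{Voe10} one is given density data on \emph{all three} upper/left corners of the square: besides $\sA_0 \in D^{\sS}_i(\sA)$ and $\sY_0 \in D^{\sS}_i(\sY)$ (note: degree $i$, not $i+1$) one is also given $\sB_0 \in D^{\sS}_{i-1}(\sB)$ on the intersection $\sB$, and the refining square $Q'$ must have \emph{all} of its corners factoring through the given data, with lower-right corner in $D^{\sS}_i(\sW)$. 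You omit the datum $\sB_0$ entirely. This is not cosmetic: the statement you set out to prove is strictly weaker than Voevodsky's criterion (so boundedness, and with it \propref{coh_dimension}, would not follow from it), and the constructive heart of the argument is precisely the factorization through $\sB_0$. In the paper's proof one sets $\sZ$ equal to the closed complement of the upper-left datum and removes closures: the refined \'etale part is $\sY \setminus \cl_{\sY}(\sZ)$ and the refined base is $\sW$ minus the intersection of $\sT$ with the closure of the image of $\sZ$ in $\sW$. Your sketch, which assembles $\sW'$ only from $\sA_0$, $\sY_0$ and generic points of $\sT$, has no counterpart of this step.

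There is a second gap in how you form the refined square. Taking $\sA' = \sA_0 \cap \sW'$ and letting $\sY', \sB'$ be ``induced pullbacks'' does not yield a Nisnevich square with the required factorizations: the map $\sY' \to \sW'$ must be an isomorphism over the closed complement $\sW' \setminus \sA'$, which meets $\sA \setminus \sA_0$ where $f$ is not an isomorphism, and $f^{-1}(\sW')$ has no reason to be contained in $\sY_0$. The paper resolves exactly this tension by a preliminary reduction that your proposal lacks: it applies \lemref{lem:cd-bdd} to the \'etale surjection $j \coprod f : \sA \coprod \sY \to \sW$ (this lemma rests on \cite[Lemma~4.7]{KrishnaOstvaer} and openness of the fppf cover) to find a dense open of the base whose preimages under $j$ and $f$ land in $\sA_0$ and $\sY_0$; after base change one may assume $\sA = \sA_0$ and $\sY = \sY_0$, keeping control of the upper-left datum via \lemref{lem:intersecting_substacks}, and only then does the closure construction above go through. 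Finally, the ``saturation'' of Voevodsky's topological construction under $p$ --- the point you yourself flag as the hard part --- is asserted rather than proved; the paper never needs it, because it builds $Q'$ directly at the level of stacks (whose opens are automatically defined over $\sX$) and passes to $\sS_{\sY}$ only to \emph{verify} density of the constructed open, via the proof of \cite[Proposition~4.9]{KrishnaOstvaer}.
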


\begin{proof}
We need to show that every Nisnevich square is reducing with respect to the density structure.
Consider a Nisnevich square $Q$ in $\Stk^{\mrm{DM}}_{/\sX}$:

\begin{equation} \label{eqn:Nis-cd}
\begin{tikzcd}
\sW \arrow[hookrightarrow, r, "j_V"] \arrow[d] & \sV \arrow[d, "p"]\\
\sU \arrow[r, hookrightarrow, "j"] & \sY,
\end{tikzcd}
\end{equation}
where $p$ is \'{e}tale, $j$ is an open immersion and the induced morphism
$p^{-1} (\sY \setminus \sU) \to (\sY \setminus \sU)$ is invertible.
Choose $\sW_0 \in D^S_{i-1}(\sW)$, $\sU_0 \in D^S_i(\sU)$ and $\sV_0 \in D^S_i(\sV)$.
To show that the above square $Q$ is reducing with respect to the density structure,
we need to prove that there exists a Nisnevich square $Q'$ in $\Stk^{\mrm{DM}}_{/\sX}$
\begin{equation} \label{eqn:Nis-cd-2}
\begin{tikzcd}
\sW' \arrow[hookrightarrow, r] \arrow[d] & \sV'\arrow[d]\\
\sU' \arrow[r, hookrightarrow] & \sY',
\end{tikzcd}
\end{equation}
and a morphism $Q' \to Q$ such that $\sW' \to \sW$ factors through $\sW' \to \sW_0$, $\sU' \to \sU$ through
$\sU' \to \sU_0$, $\sV' \to \sV$ through $\sV' \to \sV_0$,
and $\sY' \in D^S_i(\sY)$ (see \cite[Definition 2.21]{Voe10}).
Applying Lemma \ref{lem:cd-bdd} to the morphism $j \coprod p$, we can find $\sY_0 \in D^{\sS}_i(\sY)$ such that $j^{-1}(\sY_0) \subseteq \sU_0$ and
$p^{-1}(\sY_0) \subseteq \sV_0$.
Therefore by base changing \eqref{eqn:Nis-cd} along $\sY_0 \hook \sY$ and then replacing $\sY$ by $\sY_0$
we are reduced to the case when $\sU = \sU_0$ and $\sV = \sV_0$ in \eqref{eqn:Nis-cd}. Note that
$\sW_0\times_{\sY} \sY_0$ is in $D_{i-1}^{\sS}(\sU_0 \times_{\sY} \sV_0)$ by Lemma~\ref{lem:intersecting_substacks}.

Let $\sZ = \sW \setminus \sW_0$ and $\sC = \sY \setminus \sU$ and set $\sW' = \sW_0$, $\sU' = \sU$, $\sV' = \sV \setminus \cl_{\sV}(\sZ)$ and
$\sY' = \sY \setminus (\sC \cap \cl_{\sY}(p\circ j_V(\sZ)))$ in \eqref{eqn:Nis-cd-2} to obtain the Nisnevich square $Q'$ with a natural morphism to $Q$
given by inclusions. Now $\sY' \times_{\sY} \sS_{\sY} \to \sS_{\sY} \in D_i(\sS_{\sY})$ by the proof of \cite[Proposition~4.9]{KrishnaOstvaer}.
Therefore $Q' \to Q$ satisfies all the required properties.
\end{proof}

\begin{lem} \label{lem:cd-bdd}
Let $f: \sW \to \sY$ be an \'{e}tale surjection of noetherian stacks.
Then for any $i \ge 0$ and $\sW_0 \in D^{\sS}_i(\sW)$ there exists $\sY_0 \in D^{\sS}_i(\sY)$ such that
$f^{-1}(\sY_0) \subset \sW_0$.
\end{lem}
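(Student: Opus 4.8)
The plan is to reduce the claim to a codimension estimate on the Deligne--Mumford cover and to produce $\sY_0$ by an explicit closure construction on $\sY$ itself. First I would introduce $p : \sS_{\sW} \to \sW$ and $q : \sS_{\sY} \to \sY$, the fppf coverings obtained from $\sS \to \sX$ by base change, together with $g : \sS_{\sW} \to \sS_{\sY}$, the base change of $f$. Because $\sS_{\sW} \simeq \sS_{\sY} \times_{\sY} \sW$, the square with horizontal maps $f, g$ and vertical maps $p, q$ is cartesian, so $g$ is again an étale surjection of Deligne--Mumford stacks. Unwinding the definition of $D^{\sS}_{\bullet}$ recalled above, and writing $T := \sW \setminus \sW_0$, the hypothesis $\sW_0 \in D^{\sS}_i(\sW)$ says precisely that the closed substack $p^{-1}(T) = \sS_{\sW} \setminus (\sW_0 \times_{\sW} \sS_{\sW})$ has codimension $\ge i$ in $\sS_{\sW}$ (each of its irreducible components does); here I use that the density condition $D_i$ on Deligne--Mumford stacks, spelled out before the lemma, is equivalent to the complement having codimension $\ge i$, which is immediate since $\codim(\sZ)$ is the supremum of lengths of ascending chains of irreducible closed substacks starting at $\sZ$.

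Next I would take $\sY_0 := \sY \setminus \cl_{\sY}(f(T))$ as the candidate. It is open, and since $T \subseteq f^{-1}(\cl_{\sY}(f(T)))$ one sees at once that $f^{-1}(\sY_0) \subseteq \sW \setminus T = \sW_0$, which is one of the two required properties. It then remains to check $\sY_0 \in D^{\sS}_i(\sY)$, that is, that the closed substack $q^{-1}(\cl_{\sY}(f(T)))$ has codimension $\ge i$ in $\sS_{\sY}$. I would identify this substack in two moves. Since $q$ is flat, hence open, taking preimages commutes with closures, giving $q^{-1}(\cl_{\sY}(f(T))) = \cl_{\sS_{\sY}}(q^{-1}(f(T)))$. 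And since the comparison map $\abs{\sS_{\sW}} \to \abs{\sS_{\sY}} \times_{\abs{\sY}} \abs{\sW}$ is surjective, cartesianness of the square gives the identity $q^{-1}(f(T)) = g(p^{-1}(T))$ of subsets of $\abs{\sS_{\sY}}$. Combining these, $q^{-1}(\cl_{\sY}(f(T))) = \cl_{\sS_{\sY}}(g(p^{-1}(T)))$.

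The crux, which I expect to be the main obstacle, is then to show that this closure has codimension $\ge i$, and this is where étaleness of $g$ enters. For each irreducible component $\sZ'$ of $p^{-1}(T)$, the image closure $\cl_{\sS_{\sY}}(g(\sZ'))$ is irreducible, and I claim its codimension equals $\codim(\sZ', \sS_{\sW}) \ge i$: an étale morphism has $0$-dimensional fibres, so the local rings at the generic point of $\sZ'$ and at its image agree in dimension, which, checked on a smooth atlas, is the flat dimension formula exactly as for schemes (compare the codimension arguments in \cite{KrishnaOstvaer}). Since $g$ is surjective and $\sS_{\sW}$ is noetherian, $\cl_{\sS_{\sY}}(g(p^{-1}(T)))$ is the union of the finitely many closures $\cl_{\sS_{\sY}}(g(\sZ'))$, each of codimension $\ge i$, and hence has codimension $\ge i$. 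This yields $q^{-1}(\sY_0) \in D_i(\sS_{\sY})$, i.e. $\sY_0 \in D^{\sS}_i(\sY)$, completing the argument. The only points needing genuine care are the two topological identities of the previous paragraph (openness of the flat $q$, and surjectivity of the comparison map for the fibre product of stacks) together with the codimension-preservation under the étale $g$; all three are standard once reduced to atlases.
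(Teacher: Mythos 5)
Your proof is correct, but it is organized differently from the paper's. The paper's proof never forms the closure of an image on $\sY$: it base changes $f$ along $q$ to get the \'etale surjection $f_S : \sS_{\sW} \to \sS_{\sY}$ of noetherian Deligne--Mumford stacks, invokes \cite[Lemma~4.7]{KrishnaOstvaer} as a black box to produce $\widetilde{\sY} \in D_i(\sS_{\sY})$ with $f_S^{-1}(\widetilde{\sY}) \subseteq \sS_{\sW_0}$, and then \emph{pushes forward}, setting $\sY_0 = q(\widetilde{\sY})$; the verification that $\sY_0 \in D^{\sS}_i(\sY)$ is then soft (openness of $q$, upward-closedness of $D_i$, namely $\widetilde{\sY} \subseteq q^{-1}(\sY_0)$), and $f^{-1}(\sY_0) \subseteq \sW_0$ uses the same surjectivity of the comparison map $\abs{\sS_{\sW}} \to \abs{\sW} \times_{\abs{\sY}} \abs{\sS_{\sY}}$ that you use. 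You instead construct $\sY_0 = \sY \setminus \cl_{\sY}(f(T))$ directly downstairs and verify the density condition upstairs by hand; the identities $q^{-1}(\cl_{\sY}(f(T))) = \cl_{\sS_{\sY}}(g(p^{-1}(T)))$ (openness of $q$) and $q^{-1}(f(T)) = g(p^{-1}(T))$ (comparison-map surjectivity) are both right, and your crux step --- that the \'etale $g$ preserves codimension of irreducible closed substacks --- is exactly the mathematical content that the paper delegates to the Krishna--{\O}stv{\ae}r citation, so in effect you re-prove their lemma inside your argument. What each approach buys: the paper's route is shorter and keeps all chain/codimension topology in the cited reference; yours is self-contained, produces an explicit $\sY_0$, and incidentally shows that surjectivity of $f$ is never used. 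The one place where your write-up is thinner than a full proof is the codimension-preservation step for stacks: besides the flat dimension formula with $0$-dimensional fibres (which handles schemes), one needs that codimension in $\abs{\sS_{\sW}}$, defined via chains, can be computed on an \'etale atlas --- i.e.\ that chains lift along flat morphisms (going-down) and push down with strict inclusions because quasi-finite morphisms admit no nontrivial specializations in fibres. You gesture at this with ``checked on a smooth atlas,'' and it is standard, but it is precisely the part that would need to be spelled out (or cited) in a complete write-up.
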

\begin{proof}
Let $\sS_{\sW} = \sS \times_{\sX} \sW$, $\sS_{\sW_0} = \sS \times_{\sX} \sW_0$ and let $f_S: \sS_{\sW} \to \sS_{\sY}$
denote the base change of $f: \sW \to \sY$ along
the fppf covering $q: \sS_{\sY} \to \sY$ and $\widetilde{q}: \sS_{\sW} \to \sW$  denote the base change of $q$ along $f$. Then by \cite[Lemma~4.7]{KrishnaOstvaer} applied to $f_S$
there exists $\widetilde{\sY} \in D_i(\sS_{\sY})$ such that
$f_S^{-1}(\widetilde{\sY}) \subset \sS_{\sW_0}$. Let $\sY_0 = q(\widetilde{\sY})$, then $\sY_0 \in D_i(\sY)$
since $q$ is open (see \cite[Proposition~5.6]{laumon2018champs}), $\widetilde{\sY} \subseteq \sY_0 \times_{\sY} \sS_{\sY}$ and
$\widetilde{\sY} \in D_i(\sS_{\sY})$.
Moreover $f^{-1}(\sY_0) = f^{-1}(q(\widetilde{\sY})) \subseteq \widetilde{q}(f_S^{-1}(\widetilde{\sY})) \subseteq \widetilde{q}(\sS_{\sW_0}) = \sW_0$.
\end{proof}


\section{Formal stacks}
\label{sec:formal}

\ssec{Formal completion}
\label{ssec:formal}

  We briefly review some elements of formal derived algebraic geometry.
  Good references are \cite[Sect.~2.1]{HLP-h} and \cite{GR-DGIndSch}.
  The following definition is \cite[Def.~2.1.1]{HLP-h}.

  \begin{defn}[Formal completion]\label{defn:formal completion}
    Let $i : \sZ \to \sX$ be a closed immersion of derived stacks with quasi-compact open complement.
    The \emph{formal completion} of $\sX$ in $\sZ$ is the derived prestack $\sX^\wedge_\sZ$ whose $R$-points, for any \scr $R$, are the $R$-points $x : \Spec(R) \to \sX$ which factor \emph{set-theoretically} through the underlying topological space $\abs{\sZ} \subseteq \abs{\sX}$.
    By definition, $i : \sZ \to \sX$ factors as
      \begin{equation*}
        i : \sZ \to \sX^\wedge_\sZ \xrightarrow{\hat{i}} \sX,
      \end{equation*}
    where the first arrow induces an isomorphism on reductions, and the second arrow is a monomorphism.
    When there is no risk of ambiguity, we will write simply $\sX^\wedge$ for $\sX^\wedge_\sZ$.
  \end{defn}

  \begin{rem}\label{rem:formal base change}
    Note that the formal completion $\sX^\wedge_\sZ$ only depends on the underlying topological space $\abs{\sZ}$.
    In particular, if we have a commutative square
    \[ \begin{tikzcd}
      \sZ' \ar{d}{g}\ar{r}{i'}
      & \sX' \ar{d}{f}
      \\
      \sZ \ar{r}{i}
      & \sX
    \end{tikzcd} \]
    where $\abs{\sZ'}$ is the set-theoretic inverse image $f^{-1}(\abs{\sZ})$, then the formal completion $\sX'^\wedge$ is the derived base change of $\sX^\wedge$.
    That is, we have a homotopy cartesian square
    \[ \begin{tikzcd}
      \sX'^\wedge_{\sZ'} \ar[hookrightarrow]{r}\ar{d}{f^\wedge}
        & \sX' \ar{d}{f}
      \\
      \sX^{\wedge}_{\sZ} \ar[hookrightarrow]{r}
        & \sX.
    \end{tikzcd} \]
  \end{rem}

  \begin{rem}\label{rem:formal colim}
    The formal completion of a derived algebraic stack $\sX$ along a closed immersion $i : \sZ \to \sX$ is always an \emph{ind-algebraic stack}.
    More precisely, one has the following canonical isomorphism (see \cite[Prop.~6.5.5]{GR-DGIndSch}):
      \begin{equation*}
        \{\widetilde{\sZ}\}_{\widetilde{\sZ} \to \sX} \to \sX^\wedge_\sZ,
      \end{equation*}
    where the source is the ind-system indexed by the filtered \inftyCat of closed immersions $\widetilde{\sZ} \to \sX$ that induce an isomorphism $\widetilde{\sZ}_\red \simeq \sZ_\red$ on reductions.
    Note that the transition morphisms are surjective closed immersions.
  \end{rem}

  In the affine case, we can give the following more familiar (but less canonical) description.

  \begin{exam}\label{exam:oizchvnawl}
    Let $A$ be a \scr and $I \subseteq \pi_0(A)$ an ideal, and consider the formal completion of $\Spec(A)^\wedge$ in the vanishing locus of $I$.
    Choosing generators $f_1,\ldots,f_m$ for the ideal $I$, there is an equivalence
    \begin{equation*}
      \Spec(A)^\wedge \simeq \{\Spec(A\modmod(f_1^n,\ldots,f_m^n))\}_n.
    \end{equation*}
    See \cite[Proof of Prop.~8.1.2.1]{SAG-20180204} or \cite[Prop.~2.1.2]{HLP-h}.
  
    If $A$ is an ordinary commutative ring and is noetherian, then we recover the classical formal completion, i.e., the formal spectrum of $A$ as an $I$-adic ring: 
    \begin{equation*}
      \Spec(A)^\wedge \simeq \{\Spec(A/(f_1^n,\ldots,f_m^n))\}_n.
    \end{equation*}
    See \cite[Lem.~17.3.5.7]{SAG-20180204}, \cite[Prop.~2.1.4]{HLP-h}, or \cite[Proof of Prop.~6.8.2]{GR-DGIndSch}.
  \end{exam}

  More generally in the presence of the resolution property, we have:

  \begin{lem}\label{lem:pjpfmq}
    Let $i : \sZ \to \sX$ be a closed immersion almost of finite presentation between derived algebraic stacks for which $\sX$ has the resolution property.
    Let $\widetilde{\sZ}(n)$ be as in \constrref{constr:ohuhoj}.
    Then there is an isomorphism of ind-stacks
    \[ \sX^\wedge_\sZ \simeq \{\widetilde{\sZ}(n)\}. \]
  \end{lem}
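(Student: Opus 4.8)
The plan is to show that the sub-ind-system $\{\widetilde{\sZ}(n)\}_n$ is \emph{cofinal} inside the canonical presentation of $\sX^\wedge_\sZ$ furnished by \remref{rem:formal colim}. First I would record that each $\widetilde{\sZ}(n)$ is a legitimate object of that presentation: by \constrref{constr:ohuhoj} the classical truncation of $\widetilde{\sZ}(n)$ is cut out by the $n$-th power ideal $I^n$, where $I \subseteq \pi_0(\sO_\sX)$ is the finite-type ideal of $\sZ_\cl$ and $\sE \twoheadrightarrow \sI$ is the chosen resolution, so $\widetilde{\sZ}(n)_\red = \sZ_\red$ because $\sqrt{I^n} = \sqrt{I}$. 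Hence $n \mapsto \widetilde{\sZ}(n)$ is a functor from $\bN$ into the filtered $\infty$-category $\mathcal{T}$ of closed immersions $\widetilde{\sZ} \to \sX$ with reduction $\sZ_\red$, and \remref{rem:formal colim} identifies $\sX^\wedge_\sZ$ with the colimit over $\mathcal{T}$. By the standard cofinality criterion it then suffices to prove that for every thickening $T \in \mathcal{T}$ there is an $n$ admitting a map $T \to \widetilde{\sZ}(n)$ over $\sX$, the contractibility of the resulting comma category being automatic from the filteredness of $\bN$.

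Next I would reduce to the case where $\sX = \Spec(A)$ is affine. Both ind-stacks are compatible with smooth base change --- the formal completion by \remref{rem:formal base change}, and each $\widetilde{\sZ}(n)$ because a derived zero locus is a derived fibre product --- and a smooth surjection $u : \Spec(A) \twoheadrightarrow \sX$ is an effective epimorphism of étale sheaves, so the comparison map $\{\widetilde{\sZ}(n)\}_n \to \sX^\wedge_\sZ$ is an equivalence as soon as it is so after pulling back along $u$. The point of this reduction is that over an affine base every thickening $T \to \Spec(A)$ is itself affine, say $T = \Spec(B)$, since closed immersions are affine morphisms.

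The key computation is then the following. Writing $s : \sE \to \sO_\sX$ for the cosection of \constrref{constr:ohuhoj}, the stack $\widetilde{\sZ}(n)$ is the derived zero locus of $s^{\otimes n} : \sE^{\otimes n} \to \sO_\sX$, so by the universal property of the derived fibre product a map $T \to \widetilde{\sZ}(n)$ over $\sX$ is precisely a null-homotopy of the restricted cosection $s^{\otimes n}|_T : \sE^{\otimes n}|_T \to \sO_T$. Since $\sE^{\otimes n}|_T$ is finite locally free over the affine $T = \Spec(B)$, such a null-homotopy exists if and only if the class $[s^{\otimes n}|_T]$ vanishes in $\pi_0\Maps_B(\sE^{\otimes n}|_T, B) \simeq \Hom_{\pi_0 B}(\pi_0(\sE^{\otimes n}|_T), \pi_0 B)$, i.e. if and only if the induced $\pi_0$-level map is zero. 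Now $T \in \mathcal{T}$ forces $\sqrt{J} = \sqrt{I}$ for the ideal $J = \ker(\pi_0 A \to \pi_0 B)$, so the image of $I$ in $\pi_0 B$ is a finitely generated nilpotent ideal; its $N$-th power vanishes for some $N$, and since the $\pi_0$-image of $s^{\otimes n}|_T$ lands in $(I\cdot\pi_0 B)^n$, the cosection $s^{\otimes n}|_T$ is null on $\pi_0$ for all $n \ge N$. This produces the desired factorization $T \to \widetilde{\sZ}(n)$ and establishes cofinality; the resulting equivalence is moreover consistent with the explicit description of $\Spec(A)^\wedge$ in \examref{exam:oizchvnawl}.

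I expect the main obstacle to be the passage between the derived and $\pi_0$-level statements: what one must verify is that the mere \emph{existence} of a null-homotopy of $s^{\otimes n}|_T$ is governed entirely by vanishing on $\pi_0$, with no higher obstructions. This is exactly what fails when $T$ is a nonaffine (stacky) thickening, where $\pi_0\Maps$ receives contributions from higher sheaf cohomology on $T$; it is precisely to avoid this that the reduction to affine $\sX$, and hence to affine $T$, is carried out first. A secondary point requiring care is the justification that an equivalence of these ind-stacks may indeed be detected after a single smooth surjective base change, which rests on both sides being smooth sheaves whose transition maps are nil-thickenings.
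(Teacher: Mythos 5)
Your overall strategy is exactly the one the paper intends: its entire proof of this lemma is the single sentence that it is ``a simple cofinality argument using the description of the formal completion given in \remref{rem:formal colim}'', and your fleshing-out of the key step is correct. In particular, you correctly identify that each $\widetilde{\sZ}(n)$ belongs to the indexing system of \remref{rem:formal colim}, that a map $T \to \widetilde{\sZ}(n)$ over $\sX$ is a null-homotopy of $s^{\otimes n}|_T$, and that for \emph{affine} $T = \Spec(B)$ such a null-homotopy exists as soon as its $\pi_0$-class vanishes (projectivity of $\sE^{\otimes n}|_T$ kills all higher obstructions), which happens for $n \gg 0$ because the image of the finite-type ideal $I$ in $\pi_0(B)$ is nilpotent. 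Your closing diagnosis of why affineness is essential is also on target.

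There is, however, one genuine gap: the assertion that ``the contractibility of the resulting comma category [is] automatic from the filteredness of $\bN$'' is false in the derived setting. Closed immersions of derived stacks are not monomorphisms, so the indexing category of \remref{rem:formal colim} is not a poset: when nonempty, the space $\Maps_{/\sX}(T, \widetilde{\sZ}(n))$ is a torsor under $\Omega\,\Maps_{\sO_T}(\sE^{\otimes n}|_T, \sO_T)$ and has nontrivial higher homotopy whenever $\sO_T$ does (e.g. $\pi_1 \sO_T \neq 0$). Lurie's cofinality criterion demands that each comma category $T_{/F}$ be \emph{weakly contractible}, whereas you only prove it is nonempty. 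The repair uses your nilpotence argument a second time, one homotopical level up: since $T_{/F} \to \bN$ is a left fibration, its weak homotopy type is $\colim_n \Maps_{/\sX}(T, \widetilde{\sZ}(n))$; the transition maps of this system are given by tensoring a null-homotopy with $s^{\otimes m}$, i.e. by multiplication with the class $[s^{\otimes m}|_T]$, which vanishes in $\pi_0$ for $m \ge N$, and multiplication by a $\pi_0$-trivial class annihilates \emph{all} homotopy groups of these mapping spectra. Hence the colimit is a nonempty space with vanishing homotopy groups, i.e. contractible. A secondary gloss is your descent step: testing an equivalence after a smooth surjection requires both sides to be sheaves, and a filtered colimit of (non-truncated) sheaves need not be one. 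This is most cleanly sidestepped by skipping the reduction of $\sX$ to an affine altogether and checking the map $\colim_n \widetilde{\sZ}(n) \to \sX^\wedge_\sZ$ on $R$-points fibrewise over each $x \in \sX(R)$: the fibre is $\colim_n$ of the null-homotopy spaces of $x^*s^{\otimes n}$ over the affine $\Spec(R)$, and the same two-level nilpotence argument shows it is empty or contractible according to whether $x$ lies in $\sX^\wedge_\sZ(R)$.
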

  \begin{proof}
    This is a simple cofinality argument using the description of the formal completion given in \remref{rem:formal colim}.
  \end{proof}

  \begin{rem}\label{rem:formal completion classical}
    If $\sX$ is a \emph{classical} stack, then the formal completion (in any closed substack) in the sense of \defref{defn:formal completion} coincides with the classical formal completion, as long as $\sX$ is \emph{noetherian}.
    This follows from \examref{exam:oizchvnawl}, see \cite[Cor.~2.1.5]{HLP-h}.
  \end{rem}

  For the reader's convenience, we spell out \lemref{lem:pjpfmq} in the equivariant (quotient stack) case.

  \begin{constr}
    Let $G$ be a group scheme over a commutative ring $R$, and let $A$ be a derived commutative $R$-algebra with $G$-action.
    Let $M$ be a locally free $G$-equivariant $A$-module and $s : M \to A$ a homomorphism of $G$-equivariant $A$-modules.
    The \emph{derived quotient} of $A$ by $s$ is the $G$-equivariant $A$-algebra formed by attaching a cell $s \simeq 0$, i.e., by the cocartesian square in $G$-equivariant \scrs
    \[ \begin{tikzcd}
      \Sym_A(M) \ar{r}{0}\ar{d}{s}
        & A \ar{d}
      \\
      A \ar{r}
        & A\modmod s,
    \end{tikzcd} \]
    where the map $0$ is induced by adjunction from the zero map $M \to A$, and similarly for $s$.

    Similarly, for any $n>0$, we also write $A\modmod s^n$ for the same construction where $s$ is replaced by $s^{\otimes n} : M^{\otimes n} \to A^{\otimes n} \simeq A$ (the $n$-fold derived tensor product taken over $A$).
  \end{constr}

  \begin{lem}\label{lem:equivariant formal completion}
    Let $G$ be a group scheme over a commutative ring $R$ and $A \twoheadrightarrow B$ a homomorphism of $G$-equivariant derived commutative $R$-algebras which is surjective on $\pi_0$.
    Assume that the quotient stack $[\Spec(\pi_0(A))/G]$ admits the resolution property, so that there exists a locally free $G$-equivariant $A$-module $M$ and $s : M \to A$ whose image on $\pi_0$ is equal to the kernel of $\pi_0(A) \twoheadrightarrow \pi_0(B)$.
    Then we have the following presentation of the formal completion:
    \[ [\Spec(A)/G]^\wedge \simeq \{ [\Spec(A\modmod s^n)/G] \}_{n>0}. \]
  \end{lem}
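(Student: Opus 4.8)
The plan is to identify this statement as the translation of \lemref{lem:pjpfmq} into equivariant algebra, under the usual dictionary between quasi-coherent sheaves on the quotient stack $\sX := [\Spec(A)/G]$ and $G$-equivariant $A$-modules. First I would let $i : \sZ \to \sX$ be the closed immersion cut out by $I := \ker(\pi_0(A) \to \pi_0(B)) \sub \pi_0(A)$; since only the set-theoretic support of $\sZ$ enters the formal completion (\remref{rem:formal base change}), $\sX^\wedge_\sZ$ is the object to be computed. The hypothesis furnishes a finite locally free $G$-equivariant $M$ and a map $s : M \to A$ whose image on $\pi_0$ equals $I$; as $M$ is finite locally free, $I$ is of finite type and $i$ is almost of finite presentation. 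This pair $(M,s)$ is precisely the datum that \constrref{constr:ohuhoj} extracts from the resolution property, so I would take $\sE = M$ and use the given $s$ as the chosen section there.

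The key step is to identify the tower of derived zero loci $\{\widetilde{\sZ}(n)\}_{n>0}$ of \constrref{constr:ohuhoj} with $\{[\Spec(A\modmod s^n)/G]\}_{n>0}$. Under the dictionary above, the vector bundle $\bV_\sX(M) = \Spec_\sX \Sym_{\sO_\sX}(M)$ is $[\Spec(\Sym_A(M))/G]$, the zero section is classified by the augmentation $\Sym_A(M) \to A$, and $s^{\otimes n}$ is classified by the algebra map $\Sym_A(M^{\otimes n}) \to A$ induced by $s^{\otimes n} : M^{\otimes n} \to A$. The homotopy cartesian square defining the derived zero locus of $s^{\otimes n}$ therefore corresponds, after passing to rings of functions and quotienting by $G$, to exactly the cocartesian square of $G$-equivariant \scrs defining $A\modmod s^n$. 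Hence $\widetilde{\sZ}(n) \simeq [\Spec(A\modmod s^n)/G]$ compatibly with the transition maps.

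It then remains to apply \lemref{lem:pjpfmq}, whose proof is a cofinality argument: the tower $\{\widetilde{\sZ}(n)\}$ is cofinal among closed immersions $\widetilde{\sZ}\to\sX$ inducing an isomorphism on reductions, so by \remref{rem:formal colim} it presents $\sX^\wedge_\sZ$. Note that the resolution property enters \lemref{lem:pjpfmq} only to produce the pair $(\sE,s)$, which is here given outright, so no linear reductivity of $G$ is required for the cofinality itself. Combining this with the identification of the previous paragraph yields the asserted presentation. I expect the only genuinely non-formal point to be the ring-theoretic identification of the derived zero locus with the derived quotient $A\modmod s^n$; this is immediate from the definition of the derived quotient recalled in the Notation section, together with the fact that derived fibre products of quotient stacks compute derived tensor products of the corresponding $G$-equivariant algebras.
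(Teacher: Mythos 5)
Your proposal is correct and takes essentially the same route as the paper: the paper offers no separate argument for this lemma, presenting it simply as \lemref{lem:pjpfmq} ``spelled out'' for quotient stacks, and your proof is exactly that specialization with the implicit details made explicit (translating $\bV_\sX(M)$, the zero section, and $s^{\otimes n}$ into $G$-equivariant algebra, identifying the derived zero loci $\widetilde{\sZ}(n)$ with $[\Spec(A\modmod s^n)/G]$ via derived tensor products, and invoking the cofinality description of \remref{rem:formal colim}). Your observation that the resolution property enters only to furnish the pair $(M,s)$ — which the statement supplies directly — correctly resolves the only mismatch between the hypotheses here (resolution property of the classical truncation) and those of \lemref{lem:pjpfmq}.
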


  \begin{lem}\label{lem:classical equivariant formal completion}
    Let $G$ be a group scheme over a commutative ring $R$ and $A \twoheadrightarrow B$ a surjective homomorphism of $G$-equivariant commutative $R$-algebras with kernel $I$.
    Assume that the quotient stack $[\Spec(A)/G]$ admits the resolution property, so that there exists a locally free $G$-equivariant $A$-module $M$ and $s : M \to A$ whose image is $I$.
    Then we have the following presentation of the formal completion:
    \[ [\Spec(A)/G]^\wedge \simeq \{ [\Spec(A/I^n)/G] \}_{n>0}. \]
  \end{lem}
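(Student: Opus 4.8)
The plan is to obtain this as the classical, underived shadow of \lemref{lem:equivariant formal completion}, by exhibiting the powers $I^n$ as a cofinal subsystem inside the canonical ind-presentation of the formal completion supplied by \remref{rem:formal colim}. The argument is entirely parallel to the proof of \lemref{lem:pjpfmq}, of which the present statement is the $G$-equivariant, classical counterpart, and so should likewise reduce to ``a simple cofinality argument''.

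First I would recall, via \remref{rem:formal colim}, that $[\Spec(A)/G]^\wedge$ is the colimit over the filtered \inftyCat of closed immersions $\widetilde{\sZ} \hook [\Spec(A)/G]$ inducing an isomorphism $\widetilde{\sZ}_\red \simeq \sZ_\red$, where $\sZ = [\Spec(B)/G]$. By faithfully flat descent along the $G$-torsor $\Spec(A) \to [\Spec(A)/G]$, every such closed immersion has the form $[\widetilde{Z}/G]$ for a $G$-equivariant closed immersion $\widetilde{Z} \hook \Spec(A)$, and the ideal $J \sub A$ cutting out $\pi_0(\widetilde{Z})$ is $G$-stable with $\sqrt{J} = \sqrt{I}$ (this being exactly the reduction condition). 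The powers $I^n$ are $G$-stable with $\sqrt{I^n} = \sqrt{I}$, so the classical thickenings $[\Spec(A/I^n)/G]$, together with their evident transition closed immersions, form a subsystem indexed by $n>0$.

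The core of the argument is to check that this subsystem is cofinal, and here the resolution property enters. By hypothesis $I$ is the image of a map $s : M \to A$ out of a finite locally free module, so $I$ is finitely generated. Given any $\widetilde{\sZ}$ in the index, with associated $G$-stable ideal $J$ satisfying $\sqrt{J} = \sqrt{I}$, finite generation of $I$ forces $I^n \sub J$ for all $n \gg 0$; since the source is cut out on $\pi_0$ by an ideal containing $I^n$, this produces a morphism $\widetilde{\sZ} \to [\Spec(A/I^n)/G]$ in the index (using that an $A$-algebra map out of the discrete quotient $A/I^n$ into any thickening exists precisely when $I^n$ maps to zero on $\pi_0$). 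This verifies the cofinality hypothesis, whence the colimit over $\{[\Spec(A/I^n)/G]\}_{n>0}$ computes $[\Spec(A)/G]^\wedge$, as asserted; as a consistency check, \remref{rem:formal completion classical} identifies this ind-stack with the usual $I$-adic formal completion.

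The step I expect to be the main obstacle is the cofinality verification itself, and specifically the need to dominate closed immersions $\widetilde{\sZ}$ that may carry nontrivial \emph{derived} structure (such as the derived zero loci appearing in \constrref{constr:ohuhoj}), not merely classical thickenings. The resolution---that any such $\widetilde{\sZ}$ with $\sqrt{J} = \sqrt{I}$ is dominated by some classical power $[\Spec(A/I^n)/G]$---rests only on the finite generation of $I$ together with the elementary fact that mapping out of a discrete ideal-quotient is controlled entirely by the $\pi_0$-level factorization; the remaining bookkeeping (descent, $G$-stability, and the filtered structure) is the same as in \lemref{lem:pjpfmq}.
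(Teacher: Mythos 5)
The skeleton of your argument---the ind-presentation of \remref{rem:formal colim}, descent to $G$-stable ideals, and the observation that $\sqrt{J}=\sqrt{I}$ together with finite generation of $I$ gives $I^n \sub J$ for $n \gg 0$---is fine, but the step you yourself flag as the main obstacle is exactly where the proof breaks, and it cannot be repaired as written. Your parenthetical claim that ``an $A$-algebra map out of the discrete quotient $A/I^n$ into any thickening exists precisely when $I^n$ maps to zero on $\pi_0$'' has the direction of discreteness backwards: mapping \emph{into} a discrete ring is detected on $\pi_0$, but mapping \emph{out} of $A/I^n$ into a derived ring $C$ is obstructed, since $A \to A/I^n$ is not an epimorphism of \scrs (its codiagonal fails to be invertible whenever $\Tor^A_i(A/I^n,A/I^n)\neq 0$ for some $i>0$) and $A/I^n$ is not obtained from $A$ by attaching cells. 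This is precisely the difference between the classical quotients $A/I^n$ and the derived quotients $A\modmod s^n$ of \lemref{lem:equivariant formal completion} and \constrref{constr:ohuhoj}: the space $\Maps_A(A\modmod s^n, C)$ is a space of nullhomotopies, nonempty as soon as the image of $s^{\otimes n}$ dies in $\pi_0(C)$---which is why the ``simple cofinality argument'' of \lemref{lem:pjpfmq} works for them---whereas $\Maps_A(A/I^n, C)$ admits no such description and can fail to be nonempty (coherently in $n$) even when $I^n \sub J$.

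The gap is fatal rather than cosmetic: without a noetherian hypothesis (carried by \examref{exam:oizchvnawl} and \remref{rem:formal completion classical}, and which must be read into this lemma as well) the conclusion is false. Take $G$ trivial and $A = k[x,y_1,y_2,\dots]/(x^iy_i : i\ge 1)$, $I=(x)$, the image of $s = x$ on a free module of rank one, so all stated hypotheses hold. Then $\pi_1(A\modmod x^n)\simeq \on{Ann}_A(x^n)$, and the transition map from level $N$ to level $n$ is multiplication by $x^{N-n}$, which sends $y_N \mapsto x^{N-n}y_N \neq 0$; so $\{\pi_1(A\modmod x^n)\}_n$ is not pro-zero. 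If the classical thickenings were cofinal among all derived thickenings, then (using that colimits of prestacks are computed pointwise, that $\Spec(A)^\wedge \sub \Spec(A)$ is a subfunctor, and that $\Spec(A)^\wedge \simeq \colim_n \Spec(A\modmod x^n)$ by the derived half of \examref{exam:oizchvnawl}, which needs no noetherian hypothesis) some transition map $A\modmod x^N \to A\modmod x^n$ would factor, up to homotopy and as $A$-algebras, through a discrete ring $A/x^m$, and hence would vanish on $\pi_1$---a contradiction. The argument the paper intends (the lemma is stated without proof, as a spelling-out of \lemref{lem:pjpfmq}) is therefore different from yours in an essential way: first one establishes the derived presentation of \lemref{lem:equivariant formal completion}, where cofinality is the easy nullhomotopy argument; then, for \emph{noetherian} $A$, one shows that the truncations $A\modmod s^n \to \pi_0(A\modmod s^n) = A/I^n$ induce a pro-isomorphism of $G$-equivariant pro-rings, because $\{\pi_i(A\modmod s^n)\}_n$ is pro-zero for $i>0$ by Artin--Rees, in the form of \cite[Lem.~8.4.4.5]{SAG-20180204} (used the same way in the proof of \lemref{lem:finite cdh gives formal Milnor square}; cf. \cite[Prop.~2.1.4]{HLP-h}). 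Your finite-generation observation cannot substitute for this Artin--Rees input.
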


\ssec{Quasi-coherent sheaves}
\label{ssec:formal/qcoh}

  For formal stacks (or more generally ind-stacks) such as the formal completion, there is a natural pro-$\infty$-categorical refinement of the stable \inftyCat of quasi-coherent sheaves.

  \begin{constr}
    If $\sC$ denotes the \inftyCat of derived algebraic stacks and representable morphisms, then by \remref{rem:f^* compact} the assignment $\sX \mapsto \Qcoh(\sX)$ can be regarded as a functor $\sC^\op \to \Presc$ valued in the \inftyCat of presentable \inftyCats and compact colimit-preserving functors.
    Now consider its Ind-extension
      \[ \Ind(\sC)^\op \simeq \Pro(\sC^\op) \to \Pro(\Presc). \]
    Any derived ind-algebraic stack $\sX$ can be regarded an ind-object in $\sC$ and hence gives rise to a canonical pro-\inftyCat which we denote $\Qcohform(\sX)$.
  \end{constr}

  \begin{exam}
    Let $\sX$ be a derived algebraic stack.
    Then $\Qcohform(\sX)$ is the constant pro-\inftyCat $\{\Qcoh(\sX)\}$.
  \end{exam}

  \begin{exam}
    If $\sX$ is a derived ind-algebraic stack represented by a filtered system $\{\sX_n\}_n$, then $\Qcohform(\sX)$ is represented by the cofiltered system $\{ \Qcoh(\sX_n) \}_n$.
  \end{exam}

  \begin{exam}
    Let $A$ be a \scr and $I \subseteq \pi_0(A)$ an ideal, and consider the formal completion $\Spec(A)^\wedge$ in the vanishing locus of $I$.
    Then choosing generators $f_1,\ldots,f_m$ for the ideal $I$, we have an equivalence
      \begin{equation*}
        \Qcohform(\Spec(A)^\wedge) \simeq \{\Qcoh(A\modmod(f_1^n,\ldots,f_m^n))\}_n
      \end{equation*}
    by \examref{exam:oizchvnawl}.
    If $A$ is a noetherian commutative ring, then we have also
    \begin{equation*}
      \Qcohform(\Spec(A)^\wedge) \simeq \{\Qcoh(A/(f_1^n,\ldots,f_m^n))\}_n
    \end{equation*}
    again by \examref{exam:oizchvnawl}.
  \end{exam}


\section{Weak pro-Milnor squares}
\label{sec:weak}

In this section we continue to use the language of weight structures as in \secref{sec:weightstructures}.
For convenience, the term \emph{weighted \inftyCat} will refer to an essentially small stable \inftyCat with a bounded weight structure.
We write $\sC^{w=0}$ for the weight-heart of a weighted \inftyCat $\sC$.

\ssec{Connected invariants}

  \begin{defn}\leavevmode
    \begin{defnlist}
      \item
      We say that a weight-exact functor $f : \sC \to \sD$ between weighted $\infty$-categories is \emph{thickly surjective} 
      if every object $Y \in \sD^{w=0}$ is a direct summand of $f(X)$ for some object $X \in \sC^{w=0}$.
      In other words, if the induced functor $f^*$ on Ind-completions generates its codomain under colimits.
      
      \item
      Let $\sC$ and $\sD$ be weighted \inftyCats and $k\ge 0$ an integer.
      A thickly surjective weight-exact functor $f : \sC \to \sD$ is \emph{$k$-connective} if it induces $k$-connective maps
      \[ \Maps_{\sC^{w=0}}(X,Y) \to \Maps_{\sD^{w=0}}(f(X),f(Y)) \]
      for all $X$ and $Y$ in $\sC^{w=0}$.
      In other words, if the induced functor of $(k+1)$-categories
      \[ \tau_{\le k+1}(\sC) \to \tau_{\le k+1}(\sD) \]
      is fully faithful (and hence an equivalence) when restricted to the weight-hearts.
    \end{defnlist}
  \end{defn}

  \begin{exam}
    If $A \to B$ is a $k$-connective map of connective $\Einfty$-rings, then the induced functor $\Perf_A \to \Perf_B$ is $k$-connective with respect to the weight structures of \examref{exam:panoujnq}.
  \end{exam}

  The following definition is a variant of \cite[Def.~2.5]{LandTamme}.

  \begin{defn}\label{connected_invariant}
    Let $E$ be a spectrum-valued functor on the \inftyCat of small stable \inftyCats.
    We say that $E$ is \emph{connected} if for any $k$-connective functor $\sC \to \sD$ of weighted \inftyCats, the induced map of spectra $E(\sC) \to E(\sD)$ is $(k+1)$-connective.
  \end{defn}

  \begin{exam}\label{ktheory_is_connected}
    Connective K-theory is an example of a connected invariant.
    This follows from \cite[Cor.~5.16]{fontes2018weight} and the fact that plus-construction sends $k$-connected maps of spaces into
    $k+1$-connected maps (cf. \cite[Lem.~2.4]{LandTamme}). 
    Moreover, nonconnective K-theory is also an example by \cite[Thm.~4.3]{Vovastheoremoftheheart}.
  \end{exam}

  \begin{rem}\label{connected_and_connective_invariants}
    Let $E$ be a localizing invariant.
    If $E$ is connected, then it is $1$-connective in the sense of \cite[Def.~2.5]{LandTamme}.
    The converse holds if $E$ commutes with filtered colimits.
    If $E$ does not commute with filtered colimits, one can still show a weaker statement: if $E$ is $2$-connective in the sense of \cite{LandTamme}, then it is connected.
    This follows from the fact that any weighted \inftyCat can be functorially realized as the kernel of a weight-exact localization functor $\Perf_{A(\sC)} \to \Perf_{B(\sC)}$ for some map of connective $\sE_1$-rings $A(\sC) \to B(\sC)$; see the proof of \lemref{lem:proequiv criterion}.
    (See \cite[Sect.~8.1]{BondarkoWeights} or \cite{BondarkoSosnilo} for the theory of weight-exact localizations.)
    Any $n$-connective functor $\sC \to \sD$ clearly induces an $n$-connective map on $A(\sC) \to A(\sD)$, and it also induces an $n$-connective map
    $B(\sC) \to B(\sD)$
    by the universal properties of localization and of truncation.
  \end{rem}

\ssec{Weak pro-Milnor squares}

  The starting point for our definition of weak pro-Milnor squares is the following classical definition (see e.g. \cite[Sect.~4]{ArtinMazur}):

  \begin{defn}
    Let $\{f_n : X_n \to Y_n\}_n$ be a morphism of cofiltered systems of spectra.
    We say that $f$ is \emph{pro-$k$-connective} if the induced map
    \[ \{\tau_{\le k}(X_n)\}_n \to \{\tau_{\le k}(Y_n)\}_n \]
    is an isomorphism in $\Pro(\Spt)$ for every $k$.
    It is a \emph{weak pro-equivalence} if it is pro-$k$-connective for all $k$.
  \end{defn}

  Note that the same definition makes sense for the \inftyCat of $\sE_1$-rings, for instance, in place of $\Spt$.
  The following can be viewed as a many-object generalization (see \examref{exam:panoujnq}).
  A \emph{weighted pro-\inftyCat} is a pro-object in the \inftyCat of weighted \inftyCats and weight-exact functors.

  \begin{defn}
    Let $\{f_n : \sC_n \to \sD_n\}_n$ be a cofiltered system of weight-exact functors between weighted \inftyCats and $k\ge 0$ an integer.
    We say that it is \emph{pro-$k$-connective} if the induced morphism of pro-\inftyCats
    \[ \{\tau_{\le k+1}(\sC_{n}^{w=0})\}_n \to \{\tau_{\le k+1}(\sD_{n}^{w=0})\}_n \]
    is invertible.
    If it is $k$-connective for all $k$, then it is called a \emph{weak pro-equivalence} of weighted pro-\inftyCats.
  \end{defn}

  This enables us to define weak pro-Milnor squares of weighted \inftyCats.

  \begin{defn}\label{defn:weakly pro-precartesian}
    Let $\{\Delta_n\}_n$ be a cofiltered system of commutative squares of weighted \inftyCats and weight-exact functors of the form
    \[ \begin{tikzcd}
      \sA_n \ar{r}{f_n}\ar{d}{p_n}
      & \sB_n \ar{d}{q_n}
      \\
      \sA'_n \ar{r}{g_n}
      & \sB'_n.
    \end{tikzcd} \]
    For every $n$, let $\sA_n^+ \sub \sA'_n \times_{\sB'_n} \sB_n$ denote the full subcategory of the pullback (taken in the \inftyCat of weighted \inftyCats) generated under finite colimits, finite limits, and retracts by the essential image of $\sA_n \to \sA'_n \times_{\sB'_n} \sB_n$.
    Note that $\sA_n^+$ inherits a weight structure from $\sA'_n \times_{\sB'_n} \sB_n$.
    We say that $\Delta$ is \emph{$k$-pro-precartesian} if the functors $\sA_n \to \sA_n^+$ induce a pro-$k$-connective functor on weighted pro-\inftyCats.
    We say it is \emph{weakly pro-precartesian} if it is $k$-pro-precartesian for all $k$.
  \end{defn}

  \begin{defn}
    Let $\{\Delta_n\}_n$ be a cofiltered system of commutative squares of weighted \inftyCats and weight-exact functors as above.
    We say that $\{\Delta_n\}_n$ is \emph{$k$-pro-Milnor} if it is pro-precartesian and each of the functors $f_n^*$, $g_n^*$, $p_n^*$, $q_n^*$ is thickly surjective.
    It is \emph{weakly pro-Milnor} if it is $k$-pro-Milnor for all $k$.
  \end{defn}

  \begin{constr}\label{constr:aobfuqp}
    Suppose given a commutative square
    \[ \begin{tikzcd}
      \sA \ar{r}{f}\ar{d}{p}
      & \sB \ar{d}{q}
      \\
      \sA' \ar{r}{g}
      & \sB'
    \end{tikzcd} \]
    of weighted \inftyCats.
    Write $\widehat{\sA}$, $\widehat{\sB}$, etc. for the Ind-completions and consider the $\odot$-construction $\widehat{\sQ} := \widehat{\sA'} \odot_{\widehat{\sA}}^{\widehat{\sB'}} \widehat{\sB}$.
    By \lemref{lem:odot t-structure} there is a weight structure on $\sQ := \widehat{\sQ}^\omega$ such that all the functors in the induced square
    \[ \begin{tikzcd}
      \sA \ar{r}{f}\ar{d}{p}
      & \sB \ar{d}{q_0}
      \\
      \sA' \ar{r}{g_0}
      & \sQ,
    \end{tikzcd} \]
    as well as the functor $b : \sQ \to \sB'$, are weight-exact.
    We call $\sQ$ the \emph{weighted $\odot$-construction}, and denote it by
    \[ \sA' \odot_{\sA}^{\sB'} \sB. \]
  \end{constr}

  \begin{defn}
    Let $\{\Delta_n\}_n$ be a cofiltered system of commutative squares of weighted \inftyCats and weight-exact functors of the form
    \begin{equation}\label{eq:weak pro Milnor}
      \begin{tikzcd}
        \sA_n \ar{r}{f_n}\ar{d}{p_n}
        & \sB_n \ar{d}{q_n}
        \\
        \sA'_n \ar{r}{g_n}
        & \sB'_n.
      \end{tikzcd}
    \end{equation}
    If $\Delta$ is a pro-$k$-Milnor square, then we say it \emph{pro-$k$-base change} if the functors $\sA'_n \odot_{\sA_n}^{\sB'_n} \sB_n \to \sB'_n$ induce a pro-$k$-connective functor on weighted pro-\inftyCats.
    If $\Delta$ is a weak pro-Milnor square, then we say it \emph{weakly satisfies pro-base change} if satisfies pro-$k$-base change for all $k$.
  \end{defn}

\ssec{Weak pro-excision}\label{weightedproexcision}

For connected invariants, we have the following analogue of \thmref{thm:pro excision}:

\begin{thm}[Weak pro-excision]\label{thm:proexcision_stcat}
  Let $E$ be a connected localizing invariant.
  Suppose given a weak pro-Milnor square of weighted pro-\inftyCats of the form \eqref{eq:weak pro Milnor} weakly satisfying pro-base change.
  Then the induced square of pro-spectra
  \[ \begin{tikzcd}
    \{E(\sA_n)\}_n \ar{r}\ar{d}
    & \{E(\sB_n)\}_n \ar{d}
    \\
    \{E(\sA'_n)\}_n \ar{r}
    & \{E(\sB'_n)\}_n
  \end{tikzcd} \]
  is weakly pro-cartesian.
  That is, the morphisms $E(\sA_n) \to E(\sA'_n) \fibprod_{E(\sB'_n)} E(\sB_n)$ induce a weak pro-equivalence of pro-spectra.
\end{thm}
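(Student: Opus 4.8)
The plan is to carry out the weighted $\odot$-construction levelwise and then reduce the statement to the weak pro-base change hypothesis by means of the connectedness of $E$, in close analogy with the proof of \thmref{thm:pro excision} but with every strict (pro-)isomorphism replaced by a weak pro-equivalence.

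For each $n$ I would apply \constrref{constr:aobfuqp} to $\Delta_n$, producing the weighted $\odot$-construction $\sQ_n := \sA'_n \odot_{\sA_n}^{\sB'_n} \sB_n$, the square $\Delta_{n,0}$, and the weight-exact functor $b_n : \sQ_n \to \sB'_n$; by construction $\Delta_n$ is the adjustment of $\Delta_{n,0}$ along $b_n$, so $g_n = b_n g_{n,0}$ and $q_n = b_n q_{n,0}$. The first goal is to show that $\{E(\Delta_{n,0})\}_n$ is weakly pro-cartesian. Reusing the argument of \propref{prop:E(Delta_0)} after passing to Ind-completions, the semi-orthogonal decomposition $\sC_n = \langle \sB_n, \sA'_n \rangle$ (\remref{rem:C=<B,A'>}) gives a splitting $E(\sC_n) \simeq E(\sA'_n) \oplus E(\sB_n)$, and the Verdier sequence underlying the $\odot$-construction gives a cofibre sequence $E(\overline{\sA}_n) \to E(\sC_n) \to E(\sQ_n)$, where $\overline{\sA}_n \subseteq \sC_n$ is the thick subcategory generated by the image of $d_{n,*} = (p_n, f_n)$. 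The key observation is that, since the honest fibre product $\sA'_n \fibprod_{\sB'_n} \sB_n$ is a thick stable subcategory of the lax pullback $\sC_n$ that contains this image, one has $\overline{\sA}_n = \sA_n^+$ in the notation of \defref{defn:weakly pro-precartesian}.

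Now I would bring in the two hypotheses. Weak pro-precartesianness of $\Delta$ means that $\{\sA_n \to \sA_n^+\}_n$ is pro-$k$-connective for every $k$, that is, a weak pro-equivalence of weighted pro-\inftyCats; connectedness of $E$ then upgrades this to a weak pro-equivalence $\{E(\sA_n) \to E(\sA_n^+)\}_n$ of pro-spectra. Combined with the cofibre sequence and the $a^*$-splitting of \remref{rem:a^* splitting}, this identifies $\{E(\sA_n)\}_n$ with $\{E(\sA'_n) \fibprod_{E(\sQ_n)} E(\sB_n)\}_n$ up to weak pro-equivalence, establishing the weak pro-cartesianness of $\{E(\Delta_{n,0})\}_n$. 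Finally, weak pro-base change asserts precisely that $\{b_n : \sQ_n \to \sB'_n\}_n$ is a weak pro-equivalence of weighted pro-\inftyCats, so connectedness of $E$ yields a weak pro-equivalence $\{E(\sQ_n) \to E(\sB'_n)\}_n$; since forming pullbacks preserves weak pro-equivalences of pro-spectra, replacing $E(\sQ_n)$ by $E(\sB'_n)$ produces the desired weak pro-equivalence $\{E(\sA_n)\}_n \to \{E(\sA'_n) \fibprod_{E(\sB'_n)} E(\sB_n)\}_n$.

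The main obstacle I anticipate is the implication, used twice above, that a connected invariant sends a weak pro-equivalence of weighted pro-\inftyCats to a weak pro-equivalence of pro-spectra. Connectedness as formulated in \defref{connected_invariant} is a levelwise statement about individual $k$-connective functors, whereas a weak pro-equivalence need not be represented by levelwise $k$-connective functors. To bridge this gap I would isolate the auxiliary claim that a pro-$k$-connective functor of weighted pro-\inftyCats induces a pro-$(k+1)$-connective map on $E$; taking this for all $k$ gives what is needed. The cleanest route is to realize each weighted \inftyCat functorially as the kernel of a weight-exact localization $\Perf_{A(\sC_n)} \to \Perf_{B(\sC_n)}$ for a map of connective $\sE_1$-rings $A(\sC_n) \to B(\sC_n)$, as in \remref{connected_and_connective_invariants}; pro-$k$-connectivity of the functors then translates into pro-$k$-connectivity of $\{A(\sC_n) \to A(\sD_n)\}$ and $\{B(\sC_n) \to B(\sD_n)\}$ as maps of connective $\sE_1$-rings, whereupon the compatibility of pro-truncations with the functoriality of $E$ (in the spirit of \examref{exam:pro-equiv monogenic} and \cite[Lem.~2.28]{LandTamme}) yields the claim. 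The delicate point throughout is that these pro-truncation manipulations commute with the relevant finite limits, which holds because filtered colimits of spaces are exact.
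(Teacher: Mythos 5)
Your main line of argument is correct and is, in substance, the paper's own proof, written out more carefully than the paper's three-line version. The paper simply asserts (citing \thmref{thm:excision}) that $E$ sends the squares $\Delta_{n,0}$ to cartesian squares levelwise, which is not literally available since $\Delta_n$ is not assumed levelwise precartesian; what is true levelwise, exactly as you say, is that $E$ of the square with corners $\sA_n^+,\sB_n,\sA'_n,\sQ_n$ is cartesian, because the kernel of $\widehat{\sC}_n \to \widehat{\sQ}_n$ is the localizing subcategory generated by the image of $d_{n,*}$, whose compact objects are the thick closure $\sA_n^+$ (your observation that the honest pullback is a thick subcategory of the lax pullback is the right one). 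Correcting the corner $\sA_n \to \sA_n^+$ by weak pro-precartesianness and the corner $\sQ_n \to \sB'_n$ by weak pro-base change, and pushing both through $E$, is the intended argument; your appeal to stability of weak pro-equivalences under pullbacks of pro-spectra is also fine (five lemma in pro-abelian groups).

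The gap is in your proposed proof of the one statement that makes this strategy run: that a connected invariant sends pro-$k$-connective maps of weighted pro-\inftyCats to pro-$(k+1)$-connective maps of pro-spectra (\corref{cor:proconnected} in the paper). Your route through a functorial realization $\sC_n \simeq \Ker(\Perf_{A(\sC_n)} \to \Perf_{B(\sC_n)})$ breaks at the asserted ``translation'' of pro-$k$-connectivity of $\{\sC_n \to \sD_n\}$ into pro-$k$-connectivity of $\{B(\sC_n) \to B(\sD_n)\}$. Pro-$k$-connectivity of the functors is, by definition, control of the pro-system of $(k+1)$-truncated weight-hearts; but $B(\sC_n)$ is the endomorphism ring of the generator in a Verdier quotient, whose mapping spectra are filtered colimits of the form $\colim_{W}\Maps_{\Perf_{A(\sC_n)}}\bigl(G_n, \Cofib(W \to G_n)\bigr)$ with $W$ ranging over $\sC_n$. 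Objects $W$ of arbitrarily large (finite) weight range contribute, so $\pi_i B(\sC_n)$ in low degrees involves heart mapping spectra in unboundedly high degrees; the pro-inverse you possess exists only on truncated hearts and induces nothing on $\{\tau_{\le k}B(\sC_n)\}$. (The levelwise statement in \remref{connected_and_connective_invariants} has full, untruncated heart mapping spectra as input, so it does not face this issue; your pro-version is of exactly the same nature as the statement you are trying to prove.) The missing idea is the paper's \lemref{lem:weakstrongconnectivity}: any pro-$k$-connective system of weight-exact functors between idempotent-complete weighted \inftyCats is isomorphic, \emph{as a pro-object}, to a levelwise $k$-connective system --- one replaces $\sD_n$ by the base change $\sE_n := \tau_{\le k+1}(\sC_n)\times_{\tau_{\le k+1}(\sD_n)}\sD_n$, uses that passage to pro-objects commutes with finite limits to get $\{\sE_n\}\simeq\{\sD_n\}$, and checks via the octahedral axiom that each $\sC_n \to \sE_n$ is $k$-connective. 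Connectedness of $E$ then applies levelwise, giving \corref{cor:proconnected}; with that lemma substituted for your ring-theoretic detour, your proof is complete.
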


\begin{lem}\label{lem:weakstrongconnectivity}
  Let $\{f_n : \sC_n \to \sD_n\}_n$ be a cofiltered system of weight-exact functors between idempotent-complete weighted \inftyCats.
  If it is pro-$k$-connective, then it is isomorphic to a pro-system of levelwise $k$-connective functors.
\end{lem}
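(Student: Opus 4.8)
The plan is to transport the whole question to the weight-hearts, reinterpret pro-$k$-connectivity as a pro-isomorphism of truncated hearts, strictify that pro-isomorphism by a reindexing argument, and then realize the strictification by honest weighted \inftyCats.

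First I would pass to hearts. Under the equivalence between weighted \inftyCats with weight-exact functors and idempotent-complete additive \inftyCats, $\sC \mapsto \sC^{w=0}$ (\cite[Prop.~3.3]{Vovastheoremoftheheart}; see also \cite{SosniloRegular}), which is compatible with passage to $\Pro$ and to arrow categories, a weight-exact functor $f$ is $k$-connective precisely when the induced additive functor $\bar{f} = f^{w=0}$ is thickly surjective and the functor $\tau_{\le k+1}\sC^{w=0} \to \tau_{\le k+1}\sD^{w=0}$ is fully faithful, while $\{f_n\}_n$ is pro-$k$-connective precisely when $\{\tau_{\le k+1}\sC_n^{w=0}\}_n \to \{\tau_{\le k+1}\sD_n^{w=0}\}_n$ is invertible in the pro-category of $(k+1)$-categories. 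Writing $\sA_n := \sC_n^{w=0}$ and $\sB_n := \sD_n^{w=0}$, the problem becomes: an additive pro-system whose $(k+1)$-truncation is a pro-isomorphism is isomorphic, in the arrow pro-category, to a levelwise thickly-surjective, levelwise $\tau_{\le k+1}$-fully-faithful one.

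Since $k \ge 0$, the given pro-isomorphism restricts to one on homotopy categories (the case $\tau_{\le 1}$), which is in particular pro-essentially surjective up to retracts. Using this I would first replace each $\sB_n$ by the full subcategory $\sB_n^+$ thickly generated by the essential image of $\bar{f}_n$; the inclusions $\{\sB_n^+\}_n \to \{\sB_n\}_n$ then form a pro-isomorphism (levelwise fully faithful, and pro-essentially surjective by the above), and each $\bar{f}_n : \sA_n \to \sB_n^+$ is thickly surjective. For the mapping-space condition I would invoke standard reindexing techniques for pro-isomorphisms (cf. \cite[Sect.~4]{ArtinMazur}): the pro-isomorphism of $(k+1)$-truncations, regarded as a pro-object of the arrow category, is pro-isomorphic to a levelwise equivalence, so that after passing to a cofinal subsystem one obtains backward comparison functors on truncations whose composites with the $\tau_{\le k+1}\bar{f}_n$ recover the transition functors. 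Realizing this strictified tower by honest additive \inftyCats --- forming the appropriate fiber products of the $\sA_n$ and $\sB_n^+$ along these comparison functors, taken in the \inftyCat of additive \inftyCats and followed by idempotent completion, and using that $\tau_{\le k+1}$ preserves the relevant finite limits --- produces a pro-isomorphic system $\{f'_m : \sA'_m \to \sB'_m\}_m$ for which $\tau_{\le k+1}f'_m$ is fully faithful with thick image, that is, each $f'_m$ is $k$-connective. Transporting back along the heart equivalence gives the claim.

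The main obstacle is precisely this realization step. Mapping-space connectivity cannot be improved by restricting to full subcategories, since these leave mapping spaces unchanged; genuine levelwise $k$-connectivity must therefore be manufactured by the fiber-product construction dictated by the reindexed pro-inverse. Keeping the reindexing of the truncated data coherent with the honest categories, verifying that the resulting functors are $k$-connective rather than merely pro-$k$-connective, and checking that the new system is genuinely isomorphic to $\{f_n\}_n$ as a pro-object of the arrow category of weighted \inftyCats --- and not only after applying $\tau_{\le k+1}$ --- is where the real work lies.
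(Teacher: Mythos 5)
Your proposal never actually proves the lemma: the step you yourself identify as ``where the real work lies'' --- realizing the reindexed, truncated data by honest categories via some unspecified fiber products, verifying that the resulting functors are levelwise (not merely pro-) $k$-connective, and checking that the new system is pro-isomorphic to $\{f_n\}_n$ as a pro-object of the arrow category --- is the entire content of the statement. Everything you do carry out is either routine or unnecessary (passing to hearts is harmless; replacing $\sD_n$ by the thick image $\sB_n^+$ of $f_n$ turns out to be superfluous). Worse, the route you choose manufactures the difficulty: by insisting on strictifying the pro-inverse on $(k+1)$-truncations via Artin--Mazur reindexing, you create exactly the coherence problem between truncated and untruncated data that you then cannot resolve. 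And the fiber products you gesture at (``of the $\sA_n$ and $\sB_n^+$ along these comparison functors'') involve only the $\sA_n$ and \emph{truncations} of the targets, so it is not even clear that the construction produces a system equipped with a map to, and pro-isomorphic to, the untruncated $\{\sD_n\}_n$.

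The paper's proof shows that no choice of pro-inverse and no reindexing are needed, and this is the idea missing from your sketch. One forms, for each $n$, the levelwise pullback
\[
  \sE_n \;:=\; \tau_{\le k+1}(\sC_n) \fibprod_{\tau_{\le k+1}(\sD_n)} \sD_n
\]
along the \emph{canonical} truncation functors. The pro-$k$-connectivity hypothesis enters only through the fact that passage to pro-objects commutes with finite limits: it gives that $\{\sE_n\}_n \to \{\sD_n\}_n$ is a pro-isomorphism, hence $\{f_n\}_n$ is isomorphic, with no choices or reindexing, to the levelwise system $\{f'_n : \sC_n \to \sE_n\}_n$. It then remains to check that each $f'_n$ is $k$-connective, which is a direct levelwise verification: thick surjectivity holds because $\sE_n \to \tau_{\le k+1}(\sC_n)$ and $\sC_n \to \tau_{\le k+1}(\sC_n)$ induce equivalences on homotopy categories of weight-hearts; and for $X$, $Y$ in $\sC_n^{w=0}$ one compares $\Maps_{\sC_n}(X,Y)$, $\Maps_{\sE_n}(f'_n(X),f'_n(Y))$ and $\tau_{\le k}\Maps_{\sC_n}(X,Y)$, whose relevant fibres $\tau_{\ge k+1}\Maps_{\sC_n}(X,Y)$ and $\tau_{\ge k+1}\Maps_{\sD_n}(f_n(X),f_n(Y))$ are $(k+1)$-connective, so that the comparison map is $k$-connective. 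Your (correct) observation that passing to full subcategories cannot improve mapping-space connectivity is precisely the reason the pullback must be taken against $\sD_n$ itself over its own truncation; that single construction replaces your strictification-plus-realization scheme entirely.
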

\begin{proof}
  For every $n$, consider the commutative square
  \[ \begin{tikzcd}
    \sC_n \ar{r}{f_n}\ar{d}
    & \sD_n \ar{d}
    \\
    \tau_{\le k+1}(\sC_n) \ar{r}{f_n}
      & \tau_{\le k+1}(\sD_n)
  \end{tikzcd} \]
  By assumption, the lower arrow induces an isomorphism of pro-objects as $n$ varies.
  Since passage to underlying pro-objects commutes with finite limits, it follows that the base changes $\sE_n \to \sD_n$ also induce an isomorphism of pro-objects.
   Thus it will suffice to show that for every $n$, the induced functor $f'_n: \sC_n \to \sE_n$ is $k$-connective.
  By construction of $\sE_n$, the functor $\sE_n \to \tau_{\le k+1}(\sC_n)$ induces an equivalence on homotopy categories of the weight-hearts.
  Since the same holds for $\sC_n \to \tau_{\le k+1}(\sC_n)$, it follows that $\sC_n \to \sE_n$ is thickly surjective.

  Now for any two objects $X$ and $Y$ in $\sC_n^{w=0}$, consider the commutative triangle
  \[ \begin{tikzcd}
    \Maps_{\sC_n}(X,Y) \arrow[r]\arrow[dr]
    & \tau_{\le k}\Maps_{\sE_n}(f'_n(X),f'_n(Y))\arrow[d] \\
    & \tau_{\le k}\Maps_{\sC_n}(X,Y).
  \end{tikzcd} \]
  Note that the vertical and diagonal maps have fibres
  \[
    \tau_{\ge k+1} \Maps_{\sD_n}(f_n(X), f_n(Y))
    ~\text{and}~
    \tau_{\ge k+1} \Maps_{\sC_n}(X, Y),
  \]
  respectively.
  These are both $(k+1)$-connective, so it follows from the octahedral axiom that the horizontal map also has $k$-connective fibre.
\end{proof}

\begin{cor}\label{cor:proconnected}
  Let $E$ be a connected invariant.
  Then $E$ sends pro-$k$-connective maps of idempotent-complete weighted \inftyCats to $(k+1)$-connective maps of pro-spectra.
  In particular, it sends weak pro-equivalences to weak pro-equivalences.
\end{cor}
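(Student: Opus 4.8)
The plan is to reduce the pro-statement to the levelwise statement already encoded in the definition of a connected invariant (\defref{connected_invariant}), using the rectification provided by \lemref{lem:weakstrongconnectivity}. Recall that a morphism of pro-spectra being $(k+1)$-connective means precisely that the induced map $\{\tau_{\le k} E(\sC_n)\}_n \to \{\tau_{\le k} E(\sD_n)\}_n$ is invertible in $\Pro(\Spt)$, so it suffices to produce such an isomorphism.

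First I would apply \lemref{lem:weakstrongconnectivity}: given a pro-$k$-connective cofiltered system $\{f_n : \sC_n \to \sD_n\}_n$ of weight-exact functors between idempotent-complete weighted \inftyCats, it yields an isomorphism, in the pro-category of arrows of weighted \inftyCats, between $\{f_n\}_n$ and a pro-system $\{f'_n : \sC_n \to \sE_n\}_n$ whose members are each (levelwise) $k$-connective. Next I would apply $E$. Since any functor on small stable \inftyCats induces a functor on pro-categories (by the universal property of $\Pro$, computed levelwise on cofiltered representatives), carrying $E$ through the above isomorphism produces an isomorphism of morphisms of pro-spectra $\{E(f_n)\}_n \simeq \{E(f'_n)\}_n$; here I forget the weight structures and regard each $\sC_n, \sD_n, \sE_n$ simply as small stable \inftyCats, while retaining them as weighted \inftyCats for the purpose of invoking connectivity.

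It then remains to check that $\{E(f'_n)\}_n$ is $(k+1)$-connective. By hypothesis $E$ is connected, so \defref{connected_invariant} applies to each individual $k$-connective functor $f'_n$ and shows that the map of spectra $E(f'_n) : E(\sC_n) \to E(\sE_n)$ is $(k+1)$-connective; equivalently $\tau_{\le k}\Fib(E(f'_n)) \simeq 0$ for every $n$. Since finite limits---in particular fibres and the truncation $\tau_{\le k}$---commute with passage to pro-objects, the fibre of $\{E(f'_n)\}_n$ has vanishing $\tau_{\le k}$, i.e.\ $\{E(f'_n)\}_n$ is $(k+1)$-connective as a map of pro-spectra. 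As $(k+1)$-connectivity is invariant under isomorphism in $\Pro(\Spt)$, the same holds for $\{E(f_n)\}_n$. The ``in particular'' then follows immediately: a weak pro-equivalence is pro-$k$-connective for all $k$, whence $\{E(f_n)\}_n$ is $(k+1)$-connective for all $k$ and is therefore a weak pro-equivalence.

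I expect the only genuinely delicate point to be the bookkeeping of the middle step---namely that the pro-isomorphism of \lemref{lem:weakstrongconnectivity}, which lives in the pro-category of the arrow \inftyCat of weighted \inftyCats, is carried by $E$ to a bona fide isomorphism of \emph{morphisms} of pro-spectra, and that levelwise $(k+1)$-connectivity upgrades to the pro-level notion. Both reduce to the single formal fact that passage to underlying pro-objects is functorial and preserves finite limits; the substantive connectivity input is entirely contained in \defref{connected_invariant} and in \lemref{lem:weakstrongconnectivity}, so no further estimates are needed.
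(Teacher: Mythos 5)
Your proof is correct and follows exactly the route the paper intends: the paper states this corollary without a separate proof precisely because it is the combination of \lemref{lem:weakstrongconnectivity} (rectification to a levelwise $k$-connective system) with \defref{connected_invariant} applied levelwise, plus the formal fact that levelwise constructions descend to $\Pro(\Spt)$ --- which is what you spell out. The only cosmetic slip is calling $\tau_{\le k}$ a finite limit; what you actually need (and use) is just that levelwise $\tau_{\le k}$ is functorial on pro-objects, so levelwise equivalences after truncation give pro-isomorphisms, and the argument stands as written.
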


\begin{proof}[Proof of \thmref{thm:proexcision_stcat}]
  Since $\{\Delta_n\}_n$ is weakly pro-Milnor, it is weakly pro-equivalent to the pro-system induced by the squares
  \[ \begin{tikzcd}
    \sA_n \ar{r}\ar{d}
      & \sB_n \ar{d}
    \\
    \sA'_n \ar{r}
      & \sA'_n \odot_{\sA_n}^{\sB'_n} \sB_n.
  \end{tikzcd} \]
  By \thmref{thm:excision}, $E$ sends the above square to a cartesian square of spectra for every $n$.
  Thus the claim follows from \corref{cor:proconnected}.
\end{proof}

\bibliographystyle{alphamod}

{\small
\bibliography{references}
}

\end{document}